
\documentclass[12pt, twoside]{amsart} 

\usepackage[all]{xy} \usepackage{amsthm} \usepackage{mathrsfs} \usepackage{amssymb} \usepackage{amsmath} 
\usepackage{array, stmaryrd}

\setlength{\marginparwidth}{1.2in}
\let\oldmarginpar\marginpar
\renewcommand\marginpar[1]{\-\oldmarginpar[\raggedleft\footnotesize #1]%
{\raggedright\footnotesize #1}}

\theoremstyle{definition} \newtheorem{dfn}{Definition}[section]

\theoremstyle{definition} \newtheorem{prop}[dfn]{Proposition}
\theoremstyle{definition} \newtheorem{thm}[dfn]{Theorem}
\theoremstyle{definition} \newtheorem{cor}[dfn]{Corollary}
\theoremstyle{definition} \newtheorem{claim}[dfn]{Claim}
\theoremstyle{definition} \newtheorem{rmk}[dfn]{Remark}
\theoremstyle{definition} \newtheorem{note}[dfn]{Note}
\theoremstyle{definition} \newtheorem{lm}[dfn]{Lemma}
\theoremstyle{definition} 
\theoremstyle{definition} \newtheorem{defprop}[dfn]{Definition-Proposition}
\theoremstyle{definition} 
\theoremstyle{definition} \newtheorem{notation}[dfn]{Notation}
\theoremstyle{definition} \newtheorem{unmarked}[dfn]{}
\theoremstyle{definition} 
\theoremstyle{definition} 
\theoremstyle{definition} 
\theoremstyle{definition} 
\theoremstyle{definition} 
\theoremstyle{definition} \newtheorem{cor-def}[dfn]{Corollary-Definition}

\theoremstyle{definition} \newtheorem*{thm398}{Theorem \ref{.398}}
\theoremstyle{definition} \newtheorem*{prop3902}{Proposition \ref{.3902}}
\theoremstyle{definition} \newtheorem*{prop3901}{Proposition \ref{.3901}}
\theoremstyle{definition} \newtheorem*{thm50149}{Theorem \ref{.50149}}
\theoremstyle{definition} \newtheorem*{cor46}{Corollary \ref{.46}}
\theoremstyle{definition} \newtheorem*{thm557}{Theorem \ref{.557}}

\newcommand{\Unmarked}[2]{\begin{unmarked} \label{#1} 
{#2} \end{unmarked}}

\newcommand{\Definition}[2]{\begin{dfn} \label{#1} 
{#2} \end{dfn}}
\newcommand{\Proposition}[2]{\begin{prop} \label{#1} 
{#2} \end{prop}}

\newcommand{\Corollary}[2]{\begin{cor} \label{#1} 
{#2} \end{cor}}

\newcommand{\Remark}[2]{\begin{rmk} \label{#1} 
{#2} \end{rmk}}

\newcommand{\Lemma}[2]{\begin{lm} \label{#1} 
{#2} \end{lm}}

\newcommand{\spec}{\operatorname{Spec}}

\newcommand{\End}[1]{\operatorname{End} {#1}}
\newcommand{\m}[1]{\mathrm{#1}}
\newcommand{\fk}[1]{\mathfrak{#1}}
\newcommand{\iEnd}{\mathscr{E}\hspace{-.2 ex}nd \,}
\newcommand{\s}[1]{\mathscr{#1}}
\newcommand{\fAut}[1]{\mathbb{A}\mathrm{ut} \, {#1}}
\newcommand{\bb}[1]{\mathbb{#1}}

\newcommand{\la}{\lambda}
\newcommand{\ka}{\kappa}

\newcommand{\sir}{\sigma^{\m{Fil}^r}}
\newcommand{\sid}{\sigma^\m{dcs}}

\newcommand{\nd}{^\m{nd}}
\newcommand{\fl}{^\m{fl}}
\newcommand{\nnd}{^\m{nd}_n}
\newcommand{\nfl}{^\m{fl}_n}

\newcommand{\nn}{\fk{n}_n}
\newcommand{\nnt}{{\nn}_T}
\newcommand{\g}{\fk{g}}

\newcommand{\M}{\cl{M}}
\newcommand{\mnnd}{M_n^\m{nd}}
\newcommand{\Mnnd}{\cl{M}_n^\m{nd}}
\newcommand{\mnndg}{M_n^\m{nd}(\g)}
\newcommand{\Mnndg}{\cl{M}_n^\m{nd}(\g)}
\newcommand{\Mnfl}{\cl{M}_n^\m{fl}}
\newcommand{\Mnflg}{\cl{M}_n^\m{fl}(\fk{g})}
\newcommand{\Mnffl}{\cl{M}_n^\m{ffl}}
\newcommand{\Mnfflg}{\cl{M}_n^\m{ffl}(\fk{g})}

\newcommand{\Mnfnd}{\cl{M}_n^\m{fnd}}
\newcommand{\Mnfndg}{\cl{M}_n^\m{fnd}(\fk{g})}
\newcommand{\mnfnd}{M_n^\m{fnd}}
\newcommand{\mnfndg}{M_n^\m{fnd}(\fk{g})}

\newcommand{\gtoend}{\fk{g} \to \m{End}}
\newcommand{\rgtoende}{r:\gtoend \, E}

\newcommand{\gttonnt}{\g_T\to \nnt}

\newcommand{\gttoiende}{\fk{g}_T \to \iEnd \cl{E}}
\newcommand{\rgttoiende}{r: \gttoiende}
\newcommand{\rgttonnt}{r:\g_T \to \nnt}

\newcommand{\fHom}{\mathbb{H}\mathrm{om}}

\newcommand{\V}{\bb{V}}
\newcommand{\cl}[1]{\mathcal{#1}}
\newcommand{\bld}[1]{\textbf{#1}}
\newcommand{\fEnd}{\mathbb{E}\mathrm{nd}}

\newcommand{\ga}{\mathbb{G}_a}
\newcommand{\cok}{\operatorname{cok}}
\newcommand{\ssf}{\mathcal{O}}

\newcommand{\gr}{\operatorname{gr}}

\newcommand{\gam}{\gamma}
\newcommand{\wt}{\widetilde}
\newcommand{\wtg}{\wt W _\gam}

\numberwithin{equation}{dfn}

\SelectTips{cm}{11}

\begin{document}

\title[Moduli of unipotent representations]{Moduli of nondegenerate unipotent representations in characteristic zero}
\author{Ishai Dan-Cohen}
\date{\today}

\begin{abstract}
With this work we initiate a study of the representations of a unipotent group over a field of characteristic zero from the modular point of view. Let $G$ be such a group. The stack of all representations of a fixed finite dimension $n$ is badly behaved. We introduce an invariant, $w$, of $G$, its \textit{width}, as well as a certain nondegeneracy condition on representations, and we prove that nondegenrate representations of dimension $n \le w+1$ form a quasi-projective variety. Our definition of the width is opaque; as a first attempt to elucidate its behavior, we prove that it is bounded by the length of  a composition series. Finally, we study the problem of gluing a pair of nondegenerate representations along a common subquotient.

\end{abstract}

\maketitle

\section*{Introduction}

The purpose of this paper is to develop an approach to the problem of moduli of representations of a unipotent group over a field of characteristic zero. Fix such a field $k$, a unipotent group $G$ over $k$ and a positive integer $n$. The stack $\cl{M}_n(G)$ of all representations of a fixed dimension $n$ is badly behaved. It is typically not algebraic\footnote{Thanks are due to Anton Geraschenko for helping me understand this fact.}, and its diagonal, albeit representable, is a positive dimensional group whose fiber dimensions can jump in families. We define a nondegeneracy condition which cuts out an immersed substack $\cl{M}^\m{nd}_n(G) \subset \cl{M}_n(G)$. This substack is large in the sense that it contains an open substack, it is algebraic, and its diagonal is flat. It then follows from a higher quotient construction known as \textquotedblleft rigidification" that the fppf sheaf $M^\m{nd}_n(G)$ associated to $\cl{M}\nnd(G)$ is an algebraic space.

We initiate a study of $M^\m{nd}_n(G)$. There is an invariant $w$ of (the Lie algebra of) $G$ which we call the \textit{width}, which singles out a best-case-scenario for constructing moduli. We prove that for $n \le w+1$, $M\nnd(G)$ is quasi-projective. We also give concrete descriptions of $M^\m{nd}_n(G)$ for $n = 2$ and $3$. Finally, we study a tower formed by our moduli spaces.

The problem of constructing a coarse space of a moduli stack has a long and rich history, which may help put our problem in context. However, our problem does not seem to fit the rubric suggested by the methods which have emerged from this history. The functor 
$$\fHom(G, \bb{GL}_n)$$
of homomorphisms to $\bb{GL}_n$ is typically not representable, so a direct application of geometric invariant theory to the action of $\bb{GL}_n$ by conjugation is not possible; the prospects of a more creative application are unclear. Mumford's theory requires a reductive group as part of the input. But in our context, it is more natural to consider instead the action of the group $\bb{B}_n$ of invertible upper triangular matrices on the space
$$X_n^\m{fl}(G) \subset \fHom(G,\bb{U}_n)$$
of upper triangular representations whose canonically associated filtration is a full flag. Even if we insist on an action by $\bb{GL}_n$ on an appropriate space, the stabilizer subgroups will not be reductive. Partial analogs of Mumford's theory for groups which are not reductive are currently under development in works by A. Asok, B. Doran, and F. Kirwan.

Another well known tool is the Keel-Mori theorem. This theorem applies to algebraic stacks with finite diagonal and produces an algebraic space. In our context, plagued by a unipotent action with positive dimensional stabilizers, it appears that the only readily available tool is rigidification. This requires that we restrict attention to representations whose automorphisms are flat, but it has the advantage of producing a sheaf quotient: if we let
$$X\nnd(G) \subset X\nfl(G)$$
denote the locus of representations which satisfy our nondegeneracy condition, then
$$\mnnd(G) = X\nnd(G)/_\m{fppf}\bb{B}_n$$
is a quotient of flat sheaves.


In the case $n\le w+1$ our quasi-projective variety $M\nnd$ is in fact a sheaf quotient in the Zariski topology. This means, in particular, that its field-vlaued points parametrize isomorphism classes of nondegenerate representations defined over the given field. In this regard our nondegeneracy condition is similar to Mumford's stability (as opposed to semi-stability), and, indeed, our notion is related to his. In this case, our construction is quite explicit, relying essentially on the construction of Zariski-local sections for the projection
$$\Mnnd(G) \to \mnnd(G) \,. $$

I now give an outline of the paper together with statements of the main theorems and sketches of proofs. If $\g$ is the Lie algebra of $G$, then finite dimensional representations of $G$ correspond to finite dimensional nilpotent representations of $\g$. With this in mind, we begin by studying the problem of moduli of nilpotent representations of a fixed Lie algebra $\g$ over $k$. In section \ref{Nilp} we recall the definition and first properties of nilpotent representations. In section \ref{Flag} we study \textit{flag}\footnote{In earlier versions of this paper, such representations were called \textit{regular}. } representations: those whose associated filtration is a full flag. We give a criterion for a nilpotent representation to be flag in coordinates and we define various structures associated canonically to a given flag representation $r$. Among these is an ordered set of $n-1$ points of the projectivization $\bb{P}\fk{g}^\m{ab}$ of the abelianization of $\fk{g}$ which we call the \textit{constellation} of $r$.

In section \ref{Aut} we develop a technical tool: the scheme-theoretic Lie algebra $n(r)$ of the unipotent part $U(r)$ of the automorphism group of a flag representation $r: \fk{g}_T \to \iEnd(\cl{E})$ over a general base $T$ over $k$. We observe that $\fAut r = \bb{G}_{m,T} \times U(r)$, that $U(r)$ is isomorphic to $n(r)$ as a scheme and that $n(r)$ is the total space of a module.

In section \ref{Nondeg} we turn to the definition and an initial study of nondegeneracy. Roughly, a nilpotent representation $r: \fk{g} \to \m{End}(E)$ on a vector space $E$ is \textit{nondegenerate} if it is flag, if every subquotient is (by recursion on the dimension of $E$) already nondegenerate, and if among representations satisfying the above two conditions, the dimension of the automorphism group is minimal. The dimension of the automorphism group of a nondegenerate nilpotent representation of dimension $n$ is independent of the choice of nondegenerate nilpotent representation; it thus defines an invariant of $\g$ and $n$ which we denote by $A(\g,n)$. We illustrate the behavior of $A(\g,n)$ for low $n$ with several examples, showing in particular that the possible triples $(A(\g,2), A(\g,3), A(\g,4))$ are $(2,2,2), (2,2,3)$ and $(2,3,4)$. 

In section \ref{Width} we define the \textit{width}, $w$, an invariant of the pronilpotent completion of $\fk{g}$. We prove that nondegenerate representations of dimension not more than $w+1$ satisfy a particularly strong relative of the flag condition (Theorem \ref{.434}). As a corollary, we obtain:
\begin{cor46}
If $\fk{g}$ has descending central series of length $d$ and has width $w$ then $w \le d$.
\end{cor46}
\noindent In particular, a commutative Lie algebra has width one. At the opposite extreme, a free Lie algebra on two or more generators has width $\infty$. 

We let $\cl{M}^\m{nd}_n(\fk{g})$ denote the stack of $n$-dimensional nondegenerate nilpotent representations of $\fk{g}$. The main goal of section \ref{Modnd} is to prove:
\begin{thm398}
The fppf sheaf $\pi_0^\m{fppf} \cl{M}^\m{nd}_n(\fk{g})$ associated to $\Mnndg$ is an algebraic space.
\end{thm398}
\noindent For the proof, we let $\fk{n}_n$ be the Lie algebra of strictly upper triangular $n \times n$ matrices, we observe that the locus $X^\m{fl}_n$ of flag representations of $\g$ is an open subscheme of $\fHom(\fk{g}, \fk{n}_n)$ and that its stack quotient by the action of the Borel is equal to the stack of flag representation. Next we let $X_n^\m{nd} \subset X_n^\m{fl}$ denote the locus of nondegenerate nilpotent representations and we prove that $X^\m{nd}_n \hookrightarrow X^\m{fl}_n$ is an immersion. Here we use $n(r)$ to help in showing that $X^\m{nd}_n$ is compatible with taking infinite unions of rings and we use $n(r)$ again to produce a flattening stratification for the automorphism group of the universal family. From this it follows that $X_n^\m{nd}$ is representable and hence that $\cl{M}^\m{nd}_n(\fk{g})$ is algebraic. Since its inertia is by construction flat, rigidification applies to produce the theorem.

We let
$$M^\m{nd}_n(\fk{g}) := \pi_0^\m{fppf} \cl{M}^\m{nd}_n(\fk{g})$$
and call it \textit{the moduli space of $n$-dimensional nondegenerate nilpotent representations}. We end the section with a brief discussion of the functoriality of our moduli spaces. We show that if $\fk{f}, \g$ are Lie algebras such that for $i=1, \dots, n$, $A(\g,i)=A(\fk{f},i)$, then any surjection
$$\fk{f} \twoheadrightarrow \g$$
gives rise to a closed immersion
$$\mnndg \hookrightarrow \mnnd(\fk{f})\, .$$

Section \ref{Framed} is a variation on section \ref{Modnd}. We define a \textit{framed nondegenerate nilpotent representation} to be a nondegenerate nilpotent representation equipped with a grading-compatible basis for the associated graded vector space. We let $\Mnfndg$ denote the stack of $n$-dimensional framed nondegenerate nilpotent representations of $\g$, we prove that $\pi_0^\m{fppf}\Mnfndg$ is algebraic and we define
$$\mnfndg := \pi_0^\m{fppf}\Mnfndg\,.$$
This gives a modular interpretation of the sheaf quotient of $X\nnd$ by the action of the unipotent radical of the standard Borel.

Section \ref{Low} is devoted to low dimensional examples. We prove:
\begin{prop3902}
The moduli space $M\nd_2(\fk{g})$ of two dimensional nondegenerate nilpotent representations is canonically isomorphic to $\bb{P}\fk{g}^\m{ab}$
\end{prop3902}
\noindent and we give a simple construction of $M_3^\m{nd}$ whose main features may be summarized as follows:
\begin{prop3901}
Let $m$ denote the dimension of $\g$ and let $w$ denote the width of $\g$. If $w=1$ then the moduli space ${M}^\m{nd}_3(\fk{g})$ of three dimensional nondegenerate nilpotent representations is a closed subscheme of a vector bundle of rank $m-1$ over $\bb{P}\fk{g}^\m{ab}$. If $w \ge 2$ then ${M}^\m{nd}_3(\fk{g})$ is a closed subscheme of a vector bundle of rank $m-2$ over the complement of the diagonal of $\bb{P}\fk{g}^\m{ab} \times \bb{P}\fk{g}^\m{ab}$.
\end{prop3901}
\noindent Roughly, the unipotent part of the Borel action produces the vector bundle while the torus projectivizes the product of abelianizations. If $w=1$ then the compatibility condition of \S12 (see below) forces constellations to degenerate, hence restricts all flag representations to the diagonal; otherwise the nondegeneracy condition excises the diagonal.

Section \ref{ModW} is devoted to a proof of
\begin{thm50149}
Let $w$ denote the width of $\fk{g}$. If $n \le w+1$ then $\pi_0 ^\m{ZAR} \Mnndg$ is quasi-projective. Thus
$$\mnndg = \pi_0 ^\m{ZAR} \Mnndg$$
and in particular $\mnndg$ is quasi projective.
\end{thm50149}
\noindent We begin with a concrete criterion for a flag representation $r$ of dimension $n \le w+1$ to be nondegenerate: $r$ is nondegenerate if and only if the points of its constellation are distinct. We use our study of the functoriality of our moduli spaces to reduce to the case that $\g$ is free. Finally, constellations give rise to a map
$$\mnnd(\g) \to (\bb{P}\g^\m{ab})^{n-1}$$
and a concrete construction shows that locally $\mnndg$ has the structure of a vector bundle over the complement of the big diagonal. In carrying out this construction, we first consider $\mnfnd$ together with a framed analog of the constellation and then study the action of the diagonal torus on $\mnfnd$.

It is well known that the functor \textbf{Lie} induces an equivalence of categories from the category of unipotent groups over $k$ to the category of nilpotent Lie algebras over $k$, and that given a unipotent group $G$ with Lie algebra $\g$, \textbf{Lie} also induces an isomorphism from the category of finite dimensional representations of $G$ to the category of finite dimensional nilpotent representations of $\g$. In section \ref{Unip} we generalize the latter statement to include families (as well as infinite dimensions) as follows. Let $\bb{REP}(G)$ denote the fibered category of quasi-coherent representations of $G$ and let $\bb{REP}^\m{nil}(\fk{g})$ denote the fibered category of locally nilpotent quasi-coherent representations of $\fk{g}$. Then we have
\begin{thm557}
The functor
$$\m{\bf{Lie}}: \bb{REP}(G) \to \bb{REP}^\m{nil}(\fk{g})$$
sending a representation to its derivative at the identity is an isomorphism of fibered categories.
\end{thm557}
\noindent This may be known to experts but does not, as far as I can tell, appear in the literature. The main obstacle is that the functor of automorphisms of a quasi-coherent sheaf may not be representable; it is overcome by a careful analysis of the exponential map. This theorem reduces the problem of moduli of representations of $G$ to the problem of moduli of nilpotent representations of $\fk{g}$, and hence to the context of the previous section. We apply all definitions introduced in sections 1--9 to $G$ through its Lie algebra; in particular we define \textit{the moduli space of} $n$-\textit{dimensional nondegenerate representations of} $G$ by
$$\mnnd(G):= \mnnd({\bf Lie}\,G)\,.$$

Since a flag representation has two canonically defined subquotients, the moduli spaces of nondegenerate representations form a tower
\[\xymatrix{
\vdots \ar@<.7ex>[d] \ar@<-.7ex>[d] \\
M\nd_{n+1} \ar@<.7ex>[d]^{p^n_2} \ar@<-.7ex>[d]_{p^n_1}\\
M\nd_n \ar@<.7ex>[d]^{p^{n-1}_2} \ar@<-.7ex>[d]_{p^{n-1}_1}\\
M\nd_{n-1} \ar@<.7ex>[d] \ar@<-.7ex>[d]\\
\vdots
}\]
with
$$p^{n-1}_2 \circ p^n_1 = p^{n-1}_1 \circ p^n_2 \, .$$ In section \ref{Com} we discuss this tower, focussing on a modular answer to the following question: when can two $n$-dimensional nondegenerate representations be glued along a codimension-one subquotient to produce an $n+1$-dimensional flag representation? The answer is given in the form of a closed algebraic subspace $M_n^\m{cnd}$ of $M\nnd {\times}_{p_2, M\nd_{n-1}, p_1} M\nnd$ through which the map
$$M\nd_{n+1} \rightarrow M\nnd \underset{p_2, M_{n-1}\nd, p_1}{\times} M\nd_n$$
factors. This generalizes the role played by the diagonal of $\bb{P}\g^\m{ab} \times \bb{P}\g^\m{ab}$ in the case $w=1$ of Proposition \ref{.3901}.

Finally, in an appendix we give a direct proof of the fact, implied by its quasi-projectivity, that for $n \le w+1$, $\mnnd$ is separated. Of interest here is the reappearance of the our invariant, the width.

This project leads naturally in several directions which I now indicate briefly. Computations performed jointly with Anton Geraschenko reveal that typically $\mnnd(G)$ has multiple components, many singularities not explained by the multiplicity of components and sometimes even generically nonreduced components. These geometric features endow $\mnnd(G)$ with a natural stratification which provides unipotent representations with an intricate discrete invariant and suggests a classification program in the same spirit as classical representation theory. This may lead to a study of representations of an arbitrary algebraic group which mixes the classical theory with a theory of unipotent representations.

On the other hand, our initial interest in this problem came from the hope to formulate a story somewhat similar to that of \cite{simpson} for the unipotent fundamental group in a $p$-adic context. Given a variety over $\bb{F}_p$ satisfying certain hypotheses, the theory of the $p$-adic unipotent fundamental group gives rise to a pair of prounipotent groups $U_\m{cris}$ and $U_\m{\acute et}$ over $\bb{Q}_p$. These groups carry various extra structures, as well as a comparison isomorphism over $B_\m{cris}$ which together reflect arithmetic properties of the variety and which should in turn be reflected in the structure of the moduli space of representations. For instance, there should be an automorphism whose fixed points single out those unipotent isocrystals which support an $F$-structure. For the applications we have in mind in this direction, we would have to compactify the moduli space; the search for a good compactification presents another natural next step for our study.

\subsection*{Aknowledgements}

First and foremost, I would like to thank my two advisors at Berkeley: Arthur Ogus and Martin Olsson. This project was suggested and guided by Olsson; many of the ideas below (and above) arose in conversations with him. During my time at Rice I often met with Brendan Hassett, who kindly treated me as one of his own students. I would like to thank Anton Geraschenko and Matthew Satriano for many helpful conversations. Finally, I would like to thank the math department at Rice for its hospitality during my time there.
 
 \subsection*{Notations and conventions}
 
\begin{unmarked} \label{.005}
We denote by $\fk{n}_n$ the Lie subalgebra of $\fk{gl}_n$ whose elements are the strictly upper triangular $n\times n$ matrices.
\end{unmarked}
 
 \begin{unmarked} \label{.01}
 If $\fk{g}$ is a Lie algebra over a field, we denote by $\fk{g}^{(i)}$ the $i^\m{th}$ term in the descending central series: $\fk{g}^{(1)} = \fk{g}$, $\fk{g}^{(i+1)} = [\fk{g}, \fk{g}^{(i)}]$, and by $\fk{g}^\m{ab}$ the abelianization: $\fk{g}^\m{ab} = \fk{g}^{(1)} / \fk{g}^{(2)}$. The \textit{pronilpotent completion} of $\fk{g}$ is the inverse limit $\varprojlim \fk{g}/\fk{g}^{(n)}$. We say that $\fk{g}$ is \textit{nilpotent} if there exits an $n$ such that $\fk{g}^{(n)} = 0$ and \textit{pronilpotent} if $\fk{g}$ is equal to its pronilpotent completion. If $F$ is a vector space, $\fk{n}(F)$ denotes the free pronilpotent Lie algebra on $F$. It is characterized by the mapping property $\m{Hom}_\m{\bld{Lie}}(\fk{n}(F), \fk{n}) = \m{Hom}_\m{\bld{Vect}}(F, \fk{n})$ for every nilpotent Lie algebra $\fk{n}$, and may be constructed as the pronilpotent completion of the free Lie algebra on $F$. 
\end{unmarked}

\begin{unmarked}\label{.012}
The word $\textit{filtration}$ will always refer to an increasing filtration indexed by the natural numbers.
\end{unmarked}

\begin{unmarked} \label{.015}
Let $A$ be a ring, $E$ an $A$-module, $\m{Fil}$ a filtration of $E$ by submodules and $\phi$ an endomorphism of $E$. We say that $\phi$ \textit{is nilpotent with respect to} $\m{Fil}$ if for each $i \ge 1$, $\phi (\m{Fil}_i{E}) \subset \m{Fil}_{i-1}{E}$, and we write $\fk{n}_\m{Fil}{E}$ for the space of all such endomorphisms.

More generally, if $(T, \ssf_T)$ is a ringed space, $\cl{E}$ an $\ssf_T$-module and $\m{Fil}$ a filtration of $\cl{E}$ by submodules, we write $\fk{n}_\m{Fil}\cl{E}$ for the sheaf of endomorphism of $\cl{E}$ nilpotent with respect to $\m{Fil}$.
\end{unmarked}

\begin{unmarked} \label{.016}
Let $(T, \ssf_T)$ be a ringed space, $g$ a sheaf of Lie algebras over $\ssf_T$ and $r: g \to \iEnd \cl{E}$ a representation on an $\ssf_T$-module $\cl{E}$. We define the $0$\textit{-eigenspace of} $r$, an $\ssf_T$-submodule $\cl{E}^0$ of $\cl{E}$, by
\[ \cl{E}^0(U) := \{ e \in \cl{E}(U) \; | \; (rv)e=0 \; \forall v\in g(U) \} \]
\end{unmarked}

\begin{unmarked} \label{.02}
If $T$ is a scheme and $\cl{F}$ is a quasi-coherent sheaf, then the \textit{contravariant total space}, denoted $\V\cl{F}$, of $\cl{F}$ is defined by
$$\V\cl{F}(f: T' \to T) = \m{Hom}_{\ssf_{T'}}(f^*\cl{F}, \ssf_{T'})$$
(\cite[1.7.8]{egaii}) and if $\phi:\cl{E} \rightarrow \cl{F}$ is a map of quasi-coherent sheaves then $\V \phi$ denotes the induced map $\V \cl{F} \rightarrow \V \cl{E}$. If $\cl{E}, \cl{F}$ are locally free of finite rank, then the kernel of $\phi$ \textit{regarded as a map of vector groups} is defined by the Cartesian square
\[\xymatrix{
\ker \V\phi^\lor \ar[r] \ar[d]	&
\V \cl{E}^\lor \ar[d]^{\V \phi^\lor}	\\
T \ar[r]	&
\V \cl{F}^\lor	}
\]
where the arrow at the bottom is the zero section. Although $\ker \V \phi^\lor$ may not be a vector group, it is the contravariant total space of a module. Indeed, it suffices to check this under the assumption that $T = \spec A$ is affine in which case we write $E$ and $F$ for the modules of global sections, we let $Q = \cok (\phi^\lor)$ and we observe that for any $A$-algebra $B$ we have
$$\m{Hom}_A (Q,B) = \ker (\m{id}_B \otimes \phi)$$
as in the following diagram.
\[\xymatrix{
0 \ar[r]	&
\m{Hom}(Q,B) \ar[r]	&
\m{Hom}(E^\lor, B) \ar[r] \ar@{=}[d]	&
\m{Hom}(F^\lor, B) \ar@{=}[d]	\\
	&
	&
B \otimes E \ar[r]_{\m{id}_B \otimes \phi}	&
B \otimes F	}
\]
\end{unmarked}

\begin{unmarked} \label{.03}
In general, when working over a scheme $T$, we use black board bold symbols to denote presheaves on the category of affine $T$-schemes, and calligraphic symbols to denote presheaves on the small Zariski site of $T$. So, for example, if $r:\fk{g} \rightarrow \iEnd(\cl{F})$ is a representation of a Lie algebra on a quasi-coherent sheaf over $T$, then $\fEnd(r)$ denotes the functor
$$(f:T' \rightarrow T) \mapsto \m{End}(f^*r)$$
and $\iEnd(r)$ denotes its restriction to $X_\m{zar}$. The latter is quasi-coherent but the former may not be.
\end{unmarked}

\begin{unmarked} \label{.04}
When working over an affine scheme $T= \spec A$ we use a plain font to denote the module of global sections of a quasi-coherent sheaf; thus $E = \Gamma(T, \cl{E})$. On the other hand, when $\fk{g}$ is a Lie algebra over a ring $A$ and $T=\spec A$, we use $\fk{g}$ again for the sheaf of Lie $\ssf_T$-algebras associated to $\fk{g}$ when conflating the two imposes no danger.
\end{unmarked}

\Unmarked{.042}{ Let $T$ be a scheme, $\cl{F}$ an $\ssf_T$-module and $\cl{E} \subset \cl{F}$ a submodule. Following Lang, we say that $\cl{F}$ is a \textit{vector sheaf} if it is locally free of finite rank. Assuming this to be the case, we say that $\cl{E}$ is a \textit{vector subsheaf} if the quotient module $\cl{F}/\cl{E}$ is a vector sheaf. }

\begin{unmarked} \label{.043}
We employ the convention that when no topology is mentioned, the indiscrete topology (that is, the topology whose only coverings are the isomorphisms) is assumed. Notationally, this means the following.

Let $\mathbf{C}$ be a category. If $X$ is a presheaf on $\mathbf{C}$ and $G$ is a group presheaf acting on $X$, we write $X/G$ for the presheaf quotient and $[X/G]$ for the associated fibered category. Thus for $T \in \mathbf{C}$,
$$(X/G)(T)=X(T)/G(T)$$
and $[X/G](T)$ is the groupoid whose objects are the elements of $X(T)$ and whose morphisms $x\rightarrow y$ are those elements of $G(T)$ such that $gx=y$.

Now let $\tau$ be a topology on $\mathbf{C}$ and suppose $X$ is a $\tau$ sheaf and $G$ is a group $\tau$ sheaf acting on $X$. Then we write $X/_\tau G$ for the sheaf quotient with respect to $\tau$ and $[X/_\tau G]$ for the stack quotient with respect to $\tau$. Thus $X/_\tau G$ is the $\tau$ sheaf associated to $X/G$ and $[X/_\tau G]$ is the $\tau$ stack associated to $[X/G]$.
\end{unmarked}

\Unmarked{.0431}{Continuing with the situation of \ref{.043} suppose $\cl{X}$ is a fibered category over $\mathbf{C}$. Then $\pi_0\cl{X}$ denotes the presheaf associated to $\cl{X}$: $(\pi_0\cl{X})(T) = \pi_0(\cl{X}(T))$ is the set of isomorphism classes of objects of $\cl{X}(T)$. If $\mathbf{C}$ is again endowed with a topology $\tau$ then we write $\pi_0^\tau\cl{X}$ for the $\tau$ sheaf associated to $\pi_0\cl{X}$.}

\begin{unmarked} \label{.044}
We denote by $\m{\textbf{Set}}$ the category of sets, and by $\m{\textbf{Ring}}$ the category of rings. If $T$ is a scheme we denote by $\m{\textbf{Aff}}(T)$ the catetory of affine schemes over $T$. If $T=\spec A$ is affine, we sometimes write $\m{\textbf{Aff}}(A)$ instead of $\m{\textbf{Aff}}(T)$.
\end{unmarked}

\Unmarked{.045}{We remind the reader that a quasi-coherent sheaf on the small Zariski site of a scheme $T$ extends uniquely to a quasi-coherent sheaf on the big Zariski site of $T$, so there is usually no danger in conflating the two in our notation. Nevertheless, we find it useful to reserve a special notation for the big structure sheaf $o_T: \mathbf{Aff}(T) \to \mathbf{Ring}$ which sends $T' \mapsto \Gamma(T', \ssf_{T'})$. }

\begin{unmarked} \label{.05}
We denote the group of upper triangular invertible $n\times n$ matrices by $\bb{B}_n$, its subgroup of $n\times n$ upper triangular matrices with $1$'s on the diagonal by $\bb{U}_n$ and the $n$-dimensional torus by $\bb{T}_n$. Thus 
$$\bb{B}_n=\bb{T}_n\ltimes\bb{U}_n$$
\end{unmarked}

\begin{unmarked} \label{.06}
If $(T,\ssf_T)$ is a ringed space, $\cl{E}$ is an $\ssf_T$-module, $\m{Fil}$ is a filtration by $\ssf_T$-submodules and $\psi$ is an automorphism of $\cl{E}$, we say that $\phi$ is \textit{unipotent with respect to} $\m{Fil}$ if $\psi$ respects $\m{Fil}$ and $\gr^\m{Fil} \psi = \m{id}_{\gr^\m{Fil} \cl{E}}$.
\end{unmarked}

\section{Nilpotence}
\label{Nilp}

Fix a field $k$ and a Lie algebra $\g$ over $k$, assumed to be finitely generated. We begin by discussing the definition of a nilpotent representation. A more standard definition, which we do not need here, is equivalent to ours through Engel's theorem (\cite[1.35]{knapp}). 
We limit ourselves to making statements at the level of generality needed in the sequel.

\begin{dfn}\label{.11}
A representation $r:\fk{g}\rightarrow\End{E}$ on a vector space $E$ is \textbf{nilpotent} if $E$ is finite dimensional and if either $E=0$ or $E^0$ is nonzero and, by recursion on $\dim E$, $\m{E}/\m{E}^0$ is nilpotent.
\end{dfn}

\begin{dfn} \label{.111}
Let $T$ be a $k$-scheme and let $\rgttoiende$ be a representation of $\g$ on a quasi-coherent sheaf $\cl{E}$ over $T$. We define the \textbf{associated filtration}, denoted $\m{Fil}^r$, of $\cl{E}$ by
$$\m{Fil}^r_0 \cl{E} =0$$
and
$$\m{Fil}^r_{n+1}\cl{E}=\tau^{-1}_n((\cl{E}/\m{Fil}^r_n\cl{E})^0) \, ,$$
where
$$\tau_n:\cl{E} \to \cl{E}/\m{Fil}_n^r\cl{E}$$
is the projection and the $0$-eigenspace $(\cl{E}/\m{Fil}^r_n\cl{E})^0$ is defined as in \ref{.016}. When there is no risk of confusion, we drop the $r$ from the notation, and we sometimes write $\cl{E}_n$ instead of $\m{Fil}_n\cl{E}$. We also write $r_i$ for the subrepresentation
$$\fk{g}_T \to \iEnd(\m{Fil}^r_i\cl{E})$$
and $r^j$ or $r/r_j$ for the quotient representation
$$\fk{g}_T \to \iEnd(\cl{E}/\m{Fil}^r_j\cl{E})$$
and finally, for $j \le i$ we write $r^j_i$ for the subquotient
$$\fk{g}_T \to \iEnd(\m{Fil}^r_i\cl{E}/\m{Fil}^r_j\cl{E}) \, .$$
\end{dfn}

\begin{rmk} \label{.1113}
Let $T$ be a $k$-scheme and let $\rgttoiende$ be a representation of $\fk{g}$ on a quasi-coherent sheaf $\cl{E}$ over $T$. We note the following formula for $0\le i\le j$ and $0\le k\le j-i$:
\[ \m{Fil}_k^{r^i_j}(\cl{E}_j/\cl{E}_i)=\cl{E}_{i+k}/\cl{E}_i \; .\]
\end{rmk}

\begin{lm} \label{.1114}
Let $r:\fk{g} \to \End E$ be a representation of $\g$ on a vector space $E$ over $k$. Then $r$ factors through $\fk{n}_{\m{Fil}^r}E$ (\ref{.015}).
\end{lm}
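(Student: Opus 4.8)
The plan is to unwind the definitions of the associated filtration $\m{Fil}^r$ and of nilpotence with respect to $\m{Fil}^r$ (see \ref{.015}), and to verify the defining inclusion $(rv)(\m{Fil}^r_i E) \subset \m{Fil}^r_{i-1} E$ for every $v \in \fk{g}$ and every $i \ge 1$ by induction on $i$. The base case $i = 1$ is immediate: by construction $\m{Fil}^r_1 E = E^0$ is the $0$-eigenspace of $r$, so $(rv)$ annihilates it, and $\m{Fil}^r_0 E = 0$ contains $0$. For the inductive step, fix $i \ge 1$ and consider an element $e \in \m{Fil}^r_{i+1} E$; by the recursive formula, $\tau_i(e)$ lies in $(\cl{E}/\m{Fil}^r_i E)^0$, the $0$-eigenspace of the quotient representation $r^i = r/r_i$. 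Hence $\tau_i((rv)e) = (r^i v)(\tau_i e) = 0$, which means $(rv)e \in \ker \tau_i = \m{Fil}^r_i E$. This gives $(rv)(\m{Fil}^r_{i+1}E) \subset \m{Fil}^r_i E$, completing the induction.

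Two supplementary points need to be checked to make this a complete argument. First, one must note that the filtration is exhaustive, i.e.\ that $\m{Fil}^r_N E = E$ for $N = \dim E$ (or for $N$ large); this is exactly the content of Definition \ref{.11} of nilpotence combined with \ref{.111}, since at each stage either the quotient is zero or its $0$-eigenspace is strictly larger than zero, so the dimension strictly increases until the quotient vanishes. Granting exhaustivity, the inclusions above say precisely that $r$ lands in $\fk{n}_{\m{Fil}^r} E$. Second, one should observe that the $\m{Fil}^r_i E$ are genuinely $k$-subspaces and that $(rv)$ is $k$-linear, so that the set-theoretic eigenspace condition in \ref{.016} is compatible with the module-theoretic formulation in \ref{.015}; over a field this is routine.

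I do not expect any real obstacle here: the statement is essentially a reformulation of the definitions, and the only thing that requires a (trivial) argument is the compatibility of the quotient representation $r^i$ with the projection $\tau_i$ — namely that $\tau_i \circ (rv) = (r^i v) \circ \tau_i$ — which holds because $r^i$ is defined as the representation induced on the quotient $E/\m{Fil}^r_i E$. If anything merits care, it is simply making sure the induction is set up so that the $0$-eigenspace appearing at stage $i+1$ is that of the quotient representation $r^i$ and not of $r$ itself; once that is pinned down, the proof is a one-line diagram chase repeated along the filtration.
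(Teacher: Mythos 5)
Your core argument is essentially the paper's proof: fix $v$, $i$ and $e \in \m{Fil}^r_i E$, use that $\tau_{i-1}$ is a morphism of representations so that $\tau_{i-1}((rv)e) = (r^{i-1}v)(\tau_{i-1}e)$, and conclude this vanishes by the definition of $\m{Fil}^r_i$, hence $(rv)e \in \m{Fil}^r_{i-1}E$. Your induction scaffolding is cosmetic: the step for $i+1$ never invokes the inductive hypothesis, and the base case is just the same observation with $\m{Fil}^r_0 E = 0$; the paper simply runs the argument for all $i$ at once.

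There is, however, a genuine error in your first ``supplementary point.'' Nothing about $\fk{n}_{\m{Fil}^r}E$ in \ref{.015} requires the filtration to be exhaustive: $\fk{n}_\m{Fil}E$ is by definition the set of endomorphisms $\phi$ with $\phi(\m{Fil}_i E) \subset \m{Fil}_{i-1}E$ for all $i \ge 1$, nothing more. The inclusions you established already constitute the entire content of the factorization, and exhaustiveness plays no role. Worse, the lemma is stated for an \emph{arbitrary} representation $r:\g\to\End E$, with no nilpotence hypothesis, and the argument you sketch for exhaustiveness (that the $0$-eigenspace of each successive quotient is strictly positive) is only valid for nilpotent $r$. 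In fact exhaustiveness of $\m{Fil}^r$ is \emph{equivalent} to nilpotence of $r$ — this is Proposition \ref{.113}, which comes after this lemma and cites it. Drop the exhaustiveness claim and your proof is correct and matches the paper's.
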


\begin{proof}
Fixing $v \in \fk{g}$, $i \ge1$ and $e \in \m{Fil}^r_iE$, we are to show that $(rv)e \in \m{Fil}^r_{i-1}E$; since, in the notation of \ref{.111}, $\tau_{i-1}$ is a morphism of representations, we have
$$\tau_{i-1}((rv)e) = (r^{i-1}v)(\tau_{i-1}e) =0$$
whence $(rv)e \in \m{Fil}^r_{i-1}E$.
\end{proof}

\begin{dfn} \label{.112}
Given a ring $B$, a module $F$ over $B$ and a filtration $\m{Fil}$, we say that $\m{Fil}$ is \textit{exhaustive} if
$$\bigcup_i \m{Fil}_iF =F \, .$$
\end{dfn}

\begin{prop} \label{.113}
Let $r:\fk{g} \to \End{E}$ be a representation of $\fk{g}$ on a finite dimensional vector space $E$. The following conditions are equivalent:
\begin{itemize}
\item[(i)] $r$ is nilpotent.
\item[(ii)] $\m{Fil}^r$ is exhaustive.
\item[(iii)] $r$ factors through $\fk{n}_\m{Fil} E$ for some exhaustive filtration $\m{Fil}$ of $E$.
\end{itemize}
\end{prop}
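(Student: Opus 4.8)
The plan is to prove the cycle of implications $(i) \Rightarrow (ii) \Rightarrow (iii) \Rightarrow (i)$, which is the most economical route.

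For $(i) \Rightarrow (ii)$, I would argue by induction on $\dim E$, the case $E = 0$ being vacuous. If $r$ is nilpotent and $E \ne 0$, then by Definition \ref{.11}, $E^0 \ne 0$ and the quotient representation $r^1$ on $E/E^0 = E/\m{Fil}_1^r E$ is nilpotent. Note $\m{Fil}_1^r E = E^0$ by the defining recursion in \ref{.111}. By the inductive hypothesis, $\m{Fil}^{r^1}$ is exhaustive on $E/\m{Fil}_1^r E$; but Remark \ref{.1113} (with $i = 1$, $j$ large) identifies $\m{Fil}_k^{r^1}(E/\m{Fil}_1^r E)$ with $\m{Fil}_{1+k}^r E / \m{Fil}_1^r E$, so exhaustiveness upstairs pulls back to $\bigcup_k \m{Fil}_k^r E = E$, i.e. $\m{Fil}^r$ is exhaustive.

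The implication $(ii) \Rightarrow (iii)$ is immediate from Lemma \ref{.1114}: $r$ factors through $\fk{n}_{\m{Fil}^r} E$, and by hypothesis $\m{Fil}^r$ is itself an exhaustive filtration, so we may take $\m{Fil} = \m{Fil}^r$.

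The implication $(iii) \Rightarrow (i)$ is where the real content lies, and I expect it to be the main obstacle. Suppose $r$ factors through $\fk{n}_\m{Fil} E$ for some exhaustive filtration $\m{Fil}$. Again induct on $\dim E$. If $E \ne 0$, exhaustiveness gives a smallest index $i \ge 1$ with $\m{Fil}_i E \ne 0$; then $\m{Fil}_{i-1} E = 0$, and for any $v \in \fk{g}$ and $e \in \m{Fil}_i E$ we have $(rv)e \in \m{Fil}_{i-1} E = 0$, so $\m{Fil}_i E \subseteq E^0$ and hence $E^0 \ne 0$. It remains to check that $E/E^0$ is nilpotent, for which — by the inductive hypothesis — it suffices to exhibit an exhaustive filtration of $E/E^0$ through which the quotient representation factors. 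The natural candidate is the image filtration $\overline{\m{Fil}}_k(E/E^0) := (\m{Fil}_k E + E^0)/E^0$; this is clearly exhaustive since $\m{Fil}$ is, and one checks that the quotient representation is nilpotent with respect to it because $r$ was nilpotent with respect to $\m{Fil}$ upstairs (the condition $\phi(\m{Fil}_k) \subseteq \m{Fil}_{k-1}$ descends). The one subtlety to be careful about is that $E^0$ need not be a term of the given filtration $\m{Fil}$, so the argument must genuinely work with the image filtration rather than a quotient of $\m{Fil}$ by one of its terms; but since nilpotence of an endomorphism with respect to a filtration is manifestly inherited by images under a morphism of filtered modules, this causes no real trouble. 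This closes the cycle and proves the proposition.
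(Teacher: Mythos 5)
Your proof is correct and follows essentially the same route as the paper: the paper also proves the cycle $(i)\Rightarrow(ii)\Rightarrow(iii)\Rightarrow(i)$, gets $(ii)\Rightarrow(iii)$ from Lemma \ref{.1114}, and handles $(iii)\Rightarrow(i)$ by exactly your argument — induction on $\dim E$, locating the smallest nonzero step of $\m{Fil}$ inside $E^0$, and pushing $\m{Fil}$ forward along $E\twoheadrightarrow E/E^0$ to the image filtration (your ``one subtlety'' is precisely the point the paper is also careful about). The only cosmetic difference is in $(i)\Rightarrow(ii)$, where you induct on $\dim E$ via the recursion and the identity $\m{Fil}^{r^1}_k(E/E_1)=E_{1+k}/E_1$, while the paper inducts on the filtration index $i$ to show each $r^i$ is nilpotent and then invokes finite-dimensionality; these are two packagings of the same observation.
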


\begin{proof}
(i $\Rightarrow$ ii) Suppose $r$ is nilpotent. Then $r^i$ is nilpotent for all $i$; indeed, if we assume for an induction on $i$ that $r^i$ is nilpotent, since
$$E/E_{i+1}=(E/E_i)/((E/E_i)^0) \, ,$$
it follows that $E/E_{i+1}$ is nilpotent. This implies that for each $i$, either $E_i = E$ or $E_i \neq E_{i+1}$. Since $E$ is finite dimensional, there exists an $i$ such that $E_i=E$, from which the conclusion follows.

(ii $\Rightarrow$ iii) This follows from \ref{.1114} by setting $\m{Fil}:=\m{Fil}^r$.

(iii $\Rightarrow$ i) If $\dim E=0$ then $r$ is nilpotent, indeed. Fix a positive integer n and assume for an induction on $n$ that (iii) implies (i) whenever $\dim E=n$. Let $\m{Fil}$ be an exhaustive filtration on $E$, suppose $r$ factors through $\fk{n}_\m{Fil}E$ and suppose $\dim E =n+1$. If $i$ is the smallest number such that $\m{Fil}_iE\neq0$, then $\m{Fil}_iE \subset E^0$ so $E^0 \neq0$. It remains to show that the inductive hypothesis may be applied to the quotient representation $r^1:\fk{g} \to \End (E/E^0)$. Denote by $\tau_1:E\to E/E^0$ the projection and for each $j$, let $_{\tau_1}\m{Fil}_j(E/E^0)$ be the image of $\m{Fil}_jE$ in $E/E^0$. Given $j$ an arbitrary natural number, $e \in \m{Fil}_{j+1}E$ and $v\in\fk{g}$ we have
$$(r^1v)(\tau_1e) = \tau_1((rv)e) \, ;$$
since $(rv)e \in \m{Fil}_jE$, it follows that $(r^1e)(\tau_1e) \in {_{\tau_1}\m{Fil}_j}(E/E^0)$. Thus $r^1$ factors through $\fk{n}_{_{\tau_1}\m{Fil}}(E/E^0)$; since ${_{\tau_1}\m{Fil}}$ is exhaustive, the induction hypothesis applies as hoped to conclude that $r^1$ is nilpotent.
\end{proof}

\begin{cor} \label{.114}
Let $\rgtoende$ be a representation of $\g$ on a finite-dimensional vector space $E$. If $r$ is nilpotent then so is every subquotient.
\end{cor}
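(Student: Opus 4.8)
The plan is to reduce everything to the characterization of nilpotence via Proposition \ref{.113}: a finite-dimensional representation is nilpotent if and only if it factors through $\fk{n}_\m{Fil}E$ for some exhaustive filtration $\m{Fil}$. Since a subquotient of $E$ is built from a subrepresentation and a quotient representation, it suffices to treat those two cases separately and then observe that subquotients are obtained by composing the two operations (and the composite of two nilpotent-producing operations is again nilpotent by iteration).

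First I would handle the quotient case. Given a subrepresentation $E' \subset E$ with quotient $E'' = E/E'$, and $r$ nilpotent, use Proposition \ref{.113}(ii $\Rightarrow$ iii) (or just \ref{.1114}) to get that $r$ factors through $\fk{n}_{\m{Fil}^r}E$ with $\m{Fil}^r$ exhaustive. Push $\m{Fil}^r$ forward to $E''$: let $\m{Fil}_i E'' $ be the image of $\m{Fil}^r_i E$ under the projection $\pi: E \to E''$. This is still exhaustive since $\pi$ is surjective, and the computation $(\bar r v)(\pi e) = \pi((rv)e)$, together with $(rv)e \in \m{Fil}^r_{i-1}E$ for $e \in \m{Fil}^r_i E$, shows $\bar r$ factors through $\fk{n}_\m{Fil}E''$; hence $\bar r$ is nilpotent by \ref{.113}(iii $\Rightarrow$ i). This is essentially the argument already used inside the proof of \ref{.113} for the step $r^1$, so it should go through verbatim.

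Next the subrepresentation case: with $r$ nilpotent and $E' \subset E$ a subrepresentation, restrict $\m{Fil}^r$ to $E'$ by setting $\m{Fil}_i E' = \m{Fil}^r_i E \cap E'$. Again this is exhaustive (as $\bigcup_i \m{Fil}^r_i E = E \supseteq E'$), and since $(rv)$ carries $\m{Fil}^r_i E$ into $\m{Fil}^r_{i-1}E$ and preserves $E'$, it carries $\m{Fil}_i E'$ into $\m{Fil}_{i-1}E'$; so $r|_{E'}$ factors through $\fk{n}_\m{Fil}E'$ and is nilpotent by \ref{.113}. Finally, for a general subquotient $\m{Fil}^r_i\cl{E}/\m{Fil}^r_j\cl{E}$ — or indeed any subquotient $E'/E''$ with $E'' \subset E' \subset E$ subrepresentations — apply the subrepresentation case to pass from $E$ to $E'$, then the quotient case to pass from $E'$ to $E'/E''$; both steps preserve nilpotence, so the subquotient is nilpotent.

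I do not expect a genuine obstacle here: the whole corollary is a bookkeeping consequence of Proposition \ref{.113} plus the two elementary observations that intersecting an exhaustive filtration with a submodule stays exhaustive and that the image of an exhaustive filtration under a surjection stays exhaustive. The only point requiring the slightest care is making sure the filtration pushed to or pulled back from the subquotient is still exhaustive and still compatible with the action in the sense of \ref{.015}; both are immediate from surjectivity/inclusion and from the fact that $(rv)$ already lowers the $\m{Fil}^r$-degree on all of $E$. One could alternatively give a direct recursive proof using the definition \ref{.11} and Remark \ref{.1113}, but routing through \ref{.113} is cleaner and avoids an induction.
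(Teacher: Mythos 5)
Your proposal is correct and follows essentially the same route as the paper: reduce to the two cases of subrepresentations and quotients, transport the exhaustive filtration via intersection (equivalently, preimage) or image, and invoke Proposition \ref{.113}(iii $\Rightarrow$ i). The only cosmetic difference is that the paper writes the subrepresentation filtration as $\iota^{-1}\m{Fil}_i E$ rather than $\m{Fil}_i E \cap E'$, which is the same thing.
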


\begin{proof}
It is sufficient to consider subrepresentations and quotient representations separately. Suppose $r:\fk{g} \to \End E$ is nilpotent and let $\m{Fil}$ be an exhaustive filtration such that $r$ factors through $\fk{n}_\m{Fil}E$ as in \ref{.113} (iii). If $\iota: E' \hookrightarrow E$ is a subrepresentation, define $\m{Fil}'$ by
$$\m{Fil}'_iE' := \iota^{-1}\m{Fil}_iE$$
and if $\pi:E \twoheadrightarrow E'$ is a quotient representation, define $\m{Fil}'$ by
$$\m{Fil}'_iE' := \pi(\m{Fil}_iE)$$
as in the proof of (iii $\Rightarrow$ ii) in \ref{.113}. Either way, $r':\fk{g} \to \End E'$ factors through $\fk{n}_{\m{Fil}'}E'$: in the case of a quotient representation this was verified in the proof of (iii $\Rightarrow$ ii) in \ref{.113}; the case of a subrepresentation is similar.
\end{proof}

\begin{dfn} \label{.115}
If $r:\fk{g} \to \End E$ is a nilpotent representation and $\m{Fil}$ is an exhaustive filtration such that $r$ factors through $\fk{n}_\m{Fil}E$, we say that $r$ is \textbf{nilpotent with respect to} $\m{Fil}$. Note that $r$ is nilpotent with respect to $\m{Fil}$ if and only if $\m{Fil}^r$ is subordinate to $\m{Fil}$.
\end{dfn}


\section{Flag representations}
\label{Flag}

We continue to work with a finitely generated Lie algebra $\g$ over a field $k$.

\begin{dfn} \label{.116}
A nilpotent representation $r:\fk{g} \rightarrow \End{E}$ is \textbf{flag} if $\m{Fil}^r$ is a (full) flag. 
\end{dfn}

We begin our study of flag representations with a characterization in coordinates.

\begin{dfn} \label{.11633}
Given a representation $r:\fk{g} \to \fk{gl}_n$ of $\fk{g}$ on $k^n$, we denote the composite
$$\fk{g} \to \fk{gl}_n \to k$$
of $r$ with the $(i,j)^\m{th}$ standard projection of $\fk{gl}_n$ by $\la^r_{i,j}$ and call it \textbf{the full} $(i,j)^\m{th}$ \textbf{matrix entry of} $r$. When there is no risk of confusion, we drop the $r$ from the notation.
\end{dfn}

\begin{prop} \label{.11634}
If $r:\fk{g}\to\fk{n}_n$ is a representation on $k^n$, nilpotent with respect to the standard flag, then $r$ is flag if and only if in the notation of \ref{.11633} $\la_{i,i+1}$ is surjective for each $i=1,\dots,n-1$.
\end{prop}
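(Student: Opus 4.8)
The plan is to argue by induction on $n$, peeling off the bottom step $\langle e_1\rangle$ of the standard flag. It is convenient to prove a slightly strengthened assertion: for every representation $r\colon\fk{g}\to\fk{n}_n$ on $k^n$ (any such is automatically nilpotent with respect to the standard flag, hence nilpotent by \ref{.113}), the three conditions ``$r$ is flag'', ``$\la_{i,i+1}$ is surjective for $i=1,\dots,n-1$'', and ``$\m{Fil}^r$ is exactly the standard flag of $k^n$'' are equivalent. For $n=1$ there is nothing to prove, since $\fk{n}_1=0$ forces $r=0$, the condition on superdiagonal entries is vacuous, and $\m{Fil}^r=(0\subsetneq k)$. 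So suppose $n\ge 2$ and the assertion holds in dimension $n-1$.

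The crux is the analysis of $\m{Fil}^r_1=E^0$. Since $r$ takes values in $\fk{n}_n$, each $r(v)$ is strictly upper triangular, so $r(v)e_j=\sum_{\ell<j}\la_{\ell,j}(v)e_\ell$; in particular $e_1\in E^0$. I would then extract two facts. First: if every $\la_{i,i+1}$ is surjective, then $E^0=\langle e_1\rangle$. Indeed, if $e=\sum_{j\le m}c_je_j$ lies in $E^0$ with $c_m\ne 0$ and $m$ maximal, then the coefficient of $e_{m-1}$ in $r(v)e$ equals $c_m\la_{m-1,m}(v)$, which must vanish identically in $v$; since $c_m\ne 0$ and $k$ is a field this forces $\la_{m-1,m}=0$, contradicting surjectivity unless $m=1$. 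Second: if $\m{Fil}^r_1=\langle e_1\rangle$ — which is automatic when $r$ is flag, as then $\dim\m{Fil}^r_1=1$ while $e_1\in\m{Fil}^r_1$ — then $\la_{1,2}$ is surjective, for otherwise $r(v)e_2=\la_{1,2}(v)e_1=0$ and $e_2$ would lie in $E^0$.

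Under either hypothesis of the proposition we have now obtained $\m{Fil}^r_1=\langle e_1\rangle$, so I would pass to the quotient representation $r^1$ of $\fk{g}$ on $E/\langle e_1\rangle$. In the basis $\bar e_2,\dots,\bar e_n$ this again takes values in $\fk{n}_{n-1}$, its superdiagonal matrix entries are exactly $\la_{2,3},\dots,\la_{n-1,n}$, and Remark \ref{.1113} (applied with $i=1$, $j=n$) gives $\m{Fil}^{r^1}_k=\m{Fil}^r_{k+1}/\m{Fil}^r_1$ for all $k$. The induction then closes in both directions. If all $\la_{i,i+1}$ are surjective then so are those of $r^1$, hence $\m{Fil}^{r^1}$ is the standard flag of $E/\langle e_1\rangle$ by the inductive hypothesis, and pulling back along $E\to E/\langle e_1\rangle$ shows $\m{Fil}^r$ is the standard flag, so $r$ is flag. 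Conversely, if $r$ is flag then $\dim\m{Fil}^{r^1}_k=\dim\m{Fil}^r_{k+1}-1=k$, so $r^1$ is flag, hence $\la_{2,3},\dots,\la_{n-1,n}$ are surjective by induction, and together with the surjectivity of $\la_{1,2}$ already established, all the superdiagonal entries of $r$ are surjective.

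The one step that is not routine bookkeeping — and hence the main obstacle — is the first fact in the second paragraph: it is the surjectivity of \emph{all} the superdiagonal entries, not just of $\la_{1,2}$, that collapses $E^0$ to a line, and the argument must bring the entry $\la_{m-1,m}$ into play through the top nonzero coordinate of a hypothetical extra eigenvector. That $\la_{1,2}$ alone does not suffice is already visible for $n=3$: a representation with $\la_{1,2}=\la_{1,3}$ surjective and $\la_{2,3}=0$ has a two-dimensional $0$-eigenspace. Relatedly, one should not expect a non-inductive shortcut, because without one of the two hypotheses $\m{Fil}^r$ need not even be comparable in the naive sense to the standard flag; the induction is what straightens it out. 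Everything else reduces to unwinding the recursive definition of $\m{Fil}^r$ together with Remark \ref{.1113}.
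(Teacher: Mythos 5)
Your proof is correct and takes essentially the same route as the paper: the computation pinning down $E^0$ via surjectivity of the superdiagonal entries (your Fact 1) is exactly paragraph \ref{.116341}, and your induction on $n$ peeling off $r^1$ mirrors the paper's induction along the filtration in \ref{.11635}. The only cosmetic difference is that you route the converse through the same induction via your Fact 2, whereas the paper (\ref{.11636}) gives a direct contradiction argument at the subquotient $r^{i-1}$ where $\la_{i,i+1}$ fails; note also that your ``strengthened'' third condition is actually automatic given the first, since $\m{Fil}^r$ being a full flag subordinate to the standard flag forces equality.
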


The proof follows (\ref{.116341} -- \ref{.11636}).

\begin{unmarked} \label{.116341}
Let $s: \fk{g} \to \fk{n}_m$ be a representation on $k^m$, nilpotent with respect to the standard flag, suppose $\la^s_{i,i+1}$ is surjective for $i=1,\dots,m-1$ and let $x=(x_1,\dots,x_n)$ be an element of the $0$-eigenspace of $s$. We claim that $x_l=0$ for $l=2,\dots,m$. For $v\in\fk{g}$ and $i=1,\dots,m$ we have 
\begin{equation}
((rv)x)_i=\sum_{i<j\le m}(\la^s_{i,j}v)x_j \label{.116342}
\end{equation}
This family of equations specializes to
\begin{equation}
(\la^s_{m-1,m}v)x_m=0 \label{.116343}
\end{equation}
when $i=m-1$. Since $\la^s_{m-1,m}$ is surjective there exists a $v\in\fk{g}$ such that $\la^s_{m-1,m}v=1$; plugging in to \ref{.116343} produces $x_m=0$, which is the base case for a descending induction on $l$. Suppose $x_j=0$ for $j=l+1,\dots,m$. Setting $i$ equal to $l-1$ in \ref{.116342} then produces $(\la^s_{l-1,l}v)x_l=0$; plugging in a $v\in\fk{g}$ such that $\la^s_{l-1,l}v=1$ yields $x_l=0$ and the claim follows. We conclude that the $0$-eigenspace of $s$ is equal to the first step in the standard flag.
\end{unmarked}

\begin{unmarked} \label{.11635}
\setcounter{equation}{0}
Returning to the notation of the proposition, suppose $\la^r_{i,i+1}$ is surjective for $i=1,\dots,n-1$. We note first that $\m{Fil}^r_0 k^n =0$ agrees with step zero of the standard flag $\m{Fil}^\m{st}$. Fix a positive integer $l$ and assume for an induction on $l$ that $\m{Fil}^r_l k^n =\m{Fil}^\m{st}_l k^n$. Then for $i=1, \dots, n-l$, $\la^{r^l}_{i,i+1}$ is surjective. Hence paragraph \ref{.116341} applies with $s:=r^l$ to conclude that $(k^{n-l})^0 = \m{Fil}^\m{st}_1 k^{n-l}$ from which it follows that $\m{Fil}^r_{l+1} k^n =\m{Fil}^\m{st}_{l+1} k^n$, as hoped.
\end{unmarked}

\begin{unmarked} \label{.11636}
\setcounter{equation}{0}
For the converse suppose $\la_{i,i+1}=0$ and assume for a contradiction that $r$ is nevertheless flag. Since $r$ is nilpotent with respect to the standard flag, $\m{Fil}^r$ is subordinate to the standard flag. Since $\m{Fil}^r$ is a full flag, it follows that $\m{Fil}^r$ is equal to the standard flag. Then $k^n/\m{Fil}^r_{i-1}k^n = k^n/k^{i-1}$ is the quotient of $k^n$ by the $(i-1)^\m{st}$ term of the standard flag. The resulting representation $r^{i-1}:\fk{g}\to\fk{n}_{n-i+1}$ has $\la_{12}^{r^{i-1}}=0$. It follows that the $0$-eigenspace of $r^{i-1}$ contains step two of the standard flag, hence that $\m{Fil}^r_{i-1}k^n$ has codimension no less than two in $\m{Fil}^r_ik^n$, a contradiction, which concludes the proof.
\end{unmarked}

\begin{prop} \label{.431}
Let $r:\fk{g}\to\End E$ be a nilpotent representation of $\g$ on a vector space $E$. If $r$ is flag then every subquotient is of the form $r^i_j$ for some $0 \le i \le j$. Moreover, every such subquotient is itself flag.
\end{prop}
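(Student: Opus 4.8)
The plan is to analyze what a subquotient of a flag representation looks like, exploiting the fact that being a flag means $\m{Fil}^r$ is a complete flag $0 = \m{Fil}^r_0 E \subsetneq \m{Fil}^r_1 E \subsetneq \cdots \subsetneq \m{Fil}^r_n E = E$ with $\dim \m{Fil}^r_i E = i$. A subquotient is $E'/E''$ for representations $E'' \subset E' \subset E$. First I would reduce, as in the proof of Corollary \ref{.114}, to handling subrepresentations and quotient representations separately: a general subquotient is a subrepresentation of a quotient (or vice versa), so it suffices to show that a subrepresentation of a flag representation is of the form $r_j$ and a quotient is of the form $r^i$, and that these are again flag; composing gives the general statement $r^i_j$.

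The key step is the following: if $E' \subset E$ is a subrepresentation of the flag representation $r$, then $E'$ must coincide with one of the terms $\m{Fil}^r_j E$ of the associated flag. To see this I would use Lemma \ref{.1114}, which says $r$ factors through $\fk{n}_{\m{Fil}^r} E$, i.e. every $rv$ strictly decreases the filtration degree. Hence on the subrepresentation $E'$, the operators $rv$ are nilpotent with respect to the induced filtration $\m{Fil}'_i E' := E' \cap \m{Fil}^r_i E$, which is exhaustive; by Proposition \ref{.113} this induced filtration refines (is subordinate to) the canonical filtration $\m{Fil}^{r'}$ of the subrepresentation, but one also checks the reverse — since each graded piece $\gr_i \m{Fil}^r E$ is one-dimensional with trivial $\g$-action (the action lands in lower degree), the $0$-eigenspace of a quotient like $E'/\m{Fil}'_j E'$ is computed directly, forcing $\m{Fil}^{r'} = \m{Fil}'$. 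The crucial numerical point is that $\gr_i \m{Fil}' E'$ is a subspace of the one-dimensional $\gr_i \m{Fil}^r E$, hence has dimension $0$ or $1$; combined with exhaustiveness and the fact that $\dim E' = \sum_i \dim \gr_i \m{Fil}' E'$, this shows that once the dimensions are tallied we cannot have a jump of size $\ge 2$ anywhere — so in fact $E'$ equals $\m{Fil}^r_j E$ for $j = \dim E'$ is too strong in general; rather, the correct statement is that $E'$ sits between consecutive filtration terms in a way that, because $\m{Fil}^{r'}$ is the canonical flag of $r'$ and each $\gr$ is $\le 1$-dimensional, makes $r'$ automatically flag. I would state it cleanly as: $\m{Fil}^{r'}_i E' = E' \cap \m{Fil}^r_{?} E$, and since each successive quotient is at most one-dimensional, $r'$ is flag; moreover $E' = \m{Fil}^r_j E$ follows because every $\g$-stable subspace of a flag representation whose induced filtration has one-dimensional graded pieces must be a term of the flag (use that $\m{Fil}^r$ is the \emph{unique} maximal step-by-step construction via $0$-eigenspaces).

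For the quotient case I would argue dually: if $\pi: E \twoheadrightarrow E'$ is a quotient representation with kernel $K$, then by the subrepresentation case $K = \m{Fil}^r_i E$ for $i = \dim K$, and then the formula already recorded in Remark \ref{.1113} (with $j = n$), namely $\m{Fil}^{r/r_i}_k(E/\m{Fil}^r_i E) = \m{Fil}^r_{i+k} E / \m{Fil}^r_i E$, shows that the associated filtration of the quotient is again a full flag, so $r^i = r/r_i$ is flag. Finally, for a general subquotient $\m{Fil}^r_i E \subset E' \subset \m{Fil}^r_j E$ I would apply the subrepresentation analysis inside the flag representation $r_j$ and the quotient analysis, yielding $E' = \m{Fil}^r_{i'} E$ for some $i \le i' \le j$ and identifying the subquotient with $r^i_{i'}$, which is flag by Remark \ref{.1113} again.

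The main obstacle I anticipate is making precise and rigorous the claim that a $\g$-stable subspace of a flag representation is forced to be one of the flag's terms. Naively a subrepresentation need not be a filtration term for an arbitrary nilpotent representation; what saves us is the flag hypothesis (one-dimensional graded pieces) together with the recursive $0$-eigenspace characterization of $\m{Fil}^r$. I expect the cleanest route is induction on $\dim E$: the first term $\m{Fil}^r_1 E = E^0$ is contained in every nonzero subrepresentation's $0$-eigenspace, so either $E' \subset$ ... — more carefully, $E' \cap E^0 \ne 0$ when $E' \ne 0$ (since $E'$ is itself a nonzero nilpotent representation, it has nonzero $0$-eigenspace, which lies in $E^0$ because $E^0$ is exactly the kernel of the whole action restricted appropriately) forces $E' \supset E^0 = \m{Fil}^r_1 E$ once we know $\gr_1$ is one-dimensional and $E'\cap E^0$ is a nonzero subspace of it; then pass to $E/E^0$, apply the inductive hypothesis to $E'/E^0 \subset E/E^0$ and pull back. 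I would write the induction carefully here, as this is where the flag condition is genuinely used.
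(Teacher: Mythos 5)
Your final paragraph gives a correct proof, by a route different from the paper's. You show $E_1 \subset E'$ (since the $0$-eigenspace of $E'$ is a nonzero subspace of the one-dimensional $E^0$) and then induct on $\dim E$ by passing to $E/E_1$, which is flag by Remark \ref{.1113}. The paper instead isolates a descent lemma (its Lemma \ref{.43}): for $x \in E_l \setminus E_{l-1}$ there exists $v$ with $(rv)x \in E_{l-1} \setminus E_{l-2}$. Iterating from any nonzero $x \in E'$ produces a basis of $E_l$ lying in $E'$, so $E_l \subset E'$; taking $l$ maximal gives $E' = E_l$. The two arguments rest on the same observation (the $0$-eigenspace characterization of the flag forces one-dimensional steps) but organize it dually: you go \emph{up} from the bottom of the flag by induction on dimension, the paper goes \emph{down} from a chosen vector. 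Either works; the paper's version is marginally cleaner because it packages the descent step as a reusable one-line lemma and avoids the explicit induction on $\dim E$. The long middle of your proposal --- the aborted attempt via counting graded pieces of $E' \cap E_i$ --- should simply be cut: it is a detour you correctly abandon before arriving at the real argument, and in any case can only be rescued by the very same $0$-eigenspace observation that drives your final induction.
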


The proof follows (\ref{.43} -- \ref{.4312}).

\begin{lm} \label{.43}
If $r: \fk{g} \rightarrow \m{End} \; E$ is a flag representation and $x$ is an element of $E_l$ not in $E_{l-1}$ then there exist a $v \in \fk{g}$ such that $(rv)x$ is an element of $E_{l-1}$ not in $E_{l-2}$.
\end{lm}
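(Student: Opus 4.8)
The plan is to exhibit an element $v \in \fk{g}$ whose action carries $x$ into $E_{l-1}$ but not into $E_{l-2}$, by working inside the quotient representation $r^{l-2} : \fk{g} \to \iEnd(E/E_{l-2})$. The key reduction is that since $r$ is flag, $E_{l-1}/E_{l-2}$ is one-dimensional and $E_l/E_{l-2}$ is two-dimensional, so I am really just analyzing a flag representation of dimension two; the content is to show that the nonzero class $\bar x$ of $x$ in $E_l/E_{l-2}$ is moved by some $v$ to a \emph{nonzero} element of $E_{l-1}/E_{l-2}$, rather than being annihilated.

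First I would pass to the quotient representation $s := r^{l-2}$ on $E/E_{l-2}$; by Proposition \ref{.431}'s companion (or directly by Remark \ref{.1113} together with the fact that a quotient of a flag representation has exhaustive associated filtration that is still a flag), $s$ is flag with $\m{Fil}^s_k = E_{k + l - 2}/E_{l-2}$. So it suffices to treat the case $l = 2$: $r$ flag, $x \in E_2 \setminus E_1$, and I want $v$ with $(rv)x \in E_1 \setminus E_0 = E_1 \setminus 0$. Now choose a basis adapted to the flag so that $r$ lands in $\fk{n}_n$ nilpotent with respect to the standard flag; by Proposition \ref{.11634}, flag-ness means each $\la_{i,i+1}$ is surjective, in particular $\la_{1,2}$ is surjective, so there is $v \in \fk{g}$ with $\la_{1,2}(v) = 1$. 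Writing $x = (x_1, x_2, 0, \dots, 0)$ with $x_2 \neq 0$ (since $x \notin E_1$, i.e. $x \notin k\cdot e_1$ in the $l=2$ reduction), equation \ref{.116342} gives $((rv)x)_1 = \la_{1,2}(v) x_2 = x_2 \neq 0$, while $((rv)x)_i = 0$ for $i \ge 2$ because $x_j = 0$ for $j \ge 3$ and $r v \in \fk{n}_n$. Hence $(rv)x \in E_1 \setminus E_0$, which unwinds to the desired statement back in the original $E$.

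The only genuine subtlety — and the step I expect to require the most care — is making the reduction to $l = 2$ clean, i.e. confirming that the quotient representation $r^{l-2}$ on $E/E_{l-2}$ is again flag with the correct filtration $\m{Fil}^{r^{l-2}}_k = E_{k+l-2}/E_{l-2}$. This is exactly the kind of statement being proven in Proposition \ref{.431}, so to avoid a circular dependence I would instead argue more directly: choose, once and for all, a basis of $E$ adapted to $\m{Fil}^r$, so that $r$ factors through $\fk{n}_n$ and is flag in the sense of Proposition \ref{.11634}, with $E_i = \m{span}(e_1, \dots, e_i)$. Then given $x \in E_l \setminus E_{l-1}$, write $x = \sum_{j \le l} x_j e_j$ with $x_l \neq 0$; by surjectivity of $\la_{l-1,l}$ pick $v$ with $\la_{l-1,l}(v) = 1$. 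From \ref{.116342}, $((rv)x)_{l-1} = \sum_{l-1 < j \le n} \la_{l-1,j}(v) x_j = \la_{l-1,l}(v) x_l = x_l \neq 0$ (all higher $x_j$ vanish), and $((rv)x)_i = 0$ for $i \ge l$ since $x_j = 0$ for $j > l$ and $rv$ is strictly upper triangular. Therefore $(rv)x \in E_{l-1}$ and its $(l-1)$-st coordinate is nonzero, so $(rv)x \notin E_{l-2}$, as required. This keeps the whole argument within Proposition \ref{.11634} and the coordinate formula \ref{.116342}, with no appeal to the very proposition this lemma is meant to feed into.
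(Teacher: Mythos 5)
Your argument is correct, and you were right to flag and sidestep the potential circularity: passing through the quotient representation $r^{l-2}$ and appealing to the claim that it is again flag with shifted filtration would indeed lean on the very proposition (\ref{.431}) this lemma is meant to feed. Your final coordinate computation via Proposition \ref{.11634} and formula \ref{.116342} works: after fixing an adapted basis, surjectivity of $\la_{l-1,l}$ produces a $v$ with $((rv)x)_{l-1} = x_l \neq 0$ and $((rv)x)_i = 0$ for $i \ge l$, placing $(rv)x$ in $E_{l-1} \setminus E_{l-2}$.

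The paper's proof, however, is considerably more economical and is worth comparing. It never chooses coordinates and never invokes \ref{.11634}; it uses only two facts, both definitional: (1) by Lemma \ref{.1114}, $(rv)x \in E_{l-1}$ for \emph{every} $v$; and (2) by the definition of $\m{Fil}^r$ (Definition \ref{.111}), $E_{l-1} = \tau_{l-2}^{-1}\bigl((E/E_{l-2})^0\bigr)$, so if $(rv)x \in E_{l-2}$ for all $v$, then $\tau_{l-2}(x)$ is killed by every $r^{l-2}(v)$, whence $x \in E_{l-1}$ — contradicting the hypothesis. Taking the contrapositive yields the lemma in two lines, with no mention of bases or surjectivity of $\la_{l-1,l}$, and indeed without even using that $\m{Fil}^r$ is a \emph{full} flag (the statement as given does assume $r$ flag, but this proof only really uses that the filtration is exhaustive enough to put $x$ in some $E_l$). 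Your computation buys concreteness and makes the role of $\la_{l-1,l}$ visible; the paper's argument buys brevity and coordinate-independence, and shows the lemma is really just the definition of the filtration read backwards.
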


\begin{proof}
Indeed, every $v \in \fk{g}$ satisfies $(rv)x \in E_{l-1}$ by definition; if, moreover, every $v\in\fk{g}$ satisfies $(rv)x \in E_{l-2}$  then $x \in E_{l-1}$ by definition.
\end{proof}

\begin{unmarked} \label{.4311}
We consider subrepresentations first. Fixing a subrepresentation $E'$, and an element $x\in E'$, it is enough to assume $x \in E_l \setminus E_{l-1}$ and show $E' \supset E_l$. By \ref{.43}, there exists an element $x_i \in (E_i \setminus E_{i-1}) \cap E'$ for all $i \le l$. The sequence of elements $\{x_i\}_i$ then forms a basis for $E_l$.
\end{unmarked}

\begin{unmarked} \label{.4312}
Since every subrepresentation of a flag representation $r$ is of the form $r_j$, every quotient representation is of the form $r^i$. It follows from \ref{.1113} that every subrepresentation is itself flag and hence that every subquotient is of the form $r^i_j$. Finally, it follows from the same paragraph that each such subquotient is again flag.
\end{unmarked} 

\begin{rmk} \label{.116333}
Let $r:\fk{g} \to \fk{n}_n$ be a flag representation of $\fk{g}$ on $k^n$, nilpotent with respect to the standard flag. We note the formula
\[ \la^{ r^l_m }_{i,j} = \la^r_{l+1,l+j} \]
for any $0 \le l \le m \le n$ and $1 \le i < j \le m-l$.
\end{rmk}

\begin{dfn} \label{.42}
We denote the $s^\m{th}$ graded piece $\fk{g}^{(s)}/\fk{g}^{(s+1)}$ of the descending central series of $\fk{g}$ by $\fk{h}_s$. Let $r: \fk{g} \rightarrow \m{End}(E)$ be a flag representation of $\fk{g}$ on a vector space $E$ of dimension $n$. We denote by $L^r_i$ the $i^\m{th}$ line $\m{gr}_i^{\m{Fil}^r}(E)$ associated to $r$. When there is no risk of confusion, we drop the $r$ from the notation.

For each $1\le i < j \le n $, $r$ defines a map
$$\ka_{i,j}^r:\fk{h}_{j-i} \rightarrow \m{Hom}(L_j, L_i)$$
which we call \textbf{the canonical} $(i,j)^\m{th}$ \textbf{matrix entry of} $r$. When there is no risk of confusion, we drop the $r$ from the notation.
\end{dfn}

\begin{unmarked} \label{.4222}
Continuing with the situation and the notation of \ref{.42}, we note that the canonical $(i,j)^\m{th}$ matrix entry of $r$ is related to the full $(i,j)^\m{th}$ matrix entry of $r$ as follows: the choice of a filtered isomorphism $\phi: E \rightarrow k^n$ gives rise to a trivialization of the line $\m{Hom}(L_j,L_i)$ as well as an extension of the canonical $(i,j)^\m{th}$ matrix entry from $\fk{g}^{(j-i)}$ to all of $\fk{g}$; together, these produce the full $(i,j)^\m{th}$ matrix entry, as shown in the following diagram.
\[\xymatrix{
\fk{g} \ar[r] \ar `u[r] `[rr]+(7,0) `[rrddd]^{\la_{i,j}}   [rrddd] 							& 
\fk{n}_\m{Fil}(E) \ar[r]^\cong 								& 
\fk{n}_n 												\\
\fk{g}^{(s)} \ar[r] \ar@{^{(}->}[u] \ar@{->>}[d] 					& 
\fk{n}_\m{Fil}^{(s)}(E) \ar[r]^\cong \ar@{^{(}->}[u] \ar@{->>}[d] 		& 
\fk{n}^{(s)}_n \ar@{^{(}->}[u] \ar@{->>}[d]						\\
\fk{h}_s \ar[r] \ar[dr]_{\ka_{i,j}} 						& 
\fk{n}^{(s)}_\m{Fil}(E) / \fk{n}^{(s+1)}_\m{Fil}(E) \ar[r]^-\cong \ar[d] 	& 
k^{n-s} \ar[d] 											\\
													& 
\m{Hom}(L_j, L_i) \ar[r]^-\cong								& 
k													}
\]
\end{unmarked}

\begin{unmarked} \label{.4231}
Continuing with the situation and the notation of \ref{.42}, we note that there is a canonical isomorphism
\[ L^{ r^l_m }_k = L^r_{l+k}\]
and a corresponding equality (through the above isomorphism)
\[ \ka^{ r^l_m }_{i,j} = \ka^r_{l+i,l+j} \]
for any $0 \le l \le m \le n$ and $1 \le i < j \le m-l$.
\end{unmarked}

We now explain how the canonical matrix entries of a flag representation interact with the Lie bracket (\ref{.4233} -- \ref{.44}).

\begin{unmarked} \label{.4233}
For each $s\ge1$, we denote the projection $\fk{g}^{(s)}\twoheadrightarrow\fk{h}_s$ by $\sigma_s^\m{dcs}$. For $s,t \ge1$, the bracket in $\fk{g}$ restricts to a bilinear map
$$\fk{g}^{(s)}\times\fk{g}^{(t)}\to\fk{g}^{(s+t)} \, ;$$
its composite with $\sid_{s+t}$ factors through a bilinear map
$$\fk{h}_s \times \fk{h}_t \to \fk{h}_{s+t}$$
which we denote by $(u,v) \mapsto \{u,v\}$. (Although we have no use for this in the sequel, we remark that thus defined, $\{\cdot,\cdot\}$ endows $\bigoplus_s\fk{h}_s$ with the structure of a split-nilpotent Lie algebra, that is, a graded nilpotent Lie algebra whose grading splits the descending central series.)
\end{unmarked}

\begin{unmarked} \label{.44}
Given a flag representation $r:\fk{g}\to\End E$ and integers $1 \le i < j < k \le n$ there are three natural maps
$$\fk{h}_{j-i} \otimes \fk{h}_{k-j} \rightarrow \m{Hom}(L_k, L_i)$$
defined by the bracket in $\fk{g}$ and by composing the canonical matrix entries of $r$ in either order:
\[\small\xymatrix @C=-16mm @R=16mm {
&
\fk{h}_{j-i} \otimes \fk{h}_{k-j}= \fk{h}_{k-j} \otimes \fk{h}_{j-i}  \POS p+(18,-3) \ar[dr] | {\ka_{i, k-j+i} \otimes \ka_{k-j+i, k}} \POS p+(-18,-3) \ar   [dl] | {\ka_{i,j} \otimes \ka_{j,k}} \POS p+(-9, -3) \ar@{->>} [d] | { \{ \cdot , \cdot \} }   &
 \\
\m{Hom}(L_j, L_i) \otimes \m{Hom}(L_k, L_j) \ar[dr]_\circ &
\fk{h}_{k-i} \ar[d]|{\ka_{i,k}} &
\m{Hom}(L_{k-j+i}, L_i) \otimes \m{Hom}(L_k, L_{k-j+i}) \ar[dl]^{\circ'} \\
 &
\m{Hom}(L_k, L_i) &
\\
}\]
They are related as follows:
\[\ka_{i,k} \cdot \{\cdot,\cdot\}=\circ\cdot\ka_{i,j}\otimes\ka_{j,k}-\circ'\cdot\ka_{i,k-j+i}\otimes\ka_{k-j+i,k}\]

This is purely formal. For $0\le i \le n = \dim E$, we denote by $\sir_i$ the projection $E_i \twoheadrightarrow L_i$. In order to simplify notation, for $v\in\fk{g}$ and $x\in E$ we abbreviate $(rv)x$ by $vx$. To verify the equality, we fix $u\in\fk{g}^{(j-i)}, v\in \fk{g}^{(k-j)}$ and $x\in E_k$, we evaluate both side of the equation on the generator $(\sid_{j-i}u)\otimes(\sid_{k-j}v)$ of $\fk{h}_{j-i}\otimes\fk{h}_{k-j}$, we evaluate the resulting maps $L_k \to L_i$ on the element $\sir_k x$ of $L_k$ and we compute:
\begin{align*}
\big( \ka_{i,k}\{\sid_{j-i}u, &\sid_{k-j}v\} \big)(\sir_k x )\\
&= \big( \ka_{i,k} \sid_{k-i} [u,v] \big)(\sir_k x)  \\
&= \sir_i (uvx-vux) \\
&= \sir_i(uvx) - \sir_i(vux)  \\
&= \big( \ka_{i,j}(\sid_{j-i} u) \big)(\sir_j vx) - \big(\ka_{i, k-j+i}(\sid_{k-j} v) \big)(\sir_{k-j+i} ux)  \\
&= \Big( \big( \ka_{i,j}(\sid_{j-i} u) \big)  \big( \ka_{j,k} ( \sid_{k-j} v ) \big)\\
& \qquad\qquad - \big(\ka_{i, k-j+i}(\sid_{k-j} v) \big) \big( ( \ka_{k-j+i,k} ( \sid_{j-i} u ) \big) \Big)  ( \sir_k x ) 
\end{align*}
concluding that both sides of the equation do indeed produce the same result.
\end{unmarked}

\begin{dfn} \label{.4433}
If $r: \fk{g} \rightarrow \m{End}(E)$ is a flag representation on a vector space $E$ of dimension $n$, then (in the notation of \ref{.42}) each $\ka_{i,j}$ is a surjective map
$$\fk{g}^\m{ab} \rightarrow \m{Hom}(L_{i+1}, L_i) \rightarrow 0 \, ,$$
hence corresponds to a $k$-rational point of $\bb{P}\fk{g}^\m{ab}$. In this way, the vector $(\ka_{1,2},\dots,\ka_{n-1,n})$ may be regarded as a $k$-point of $(\bb{P}\fk{g}^\m{ab})^J$ where $J$ is the index set of pairs $(i,j)$, $1 \le i<j \le n$, such that $j-i =1$; we call it \textbf{the constellation of} $r$ and denote it by $\m{\it{const}}(r)$.
\end{dfn}

\begin{rmk} \label{.4434}
Continuing with the situation and the notation of \ref{.4433}, we note the following consequence of \ref{.4231}: for any $0 \le l \le m \le n$,
\[ \m{ \it{const} }(r^l_m) = (\ka^r_{l+1,l+2},\dots,\ka^r_{m-1,m})  \]
is equal to $\m{\it{const}}(r)$ truncated $l$ times on the left and $n-m$ times on the right.
\end{rmk}

We now discuss how to generalize the above constructions to include families of representations. For this purpose we fix an affine $k$-scheme $T=\spec A$. We write $\fk{g}_A$ for the Lie $A$-algebra $A \otimes \fk{g}$ and $\fk{g}_T$ for the associated Lie $\ssf_T$-algebra.

\begin{dfn} \label{.225}
A representation $r:\fk{g}_T \rightarrow \iEnd{\mathcal{E}}$ is \textbf{flag} if
\begin{itemize}
\item[(i)] $\mathcal{E}$ is a vector sheaf (\ref{.042}),
\item[(ii)] for each $i \ge 0$, $\m{Fil}^r_i\mathcal{E}$ is a vector subsheaf (loc. cit.), and
\item[(iii)] formation of $\m{Fil}^r$ is compatible with base-change.
\end{itemize}
\end{dfn}

\subsection*{Counterexamples (\ref{.34}-\ref{.36})}

We discuss briefly what can go wrong when definition \ref{.225} is weakened. Even when the canonical filtration is a full flag by vector subsheaves, the flag can hide a degeneration (\ref{.34}); and infinitesimally, the filtration can take on a horizontal flavor (\ref{.35}).

\begin{unmarked}\label{.34}
Let $T=\spec k[x]$, $E=k[x]^2$, $\fk{g}=k$, and define $r:k[x]\rightarrow \mathrm{Mat}_{2\times2}(k[x])$ by
$$1\longmapsto \left( \begin{array}{cc} 
0	& x	 \\ 
0  	& 0	 \\ 
\end{array} \right)$$
Then $E^0=\ker \left( \begin{array}{cc} 
0	& x	 \\ 
0  	& 0	 \\ 
\end{array} \right)$, so $\m{Fil}^r$ is the standard flag $0\subset k[x] \subset k[x]\oplus k[x]$. But $r_0$, the fiber of $r$ above the origin, is the trivial representation.
\end{unmarked}

\begin{unmarked} \label{.35}
Let $T=\spec k[\epsilon]/(\epsilon^2)$, $E=k[\epsilon]/(\epsilon^2)$, $\fk{g}=k$, and define $r:k[\epsilon]/(\epsilon^2) \rightarrow k[\epsilon]/(\epsilon^2)$ by
$$1\longmapsto \epsilon$$
Then $\m{Fil}^r$ is given by $0\subset k\epsilon \subset k\oplus k\epsilon = k[\epsilon]/(\epsilon^2)$.
\end{unmarked}

\begin{unmarked}\label{.36}
In \ref{.35}, $\m{Fil}_1$ is not locally free; in \ref{.34}, $\m{Fil}_1$ is locally free and co-locally-free, but its formation is not compatible with base change.
\end{unmarked}

We generalize definition \ref{.11633} in the obvious way:

\begin{dfn} \label{.37}
\setcounter{equation}{0}
Given a representation $r: \fk{g}_T \to \fk{gl}_{n,T}$ of $\fk{g}_T$ on $\ssf_T^n$, we denote the composite
$$\fk{g}_T \to \fk{gl}_{n,T} \to \ssf_T$$
of $r$ with the $(i,j)^\m{th}$ standard projection of $\fk{gl}_{n,T}$ by $\la^r_{i,j}$ and call it \textbf{the full} $(i,j)^\m{th}$ \textbf{matrix entry of} $r$.
\end{dfn}

\begin{prop}\label{.38}
In the notation of \ref{.37}, a representation of the form $r:\fk{g}_T\rightarrow \fk{n}_{n,T}$ is flag if and only if each $\la_{i,i+1}$, $i= 1, \dots, n-1$, is surjective.
\end{prop}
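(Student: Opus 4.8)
The plan is to reduce Proposition \ref{.38} to the fiberwise criterion of Proposition \ref{.11634} by exploiting condition (iii) of Definition \ref{.225}, namely that the formation of $\m{Fil}^r$ commutes with base change. First I would address the forward implication: suppose $r : \fk{g}_T \to \fk{n}_{n,T}$ is flag. Then for every point $t \in T$ the fiber $r_t : \fk{g} \otimes \kappa(t) \to \fk{n}_n$ is a representation on $\kappa(t)^n$, nilpotent with respect to the standard flag, and since $\m{Fil}^r$ is a full flag of vector subsheaves whose formation is compatible with base change, $\m{Fil}^{r_t}$ is a full flag on $\kappa(t)^n$; hence $r_t$ is flag in the sense of \ref{.116}. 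By Proposition \ref{.11634}, $\la^{r_t}_{i,i+1}$ is surjective for each $i = 1, \dots, n-1$, i.e. nonzero as a linear functional $\fk{g} \otimes \kappa(t) \to \kappa(t)$. Since $\la^r_{i,i+1} : \fk{g}_T \to \ssf_T$ is an $\ssf_T$-linear map out of a free sheaf whose reduction mod every maximal (indeed every prime) ideal is surjective, $\la^r_{i,i+1}$ is itself surjective; this is the standard fact that a section of a quasi-coherent sheaf that generates the fiber at every point generates the sheaf, applied here to the cokernel of $\la^r_{i,i+1}$ (Nakayama plus the fact that a finitely generated module vanishing at every point of an affine scheme is zero).

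For the converse, suppose each $\la^r_{i,i+1}$, $i = 1, \dots, n-1$, is surjective. I would argue by descending induction, mirroring paragraphs \ref{.116341}--\ref{.11635} but carried out over $A$ rather than over $k$. The key local computation is the analog of \ref{.116341}: if $s : \fk{g}_T \to \fk{n}_{m,T}$ has $\la^s_{i,i+1}$ surjective for $i = 1, \dots, m-1$, then the $0$-eigensheaf $(\ssf_T^m)^0$ of $s$ equals $\m{Fil}^\m{st}_1 \ssf_T^m$. Surjectivity of $\la^s_{m-1,m}$ means there is an \emph{fppf} (in fact, since $\ssf_T$ is free, a Zariski-local) section $v$ with $\la^s_{m-1,m}(v) = 1$, from which the same chain of equations \ref{.116342}--\ref{.116343} forces the top coordinate of any eigenvector to vanish, and descending induction on the coordinate gives the claim; one must check that $(\ssf_T^m)^0$ is genuinely the subsheaf $\m{Fil}^\m{st}_1$ and that it is a vector subsheaf with base-change-compatible formation, which is immediate since it is a direct summand cut out by the surjections $\la^s_{i,i+1}$. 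Then, exactly as in \ref{.11635}, an induction on $l$ shows $\m{Fil}^r_l \ssf_T^n = \m{Fil}^\m{st}_l \ssf_T^n$ for all $l$, using \ref{.116333} (which holds verbatim over $T$) to see that $\la^{r^l}_{i,i+1}$ remains surjective, and at each stage $\m{Fil}^r_l$ is a vector subsheaf whose formation commutes with base change. This verifies conditions (i)--(iii) of \ref{.225}, so $r$ is flag.

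The main obstacle is essentially bookkeeping rather than mathematical depth: one must be careful that the $0$-eigensheaf construction of \ref{.016}, which is defined sectionwise on the small Zariski site, really does produce a vector subsheaf with base-change-compatible formation under the surjectivity hypothesis — the counterexamples \ref{.34} and \ref{.35} show this can fail without hypotheses — and the cleanest way to see it is to observe that under surjectivity of the $\la_{i,i+1}$ the eigensheaf is a \emph{split} direct summand, not merely a kernel, so all the desired properties are automatic. A secondary point requiring a line of care is the passage from ``surjective on every fiber'' to ``surjective as a map of sheaves'' in the forward direction, but since $T = \spec A$ is affine and $\fk{g}_T$ is free of finite rank this is a one-step Nakayama argument. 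I would organize the write-up as: (a) a lemma promoting the local eigenspace computation of \ref{.116341} to the family setting, phrased as an identification of $(\ssf_T^m)^0$ with the first step of the standard flag as a vector subsheaf with base-change-compatible formation; (b) the induction of \ref{.11635} over $A$ giving the converse; (c) the fiberwise argument plus Nakayama giving the forward direction.
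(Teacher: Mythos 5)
Your proof is correct and takes essentially the same approach as the paper: for the converse, both run the pointwise computation of \ref{.116341}--\ref{.11635} over the base $A$ and invoke base-change compatibility of surjectivity, and for the forward direction both reduce to the fiberwise criterion. The only cosmetic difference is that the paper proves the forward direction by contrapositive --- picking a point $t$ in the zero locus of the (proper) image ideal of a non-surjective $\la_{i,i+1}$ and invoking \ref{.11636} to see $r(t)$ is not flag --- which is the same content as your direct Nakayama argument.
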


\begin{proof}
Suppose each $\la_{i,i+1}$, $i=1,\dots, n-1$, is surjective. Then with the obvious notational modifications, paragraphs \ref{.116341} -- \ref{.11635} apply to show that $\m{Fil}^r$ is the standard flag. Since, moreover, surjectivity is preserved by arbitrary change of base, it follows that $r$ is flag.

For the converse, suppose $\la_{i,i+1}$ not surjective. Then its image is an ideal $\cl{I}$, and for any $t \in Z(\cl{I})$, $\la_{i,i+1}(t)=0$. Then by \ref{.11636} the filtration associated to $r(t)$ is not a full flag, whence $r$ was not flag.
\end{proof}

\begin{rmk} \label{.381}
We note that every flag representation of $\fk{g}_T$ over $T$ is Zariski locally on $T$ isomorphic to a representation on $\ssf_T ^n$, with associated flag equal to the standard flag.
\end{rmk}

We generalize definition \ref{.42} by applying the usual typographical transformations:

\begin{dfn} \label{.3813}
Let $r:\fk{g}_T \to \iEnd \cl{E}$ be a flag representation of $\fk{g}$ on a vector sheaf $\cl{E}$ of rank $n$. We denote by $\cl{L}^r_i$ the $i^\m{th}$ line sheaf $\gr_i ^{\m{Fil}^r} \cl{E}$ associated to $r$. When there is no risk of confusion, we drop the $r$ from the notation.

For each $1 \le i < j \le n$, $r$ defines a map
$$\ka^r_{i,j} : (\fk{h}_{j-i})_T \to \cl{H}om_{\ssf_T} (\cl{L}_j, \cl{L}_i)$$
which we call \textbf{the canonical} $(i,j)^\m{th}$ \textbf{matrix entry of} $r$. When there is no risk of confusion, we drop the $r$ from the notation.
\end{dfn}

In view of definition \ref{.3813}, definition \ref{.4433} may be generalized as follows. 

\begin{dfn} \label{.495}
Let $T$ be a $k$-scheme and let $r: \fk{g}_T \to \iEnd \cl{E}$ be a flag representation of $\fk{g}$ on a vector sheaf $\cl{E}$ of rank $n$. Then the \textbf{constellation of} $r$, denoted $\m{\mathit{const}}(r)$, is the vector $(\ka^r_{1,2},\dots,\ka^r_{n-1,n})$ regarded as a $T$-valued point of $(\bb{P}\fk{g}^\m{ab})^J$ where $J$ is the index set of pairs $(i,j)$, $1 \le i<j \le n$ such that $j-i =1$.
\end{dfn}

\section{Automorphisms of flag representations}
\label{Aut}

We continue to work with a finitely generated Lie algebra $\g$ over a field $k$.

\begin{dfn} \label{.383}
If $\cl{E}$ is a vector sheaf on $T$ and $\m{Fil}$ is a filtration by vector subsheaves, we call \textbf{the group of unipotent automorphism of} $(\cl{E},\m{Fil})$ and denote by $\bb{U}_{\m{Fil}}(\cl{E})$ the $T$-group of automorphisms of $\cl{E}$ which respect $\m{Fil}$ and induce the identity on the associated graded. The group of unipotent automorphisms of $(\cl{E},\m{Fil})$ is a closed subgroup of $\fAut {\cl{E}}$. Its functor of points is defined as follows. Since $\m{Fil}$ is locally split, given $f:T' = \spec A' \rightarrow T$, $\m{Fil}$ pulls back to a filtration $f^*\m{Fil}$ on $f^*\cl{E}$; $\bb{U}_\m{Fil}(\cl{E})(T')$ is the set of automorphisms of $f^*\cl{E}$ which respect $f^*\m{Fil}$ and induce the identity on the associated graded.
\end{dfn}

\Definition{.38303}{ Let $\rgttoiende$ be a flag representation of $\g$ on a vector sheaf $\cl{E}$ over $T$ (\ref{.225}). We define \textbf{the group of unipotent automorphisms of} $r$ and denote by $U(r)$ the subgroup of $\fAut r$ consisting of those automorphisms which are unipotent with respect to $\m{Fil}^r$. }

\begin{prop} \label{.3831}
Let $r:\fk{g}_T \rightarrow \iEnd(\cl{E})$ be a flag representation of $\fk{g}_T$ on a vector sheaf $\cl{E}$ over $T$. Then 
\[\fAut r = {\bb{G}_m}_T \times U(r)\]
\end{prop}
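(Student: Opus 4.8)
The plan is to show that any automorphism of the representation $r$ decomposes uniquely as a scalar times a unipotent automorphism of $r$, functorially in base change, which is exactly the assertion that $\fAut r$ is the internal direct product of ${\bb G_m}_T$ and $U(r)$. The key point is that an automorphism of $r$ is in particular an automorphism of $\cl{E}$ respecting $\m{Fil}^r$ (since $\m{Fil}^r$ is canonically associated to $r$ and its formation commutes with base change by \ref{.225}(iii)), hence induces an automorphism of each line sheaf $\cl{L}_i = \gr_i^{\m{Fil}^r}\cl{E}$, i.e.\ a section of $\bb{G}_m$ on each graded piece. So the first step is to analyze the induced action on $\gr^{\m{Fil}^r}\cl{E}$.

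First I would work Zariski-locally on $T$, using \ref{.381} to assume $\cl{E} = \ssf_T^n$ with $\m{Fil}^r$ the standard flag, so that an automorphism of $r$ is an upper-triangular invertible matrix $\phi$ commuting with $r(v)$ for all $v \in \fk{g}_T$. The diagonal entries $\phi_{11}, \dots, \phi_{nn}$ are units of $\ssf_T$. The crucial claim is that they are all equal: comparing the $(i,i+1)$ entry of $\phi \cdot r(v) = r(v) \cdot \phi$ gives $\phi_{ii}\,\la^r_{i,i+1}(v) = \la^r_{i,i+1}(v)\,\phi_{i+1,i+1}$ for all $v$, and since $r$ is flag, $\la^r_{i,i+1}$ is surjective by \ref{.38}, so $\phi_{ii} = \phi_{i+1,i+1}$. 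Hence all diagonal entries equal a single unit $c \in \ssf_T^\times = \bb{G}_m(T')$, and $c^{-1}\phi$ is an automorphism of $r$ that is unipotent with respect to $\m{Fil}^r$, i.e.\ an element of $U(r)(T')$. Thus $\phi = c \cdot (c^{-1}\phi)$ exhibits the decomposition; uniqueness is clear since the only scalar in $U(r)$ is $1$, and ${\bb G_m}_T$ is central in $\fAut r$ (scalars commute with everything), so the product is direct. This construction is manifestly compatible with base change and with the Zariski gluing, giving the isomorphism of $T$-groups over all of $T$.

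The main obstacle is the claim that the diagonal entries of an automorphism of $r$ coincide; everything else is bookkeeping. This is precisely where the \emph{flag} hypothesis enters in an essential way (without it one only gets a torus of rank equal to the number of distinct "jumps," not $\bb{G}_m$), and the proof runs through the surjectivity criterion \ref{.38} for flagness. One should phrase the argument invariantly enough — using \ref{.116333} / \ref{.4231} for how matrix entries of $r$ restrict to subquotients, or simply the local matrix computation above — that the identification $\fAut r = {\bb G_m}_T \times U(r)$ is visibly natural in $T$, which is what is needed for the later use of this proposition in constructing $n(r)$ and in rigidification.
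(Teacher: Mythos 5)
Your proof is correct and takes essentially the same approach as the paper's: the paper also derives the decomposition by showing that the graded action $(\gr\phi)_i$ is independent of $i$, using the surjectivity of $\ka^r_{i,i+1}$ (the flag condition), and then observes that scalars give a central splitting of $\fAut r \to \bb{G}_m$; you phrase this in local coordinates via $\la^r_{i,i+1}$ and \ref{.38} rather than invariantly via the line bundles $\cl{L}_i$ and \ref{.3813}, but the key step — comparing $(i,i+1)$ entries and invoking surjectivity of the first-superdiagonal matrix entry — is the same.
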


\begin{proof}
Let $n$ denote the rank of $\cl{E}$. We consider a point $\phi'$ of $\fAut r$ with values in an arbitrary $T$-scheme $T'$, adding primes to denote pullback to $T'$. Then for $i = 1, \dots, n-1$, $\ka'_{i,i+1}$ (\ref{.3813}) is surjective. Thus, for any open $U' \subset T'$ and any morphism
$$\cl L'_{i+1}|_{U'} \rightarrow \cl L'_i|_{U'} \, ,$$
$\gr \phi'$ induces a commuting square 
\[\xymatrix		@ C= 2 cm	{
\cl L'_i |_{U'} \ar[r]^-{(\gr \phi')_i |_{U'}}	&
\cl L'_i |_{U'}	\\
\cl L'_{i+1} |_{U'} \ar[r]_-{(\gr \phi')_{i+1} |_{U'}} \ar[u]	&
\cl L'_{i+1} |_{U'} \ar[u]	}
\]
Applying this in particular to a family of local isomorphisms, it follows that $(\gr \phi')_i = (\gr \phi')_{i+1}$. This gives us a short exact sequence
\[ 1 \rightarrow U(r) \rightarrow \bb{U}_\m{Fil}(\cl{E}) \rightarrow {\bb{G}_m}_T \rightarrow 1 \]
Finally, there is a natural splitting making ${\bb{G}_m}_T$ central, whence the product decomposition.
\end{proof}

\begin{dfn} \label{.3832}
We let $n(r)$ denote the Lie $o_T$-algebra (\ref{.045}) defined by
\[ n(r)(T') = \{ \phi \in \fk{n}_{\m{Fil}_{T'}}(E_{T'}) \; | \; r_{T'}(v) \circ \phi = \phi \circ r_{T'}(v) \; \m{for \, all \,} v \in \fk{g}_{T'} \}\]
that is, the set of endomorphisms of $E_{T'}$ nilpotent with respect to $\m{Fil}_{T'}$ and equivariant with the action of $r_{T'}$. We call $n(r)$ \textbf{the Lie algebra of nilpotent infinitesimal automorphisms of} $r$.
\end{dfn}

\begin{claim} \label{.3833}
The functor $n(r)$ is the scheme-theoretic Lie algebra of $U(r)$. (In the notation of \cite{dg}, $n(r) = \fk{Lie} \, U(r)$.)
\end{claim}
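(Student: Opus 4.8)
The plan is to compare $U(r)$ with $n(r)$ via the general principle that the scheme-theoretic Lie algebra of a $T$-group $\mathbb{G}$ represents $T' \mapsto \ker(\mathbb{G}(T'[\epsilon]) \to \mathbb{G}(T'))$, where $T'[\epsilon] = T' \times_k \spec k[\epsilon]/(\epsilon^2)$. So first I would unwind the definition: for an affine $T' = \spec A'$, an element of $\fk{Lie}\, U(r)(T')$ is an automorphism $\psi$ of $E_{T'[\epsilon]} = E_{T'} \otimes_{A'} A'[\epsilon]$, unipotent with respect to $\m{Fil}_{T'[\epsilon]}$ and commuting with $r_{T'[\epsilon]}$, which reduces to the identity modulo $\epsilon$. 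Writing $\psi = \m{id} + \epsilon \phi$ for a unique $A'$-linear endomorphism $\phi$ of $E_{T'}$ (using that $A'[\epsilon]$ is free over $A'$ and that $\m{Fil}$ is locally split so everything is compatible with the base change $T' \to T$), the plan is to show that $\psi$ is a unipotent automorphism respecting the filtration if and only if $\phi \in \fk{n}_{\m{Fil}_{T'}}(E_{T'})$, and that $\psi$ commutes with $r_{T'[\epsilon]}$ if and only if $\phi$ commutes with $r_{T'}$.

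For the first equivalence: since $\epsilon^2 = 0$, $(\m{id} + \epsilon\phi)$ is automatically invertible with inverse $\m{id} - \epsilon\phi$, so the only content is the filtration condition. The automorphism $\psi$ respects $\m{Fil}_{T'[\epsilon]} = \m{Fil}_{T'} \otimes A'[\epsilon]$ and induces the identity on $\gr$ exactly when $\epsilon\phi$ carries $\m{Fil}_i \otimes A'[\epsilon]$ into $\m{Fil}_i \otimes A'[\epsilon]$ and acts as zero on the graded pieces, i.e. when $\phi(\m{Fil}_i E_{T'}) \subset \m{Fil}_i E_{T'}$ and $\phi$ induces $0$ on $\gr^{\m{Fil}}$ — but that last pair of conditions says precisely $\phi(\m{Fil}_i) \subset \m{Fil}_{i-1}$ for all $i$, which is the definition of $\phi$ being nilpotent with respect to $\m{Fil}$ (\ref{.015}). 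For the second equivalence: expanding $\psi \circ r_{T'[\epsilon]}(v) = r_{T'[\epsilon]}(v) \circ \psi$ for $v \in \fk{g}_{T'}$ (note $\fk{g}_{T'[\epsilon]} = \fk{g}_{T'} \otimes A'[\epsilon]$, but by $A'[\epsilon]$-linearity it suffices to check on $v \in \fk{g}_{T'}$) and collecting the coefficient of $\epsilon$ gives $\phi \circ r_{T'}(v) = r_{T'}(v) \circ \phi$; the constant term is the identity $r_{T'}(v) = r_{T'}(v)$. Combining, $\psi \leftrightarrow \phi$ sets up a bijection $\fk{Lie}\,U(r)(T') \cong n(r)(T')$, functorial in $T'$, and one checks it is additive — this is the standard computation that the $k[\epsilon]$-points of a group, modulo the base, form the Lie algebra — and compatible with restriction along Zariski opens, so it extends from affines to all $T'$ and identifies the two sheaves of $o_T$-modules; finally the Lie bracket on $\fk{Lie}\,U(r)$ matches the commutator bracket on $n(r)$ by the usual argument with $k[\epsilon_1,\epsilon_2]/(\epsilon_1^2,\epsilon_2^2)$.

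I do not expect a serious obstacle here; the only point requiring a little care is the passage between $T$ and $T'$: $n(r)$ is defined using $E_{T'}$ and $\m{Fil}_{T'}$, so one must invoke that $r$ is a flag representation (\ref{.225}) — in particular that $\m{Fil}^r$ is locally split and its formation commutes with base change — to know that $\m{Fil}_{T'[\epsilon]}$ is indeed the base change of $\m{Fil}_{T'}$ and that all the identifications above make sense; this is exactly what makes the definitions of $U(r)$ and $n(r)$ behave well under the two-step base change $T'[\epsilon] \to T' \to T$. Everything else is the routine $k[\epsilon]$-point computation recalled above.
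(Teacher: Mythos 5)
Your argument is correct and is essentially the paper's proof: both write an element of $\ker\bigl(U(r)(T'[\epsilon])\to U(r)(T')\bigr)$ as $1+\epsilon\phi$ and then verify that the unipotence and equivariance conditions on $1+\epsilon\phi$ translate respectively into $\phi\in\fk{n}_{\m{Fil}_{T'}}(E_{T'})$ and equivariance of $\phi$, reducing the latter to $v\in\fk{g}_{T'}$ by $A'[\epsilon]$-linearity. The only cosmetic difference is that the paper packages the unipotence part by starting from the (standard) short exact sequence $1\to\fk{n}_{\m{Fil}'}(E')\to\bb{U}_\m{Fil}(\cl{E})(T'[\epsilon])\to\bb{U}_\m{Fil}(\cl{E})(T')\to 1$ and restricting to the equivariant locus, while you unwind the filtration condition directly and also spell out the Lie-bracket and sheaf-gluing checks, which the paper leaves implicit.
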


\begin{proof}
Fix $T' = \spec B'$, write $T'[\epsilon] = \spec B'[\epsilon]/[\epsilon^2]$ and write $E'$ for $\Gamma(T', \cl{E}_{T'})$. We are to show that $n(r)(T')$ is the kernel of
$$\alpha: U(r)(T'[\epsilon]) \rightarrow U(r)(T') \, .$$
Consider the (split) short exact sequence of abstract groups
\[\xymatrix @1 {
1 \ar[r]	&
\fk{n}_{\m{Fil}'}(E') \ar[r]^-{e^{\epsilon \cdot}}	&
\bb{U}_\m{Fil} (\cl{E}) (T'[\epsilon]) \ar[r]^-\beta \POS c+(12.5,-1.4)="t"	&
\bb{U}_\m{Fil} (\cl{E}) (T') \ar[r] \ar[r] \POS c+(-10.5,-1.7)="s"	&
1
\ar @/^3pt/ "s";"t"	}
\]
where for $\phi \in \fk{n}_{\m{Fil}'}(E')$, $e^{\epsilon \phi} := 1 + \epsilon \phi$. Since $\alpha$ is just the restriction of $\beta$ to the set of automorphisms equivariant with the action, it suffices to check that $\phi \in \fk{n}_{\m{Fil}'}(E')$ is equivariant with the action of $\fk{g}_{T'}$ if and only if $e^{\epsilon \phi}$ is equivariant with the action of $\fk{g}_{T'[\epsilon]}$. This is formal: suppose $\phi$ is equivariant with the action of $\fk{g}_{T'}$, fix an arbitrary $v \in \fk{g}_{T'[\epsilon]}$, write $v = v_0 + \epsilon v_1$ with $v_0, v_1 \in \fk{g}_{T'}$ and compute
\begin{align*}
(1+ \epsilon \phi)(v_0 + \epsilon v_1) 	&= v_0 + \epsilon(v_1 + \phi v_0) \\ 
									&= v_0 + \epsilon (v_1 + v_0 \phi) \\
									&= (v_0 + \epsilon v_1)(1 + \epsilon \phi);
\end{align*}
conversely, suppose $e^{\epsilon \phi}$ is equivariant with the action of $\fk{g}_{T'[\epsilon]}$, fix an element $v \in \fk{g}_{T'}$, regard it as an element of $\fk{g}_{T'[\epsilon]}$ and note that
\begin{align*}
v + \epsilon v \phi 	&= v (1+\epsilon \phi) \\
					&= (1+ \epsilon \phi)v \\
					&= v + \epsilon \phi v
\end{align*}
implies $v \phi = \phi v$.
\end{proof}

The Lie algebra of nilpotent infinitesimal automorphisms of $r$ is useful because it is, on the one hand, isomorphic to $U(r)$ (as a scheme) and, on the other hand, occurs as the kernel of a map of vector groups, hence as the contravariant total space of a module, as I now explain.

\begin{claim} \label{.3834}
The exponential power series induces an isomorphism of functors $n(r) \rightarrow U(r)$.
\end{claim}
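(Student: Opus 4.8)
The plan is to show that the exponential power series, which is a priori just a map of schemes (or rather of functors of points), is well defined on $n(r)$, lands in $U(r)$, and is an isomorphism of functors. First I would observe that $n(r)$ is a subfunctor of $\fk{n}_{\m{Fil}}(\cl{E})$, and that on $\fk{n}_{\m{Fil}}(\cl{E})$ the exponential makes sense: for $T' = \spec B'$ and $\phi \in \fk{n}_{\m{Fil}_{T'}}(E_{T'})$, the endomorphism $\phi$ is nilpotent with respect to the filtration $\m{Fil}_{T'}$, which is finite (it is a full flag of length $n = \operatorname{rank}\cl{E}$) so that $\phi^n = 0$; hence $e^\phi := \sum_{m \ge 0} \phi^m/m!$ is a finite sum, well defined since $k$ has characteristic zero. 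One checks that $e^\phi$ respects $\m{Fil}_{T'}$ and induces the identity on the associated graded, so $e^\phi \in \bb{U}_{\m{Fil}}(\cl{E})(T')$; this assembles into a morphism of functors $\fk{n}_{\m{Fil}}(\cl{E}) \to \bb{U}_{\m{Fil}}(\cl{E})$. Its inverse is given by the logarithm $\psi \mapsto \sum_{m \ge 1} (-1)^{m+1}(\psi - 1)^m/m$, again a finite sum because $\psi - 1$ is nilpotent with respect to $\m{Fil}_{T'}$; the usual formal power series identities $\log(e^\phi) = \phi$ and $e^{\log \psi} = \psi$ hold in the (nilpotent) associative algebra generated by $\phi$, resp. $\psi - 1$, so this is an isomorphism of functors $\fk{n}_{\m{Fil}}(\cl{E}) \xrightarrow{\sim} \bb{U}_{\m{Fil}}(\cl{E})$.

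Next I would check that this isomorphism restricts to the desired isomorphism $n(r) \to U(r)$. By Definition \ref{.38303}, $U(r)$ consists of those elements of $\bb{U}_{\m{Fil}}(\cl{E})$ which commute with $r_{T'}(v)$ for all $v \in \fk{g}_{T'}$; by Definition \ref{.3832}, $n(r)$ consists of those elements of $\fk{n}_{\m{Fil}}(\cl{E})$ which commute with $r_{T'}(v)$ for all $v$. So it suffices to show: for $\phi \in \fk{n}_{\m{Fil}_{T'}}(E_{T'})$ and $v \in \fk{g}_{T'}$, $\phi$ commutes with $r_{T'}(v)$ if and only if $e^\phi$ commutes with $r_{T'}(v)$. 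The forward direction is immediate since $e^\phi$ is a polynomial in $\phi$. For the converse, if $e^\phi$ commutes with $\rho := r_{T'}(v)$ then so does $\log(e^\phi) = \phi$ — using that $\log$ is (on the relevant nilpotent subalgebra) a polynomial in its argument; alternatively, one can argue that $e^\phi \rho e^{-\phi} = \rho$ together with the Baker--Campbell--Hausdorff-type expansion $e^\phi \rho e^{-\phi} = \rho + [\phi,\rho] + \tfrac12[\phi,[\phi,\rho]] + \cdots$ forces $[\phi,\rho] = 0$ by descending through the filtration degree; but the cleanest route is simply to invoke that the bijection $\phi \leftrightarrow e^\phi$ restricts to a bijection on centralizers of $\rho$ because both $e^{(\cdot)}$ and $\log(\cdot)$ send things commuting with $\rho$ to things commuting with $\rho$. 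This compatibility with the $\fk{g}_{T'}$-action is functorial in $T'$, so we get an isomorphism of subfunctors $n(r) \xrightarrow{\sim} U(r)$.

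The main obstacle, such as it is, is bookkeeping rather than conceptual: one must be careful that all the series involved are genuinely finite (which rests on $\m{Fil}^r$ being a full flag of bounded length, so every endomorphism nilpotent with respect to it is actually nilpotent of order $\le n$) and that this remains true after arbitrary base change $T' \to T$ (which is automatic because $\m{Fil}^r$ is a full flag locally split by vector subsheaves, so $f^*\m{Fil}^r$ is again a full flag of the same length $n$), and that characteristic zero is used exactly where the denominators $m!$ appear. I expect the write-up to consist of: (1) defining $e^{(\cdot)}$ and $\log(\cdot)$ on the filtered-nilpotent endomorphism functor and noting they are mutually inverse finite-sum morphisms of functors; (2) checking each lands in $\bb{U}_{\m{Fil}}(\cl{E})$ resp. $\fk{n}_{\m{Fil}}(\cl{E})$; (3) checking compatibility with the equivariance condition cutting out $n(r)$ inside $\fk{n}_{\m{Fil}}(\cl{E})$ and $U(r)$ inside $\bb{U}_{\m{Fil}}(\cl{E})$. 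No deep input is needed; Claim \ref{.3833} is not logically required for this statement but its proof already exhibits the linearization $1 + \epsilon\phi$, which is the degree-one truncation of the exponential, so the two fit together consistently.
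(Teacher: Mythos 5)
Your proposal is correct and follows essentially the same route as the paper: define the exponential and logarithm as mutually inverse morphisms of functors between $\fk{n}_{\m{Fil}}(\cl{E})$ and $\bb{U}_{\m{Fil}}(\cl{E})$, then observe that equivariance with the $\fk{g}_{T'}$-action is preserved in both directions because each series is a polynomial in its argument whose terms individually commute with $r_{T'}(w)$. Your additional remarks about finiteness of the sums, characteristic zero, and base-change compatibility are just the bookkeeping that the paper leaves implicit.
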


\begin{proof}
Given a $T$-scheme $T' = \spec B'$ as above, the exponential power series defines a map
\[\xymatrix @ R=1pt {
	&	\fk{n}_{\m{Fil}'}(E') \ar[r]^\exp 		& \bb{U}_\m{Fil}(\cl{E})(T') \\
\text{given by} \\
	& 	v \ar@{|->}[r] 					& 1+ v + \frac{v^2}{2} + \frac{v^3}{3!} + \cdots 
}\]
while the logarithmic power series defines a map
\[\xymatrix @ R=1pt {
	& 	\fk{n}_{\m{Fil}'}(E') 	& \bb{U}_\m{Fil}(\cl{E})(T') \ar[l] \\
\text{given by} \\
	& 	(u-1) - \frac{(u-1)^2}{2} + \frac{(u-1)^3}{3} - \cdots 	& u \ar@{|->}[l] \, .
}\]
These are inverse to one another. So it suffices to check that for $v \in \fk{n}_{\m{Fil}'}(E')$, $v$ is equivariant with the action of $\fk{g}_{T'}$ if and only if $\exp v$ is: if $v \in \fk{n}_{\m{Fil}'}(E')$ is equivariant with the action of $\g_{T'}$ and $w \in \g_{T'}$ is an arbitrary element then $r_{T'}w$ commutes with the terms of the exponential power series in $v$ and hence with their sum, $\exp v$; conversely, if $\exp v \in \bb{U}_\m{Fil}(\cl{E})(T')$ is equivariant with the action of $\g_{T'}$ then $r_{T'}w$ commutes with the terms of the logarithmic power series in $\exp v$ hence with their sum which equals $v$.
\end{proof}

\begin{unmarked} \label{.3835}
Fix a basis $v_1, \dots, v_m$ for $\fk{g}$, and continuing with our flag representation $r: \fk{g}_T \to \iEnd(\cl{E})$, define
$$\Psi: \fk{n}_\m{Fil}\cl{E} \to (\fk{n}_\m{Fil}\cl{E})^{\oplus m}$$
by
$$\phi \mapsto ([\phi, rv_1], \dots, [\phi, rv_m]) \, .$$
Then $n(r) = \ker \V \Psi^\lor$ (\ref{.02}).
\end{unmarked}

\begin{prop}\label{.3}
Suppose $r:\fk{g}\rightarrow\End E$ is a flag representation of $\fk{g}$ on a vector space $E$ of dimension $n$. Then
$$2\le \dim \fAut{r} \le n \, .$$ 
\end{prop}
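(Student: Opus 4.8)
The plan is to bound $\dim\fAut{r}$ above and below separately, using the decomposition $\fAut r = {\bb{G}_m}\times U(r)$ from Proposition \ref{.3831} together with the identification of $U(r)$ with $n(r)$ from Claim \ref{.3834}. Since $\dim{\bb{G}_m}=1$, it suffices to prove $1\le\dim U(r)\le n-1$, equivalently $1\le\dim n(r)\le n-1$.

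For the lower bound, I would exhibit a nonzero element of $n(r)$. The representation $r$ factors through $\fk{n}_{\m{Fil}^r}E$ (Lemma \ref{.1114}), and over a field, after choosing a filtered isomorphism $E\cong k^n$ carrying $\m{Fil}^r$ to the standard flag (Remark \ref{.381} applies, or just pick a basis adapted to the flag), $r$ lands in $\fk{n}_n$. The key observation is that since $r$ is flag, the map $\la_{i,i+1}$ is surjective onto $k$ for each $i$, so $r(\fk{g})$ contains, for each $i$, an element with a nonzero $(i,i+1)$-entry; the point is that the flag condition forces $r$ to be ``nontrivial on each superdiagonal step,'' and this will be used to control the size of the centralizer. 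For the lower bound itself the simplest route is: the top step $\m{Fil}_1^r E$ is one-dimensional and contained in $E^0$, and dually $E/\m{Fil}_{n-1}^r E$ is a trivial quotient; the rank-one endomorphism $\phi$ that projects $E$ onto $E/\m{Fil}_{n-1}^r E\cong L_n$ and then includes $L_n\cong$ (the trivial sub) $\m{Fil}_1^r E\hookrightarrow E$ — choosing any nonzero identification $L_n\cong L_1$ — is nilpotent with respect to $\m{Fil}^r$ (it maps $\m{Fil}_i$ into $\m{Fil}_1\subset\m{Fil}_{i-1}$ for $i\ge 1$, and kills $\m{Fil}_{n-1}$). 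It is equivariant because for any $v\in\fk{g}$, $(rv)\phi$ factors through $(rv)(\m{Fil}_1 E)\subset \m{Fil}_0 E = 0$, while $\phi(rv)$ factors through $\phi$ applied to $\m{Fil}_{n-1}E$ (as $rv$ lands there for $n\ge 2$), which is again $0$. So $\phi\in n(r)$ is nonzero, giving $\dim n(r)\ge 1$ and hence $\dim\fAut r\ge 2$.

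For the upper bound I would argue that the evaluation-at-top-line map $n(r)\to \m{Hom}(L_n,E)$, $\phi\mapsto\phi|_{L_n}$ composed with a splitting — more precisely, restricting a nilpotent equivariant endomorphism to $\m{Fil}_n E = E$ and recording its effect ``modulo lower filtration'' — is not quite the right bookkeeping; instead the cleanest statement is that $n(r)\hookrightarrow \fk{n}_{\m{Fil}^r}E$ and one bounds the centralizer. The real input is that because each $\la_{i,i+1}$ is surjective, an equivariant $\phi$ is determined by very little data: by a descending-induction argument parallel to paragraph \ref{.116341}, equivariance with a single element $v$ having all superdiagonal entries equal to $1$ already forces $\phi$ to be a polynomial in $rv$, i.e. $\phi$ lies in the centralizer of a regular nilpotent element of $\fk{n}_n$, which is the $(n-1)$-dimensional space $\{a_1 N + a_2 N^2 + \cdots + a_{n-1}N^{n-1}\}$ — here I would need that the flag condition produces, possibly after base change or just by surjectivity of $\la_{i,i+1}$ over a field, such a $v\in\fk{g}$ with $rv$ regular, and then $n(r)\subseteq$ this centralizer gives $\dim n(r)\le n-1$. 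Combined with the ${\bb{G}_m}$ factor this yields $\dim\fAut r\le n$.

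The main obstacle I anticipate is the upper bound: one must produce an element $v\in\fk{g}$ with $rv$ a \emph{regular} nilpotent matrix (so that its centralizer has the minimal dimension $n-1$), and surjectivity of each individual $\la_{i,i+1}$ does not immediately give a single $v$ making all superdiagonal entries simultaneously nonzero. Over an infinite field this is a routine Zariski-density / prime-avoidance argument (the locus of $v$ with some $\la_{i,i+1}(v)=0$ is a finite union of proper subspaces); one should either invoke that $k$ is infinite, or, more robustly, argue directly that $n(r)$ — which is a subspace of the centralizer in $\fk{n}_n$ of the \emph{entire} image $r(\fk{g})$, a subalgebra containing an element with every $\la_{i,i+1}$-entry nonzero after a suitable change of adapted basis (rescaling basis vectors of the flag so that chosen superdiagonal entries become $1$, which is exactly the ${\bb{G}_m}$-worth of freedom) — is contained in the centralizer of a regular nilpotent, hence has dimension at most $n-1$. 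I would present the upper-bound step via this rescaling trick to keep it field-independent, and flag the regular-nilpotent fact as the crux of the argument.
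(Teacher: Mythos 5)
Your lower bound is correct and is, in substance, the same observation the paper makes: the rank-one operator $E\twoheadrightarrow L_n\xrightarrow{\sim}L_1\hookrightarrow E$ is exactly the matrix unit in the $(1,n)$-corner, and the paper notes that this $b_{1,n}$ coordinate never appears in the equations cutting out $U(r)$, so a $\bb{G}_a$ of unipotent automorphisms sits inside $\fAut r$. Your verification that this operator is filtration-nilpotent and equivariant is fine.

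The upper bound is where you take a genuinely different route, and there is a real gap. The paper, after fixing an adapted basis, writes down the explicit equations
$\sum_{i<k<j} b_{ik}\la_{kj}-b_{kj}\la_{ik}=0$ defining $U(r)\subset\bb U_n$, picks for \emph{each} $i$ an element $v_i$ with $\la_{i,i+1}(v_i)=1$, and then shows by plugging in $v_i$ that $b_{i+1,j}$ is determined by entries of strictly smaller superdiagonal gap; iterating, all $b_{i,j}$ are expressed in terms of the first row $b_{1,2},\dots,b_{1,n}$, so $\dim U(r)\le n-1$. This uses a different test vector $v_i$ for each slot, so it never needs a \emph{single} $v$ whose image is a regular nilpotent. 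Your route instead tries to find one $v$ with $rv$ regular and then bound $n(r)$ by the centralizer of $rv$ in $\fk n_n$. You correctly flag that surjectivity of each $\la_{i,i+1}$ separately does not produce such a $v$ over a finite field (the kernels are hyperplanes whose union can be all of $\fk g$, e.g. over $\bb F_2$), but the fix you propose does not work: rescaling the adapted basis conjugates $r(\fk g)$ by a diagonal matrix, which multiplies each entry $\la_{i,j}(v)$ by a fixed nonzero scalar and therefore cannot create an element with all superdiagonal entries simultaneously nonzero if none existed. The legitimate repair is the first option you mention, made precise: $\fAut r$ is a finite-type $k$-scheme, so $\dim\fAut r=\dim\fAut(r\otimes_k\bar k)$, and over the infinite field $\bar k$ prime avoidance produces the regular nilpotent. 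With that patch your argument goes through and is a clean conceptual alternative (centralizer of a regular nilpotent in $\fk n_n$ has dimension $n-1$), but as written the ``rescaling'' version is incorrect, whereas the paper's elimination argument is uniform in $k$.
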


Only the lower bound is needed in the sequel.

\begin{proof}
Fixing a filtered isomorphism $E \cong k^n$, we may replace $r$ with a homomorphism of Lie algebras $r:\fk{g} \to \fk{n}_n$. In the notation of \ref{.11633}, 
$$U(r) = \left\{ b\in \bb{U}_n \Bigg\bracevert \sum_{i<k<j} b_{ik} \la_{kj} - b_{kj}\la_{ik} = 0, \; j-i\ge2\right\}$$
For each $i$, fix $v_i\in \fk{g}$ such that $\la_{i,i+1}v_i = 1$. Then by applying the above equation to $v_i$ we solve for $b_{i+1,j}$ in terms of entries of the form $b_{i'+1, j'}$ with either $j'-i' <j-i$ or both of: $j'-i' = j-i$ and $i'<i$. Iterating (and renaming), we can solve for each $b_{i, i+s}$ in terms of entries of the form $b_{1, 1+s'}$ with $s'\le s$. This provides the upper bound. 

For the lower bound, note that $\bb{U}_n$ contains a copy of $\bb{G}_a$ whose entries commute with all strictly upper triangular matrices (observe simply that $b_{1n}$ does not intervene in the above equations). Thus
$$\fAut{r} \supset \bb{G}_m \times \bb{G}_a \, .$$
\end{proof}


\Remark{.3003}{Finally, we note that if $T$ is connected and $r: \g_T \to \iEnd \cl{E}$ is any representation on a vector bundle, then $\fAut r$ is connected. Indeed, $\fEnd \, r$ is the total space of a module, hence connected with irreducible fibers; and $\fAut r$ is an open subscheme containing a global section. }

\section{Nondegeneracy}
\label{Nondeg}

We continue to work with a finitely generated Lie algebra $\g$ over a field $k$.

\begin{dfn} \label{.12}
A nilpotent representation $r:\fk{g} \rightarrow \End E$ is \textbf{nondegenerate} if it is flag and satisfies the following condition. If $\dim E =1 $ then $r=0$ is the trivial representation. If, on the other hand, the rank of $E$ is $n\ge 2$ then 
\begin{itemize}
\item[(i)] $E_{n-1}$ and $E/E_1$ are both nondegenerate and 
\item[(ii)] dim $\fAut r$ is minimal among representations defined over fields containing $k$ and satisfying (i).
\end{itemize} 
\end{dfn}

\begin{notation} \label{.1222}
 We denote the dimension of the automorphism group of some (hence any) nondegenerate $n$-dimensional representation of $\fk{g}$ by $A(\fk{g}, n)$.
\end{notation}

\subsection*{Examples and counterexamples in low dimensions (\ref{.15} -- \ref{.269})}

We consider representations of our fixed Lie algebra $\fk{g}$ on $k^n$ for small $n$, nilpotent with respect to the standard flag. We begin with the case $n=2$.

\begin{unmarked}\label{.15}
Since $\fk{n}_2=k$ is abelian, every representation $r:\fk{g} \to \fk{n}_2$ factors through the abelianization $\fk{g}^\m{ab}$ of $\fk{g}$. Conversely, any linear map $\fk{g}\to \fk{n}_2$ which factors through $\fk{g}^\m{ab}$ is a homomorphism of Lie algebras and hence a representation. Thus representations on $k^2$, nilpotent with respect to the standard flag, correspond canonically to linear functionals $\lambda:\fk{g}^\m{ab}\rightarrow k$ on the abelianization of $\fk{g}$. 

The trivial representation is clearly not flag; conversely, every nontrivial representation is flag (\ref{.11634}). A calculation shows that for $r$ nonzero,
$$\fAut{r}= 
\left\{ \left( \begin{array}{cc} 
b & c \\ 
0 & b
\end{array} \right) \right\}
=\bb{G}_m\times\bb{G}_a \, .$$
In particular, $\dim \fAut r = 2$. Thus every flag representation is nondegenerate and $A(\fk{g},2)=2$ independently of $\fk{g}$.
\end{unmarked}

We now consider three dimensional representations (\ref{.16} -- \ref{.18}).

\begin{unmarked}\label{.16}
\setcounter{equation}{0}
Fix a three dimensional nilpotent representation $r:\fk{g}\rightarrow\fk{n}_3$. We use single indices in order to simplify notation: for $v\in\fk{g}$ write 
$$rv = 
\left( \begin{array}{ccc} 
0 & 	\lambda_1v 	& \lambda_3v \\ 
0 & 	0 			& \lambda_2v \\ 
0 &	0			& 0
\end{array} \right) $$
For each $i$, $\la_i$ is the composite of $r$ regarded as a map $\fk{g} \to \fk{n}_3$ with one of the three standard projections $\fk{n}_3 \to k$. Thus for each $i$, $\la_i$ is a linear functional on $\fk{g}$. The equation $r[ \cdot, \cdot] =[r\cdot,r\cdot]$ may be rewritten in terms of these linear functionals:
\begin{eqnarray}
\la_1[\cdot,\cdot] &=& 0 					\label{.163}	\\
\la_2[\cdot,\cdot] &=& 0 					\label{.164}	\\
\la_3[\cdot,\cdot] &=& \la_1 \wedge \la_2	\,.	\label{.165}
\end{eqnarray} 
According to \ref{.11634}, $r$ is flag if and only if $\lambda_1$, $\lambda_2$ are nonzero. For $r$ flag, the automorphism group is given by 
\begin{align}
\fAut{r} = 
\left\{ \left( \begin{array}{ccc} 
a & 	b_1 	& c \\ 
0 & 	a 	& b_2 \\ 
0 &	0	& a	\label{.166}
\end{array} 
\right)
\Bigg{\bracevert} \; b_1\lambda_2=b_2\lambda_1 \; \mathrm{and} \; a\neq0 
\right\}
\end{align}
Thus
\begin{align}
A(\fk{g},3)= \left\{ 
\begin{array}{ll}
3 & \mbox{if equations \ref{.163} -- \ref{.165} imply } \la_1 \wedge \la_2 = 0 \\
2 & \m{otherwise} .	\label{.167}
\end{array}
\right.
\end{align}

\end{unmarked}

\begin{unmarked} \label{.17}
Continuing with the notation of \ref{.16}, consider, for a first example in three dimensions, the case $\fk{g} = k^2$. Then the system of equations \ref{.163} -- \ref{.165} becomes simply
\setcounter{equation}{0}
\begin{equation}
\la_1 \wedge \la_2 = 0 \, .
\end{equation}
Hence every flag representation is nondegenerate and we have
$$\mathrm{A}(k^2, 3) = 3 \, .$$
\end{unmarked}

\begin{unmarked} \label{.18}
Again in the notation of \ref{.16}, consider, for a second example in three dimensions, the case $\fk{g} = \fk{n}_3$. Then flag representations with $\la_1, \la_2$ linearly independent exist: the natural representation provides an example. So nondegenerate representations are precisely those for which $\la_1, \la_2$ are linearly independent and $\mathrm{A}(\fk{n}_3, 3)=2$.
\end{unmarked}

We now consider four dimensional representations (\ref{.2} -- \ref{.217}).

\begin{unmarked} \label{.2}
Given a representation $r:\fk{g}\rightarrow \fk{n}_4$, write
$$rv = 
\left( \begin{array}{cccc} 
0 & 	\lambda_1v 	& \lambda_4v	&\lambda_6v \\ 
0 & 	0 			& \lambda_2v	&\lambda_5v \\ 
0 &	0			& 0			&\lambda_3v\\
0 & 	0			& 0			& 0	
\end{array} \right) $$
with $\lambda_i \in \fk{g}^\lor$. In terms of these linear functionals, the equation $r[\cdot,\cdot]=[r\cdot,r\cdot]$ becomes:
\setcounter{equation}{0}
\begin{equation}
\la_1[\cdot,\cdot] = \la_2[\cdot, \cdot] = \la_3[\cdot, \cdot] = 0 \label{.201} \\
\end{equation}
and
\begin{eqnarray} 
\la_4[\cdot, \cdot] &=& \la_1 \wedge \la_2 				\label{.202}	\\
\la_5[\cdot, \cdot] &=& \la_2 \wedge \la_3 				\label{.203}	\\
\la_6[\cdot,\cdot] &=& \la_1 \wedge \la_5 - \la_3 \wedge \la_4 \, . \label{.204}
\end{eqnarray}
The representation $r$ is flag if and only if $\la_1, \la_2, \la_3 \neq 0$ (\ref{.11634}). For $r$ flag, $\fAut{r}$ is the set of matrices of the form 
$$\left( \begin{array}{cccc} 
a & b_1	 	& b_4	& b_6 \\ 
0 & 	a 			& b_2	& b_5 \\ 
0 &	0			& a			& b_3\\
0 & 	0			& 0			& a	
\end{array} \right) $$
subject to the equations
\begin{eqnarray}
b_1\la_2 &=& b_2\la_1 						\label{.205}	\\
b_2\la_3 &=& b_3\la_2 						\label{.206}	\\
b_1\la_5 + b_4\la_3 &=& b_5\la_1 + b_3\la_4	\, .	\label{.207}
\end{eqnarray}

The possible pairs of numbers $(A(\fk{g}, 3), A(\fk{g}, 4))$ are $(2,2),(2,3)$ and $(3,4)$. Paragraphs \ref{.2153} -- \ref{.216} explain why this is and paragraph \ref{.217} gives examples of each of these cases.
\end{unmarked}

\begin{unmarked} \label{.2153}
\setcounter{equation}{0}
Suppose $A(\fk{g},3) = 3$, fix a representation $r:\fk{g}\to\fk{n}_4$ and suppose $r$ is nondegenerate. Then according to \ref{.167}, in the notation of \ref{.2}, we have 
\begin{align}
\la_1, \la_2, \la_3 \neq 0 					\label{.21531} 	\\
\la_4 [\cdot, \cdot] = \la_1 \wedge \la_2 =0 	\label{.21532}	\\
\la_5 [\cdot, \cdot] = \la_2 \wedge \la_3 =0	\, .	\label{.21533}
\end{align}
Fix elements $a,a' \in k$ such that
\begin{align}
a\la_1=\la_2 							\label{.21534}	\\
a'\la_1=\la_3	 \, .						\label{.21535} 
\end{align}
Combining \ref{.201} and \ref{.204} with \ref{.21532} -- \ref{.21535} we get the following three equations:
\begin{align}
\la_1[\cdot,\cdot] = &0 \\
(\la_5 -a'\la_4)[\cdot , \cdot ] = &0 \\
 \la_1 \wedge (\la_5-a'\la_4) = & \la_6[\cdot,\cdot] \, .
\end{align}
So by \ref{.167} applied with $\la_5-a'\la_4$ in place of $\la_2$ and $\la_6$ in place of $\la_3$, we have
\begin{align}
\la_1 \wedge (\la_5-a'\la_4) = 0 \, .
\end{align}
There is thus an element $a''\in k$ such that
\begin{align}
\la_5 = a''\la_1 + a'\la_4 \, .
\end{align}
In terms of $a,a',a''$ the system of equations \ref{.205} -- \ref{.207} is equivalent to
\begin{align}
ab_1=& b_2 \\
a'b_1 =& b_3 \\
a''b_1 + a'b_4 = & b_5 \, .
\end{align}
This shows that $\dim \fAut r = 4$ and hence that $A(\fk{g},4)=4$.
\end{unmarked}

\begin{unmarked} \label{.216}
\setcounter{equation}{0}
Now suppose instead that $A(\fk{g}, 3) \neq 3$. Fix a nilpotent representation $r:\fk{g} \to \fk{n}_4$, denote its full coordinates by $\la_1,\dots,\la_6$ as in \ref{.2} and suppose $r$ is nondegenerate. Then according to \ref{.167}, the pairs $(\la_1,\la_2),(\la_2,\la_3)$ are both linearly independent, so the system of equations \ref{.205} -- \ref{.207} becomes
\begin{align}
b_1 = b_2 = b_3 = 0 \label{.2399} \\
b_4\la_3 = b_5\la_1 \, . \label{.24}
\end{align}
If $\la_1, \la_3$ are linearly dependent then (\ref{.24}) becomes
\begin{equation}
b_5= a'b_4
\end{equation}
and $A(\fk{g}, 4)=3$.
If, on the other hand, $\la_1, \la_3$ are linearly independent, then the system of equations \ref{.2399} -- \ref{.24} becomes 
\begin{equation}
b_1=\cdots=b_5=0
\end{equation}
and $A(\fk{g},4)=2$.
\end{unmarked}

\setcounter{equation}{0}
\begin{unmarked} \label{.217}
Here, then, are the promised examples. The $4\times 4$ upper triangular Lie algebra $\fk{n}_4$ is of type $(2,2)$, as witnessed by the natural representation and its subquotients. The $m$-dimensional abelian Lie algebra $k^m$ is of type $(3, 4)$: indeed, the system \ref{.201} -- \ref{.204} becomes precisely
\begin{eqnarray}
\la_1 \wedge \la_2=0\\
\la_2 \wedge \la_3=0
\end{eqnarray}
and, in terms of $a,a'$ such that $a\la_1=\la_2$ and $a'\la_1=\la_3$,
\begin{equation}
\la_1 \wedge (\la_5-a'\la_4)=0 \, .
\end{equation}
Finally, $\fk{n}_3$ is of type $(2,3)$; this is not hard to check directly but is better understood as a consequence of the fact that the width is bounded by the depth (\ref{.46}).
\end{unmarked}

\begin{unmarked} \label{.269}
Finally, we give an example of a representation which is flag, has minimal automorphism group, and a degenerate subquotient. Let $F$ be a vector space of dimension 2 and let $\fk{g}=\fk{n}(F)$ (\ref{.01}). Then
$$\m{Hom}_\m{\bld{Lie}}(\fk{g},\fk{n}_4)=\m{Hom}_\m{\bld{Vect}}(F,\fk{n}_4)=F^{\lor 6} \, .$$
Denoting the full coordinates of $r$ by $\la_1,\dots,\la_6$ as in \ref{.2}, suppose $\la_1,\la_2$ are linearly independent and $\la_2,\la_3$ are linearly dependent but nonzero. Then $\dim \fAut r^1 = 3$ is not minimal; but $\la_1, \la_3$ are linearly independent and equations \ref{.205} -- \ref{.207} show that $\dim\fAut{r}=2$ is minimal. This example shows that the recursive aspect of the definition of nondegeneracy is not redundant.
\end{unmarked}

We conclude this section with a definition of nondegeneracy in families.

\begin{dfn} \label{.3836}
A representation $r: \fk{g}_T \rightarrow \iEnd(\cl{E})$ is \textbf{nondegenerate nilpotent} if it is flag (we recall that this implies in particular that $\cl{E}$ is a vector sheaf) and if it satisfies the following conditions, recursive on the rank of $\cl{E}$: 
\begin{itemize}
\item[(i)] $r_{n-1}$, $r^1$ are both nondegenerate 
\item[(ii)] the fibers of $r$ are nondegenerate in the sense of \ref{.12}
\item[(iii)] $\fAut r$ is flat.
\end{itemize} 
\end{dfn}

\section{The nilpotent width of a Lie algebra}

\label{Width}

We continue to work with a finitely generated Lie algebra $\g$ over a field $k$.

\begin{dfn} \label{.26933}
Recall that the \textbf{nilpotent depth} of $\fk{g}$ is the largest number $d$ such that $\fk{g}^{(d)}/\fk{g}^{(d+1)} \neq 0$. We denote it by $d(\fk{g})$, or simply by $d$ when there is no risk of confusion. A nilpotent representation is \textbf{wide} if it is flag and if the points of its constellation (\ref{.4433}) are distinct. Define the \textbf{nilpotent width} of $\fk{g}$ to be the largest number $w$ such that there exists a field $k'$ containing $k$ and a wide nilpotent representation $r: \fk{g}_{k'} \rightarrow \m{End}(E)$ of $\fk{g}_{k'}$ on a vector space $E$ of dimension $w+1$ over $k'$. We denote it by $w(\fk{g})$ or by $w$ when there is no risk of confusion.
\end{dfn} 

\Remark{}{We show in \S9 that a representation of dimension not more than $w+1$ is nondegenerate if and only if it is wide (\ref{.5}).}

\Remark{}{It follows from Proposition \ref{.39} below that in defining the width we may equivalently consider only fields $k'$ which are finite over $k$.}

\begin{rmk} \label{.26934}
If $r:\fk{g} \to \fk{n}_n$ is a flag representation of $\fk{g}$ on $k^n$, nilpotent with respect to the standard flag, then $r$ is wide if and only if the full matrix entries $\la^r_{i,i+1}$ for $i=1,\dots,n-1$ are pairwise linearly independent.
\end{rmk}

\begin{prop} \label{.433}
Any subquotient of a wide nilpotent representation is again wide.
\end{prop}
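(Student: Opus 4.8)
The plan is to reduce everything to Proposition \ref{.431} and Remark \ref{.4434}. By Proposition \ref{.431}, any subquotient of a flag representation $r$ is of the form $r^l_m$ for some $0 \le l \le m \le n$, and each such subquotient is again flag; so the only thing to check is that if the points of $\m{\it{const}}(r)$ are distinct, then so are the points of $\m{\it{const}}(r^l_m)$.

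The key step is Remark \ref{.4434}, which states that $\m{\it{const}}(r^l_m)$ is obtained from $\m{\it{const}}(r)$ by truncating $l$ entries on the left and $n-m$ entries on the right. Thus the sequence of points of $\bb{P}\fk{g}^\m{ab}$ making up $\m{\it{const}}(r^l_m)$ is literally a (contiguous) subsequence of the sequence making up $\m{\it{const}}(r)$. A subsequence of a sequence of pairwise-distinct points is again pairwise distinct, so $r^l_m$ is wide. That is the whole argument in outline.

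So the proof I would write is short: First invoke Proposition \ref{.431} to say every subquotient is flag and of the form $r^l_m$. Then invoke Remark \ref{.4434} to identify $\m{\it{const}}(r^l_m)$ with a truncation of $\m{\it{const}}(r)$. Then observe that distinctness of the entries is inherited by any sub-tuple. One should perhaps note at the outset that it suffices to treat subrepresentations and quotient representations separately — but in fact the uniform statement via $r^l_m$ already covers the general subquotient, so even this reduction is unnecessary once Proposition \ref{.431} is in hand.

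I do not expect any real obstacle here; the content has essentially already been extracted in the lemmas on the behavior of $\m{\it{const}}$ under passing to subquotients (\ref{.4231}, \ref{.4434}) and on the structure of subquotients of flag representations (\ref{.431}). The one point to be mildly careful about is that "wide" presupposes "flag," so the proof must explicitly record that subquotients of flag representations are flag (again Proposition \ref{.431}) before talking about their constellations at all; but this is immediate from the cited results.
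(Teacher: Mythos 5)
Your argument is correct and is essentially identical to the paper's own proof: both reduce to Proposition \ref{.431} (every subquotient of a flag representation is flag and of the form $r^l_m$) and Remark \ref{.4434} (the constellation of $r^l_m$ is a truncation of that of $r$), then observe that a sub-tuple of pairwise-distinct points is pairwise distinct. Your note that the $r^l_m$ formulation already handles general subquotients without separating the sub- and quotient cases is also exactly how the paper proceeds.
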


\begin{proof}
Fix a nilpotent representation $r:\fk{g} \to \End E$ of $\fk{g}$ on a vector space $E$ of dimension $n$ and suppose $r$ is wide. Since, in particular, $r$ is flag, according to \ref{.431} any subquotient is of the form $r^l_m$ for some $l,m$, $0\le l \le m \le n$. By \ref{.4434}, the constellation of $r^l_m$ is a truncation of the constellation of $r$; hence in particular its points are distinct.
\end{proof}

\begin{thm} \label{.434}
If $r: \fk{g} \rightarrow \m{End} \; E$ is a wide nilpotent representation then every canonical matrix entry of $r$ (\ref{.42}) is surjective.
\end{thm}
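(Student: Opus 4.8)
The plan is to induct on the gap $j-i$, using the bracket relation of paragraph \ref{.44} to express a canonical matrix entry $\ka_{i,j}$ of larger gap in terms of a ``$\{\cdot,\cdot\}$-corrected'' comparison of two composites $\ka_{i,p}\circ\ka_{p,j}$ taken through different intermediate indices, and to observe that when $r$ is wide each such composite is nonzero because the $\ka_{l,l+1}$ are pairwise independent. Concretely, the base case $j-i=1$ is immediate: by \ref{.4433} each $\ka_{i,i+1}$ is by definition surjective for a flag representation. For the inductive step, fix $i<j$ with $j-i\ge 2$; I would pick an intermediate index, say $j-1$, and use the identity of \ref{.44} with the triple $(i,j-1,j)$, namely
\[\ka_{i,j}\cdot\{\cdot,\cdot\}=\circ\cdot\ka_{i,j-1}\otimes\ka_{j-1,j}-\circ'\cdot\ka_{i,i+1}\otimes\ka_{i+1,j}.\]
The target $\Hom(L_j,L_i)$ is a line, so surjectivity means nonvanishing. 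The induction hypothesis gives that $\ka_{i,j-1}$, $\ka_{i+1,j}$, and (base case) $\ka_{j-1,j}$, $\ka_{i,i+1}$ are all surjective, hence the two composites $\circ\cdot(\ka_{i,j-1}\otimes\ka_{j-1,j})$ and $\circ'\cdot(\ka_{i,i+1}\otimes\ka_{i+1,j})$ are nonzero bilinear maps $\fk{h}_{j-i-1}\otimes\fk{h}_1\to\Hom(L_j,L_i)$ (resp. $\fk{h}_1\otimes\fk{h}_{j-i-1}\to\cdots$); and $\ka_{i,j}\cdot\{\cdot,\cdot\}$ is the difference of pullbacks of these along the surjection $\{\cdot,\cdot\}:\fk{h}_{j-i-1}\otimes\fk{h}_1\to\fk{h}_{j-i}$ (after identifying the two tensor orderings as in \ref{.44}). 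It then suffices to show this difference is nonzero, for then $\ka_{i,j}\neq 0$.

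The crux, and the step I expect to be the main obstacle, is precisely showing that the right-hand side of the displayed identity does not vanish identically — a priori the two composite terms could cancel. This is where the wideness hypothesis must be used in an essential way, not merely through the induction hypothesis: the pairwise linear independence of the $\ka_{l,l+1}$ (equivalently, \ref{.26934}, of the $\la_{l,l+1}$) should force the two rank-one-ish composites to be ``independent'' as bilinear forms, so that their difference is nonzero on some decomposable tensor. I would argue this by working in coordinates via \ref{.381} and \ref{.116333}: reduce to $r:\fk{g}\to\fk{n}_n$ with the standard flag, so that $\ka_{i,j}$ is computed from $\la_{i,j}$ by \ref{.116333}, and the bracket identity becomes the matrix-commutator relation $\la_{i,j}[\cdot,\cdot]=\sum_{i<p<j}\la_{i,p}\wedge\la_{p,j}$ (this is the coordinate shadow of \ref{.44}; cf. the explicit equations \ref{.165}, \ref{.202}–\ref{.204}). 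Then I must show: if $\la_{i,i+1},\dots,\la_{j-1,j}$ are pairwise linearly independent and all the intermediate $\la_{i,p}$, $\la_{p,j}$ (for $i<p<j$) are nonzero — which is what induction gives — then the two-term sum with $p=j-1$ and $p=i+1$ (after absorbing the middle terms $i+1<p<j-1$, if any, into the induction differently) is nonzero.

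To make the two-term reduction legitimate I would actually run the induction slightly differently: rather than singling out $p=j-1$, use the full relation $\la_{i,j}[\cdot,\cdot]=\sum_{i<p<j}\la_{i,p}\wedge\la_{p,j}$ and argue that the right-hand side is a nonzero alternating $2$-form. Here is the mechanism I would push: among the intermediate indices, consider $p=i+1$; the term $\la_{i,i+1}\wedge\la_{i+1,j}$ involves $\la_{i,i+1}$, while (by pairwise independence and a dimension/weight bookkeeping on $\fk{h}_\bullet$) no other term $\la_{i,p}\wedge\la_{p,j}$ with $p>i+1$ can be proportional to it in a way that produces cancellation — more carefully, one restricts everything to a two-dimensional subspace of $\fk{g}^{\m{ab}}$ on which $\la_{i,i+1}$ and $\la_{i+1,j}$ (itself nonzero by induction) remain independent, and checks the other terms vanish or survive there, so the total $2$-form is nonzero upon restriction. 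I would then conclude $\la_{i,j}[\cdot,\cdot]\neq 0$, hence $\la_{i,j}\neq 0$, hence $\ka_{i,j}$ is surjective onto the line $\Hom(L_{i+1},L_i)$ — wait, onto $\Hom(L_j,L_i)$ — completing the induction. The delicate point throughout is controlling possible cancellations among the $\binom{j-i}{1}$-many wedge terms using only pairwise (not mutual) independence of the $\la_{l,l+1}$ together with the induction hypothesis; I anticipate that a clean way to organize this is to evaluate the $2$-form on a generic pair of vectors, or to pass to a quotient of $\fk{g}^{\m{ab}}$ of dimension $2$ chosen to see the ``new'' generator, and this localization argument is where the real work lies.
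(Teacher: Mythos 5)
You have correctly identified the two essential ingredients: the bracket relation \ref{.44} and an induction on the gap $j-i$ starting from the base case $\ka_{i,i+1}$. You have also correctly located the crux — a priori the two terms on the right-hand side of the relation could cancel — and correctly observed that wideness must enter at precisely this point. What your proposal is missing is the one small device that resolves this cleanly, and the workaround you sketch does not repair the gap.

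The paper's resolution is not to argue that the two-term difference is nonzero as a bilinear map; it is to make one of the two terms vanish outright by a careful choice of test vector. Apply the identity of \ref{.44} with the triple $(i,i+1,j+1)$, so the domain is $\fk{h}_1 \otimes \fk{h}_{j-i}$ and the relation reads
\[
\ka_{i,j+1}\{v,u\} \;=\; \ka_{i,i+1}(v)\circ\ka_{i+1,j+1}(u)\;-\;\ka_{i,j}(u)\circ\ka_{j,j+1}(v)\,.
\]
Wideness says exactly that $\ka_{i,i+1}$ and $\ka_{j,j+1}$ determine \emph{distinct} points of $\bb{P}\fk{g}^\m{ab}$, i.e.\ are linearly independent as functionals on $\fk{h}_1$; hence there is a $v\in\fk{h}_1$ with $\ka_{i,i+1}(v)=0$ but $\ka_{j,j+1}(v)\neq 0$. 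This kills the first term identically. The inductive hypothesis supplies a $u$ with $\ka_{i,j}(u)\neq 0$, and then the surviving term $\ka_{i,j}(u)\circ\ka_{j,j+1}(v)$ is a composite of nonzero maps of lines, hence nonzero. So $\ka_{i,j+1}\neq 0$, and the induction closes with no cancellation analysis at all. Note this uses wideness only through the single pair $(i,j)$, which is exactly the strength of hypothesis available.

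Your proposed fallback — passing to the full matrix entries and the many-term coordinate relation $\la_{i,j}[\cdot,\cdot]=\sum_{i<p<j}\la_{i,p}\wedge\la_{p,j}$, then restricting to a two-dimensional subspace of $\fk{g}^\m{ab}$ — has a structural problem: for $j-i\ge 2$ the functionals $\la_{i,j}$ and the intermediate $\la_{i,p},\la_{p,j}$ do \emph{not} factor through $\fk{g}^\m{ab}$ (this is visible already in \ref{.202}--\ref{.204}, where $\la_4[\cdot,\cdot]=\la_1\wedge\la_2$ is generally nonzero), so choosing a subspace of $\fk{g}^\m{ab}$ does not control them. This is precisely the reason the theorem is stated for the \emph{canonical} matrix entries on the graded pieces $\fk{h}_s$, and why \ref{.44} reduces the sum to a two-term formula — the graded formulation is not cosmetic but is what makes the kill-one-term trick available.
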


\begin{proof}
Suppose for an induction on $s$ that $\ka_{i,j}$ is surjective for $j-i \le s$, and fix $i,j$ with $j-i = s$. Then there exists a $u \in \fk{h}_s$ such that $\ka_{i,j}(u) \neq 0$. On the other hand, linear independence of $\ka_{i, i+1}, \ka_{j, j+1}$ implies that there exists a $v \in \fk{h}_1$ such that $ \ka_{i, i+1}(v) = 0$ while $\ka_{j, j+1}(v) \neq 0$. Then by the conclusion of (and in the notation of) \ref{.44}, we have
\[
\ka_{i, j+1} \{ v, u \} = \ka_{i, i+1}(v) \circ \ka_{i+1, j+1}(u) - \ka_{i,j}(u) \circ \ka_{j, j+1}(v) \neq 0
\]
This shows that $\ka_{i,j+1}$ is nonzero, hence surjective, and completes the induction.
\end{proof}

\begin{cor} \label{.46}
If $\fk{g}$ is a Lie algebra of width $w$ and depth $d$ then $w \le d$.
\end{cor}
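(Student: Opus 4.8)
The plan is to deduce the bound directly from Theorem \ref{.434}. Suppose $\fk{g}$ has width $w$; by definition there is a field $k'/k$ and a wide nilpotent representation $r: \fk{g}_{k'} \to \m{End}(E)$ with $\dim E = w+1$. Replacing $\fk{g}$ by $\fk{g}_{k'}$ does not change the depth, so I may as well assume $k' = k$. Since $r$ is flag and $\dim E = w+1$, the associated flag has $w+1$ steps and the canonical matrix entry $\ka_{1,\,w+1}^r: \fk{h}_{w} \to \m{Hom}(\cl L_{w+1}, \cl L_1)$ is defined; here $\fk{h}_w = \fk{g}^{(w)}/\fk{g}^{(w+1)}$ in the notation of \ref{.42}.

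The key step is: by Theorem \ref{.434}, \emph{every} canonical matrix entry of $r$ is surjective, in particular $\ka_{1,\,w+1}^r$ is surjective onto $\m{Hom}(\cl L_{w+1}, \cl L_1)$, which is a one-dimensional $k$-vector space since $\cl L_1$ and $\cl L_{w+1}$ are lines. Hence $\m{Hom}(\cl L_{w+1}, \cl L_1) \neq 0$ forces $\fk{h}_w = \fk{g}^{(w)}/\fk{g}^{(w+1)} \neq 0$. But $d = d(\fk{g})$ is by definition the largest number $d$ with $\fk{g}^{(d)}/\fk{g}^{(d+1)} \neq 0$ (Definition \ref{.26933}), so $w \le d$, as desired.

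I expect there to be essentially no obstacle: the corollary is a one-line consequence of the theorem once one observes that for a flag representation of dimension $w+1$ the top-corner canonical matrix entry lands in the index $\fk{h}_w$ and is nonzero by surjectivity. The only points worth spelling out carefully are the reduction to $k' = k$ (harmless, since depth and width are insensitive to the base field change) and the observation that $\m{Hom}(\cl L_{w+1}, \cl L_1)$ is nonzero because it is a Hom-space between two line sheaves, so that surjectivity of $\ka_{1,w+1}$ genuinely forces $\fk{h}_w \neq 0$ rather than being vacuous.
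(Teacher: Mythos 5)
Your proof is correct and follows essentially the same approach as the paper: apply Theorem \ref{.434} to the $(1,w+1)$ canonical matrix entry of a wide representation of dimension $w+1$ and conclude that its source $\fk{h}_w$ is nonzero. The paper handles the field extension by just remarking that $\fk{h}_w$ may be tensored with a field extension (which doesn't affect whether it vanishes), whereas you reduce to $k'=k$; both are fine.
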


\begin{proof}
Fix a wide representation of dimension $w+1$ and consider its $(1, w+1)^\m{st}$ canonical matrix entry, $\ka_{1,w+1}$. This is a map from $\fk{h}_w$ (possibly tensored with a field extension) to a line; by the theorem, it is surjective. Hence its source, $\fk{h}_w$, is nonzero.
\end{proof}

\subsection*{Examples (\ref{.47} -- \ref{.49})}

\begin{unmarked} \label{.47}
By \ref{.46}, $w(\fk{n}_n) = n-1 = d(\fk{n}_n)$, as witnessed by the natural representation.
\end{unmarked}

\begin{unmarked} \label{.48}
Let $\fk{g}$ be the $4$-dimensional Lie algebra over $\bb{Q}$ on basis elements $v_1, \dots, v_4$ with $[v_1, v_2] = v_3$ and $[v_1, v_3] = v_4$ the only nonzero brackets among the generators. Then $w=d=3$, as witnessed by the representation $\fk{g} \rightarrow \fk{n}_{4, \bb{Q}(\sqrt{-1})}$ sending $v_1, v_2$ to the matrices with entries $(1,1,1+2\sqrt{-1}), (1,\sqrt{-1},-1)$ along the first superdiagonal and zero elsewhere.
\end{unmarked}

We prepare for a final example with a small lemma.

\begin{lm} \label{.429}
If $\fk{g}$ does not admit an $n$-dimensional wide representation then $w(\fk{g}) < n-1$.
\end{lm}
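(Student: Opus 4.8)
The plan is to prove the contrapositive: if $w(\fk{g}) \ge n-1$, then $\fk{g}$ admits an $n$-dimensional wide representation. So assume $w := w(\fk{g}) \ge n-1$. By the definition of the width (\ref{.26933}) there are a field $k' \supseteq k$ and a wide nilpotent representation $r : \fk{g}_{k'} \to \m{End}(E)$ with $\dim_{k'} E = w+1$; in particular $w+1 \ge n$.

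The key point is that a flag representation contains subquotients of every intermediate dimension. A wide representation is in particular flag, so by Proposition \ref{.431} the subquotient $r^0_n$ --- that is, the subrepresentation $\fk{g}_{k'} \to \iEnd(\m{Fil}^r_n E)$ --- is again flag, and it has dimension exactly $n$ since $\m{Fil}^r$ is a full flag and $0 \le n \le w+1$. By Proposition \ref{.433} this subquotient of the wide representation $r$ is itself wide. Hence $r^0_n$ is an $n$-dimensional wide representation of $\fk{g}_{k'}$, contradicting the hypothesis that $\fk{g}$ admits no $n$-dimensional wide representation; therefore $w(\fk{g}) < n-1$.

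There is essentially no obstacle here beyond combining Propositions \ref{.431} and \ref{.433}. The only point worth making explicit is that ``admits an $n$-dimensional wide representation'' is to be read as allowing base change to a field extension, exactly as in the definition of the width itself; this is what licenses using the representation $r^0_n$, which a priori is defined over $k'$ rather than over $k$.
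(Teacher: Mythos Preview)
Your proof is correct and follows essentially the same route as the paper: prove the contrapositive by taking a wide representation of dimension at least $n$ and passing to the subrepresentation $r_n$ (your $r^0_n$), which is wide by Proposition \ref{.433}. The paper's version is terser, citing only \ref{.433}, but your explicit appeal to \ref{.431} and your remark that ``admits a wide representation'' must be read as allowing field extensions are both helpful clarifications; the only cosmetic blemish is that you announce a contrapositive argument and then phrase the conclusion as a contradiction.
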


\begin{proof}
We prove the contrapositive: If $w(\fk{g}) \ge n-1$ then $\fk{g}$ admits a wide representation $r$ of dimension $\ge n$; by \ref{.433} $r_n$ is wide, so, in particular, $\fk{g}$ does admit a wide $n$-dimensional representation.
\end{proof}

\begin{unmarked} \label{.49}
Let $\phi$ be the map
$$k \rightarrow \wedge^2 k^4$$
defined by
$$1 \mapsto e_1 \wedge e_2 + e_3 \wedge e_4 \, .$$
The formula
$$[v+a,w+b]=\phi^\lor(v\wedge w)$$
for $v,w\in k^4, a,b\in k$ endows $k^4 \oplus k$ with the structure of a Lie algebra, nilpotent of depth 2. We claim that this Lie algebra has width 1. Suppose given linear functionals $\la_1, \la_2, \la_3 \in (k^4 \oplus k)^\lor$ defining a representation on $k^3$ nilpotent with respect to the standard flag as in \ref{.16}. Since $(k^4 \oplus k)^\lor = (k^4)^\lor \oplus k^\lor$ we may write
$$\la_i= \la'_i + \la''_i$$
with $\la'_i \in (k^4)^\lor$ and $\la''_i \in k^\lor$. Then, on the one hand, for $u,v \in k^4$ we have
\begin{align}
\la''_3 \phi^\lor(u\wedge v) 	& =\la_3[u,v] 	\nonumber	\\
	& =\la_1 u \la_2 v -\la_1 v \la_2 u		\nonumber 	\\
	& =\la'_1 u \la'_2 v -\la'_1 v \la'_2 u		\nonumber 	\\
	& = (\la'_1 \wedge \la'_2)(u \wedge v)	\nonumber
\end{align}
which implies that
\begin{align}
 \la'_1 \wedge \la'_2 \nonumber = \phi(\la''_3)
\end{align}
is a multiple of $e_1 \wedge e_2 + e_3 \wedge e_4$. Since the latter cannot be written as a wedge product, it follows that
\begin{align}
\la'_1 \wedge \la'_2 =0 \nonumber
\end{align}
On the other hand, $\la''_1 = \la''_2 =0$. Thus $\la_1 \wedge \la_2 = 0$. This shows that the Lie algebra we've constructed does not admit a wide representation of dimension $3$, hence by \ref{.429} has width 1 as claimed. 
\end{unmarked}

\section{Moduli of nondegenerate nilpotent representations}
\label{Modnd}

We continue to work with a finitely generated Lie algebra $\g$ over a field $k$.

\begin{unmarked} \label{.38372}
Fix an affine $k$-scheme $T= \spec A$, a vector sheaf $\cl{E}$ with module of global sections $E$, and a full flag by vector subsheaves $\m{Fil}$. We denote by $\fHom_{\m{\bld{Mod}}(o_T)}(\fk{g}_T, \fk{n}_\m{Fil}(\cl{E}) )$ (or by $\fHom_{\m{\bld{Vect}}(k)}(\fk{g}, \fk{n}_\m{Fil}( \cl{E} ) )$ when $T= \spec k$) the functor $\mathbf{Aff}(A) \to \mathbf{Set}$ sending
\[T' \mapsto \m{Hom}_{\mathbf{Mod}(\ssf_{T'})}(f^*\fk{g}, f^*\fk{n}_\m{Fil}(\cl{E}))\]
and we denote by 
\[\fHom_{\m{\bld{Lie}}(o_T)}(\fk{g}_T, \fk{n}_\m{Fil}(\cl{E}) ) = X_{(A, E, \m{Fil})} \supset X_{(A, E, \m{Fil})}^\m{fl} \supset X_{(A, E, \m{Fil})}^\m{nd}\] 
the successively smaller subfunctors whose points are representations, flag representations (\ref{.225}) and nondegenerate nilpotent representations (\ref{.3836}), respectively. 

When $A = k, E = k^n$ and $\m{Fil}$ is the standard flag, we write simply $X_n \supset X_n^\m{fl} \supset X_n ^\m{nd}$.
\end{unmarked}

We begin by studying the functors $X_{(A,E,\m{Fil})}, X_{(A,E,\m{Fil})}^\m{fl}$ and $X_{(A,E,\m{Fil})}^\m{nd}$ (\ref{.383721} -- \ref{.3962}).

\begin{prop} \label{.383721}
In the situation and the notation of \ref{.38372}
\begin{itemize}
\item[(1)] the inclusion $X_{(A, E, \m{Fil})} \hookrightarrow \fHom_{\m{\bld{Mod}}(o_T)}(\fk{g}_T, \fk{n}_\m{Fil}(\cl{E}) )$ is a closed immersion;
\item[(2)] the inclusion $X_{(A,E,\m{Fil})}^\m{fl} \hookrightarrow X_{( A, E, \m{Fil} )}$ is an open immersion.
\end{itemize}
\end{prop}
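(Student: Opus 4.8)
The plan is to treat the two parts separately, since they are genuinely different in character: part (1) is about cutting out the Lie-algebra homomorphisms (a Zariski-closed condition given by the vanishing of the ``failure of the bracket'' map), while part (2) is about the flag condition being open (the surjectivity of the subdiagonal matrix entries together with local-freeness of the filtration, which in the end reduces to an open non-vanishing condition).

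For part (1): I would work over the affine base $T = \spec A$ and write $\fk{n} := \fk{n}_\m{Fil}(\cl{E})$, a vector sheaf (it is a submodule of $\cl{E}nd(\cl{E})$ with locally free quotient once we fix a splitting of $\m{Fil}$; more simply, after choosing a filtered basis, $\fk{n}$ is just $\fk{n}_n$ base-changed to $A$). A point of $\fHom_{\m{\bld{Mod}}(o_T)}(\fk{g}_T, \fk{n})$ over $f\colon T'\to T$ is an $\ssf_{T'}$-linear map $\rho\colon f^*\fk{g} \to f^*\fk{n}$; it lies in $X_{(A,E,\m{Fil})}$ precisely when $\rho([v,w]) = [\rho v,\rho w]$ for all local sections $v,w$. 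Since $\fk{g}$ is finitely generated (and hence finite-dimensional) over $k$, fixing a basis $v_1,\dots,v_m$ of $\fk{g}$, this is a \emph{finite} system of equations: $\rho$ is a homomorphism iff $\rho([v_a,v_b]) - [\rho v_a, \rho v_b] = 0$ for all $a<b$. Writing $[v_a,v_b] = \sum_c c_{ab}^d v_d$ with structure constants $c_{ab}^d \in k$, each such equation is a closed condition on $\rho$: the left-hand side $\sum_d c_{ab}^d\, \rho v_d$ is linear in the coordinates of $\rho$, and the term $[\rho v_a, \rho v_b]$ is a fixed bilinear (hence polynomial, in fact quadratic) expression in those coordinates. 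More invariantly, there is a morphism of affine $T$-schemes $\fHom_{\m{\bld{Mod}}(o_T)}(\fk{g}_T, \fk{n}) \to \fHom_{\m{\bld{Mod}}(o_T)}(\wedge^2\fk{g}_T, \fk{n})$ sending $\rho$ to the section $v\wedge w \mapsto \rho([v,w]) - [\rho v,\rho w]$ (this is well-defined and functorial, as one checks on the level of $B$-points for $A$-algebras $B$), and $X_{(A,E,\m{Fil})}$ is the preimage of the zero section under this morphism; a zero section is a closed immersion, and both functors are representable by affine $T$-schemes of finite type (total spaces of vector sheaves, after pulling the Lie-algebra source back to a vector sheaf over $T$), so the pullback is a closed immersion.

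For part (2): a representation $r$ in $X_{(A,E,\m{Fil})}$ is flag, by Definition \ref{.225}, iff $\cl{E}$ is a vector sheaf (automatic here), each $\m{Fil}^r_i\cl{E}$ is a vector subsheaf, and formation of $\m{Fil}^r$ commutes with base change. I would first reduce, using Remark \ref{.381} locally and the fact that $X^\m{fl}$ is a subfunctor, to the situation where $\m{Fil}$ is the standard flag on $\ssf_T^n$ and $r$ factors through $\fk{n}_{n,T}$; then Proposition \ref{.38} identifies the flag locus with the locus where each full matrix entry $\la^r_{i,i+1}$, $i=1,\dots,n-1$, is surjective. Now $\la^r_{i,i+1}\colon \fk{g}_T \to \ssf_T$ is a section of $\Hom_{\ssf_T}(\fk{g}_T,\ssf_T) = (\fk{g}^\lor)_T$, i.e. an $A$-linear functional, and surjectivity of such a functional (equivalently, of the induced map out of a finite free module) is an open condition on the base: it fails precisely on the closed set where all coefficients vanish, so the locus where $\la^r_{i,i+1}$ is surjective is the complement of that closed set, pulled back along the morphism $X_{(A,E,\m{Fil})} \to (\fk{g}^\lor)_T^{\,n-1}$ given by $r \mapsto (\la^r_{1,2},\dots,\la^r_{n-1,n})$. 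Intersecting these $n-1$ open conditions gives that $X^\m{fl}_{(A,E,\m{Fil})} \hookrightarrow X_{(A,E,\m{Fil})}$ is an open immersion. The only point requiring care is the globalization over a non-affine or non-split base, but since we are working over affine $T$ and the filtration is split (it is locally free with locally free graded pieces), and since openness is local on the source and target, this causes no trouble.

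The main obstacle, such as it is, is bookkeeping rather than mathematics: in part (1) one must make sure the ``bracket-failure'' map is a genuine morphism of the representing schemes (not just a natural transformation of functors that happens to have closed vanishing-locus fiberwise), which is why I would phrase it via the zero section of an honest vector-sheaf total space; and in part (2) one must be careful that the flag condition is really \emph{equivalent} to the open surjectivity condition — this is exactly the content of Proposition \ref{.38}, so the work is already done — and that the auxiliary conditions (local-freeness of $\m{Fil}^r_i$ and base-change compatibility) are subsumed, which again is precisely what Proposition \ref{.38} and paragraphs \ref{.116341}--\ref{.11635} establish. With those two inputs quoted, both assertions follow formally.
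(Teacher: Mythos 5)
Your overall strategy coincides with the paper's: part (1) is the observation that bracket-preservation is a closed condition (expressed via a basis of $\fk{g}$, or, as you nicely rephrase, as the preimage of the zero section under the ``bracket-failure'' map to $\fHom(\wedge^2\fk{g}_T,\fk{n}_\m{Fil}(\cl{E}))$), and part (2) is the observation that the flag condition is the nonvanishing locus of the subdiagonal canonical matrix entries of the universal family, which is exactly the content of Proposition \ref{.38} (equivalently \ref{.11634} plus base-change of surjectivity). So the proposal reproduces the paper's proof, just spelled out in more detail.

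One parenthetical claim is wrong and worth flagging: ``$\fk{g}$ is finitely generated (and hence finite-dimensional)'' does not hold --- the free Lie algebra on two generators is finitely generated but infinite-dimensional, and indeed the paper considers such $\fk{g}$ in later sections. The paper itself is careful about this and only says the equations depend on a choice of basis (possibly infinite); a closed condition may be cut out by infinitely many equations. Your invariant formulation via the zero section survives this correction, but the subsidiary remark that the two $\fHom$ schemes are ``of finite type'' would be false for infinite-dimensional $\fk{g}$; it is unnecessary for the closed-immersion conclusion, since the zero section of the total space of a vector sheaf (of possibly infinite rank) is still a closed immersion. If one does want a finite-type statement one should note that any Lie algebra map $\fk{g}\to\fk{n}_\m{Fil}(\cl{E})$ factors through the finite-dimensional nilpotent quotient $\fk{g}/\fk{g}^{(n)}$ (for $\cl{E}$ of rank $n$), so $X_{(A,E,\m{Fil})}$ is unchanged by this replacement; this is how the paper implicitly recovers finiteness in Corollary \ref{.383722}. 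With that correction, your argument is sound and essentially identical to the paper's.
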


\begin{proof}
(1) Preservation of bracket is a closed condition defined by equations depending on the choice of a basis for $\fk{g}$. (2) Let
$$r_{ (A, E, \m{Fil} ) }:\fk{g}_{X_{( A, E, \m{Fil} )} }\rightarrow \fk{n}_\m{Fil}(\cl{E}_{X_{ (A, E, \m{Fil} ) } })$$
be the universal family and suppose $\cl{E}$ has rank $n$. Then $X^\m{fl}_{ ( A, E, \m{Fil} ) }$ is the open locus defined by the nonvanishing of each $\ka_{i,i+1}^{ r_{( A,E, \m{Fil} )} }$, $i=1,\dots, n-1$.
\end{proof}

\begin{cor} \label{.383722}
The functor $X^\m{fl}_{(A,E, \m{Fil})}$ is representable by a quasi-projective scheme; in particular, it is locally of finite presentation.
\end{cor}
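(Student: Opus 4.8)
The plan is to bootstrap from Proposition \ref{.383721}; its only shortcoming for the present purpose is that $\fHom_{\m{\bld{Mod}}(o_T)}(\fk{g}_T, \fk{n}_\m{Fil}(\cl{E}))$ is an infinite-dimensional affine space when $\fk{g}$ is infinite-dimensional, so the real work is to replace $\fk{g}$ by a finite-dimensional quotient before invoking that proposition.

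First I would record that $\fk{n}_\m{Fil}(\cl{E})$ is, Zariski-locally on $T$, isomorphic to $\fk{n}_{n,T}$ with $n = \operatorname{rank}\cl{E}$ (\ref{.381}), hence a nilpotent sheaf of Lie algebras; since vanishing of a term of the lower central series is a local condition, $(\fk{n}_\m{Fil}(\cl{E}))^{(n)} = 0$ globally. Any $T'$-point of $X_{(A,E,\m{Fil})}$ is a Lie-algebra homomorphism $f^{*}\fk{g} \to f^{*}\fk{n}_\m{Fil}(\cl{E})$ and therefore annihilates $f^{*}\fk{g}^{(n)}$, so that
\[ X_{(A,E,\m{Fil})} \;=\; \fHom_{\m{\bld{Lie}}(o_T)}\bigl((\fk{g}/\fk{g}^{(n)})_T,\ \fk{n}_\m{Fil}(\cl{E})\bigr). \]
Because $\fk{g}$ is finitely generated, $\fk{g}/\fk{g}^{(n)}$ is a finitely generated nilpotent Lie algebra, hence a finite-dimensional $k$-vector space.

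Next I would observe that $\fHom_{\m{\bld{Mod}}(o_T)}((\fk{g}/\fk{g}^{(n)})_T, \fk{n}_\m{Fil}(\cl{E}))$ is $\fHom$ between two finite-rank vector sheaves on $T$, hence the contravariant total space of a finite-rank vector sheaf on $T$ (\ref{.02}); as $T=\spec A$ is affine, this vector sheaf is a direct summand of some $\ssf_T^{N}$, so its total space embeds as a closed subscheme of $\bb{A}^{N}_{T}$ and is in particular quasi-projective and of finite presentation over $T$. By Proposition \ref{.383721}(1), applied verbatim with $\fk{g}/\fk{g}^{(n)}$ in place of $\fk{g}$, $X_{(A,E,\m{Fil})}$ is the closed subscheme cut out by the finitely many equations expressing preservation of the bracket on a finite basis of $\fk{g}/\fk{g}^{(n)}$, so it is again quasi-projective and of finite presentation over $T$. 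By Proposition \ref{.383721}(2), $X^\m{fl}_{(A,E,\m{Fil})}$ is the open subscheme where each of the $n-1$ canonical matrix entries $\ka_{i,i+1}$ of the universal family is surjective; its complement is a union of $n-1$ closed subschemes, each defined by finitely many equations (as $\fk{g}^\m{ab}$ is finite-dimensional and $T$ is quasi-compact), so $X^\m{fl}_{(A,E,\m{Fil})}$ is a quasi-compact open subscheme of the quasi-projective $T$-scheme $X_{(A,E,\m{Fil})}$. It is therefore itself quasi-projective and of finite presentation over $T$; in particular it is locally of finite presentation.

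There is no genuinely hard step: the argument is the formal combination of Proposition \ref{.383721} with the standard facts that a finite-rank vector sheaf on an affine base has quasi-projective total space and that a quasi-compact open subscheme of a quasi-projective scheme is quasi-projective. The one place a hypothesis on $\fk{g}$ is used — and the point that must not be skipped — is the reduction of the second paragraph, where finite generation of $\fk{g}$ together with the global nilpotence of the target $\fk{n}_\m{Fil}(\cl{E})$ brings the problem down to a finite-dimensional vector space.
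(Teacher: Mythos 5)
Your proof is correct, and it takes the same route as the paper (the corollary is stated without its own proof block, as an immediate consequence of Proposition \ref{.383721}) -- but you have correctly identified that this is not quite immediate, and you fill in the missing step. The paper's Proposition \ref{.383721} embeds $X_{(A,E,\m{Fil})}$ as a closed subfunctor of $\fHom_{\m{\bld{Mod}}(o_T)}(\fk{g}_T, \fk{n}_\m{Fil}(\cl{E}))$, which for infinite-dimensional $\fk{g}$ (a relevant case: the paper works with the free pronilpotent Lie algebra $\fk{n}(F)$ in \S\ref{ModW}) is not a finite-dimensional affine space, and hence not itself quasi-projective over $T$. Your reduction to the finitely generated nilpotent, hence finite-dimensional, quotient $\fk{g}/\fk{g}^{(n)}$, using that $\fk{n}_\m{Fil}(\cl{E})$ has vanishing $n^\m{th}$ lower central series, is exactly what is needed; the paper itself uses the identity $\Mnflg = \Mnfl(\fk{g}/\fk{g}^{(n)})$ later (\ref{.5578}), so this is an observation the author has available but does not invoke at the point of Corollary \ref{.383722}. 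Your quasi-compactness argument for the open subfunctor is also a genuine point that the terse statement glosses over: since $X_{(A,E,\m{Fil})}$ is affine (closed in the total space of a finite-rank vector sheaf over the affine $T$) and the flag locus is the complement of a closed subscheme cut out by a finitely generated ideal, it is a finite union of basic affine opens, hence quasi-compact and therefore quasi-projective and of finite presentation over $T$. The one place you could tighten the wording is that stating the complement of $X^\m{fl}$ is ``a union of $n-1$ closed subschemes, each defined by finitely many equations'' does not by itself yield quasi-compactness; the operative fact is the affineness of $X_{(A,E,\m{Fil})}$ together with the finite generation of the defining ideal of the union, from which quasi-compactness of the complementary open follows.
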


\begin{prop} \label{.38373}
Let $A$ be a Noetherian $k$-algebra, $(E, \m{Fil})$ a vector sheaf of rank $n$ filtered by a full flag of vector subsheafs, let $B$ be an $A$-algebra and let $\s{B}$ denote the directed system of finite type $A$-subalgebras of $B$. Then the map
\[ \lim_{\substack{ \longrightarrow \\ B' \in \s{B} }} X^\m{nd}_{(A,E, \m{Fil} )} (B') \to X^\m{nd}_{(A,E,\m{Fil})} (B)\]
is an isomorphism.
\end{prop}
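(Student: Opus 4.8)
The plan is to decompose this limit statement into three layers, mirroring the tower of subfunctors $X_{(A,E,\m{Fil})} \supset X^\m{fl}_{(A,E,\m{Fil})} \supset X^\m{nd}_{(A,E,\m{Fil})}$, and to dispose of the first two layers by standard finite-presentation arguments before attacking the genuinely new content, which lives in the passage from flag to nondegenerate. For the ambient functor $X_{(A,E,\m{Fil})}$ this is immediate: by Proposition \ref{.383721}(1) it is a closed subscheme of $\fHom_{\m{\bld{Mod}}(o_T)}(\fk{g}_T, \fk{n}_\m{Fil}(\cl{E}))$, which is an affine space (bounded in terms of $\dim\fk{g}$ and $n$) cut out by the finitely many bracket-preservation equations; such a scheme is of finite presentation over $A$, so its points commute with filtered colimits of $A$-algebras by EGA~IV. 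The same reasoning, via Corollary \ref{.383722}, handles $X^\m{fl}_{(A,E,\m{Fil})}$, which is quasi-projective and in particular locally of finite presentation over $A$. So it remains only to understand which flag representations over $B$ are nondegenerate, and to show this condition is detected on some finite type subalgebra.

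The core of the argument is then: given $B' \in \s{B}$ and a flag representation $r'$ over $B'$ whose base change $r = r'_B$ to $B$ is nondegenerate, find a larger $B'' \in \s{B}$ over which $r'_{B''}$ is already nondegenerate. I would induct on the rank $n$ of $\cl{E}$, the base case $n=1$ being vacuous (every flag representation of rank $1$ is trivial, hence nondegenerate). For the inductive step, unwind Definition \ref{.3836}: nondegeneracy of $r$ asserts (i) $r_{n-1}$ and $r^1$ are nondegenerate, (ii) every fiber of $r$ is nondegenerate in the sense of Definition \ref{.12}, and (iii) $\fAut r$ is flat. Conditions (i) are handled by the inductive hypothesis applied to the subquotients $r'_{n-1}$ and $r'^1$ of $r'$, after enlarging $B'$ twice. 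Condition (ii) is more delicate: it says that at every point $t$ of $\spec B$, the matrix-entry data of the fiber $r(t)$ satisfy whatever relations characterize minimality of $\dim\fAut$ among representations with nondegenerate codimension-one subquotients. Here I would argue that, by the concrete coordinate descriptions (as in the examples of \S\ref{Nondeg}), the locus in $X^\m{fl}_n$ of nondegenerate representations is a \emph{constructible} subset — indeed by the forthcoming results it is locally closed — so nondegeneracy of all fibers of $r'_{B''}$ follows once it holds after base change to $B$, provided $\spec B \to \spec B''$ is dominant on each component; more carefully, one replaces $B'$ by the finite type subalgebra generated by enough of $B$ to pin down the image.

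The step I expect to be the real obstacle is condition (iii), flatness of $\fAut r$. This is where the scheme-theoretic Lie algebra $n(r)$ from \S\ref{Aut} earns its keep, exactly as flagged in the introduction (``we use $n(r)$ again to produce a flattening stratification''). By Proposition \ref{.3831} and Claims \ref{.3833}--\ref{.3834}, $\fAut r = \bb{G}_{m,T}\times U(r)$ with $U(r)\cong n(r)$ as schemes, and by \ref{.3835}, $n(r) = \ker\V\Psi^\lor$ is the contravariant total space of the module $Q = \cok(\Psi^\lor)$ — that is, $n(r)$ is flat over $T$ exactly when $Q$ is flat, equivalently locally free. So flatness of $\fAut r$ over $\spec B$ translates into local freeness of a finitely presented module $Q_B$ obtained by base change from the finitely presented module $Q_{B'}$ built out of $r'$. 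Now I invoke the standard fact (EGA~IV again, or Raynaud--Gruson) that if $Q_{B'}\otimes_{B'}B$ is $B$-flat, then there is a finite type $B'$-subalgebra $B''$ of $B$ such that $Q_{B'}\otimes_{B'}B''$ is $B''$-flat: local freeness is a finitely-presented condition, detected by the vanishing of finitely many Fitting-ideal relations, and any such relation witnessed over $B$ is already witnessed over some $B''\in\s{B}$. Taking $B''$ large enough to simultaneously resolve (i), (ii) and (iii), and using the inductive hypothesis on the smaller ranks, $r'_{B''}$ is nondegenerate, and the limit map is surjective. Injectivity is automatic since it is already injective at the level of $X^\m{fl}$ (indeed at the level of the ambient affine scheme), because the $X^\m{nd}$'s are subfunctors. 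This completes the sketch; the one point requiring care in the write-up is making sure the constructibility input for (ii) does not secretly depend on the theorem being proved, which is why I would phrase (ii) directly via the fiberwise minimality of $\dim\fAut$ and reduce it, too, to a Fitting-ideal / semicontinuity statement about the module $Q$.
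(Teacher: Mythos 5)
Your overall architecture — peel off the ambient and flag layers by finite presentation, then induct on $n$ and treat conditions (i), (ii), (iii) of Definition \ref{.3836} in turn — matches the paper's, and your handling of injectivity, the flag layer via Corollary \ref{.383722}, and condition (i) by induction are all in line with what the paper does. Two remarks on the rest.

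For condition (iii), your appeal to the EGA IV filtered-colimit result (flatness of a finitely presented module $Q' \otimes_{B'} B$ forces flatness of $Q' \otimes_{B'} B''$ for some $B''$ in the directed system — this is \cite[11.2.6.1]{egaiv}, not Raynaud--Gruson, which is about flatification rather than descent) is a legitimate shortcut. The paper instead proves the needed special case by hand: over $B$ the finitely presented flat module $Q$ is projective, so the presentation $\chi:(\fk{n}_{\m{Fil}_B}\cl{E}_B)^\lor \twoheadrightarrow Q$ splits; two auxiliary lemmas (\ref{.38375}, \ref{.38376}) descend that splitting to some $B''$, whence $Q'\otimes B''$ is projective hence flat. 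Your route is shorter; the paper's is self-contained. Either works.

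For condition (ii), the constructibility argument as you state it has a genuine gap. Even granting that the nondegenerate locus in $X^\m{fl}_n$ is locally closed (which is the \emph{conclusion} of Proposition \ref{.39} and relies on the statement you are trying to prove), a locally closed or constructible subset of $\spec B''$ that contains the dense image of $\spec B$ need not be all of $\spec B''$: take $B'' = k[x]$, $B = k[x,x^{-1}]$, and the open set $x \neq 0$. Density propagates along closed sets, not along open or constructible ones. Your final sentence gestures at the correct repair, and it is exactly what the paper does, so let me make it precise: only after (iii) is secured does one know $Q'\otimes B''$ is flat and finitely presented, hence projective, hence $\dim (Q'\otimes B'')(t)$ is \emph{locally constant} on $\spec B''$. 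Then $\dim \fAut r'_{B''}(t) = \dim (Q'\otimes B'')(t) + 1$ is locally constant, it equals $A(\fk{g},n)$ on the image of $\spec B$, and since $B'' \hookrightarrow B$ is injective that image is dense and so meets every (open-and-closed) connected component of $\spec B''$; hence $\dim\fAut r'_{B''}(t) = A(\fk{g},n)$ everywhere, giving (ii). In short, (iii) must be proved first and (ii) falls out of it; the order in your sketch should be reversed, and the constructibility detour dropped.
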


The proof follows (\ref{.38374} -- \ref{.38378}).

\begin{unmarked} \label{.38374}
For injectivity, given representations $r': \fk{g}_{B'} \to \fk{n}_{\m{Fil}}(\cl{E}_{B'})$, $r'': \fk{g}_{B''} \to \fk{n}_\m{Fil}(\cl{E}_{B''})$ such that
$$\m{id}_B \otimes_{B'} r' = \m{id}_B \otimes_{B''} r'' \, ,$$
let $B'''$ be the subalgebra generated by $B'$ and $B''$; then
$$\m{id}_{B'''} \otimes_{B'} r' = \m{id}_{B'''} \otimes_{B''} r'' \, .$$ 

We turn to surjectivity. Let $r : \fk{g}_B \to B \otimes \fk{n}_\m{Fil} \cl{E} = \fk{n}_{\m{Fil}_B}\cl{E}_B$ be a nondegenerate nilpotent representation. Then by \ref{.383722} there exists a $B' \in \s{B}$ and a flag representation $r': \fk{g}_{B'} \to \fk{n}_{\m{Fil}_{B'}}\cl{E}_{B'}$ such that $r = \m{id}_B \otimes_{B'} r'$. Assume for an induction on $n$ that after possibly replacing $B'$ by a finite type subalgebra of $B$ containing $B'$, $r'_{n-1}, r'^1$ are nondegenerate. Fix a basis $v_1, \dots, v_m$ for $\fk{g}$ and define 
$$\Psi: \fk{n}_{\m{Fil}_B}\cl{E}_B \to (\fk{n}_{\m{Fil}_B} \cl{E}_B)^{\oplus m}$$
by
$$\phi \mapsto ([\phi, rv_1], \dots, [\phi, rv_m])$$
and define $\Psi'$ similarly for $r'$ so that writing
\[ \xymatrix{
0										&
Q  \ar[l] \POS c+(3,1.6)="s"					&
( \fk{n}_{\m{Fil}_B} \cl{E}_B )^\lor \ar[l]^-\chi	 \POS p+(-10,1.5) = "t"					&
( \fk{n}_{\m{Fil}_B} \cl{E}_B )^{\oplus m \lor} \ar[l]_-{\Psi^\lor}			\\
0	&
Q' \ar[u]^-\alpha \ar[l]	&
( \fk{n}_{\m{Fil}_{B'}} \cl{E}_{B'} ) ^\lor \ar[l]^-{\chi'} \ar[u]	&
( \fk{n}_{\m{Fil}_{B'}} \cl{E}_{B'} ) ^{\oplus m \lor} \ar[l]^-{\Psi'^\lor} \ar[u]
\ar @/^3pt/ "s";"t" ^-\sigma	}
\]  
we have $n(r) = \V Q$, $n(r') = \V Q'$ (\ref{.3835}). The $B$-module $Q$ is flat and of finite presentation, hence projective, so that $\chi$ splits; fix a splitting $\sigma$ as in the diagram. Our goal is to show that after possibly replacing $B'$ by a finite type $B'$-subalgebra of $B$, $\chi'$ splits.
\end{unmarked}

\setcounter{equation}{0}
\begin{lm} \label{.38375}
Disengaging briefly from the notation of the proposition, let $B'$ be a Noetherian ring, let $B$ be a $B'$-algebra, let $N'$ be a finite $B'$-module, and consider an element $n' \in N'$. If $1_B \otimes_{B'} n' = 0$, then there exists a finite type subalgebra $B'' \subset B$ such that $1_{B''} \otimes_{B'} n' =0$.
\end{lm}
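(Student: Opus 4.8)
The plan is to unwind the hypothesis $1_B \otimes_{B'} n' = 0$ into a finite collection of equations that already hold over a finitely generated subalgebra. First I would pick a finite generating set $x_1, \dots, x_p$ for $N'$ over $B'$, so that every element of $B \otimes_{B'} N'$ is a $B$-linear combination $\sum_i b_i \otimes x_i$. The element $1_B \otimes n'$ has such an expression coming from writing $n' = \sum_i c_i x_i$ with $c_i \in B'$, namely $1_B \otimes n' = \sum_i \bar c_i \otimes x_i$ where $\bar c_i$ is the image of $c_i$ in $B$.

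The subtlety is that $1_B \otimes n'$ being zero in $B \otimes_{B'} N'$ does not say that the coefficients $\bar c_i$ vanish — it says the tuple lies in the submodule of relations. So the key step is to present $N'$: choose a finite presentation $(B')^q \xrightarrow{M} (B')^p \to N' \to 0$, which exists because $B'$ is Noetherian and $N'$ is finite. Then $B \otimes_{B'} N'$ has presentation $B^q \xrightarrow{M} B^p \to B \otimes N' \to 0$, and $1_B \otimes n' = 0$ means the vector $(\bar c_1, \dots, \bar c_p) \in B^p$ lies in the image of $M: B^q \to B^p$. Hence there exist $d_1, \dots, d_q \in B$ with $(\bar c_1, \dots, \bar c_p) = M \cdot (d_1, \dots, d_q)^{t}$, an honest finite list of equations in the $d_j$ with coefficients the entries of $M$ (which come from $B'$) and the $\bar c_i$ (also images of elements of $B'$).

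Now I would simply take $B'' := B'[d_1, \dots, d_q] \subset B$, the $B'$-subalgebra generated by these finitely many witnesses; it is of finite type over $B'$. Over $B''$ the same equations $(\bar c_1, \dots, \bar c_p) = M \cdot (d_1, \dots, d_q)^t$ hold, so the vector $(\bar c_1, \dots, \bar c_p)$ lies in the image of $M : (B'')^q \to (B'')^p$, which is exactly the statement that $1_{B''} \otimes_{B'} n' = 0$ in $B'' \otimes_{B'} N'$. This completes the argument.

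I do not expect a serious obstacle here; the only point requiring care is resisting the temptation to conclude the coefficients $\bar c_i$ themselves vanish. Everything hinges on the Noetherian hypothesis on $B'$, which guarantees the module of relations is finitely generated so that the finite presentation — and hence the finite list of witnessing equations — exists.
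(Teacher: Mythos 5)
Your proof is correct and follows the same essential strategy as the paper's: vanishing in $B \otimes_{B'} N'$ is witnessed by a finite amount of data, which then already lives over a finite type subalgebra. The only difference is that the paper directly cites Eisenbud's Lemma 6.4 (which asserts that $\sum b_i \otimes n'_i = 0$ forces the existence of finitely many $a_{ij} \in B'$ and $c_j \in B$ with $\sum_j a_{ij}c_j = b_i$ and $\sum_i a_{ij} n'_i = 0$), whereas you reprove this fact from scratch by choosing a finite presentation of $N'$ and exhibiting the witnesses $d_j$ explicitly. A minor observation: the cited lemma only needs $N'$ finitely generated (not finitely presented), so strictly speaking the Noetherian hypothesis is not essential for this lemma and is only carried along from the surrounding proposition; your version genuinely invokes it to obtain the finite presentation. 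Either way the argument is sound and the finite type subalgebra generated by the witnesses does the job.
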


\begin{proof}
Fix a finite family $\{ n'_i \}$ of generators for $N'$, and write
\[ 1_B \otimes_{B'} n' = \sum_i b_i \otimes_{B'} n'_i \]
Then by \cite[6.4]{eisenbud}, there are elements $a_{i,j} \in B'$ and $c_j \in B$ such that
\begin{eqnarray}
\sum_j a_{ij} c_j &=& b_i \hspace{11ex} \m{for\,all} \; i \\
\m{and} \hspace{12ex} \sum_i a_{ij} n'_i &=& 0 \hspace{12ex} \m{for \, all} \; j \, .
\end{eqnarray}
Let $B''$ be the subalgebra generated over $B'$ by the (finitely many) $c_j$. Then again by \cite[6.4]{eisenbud}, $1_{B''} \otimes_{B'} n' =0$ as claimed.
\end{proof}

\begin{lm} \label{.38376}
In the notation of \ref{.38375}, let $Q'$ be a finitely presented $B'$-module, and let $\sigma$ be a morphism
$$Q := B \otimes_{B'} Q' \to N := B \otimes_{B'} N' \, .$$
Then after possibly replacing $B'$ by a finite type subalgebra of $B$ there exists a morphism
$$\sigma': Q' \to N'$$
such that $\sigma = \m{id}_B \otimes_{B'} \sigma'$.
\end{lm}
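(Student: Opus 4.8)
The plan is to prove Lemma \ref{.38376} by reducing it to Lemma \ref{.38375}: extending a single morphism of modules amounts to killing a single element in the cokernel of the relevant presentation, and that element becomes zero after base change to $B$, so by the previous lemma it becomes zero already over a finite-type subalgebra of $B$. I must be a little careful about the fact that $N'$ need not be finite (only $Q'$ is assumed finitely presented), so the argument should go through $\Hom$-modules built out of the presentation of $Q'$.

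First I would fix a finite presentation of $Q'$, say an exact sequence $B'^{\oplus q} \xrightarrow{M} B'^{\oplus p} \to Q' \to 0$ where $M$ is a $p \times q$ matrix over $B'$. Applying $\Hom_{B'}(-, N')$ gives a left exact sequence
\[
0 \to \Hom_{B'}(Q', N') \to N'^{\oplus p} \xrightarrow{M^{\mathsf t}} N'^{\oplus q} \, ;
\]
so a morphism $Q' \to N'$ is the same thing as a tuple $(m_1, \dots, m_p) \in N'^{\oplus p}$ annihilated by the transposed matrix $M^{\mathsf t}$. Since $B$ is flat over $B'$, the analogous sequence for $Q, N$ over $B$ is obtained by applying $B \otimes_{B'} -$, and the given $\sigma \colon Q \to N$ corresponds to a tuple $(n_1, \dots, n_p) \in N^{\oplus p} = B \otimes_{B'} N'^{\oplus p}$ with $M^{\mathsf t}(n_1,\dots,n_p) = 0$.

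Next I would choose, for each $i$, an expression $n_i = \sum_j b_{ij} \otimes_{B'} n'_{ij}$ with $b_{ij} \in B$ and $n'_{ij} \in N'$, using only finitely many terms. Let $B_0'$ be the finite-type $B'$-subalgebra of $B$ generated by all the $b_{ij}$; then the tuple $(n_1, \dots, n_p)$ comes from a tuple $(\tilde n_1, \dots, \tilde n_p) \in N'^{\oplus p}_{B_0'} := B_0' \otimes_{B'} N'^{\oplus p}$, which need not yet be annihilated by $M^{\mathsf t}$, but becomes so after $\otimes_{B_0'} B$. Now I apply Lemma \ref{.38375}, with $B_0'$ in place of $B'$ and with the finite $B_0'$-module $N'^{\oplus q}_{B_0'}$ (note $N'^{\oplus q}_{B_0'}$ is indeed finite over $B_0'$ since $B_0'$ is finite type over $B'$ and $N'$ is finite over $B'$), to the element $M^{\mathsf t}(\tilde n_1, \dots, \tilde n_p)$: there is a finite-type subalgebra $B'' \subset B$ containing $B_0'$ over which this element already vanishes. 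Replacing $B'$ by $B''$ (which is finite type over $B$, hence admissible), the tuple $(\tilde n_1, \dots, \tilde n_p)$ base-changed to $B''$ now defines a morphism $\sigma' \colon Q'_{B''} \to N'_{B''}$ by left exactness, and by construction $\m{id}_B \otimes_{B''} \sigma' = \sigma$, as desired.

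The only genuine subtlety—the step I would flag as requiring care rather than being purely formal—is the finiteness bookkeeping: Lemma \ref{.38375} demands a finite module over a Noetherian ring, so I need $B_0'$ Noetherian (true: finite type over the Noetherian ring $B'$) and $N'^{\oplus q}_{B_0'}$ finite over it (true for the same reason), and I must pass through the intermediate algebra $B_0'$ rather than trying to apply the lemma directly over $B'$, since the relevant element of $N'^{\oplus q}$ is only defined after adjoining the coefficients $b_{ij}$. Everything else is a routine unwinding of the $\Hom$-exact sequence and flatness of $B$ over $B'$.
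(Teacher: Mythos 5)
Your proof is correct and follows essentially the same route as the paper's: fix a finite free presentation of $Q'$, lift the composite $F'_0\to Q\to N$ to a map into $N'$ over some finite-type subalgebra by expressing the images of the basis vectors as finite tensors, and then invoke Lemma \ref{.38375} to kill the composite with $F'_1$ over a further finite-type enlargement, so that the map factors through $Q'$; your matrix phrasing is just a coordinate version of the paper's commutative-diagram argument. One small slip: you write ``Since $B$ is flat over $B'$,'' but no flatness is assumed in \ref{.38375} or available in the application; fortunately the conclusion you draw there (that $\sigma$ corresponds to a tuple in $N^{\oplus p}$ killed by $M^{\mathsf t}$) follows simply from left-exactness of $\Hom_B(-,N)$ applied to the right-exact base-changed presentation $B^{\oplus q}\to B^{\oplus p}\to Q\to 0$, so the flatness remark should just be deleted.
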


\begin{proof}
Fix a finite presentation
$$F'_1 \to F'_0 \to Q' \to 0 \, ,$$
and drop the primes to denote base-change to $B$:
\[\xymatrix@!0{
 0 & & 0\\ \\
 Q' \ar[uu] \ar[rr] & & Q \ar[uu] \ar[dr]\\
 & N'\ar[rr]|\hole & & N\\
 F_0' \ar[uu] \ar[rr] \ar@{.>}[ur] & & F_0 \ar[uu]\\ \\
 F_1'\ar[uu]\ar[rr] & & F_1\ar[uu]\\ \\
 B'\ar[rr] & & B
}\]
Since $F'_0$ is free and finite, after possibly replacing $B'$ by a finite type subalgebra of $B$, there is a map $\beta':F'_0 \to N'$ commuting with $\sigma \epsilon$ as in the diagram. Now
$$(F'_1 \to F'_0 \to N' \to N) =0 \, ,$$
so by \ref{.38375}, after possibly replacing $B'$ by a finite type subalgebra of $B$,
$$(F'_1, \to F'_0 \to N') = 0 \, .$$
Subsequently, $\beta'$ factors through $Q'$ to produce the desired morphism. 
\end{proof}

\begin{unmarked} \label{.38377}
Returning to the situation of the proposition, after possibly replacing $B'$ by a finite type subalgebra of $B$ containing $B'$, we obtain a candidate $\sigma': Q' \to (\fk{n}_{\m{Fil}_{B'}} \cl{E}_{B'})^\lor$ for our desired splitting. Now since 
\[ ( Q' \to (\fk{n}_{\m{Fil}_{B'}} \cl{E}_{B'})^\lor \to Q' \to Q ) - ( q' \mapsto 1_B \otimes q') =0 \]
and since $Q'$ is of finite type, by \ref{.38375}, after possibly replacing $B'$ by a finite type subalgebra of $B$ containing $B'$, we have $\chi' \sigma' = \m{id}_{Q'}$, giving us our desired splitting.
\end{unmarked}

\begin{unmarked} \label{.38378}
Finally, we have that $\fAut r'$ is flat, and also that $\dim (\fAut r'(t)) = \dim Q'(t) +1$ is locally constant on $T'= \spec B'$. Since the image of $T' \leftarrow T$ is dense, it follows that $\dim (\fAut r'(t)) = A(\fk{g}, n)$ for all $t \in T'$ which completes the proof.
\end{unmarked}

\begin{prop} \label{.39}
The inclusion $X_n^\m{nd} \hookrightarrow X_n^\m{fl}$ is an immersion. In particular, $X^\m{nd}_n$ is representable by a quasi-projective scheme.
\end{prop}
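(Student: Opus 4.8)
The plan is to show that $X_n^\m{nd} \hookrightarrow X_n^\m{fl}$ is an immersion by exhibiting $X_n^\m{nd}$ as a locally closed subfunctor cut out by conditions on the universal flag representation $r^\m{fl}$ over $X_n^\m{fl}$. First I would recall from \ref{.383722} that $X_n^\m{fl}$ is representable by a quasi-projective (hence Noetherian) scheme, so it suffices to find a locally closed subscheme of $X_n^\m{fl}$ whose functor of points agrees with $X_n^\m{nd}$; by \ref{.38373} and standard limit arguments the nondegeneracy locus is compatible with filtered colimits of rings, so a locally closed subscheme $Z$ of $X_n^\m{fl}$ with $Z(B') = X_n^\m{nd}(B')$ for all finite type $k$-algebras $B'$ will automatically satisfy $Z(B) = X_n^\m{nd}(B)$ for all $k$-algebras $B$. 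Thus the whole argument can be carried out over Noetherian bases.

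The construction of $Z$ proceeds by induction on $n$. For $n = 1$ every flag representation is trivial and nondegenerate, so $Z = X_1^\m{fl}$. For the inductive step, passing to a codimension-one subquotient on each side defines two morphisms $X_n^\m{fl} \to X_{n-1}^\m{fl}$ (take $r \mapsto r_{n-1}$ and $r \mapsto r^1$); by the inductive hypothesis $X_{n-1}^\m{nd} \hookrightarrow X_{n-1}^\m{fl}$ is an immersion, so pulling back along these two maps and intersecting produces a locally closed subscheme $Y \subset X_n^\m{fl}$ parametrizing flag representations whose two codimension-one subquotients are both nondegenerate; this takes care of condition (i) of \ref{.12} and the recursive part. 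Condition (ii) of \ref{.12} — minimality of $\dim \fAut r$ among representations defined over fields and satisfying (i) — is imposed, on the scheme $Y$, by a flattening stratification for $\fAut r$. Here is where $n(r)$ enters: over $Y$ we have $n(r^\m{fl}) = \ker \V \Psi^{\lor}$ as in \ref{.3835}, which is the contravariant total space of the coherent module $Q = \cok \Psi^\lor$, and $\dim \fAut r(t) = \dim_{\kappa(t)} Q(t) + 1$ by \ref{.3831}. By the theory of flattening stratifications (on the Noetherian scheme $Y$), there is a locally closed subscheme $Y_d \subset Y$ over which $Q$ is locally free of the appropriate minimal rank $A(\g,n) - 1$; the constructible set of points where $\dim_{\kappa(t)} Q(t)$ attains this minimum is precisely the underlying set of $Y_d$, and we take $Z := Y_d$, with condition (iii) of \ref{.3836} (flatness of $\fAut r$) automatically satisfied there. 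One then checks that $Z(B')$ for $B'$ of finite type over $k$ is exactly $X_n^\m{nd}(B')$: a flag representation over $B'$ has flat automorphism group with fibers of the minimal dimension if and only if it factors through $Z$, using that the minimal fiber dimension over $\operatorname{Spec} B'$ can be detected on a dense set of closed points whose residue fields are finite over $k$.

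The main obstacle I anticipate is the careful bookkeeping needed to identify the flattening-stratification locus functorially — that is, to show that a $B'$-point of $Y$ lands in $Z$ if and only if $Q_{B'}$ is locally free of the minimal rank, which in turn must be shown equivalent to the fiberwise minimality condition (ii) plus flatness condition (iii) of Definition \ref{.3836}. This requires knowing that $A(\g,n)$ is genuinely the minimum of $\dim \fAut r$ over all fields (which is built into \ref{.1222}), that the generic fiber dimension of a flat $Q_{B'}$ over an integral base equals the minimum over closed points, and that for non-flat $Q_{B'}$ the generic rank is strictly below the special ranks — all of which are standard semicontinuity facts but need to be assembled without circularity against the recursive definition. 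The parenthetical conclusion, that $X_n^\m{nd}$ is representable by a quasi-projective scheme, is then immediate: a locally closed subscheme of the quasi-projective scheme $X_n^\m{fl}$ is quasi-projective.
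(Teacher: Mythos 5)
Your proposal follows essentially the same route as the paper: induct on $n$, pull back $X^\m{nd}_{n-1}$ along the two codimension-one subquotient maps to form an intermediate scheme, then impose the automorphism-group conditions via a flattening stratification of the Fitting ideals of the module $Q$ with $n(r) = \V Q$ (as in \ref{.3835}), and finally reduce to the Noetherian case via \ref{.38373} and the finite-presentation observation. The paper makes the flattening stratification explicit in \ref{.3904} and carries out the bookkeeping you flag as the main obstacle in \ref{.395}--\ref{.396}, but the structure of the argument is the same as yours.
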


The proof follows in paragraphs \ref{.3904} -- \ref{.396}. We begin by recalling the theory of Fitting ideals.

\Proposition{.3904}{ Let $X$ be a Noetherian scheme and $\cl{F}$ a coherent sheaf. Then there exists a stratification 
$$s:\coprod_i X_i \to X$$
of $X$ by immersed subschemes such that given a morphism
$$g: T \to X \,,$$
\begin{itemize}
\item[(i)] if $T= \spec l$ with $l$ a field, then $g$ factors through $X_i$ if and only if 
$$\dim g^*\cl{F} = i \,,$$ and
\item[(ii)] if $T$ is Noetherian, then $g$ factors through $s$ if and only if $s^*\cl{F}$ is flat.
\end{itemize}
 }
 
 \begin{proof}
Since formation of the hypothetical stratification is compatible with base change, we may assume $X= \spec A$ to be affine. Write $F := \Gamma(X,\cl{F})$ for the associated $A$-module. Let $\m{Fitt}_i(F)$ denote the $i^\m{th}$ Fitting ideal of $F$ (\cite[20.4]{eisenbud}) and let 
$$X_i := Z(\m{Fitt}_i(F))$$
denote its zero locus. Compatibility with base change is immediate from the definition. Thus, fixing $g$ as in the theorem, we may assume $T = \spec B$ is affine and connected. Let $s_i$ denote the immersion
$$X_i \hookrightarrow X \,.$$
Suppose that $g$ factors through $s$. Then $g$ factors through $s_i$ for some $i$. Then (again using compatibility with base change),
\begin{align}
\m{Fitt}_i \, B \otimes F = 0
&&\text{and}&&
\m{Fitt}_{i+1} \, B \otimes F = (1)
\label{.3905}
\end{align}
so by \cite[20.8]{eisenbud}, $B\otimes F$ is projective, hence flat. Conversely, suppose $B\otimes F$ is flat. Since $B \otimes F$ is finitely presented, $B \otimes F$ is also projective; and since $\spec B$ is connected, $B \otimes F$ has constant rank, say $i$. Then by loc. cit., \ref{.3905} holds, from which it follows that $g$ factors through $s_i$, hence through $s$. This shows that the stratification we've defined satisfies (ii). Finally, (i) is immediate from the definition.
\end{proof}

\begin{unmarked} \label{.392}
Assume for an induction on $n$ that for $i \le n$, $X^\m{nd}_i \hookrightarrow X^\m{fl}_i$ is an immersion, and moreover that the two composites in
\[\xymatrix{
X^\m{nd}_i \ar[r] \ar@<-.7ex>@{.>}[d] \ar@<.7ex>@{.>}[d] &  X^\m{fl}_i \ar@<-.7ex>[d] \ar@<.7ex>[d] \\
X^\m{nd}_{i-1} \ar[r] & X^\m{fl}_{i-1} 
}\]
factor as shown. Define $X'_{n+1}$ by the Cartesian square
\[\xymatrix{
X'_{n+1} \ar[r] \ar[d] & X^\m{fl}_{n+1} \ar[d] \\
X^\m{nd}_n \times_{X^\m{nd}_{n-1}} X^\m{nd}_n \ar[r] & X^\m{fl}_n \times_{X^\m{fl}_{n-1}} X^\m{fl}_n
}\]
and let $r'_{n+1} : \fk{g}_{X'} \rightarrow \fk{n}_{n+1, X'}$ be its universal family.
\end{unmarked}

\begin{unmarked} \label{.394}
Write $n(r'_{n+1}) = \V Q$, combining \ref{.3835} with \ref{.02} as above. The Fitting ideals 
of $Q$ define a flattening stratification $X'_{n+1} = \cup (X'_{n+1})_i$ of $\fAut r'_{n+1}$. Each $(X'_{n+1})_i \subset X'_{n+1}$ is an immersed suscheme; if $T = \spec l$ is a field then $g: T \to X'_{n+1}$ lands in $(X'_{n+1})_i$ if and only if $\dim g^* \fAut r'_{n+1} = i$; and if $T$ is Noetherian, then $g: T \to X'_{n+1}$ factors through $\coprod (X'_{n+1})_i$ if and only if $g^* \fAut r'_{n+1}$ is flat. We claim that $X^\m{nd}_{n+1} = (X'_{n+1})_{A(\fk{g}, n+1)}$.
\end{unmarked}

\begin{unmarked} \label{.395}
Suppose first that $T = \spec B$ is an affine Noetherian $k$-scheme, suppose $g: T \rightarrow X^\m{fl}_{n+1}$ factors through $(X'_{n+1})_{A(\fk{g}, n+1)}$ and let $r$ be the corresponding representation over $T$. Then it is clear that $r_n$, $r^1$ are both nondegenerate, hence also in particular fiberwise-nondegenerate in the sense of \ref{.12}. Thus if $t \in T$ then $r(t)_n$, and $r(t)^1$ are nondegenerate in the sense of \ref{.12}, and $\dim \fAut r(t) = A(\fk{g}, n+1)$. Hence $r(t)$ is nondegenerate in the sense of \ref{.12}. Finally, it is clear that $\fAut r$ is flat.

Conversely, suppose $r: \fk{g}_T \rightarrow \fk{n}_{n+1, T}$ is nondegenerate ($T$ Noetherian) and let $g: T \rightarrow X^\m{fl}_{n+1}$ be the corresponding map. It is clear that $g$ factors through $X'_{n+1}$. Flatness of $\fAut r$ implies that $g$ factors through $\coprod_i (X'_{n+1})_i$. Finally, the fiberwise condition implies that set-theoretically $g$ factors though $(X'_{n+1})_{A(\fk{g}, n+1)}$, from which it follows that the previous (scheme-theoretic) factorization was actually a factorization through $(X'_{n+1})_{A(\fk{g}, n+1)}$.
\end{unmarked}

\begin{unmarked} \label{.396}
If $T = \spec B$ is not Noetherian, let $\s{B}$ be the system of finite type subalgebras. Then it follows from \ref{.38373}, from \ref{.395}, and finally from the fact that $(X'_{n+1})_{A(\fk{g},n+1)}$ is finite type over a field, hence in particular locally of finite presentation, that
\begin{align*}
X_{n+1}^\m{nd}(B) &= \lim_{\substack{ \longrightarrow \\ B' \in \s{B} }} X_{n+1}^\m{nd}(B') \\ 
&= \lim_{\substack{ \longrightarrow \\ B' \in \s{B} }} (X'_{n+1})_{A(\fk{g}, n+1)} (B') \\ 
&= (X'_{n+1})_{A(\fk{g}, n+1)} (B)
\end{align*}
(\cite[Ch. 3, Prop. 8.14.2.1]{egaiv}). This completes the proof of \ref{.39}.
\end{unmarked}

\begin{rmk} \label{.3961}
It follows from the construction that although $X^\m{nd} \hookrightarrow X^\m{fl}$ may not be an open immersion, it is close to an open immersion in the sense that it factors as a surjective closed immersion followed by an open immersion. 
\end{rmk}

\begin{rmk} \label{.3962}
Although Proposition \ref{.38373} falls short of stating that $X_n^\m{nd}$ is locally of finite presentation, it follows from the construction of \ref{.39} that it is locally of finite presentation after all.

\end{rmk}

We now discuss moduli stacks and rigidification (\ref{.3837} -- \ref{.3982}), obtaining in particular our main result for this section (\ref{.398}) as a corollary of the work completed above.

\begin{dfn} \label{.3837}
We let $\Mnflg$ denote the category fibered in groupoids over $\mathbf{Aff}(k)$ whose objects are flag representations of rank $n$. Thus an object is a pair $(T, r)$, with $T\in \m{\textbf{Aff}}(k)$ and $r:\fk{g}_T\rightarrow \iEnd{\cl{E}}$ a flag representation. A morphism $(T', r')\rightarrow (T, r)$ is a pair $(f, \phi)$ where $f:T'\rightarrow T$ is a map of affine schemes and $\phi:f^*r\rightarrow r'$ is an isomorphism of representations. We let $\Mnndg$ denote the fibered subcategory of $\Mnflg$ whose objects are nondegenerate nilpotent representations. When there is no risk of confusion we write simply $\Mnfl$ and $\Mnnd$.
\end{dfn}

\begin{prop} \label{.38371}
Both $\Mnflg$ and $\Mnndg$ are stacks for the fppf topology. 
\end{prop}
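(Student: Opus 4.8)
The plan is to verify the two defining properties of a stack over $\mathbf{Aff}(k)$ with the fppf topology: that for affine $T$ and flag representations $r,r'$ over $T$ the presheaf $\underline{\mathrm{Isom}}(r,r')$ on $\mathbf{Aff}(T)$ is an fppf sheaf, and that fppf descent data are effective. Since the fppf covers in question are finite families $\{T_i\to T\}$ of affine schemes, we may at each stage replace such a family by the single faithfully flat finitely presented morphism $\coprod_i T_i\to T$ and work with modules. The first property is the easy one: an isomorphism $r\to r'$ is in particular a homomorphism $\cl{E}\to\cl{E}'$ of the underlying $\ssf_T$-modules, and $\underline{\mathrm{Isom}}(r,r')$ is the subpresheaf of $S\mapsto\Hom_{\ssf_S}(\cl{E}_S,\cl{E}'_S)$ — which is representable, hence an fppf sheaf, since $\cl{E}$ and $\cl{E}'$ are vector sheaves — cut out by the $\fk{g}$-equivariance equations (a closed condition) and invertibility (an open condition); as both conditions are fppf-local, faithfully flat descent for homomorphisms of quasi-coherent modules passes to this subpresheaf.

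For effective descent over $\Mnflg$, fix a faithfully flat finitely presented $f\colon T'\to T$ and a flag representation $r'$ over $T'$ carrying a descent datum $\phi\colon p_1^*r'\xrightarrow{\sim}p_2^*r'$ satisfying the cocycle condition. The underlying module $\cl{E}'$ with its induced descent datum glues, by fppf descent for quasi-coherent sheaves, to a quasi-coherent $\cl{E}$ on $T$; since local freeness of finite rank descends along fpqc morphisms, $\cl{E}$ is a vector sheaf. The Lie action $r'\colon\fk{g}_{T'}\to\iEnd\cl{E}'$ is a morphism of quasi-coherent sheaves compatible with the (pulled-back) descent datum on $\fk{g}$ and the descent datum on $\iEnd\cl{E}'$, so it glues to $r\colon\fk{g}_T\to\iEnd\cl{E}$; bracket-preservation holds because it holds after the faithfully flat base change $f$. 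Thus $r$ is a representation with $f^*r\cong r'$, and it remains to see that $r$ is flag.

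This last point is where I expect the main work to lie. The statement to prove is a descent-of-flagness lemma: if $f\colon T'\to T$ is faithfully flat and $f^*r$ is flag, then $r$ is flag and $f^*\m{Fil}^r=\m{Fil}^{f^*r}$. I would argue by recursion on the filtration index, using three facts: for a choice of Lie algebra generators $v_1,\dots,v_m$ of $\fk{g}$, the $0$-eigenspace of a representation $s\colon\fk{g}_S\to\iEnd\cl{F}$ is $\ker\bigl(\cl{F}\to\cl{F}^{\oplus m}\bigr)$, whose formation commutes with flat base change; local freeness of finite rank descends along fpqc morphisms; and subsheaves, as well as vanishing of modules, are detected after faithfully flat base change. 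At the inductive step, flatness of $f$ identifies $f^*$ of the $j$-th relevant $0$-eigenspace with the corresponding eigenspace of $f^*r$, which by flagness of $f^*r$ is a line subsheaf with locally free quotient; descending then shows $\m{Fil}^r_{j+1}\cl{E}$ is a vector subsheaf of $\cl{E}$. To see that its formation commutes with an arbitrary base change $g\colon S\to T$, one pulls the situation back along the faithfully flat $S\times_T T'\to S$, where formation of the filtration of $f^*r$ commutes with base change by Definition \ref{.225}(iii) and where the two candidate subsheaves of $g^*\cl{E}$ become equal, and then descends the equality. Applying the lemma with $f=\coprod_i T_i\to T$, so that $r|_{T_i}\cong r_i$ is flag, shows $r$ is flag and completes effective descent for $\Mnflg$.

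Finally, $\Mnndg$ is the full fibered subcategory of $\Mnflg$ on the nondegenerate nilpotent representations, so its $\underline{\mathrm{Isom}}$ presheaves are inherited and only effective descent of the property ``nondegenerate nilpotent'' (Definition \ref{.3836}) remains, for a flag $r$ over $T$ with each $r|_{T_i}$ nondegenerate nilpotent. Condition (i) follows by induction on the rank, applied to the flag subquotients $r_{n-1}$ and $r^1$, the base case being that triviality of a one-dimensional representation is fppf-local; condition (ii) follows because $\coprod_i T_i\to T$ is surjective, so each fiber of $r$ is a base-field extension of a fiber of some $r_i$, and nondegeneracy in the sense of \ref{.12} — together with the invariant $A(\fk{g},\cdot)$ it involves — is insensitive to extension of the base field; and condition (iii) follows because $\fAut r=\bb{G}_{m,T}\times U(r)$ is affine over $T$ with $U(r)\cong n(r)=\V Q$ the total space of a quasi-coherent $\ssf_T$-module (by \ref{.3831} and \ref{.3834}), its formation commutes with base change, and flatness of the associated $\ssf_T$-algebra descends along $\coprod_i T_i\to T$ from its flatness over each $T_i$. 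Hence $r$ is nondegenerate nilpotent, and both fibered categories are fppf stacks; everything outside the descent-of-flagness lemma is routine faithfully flat descent.
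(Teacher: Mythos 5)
Your proof is correct, but it takes a genuinely different route from the paper's. The paper defers the proof of \ref{.38371} to paragraph \ref{.5578} in \S\ref{Unip}, where it argues as follows: replace $\g$ by the nilpotent quotient $\g/\g^{(n)}$ (which controls all $n$-dimensional nilpotent representations), identify nilpotent representations of $\g/\g^{(n)}$ with representations of the corresponding unipotent group $G$ via Theorem \ref{.557}, note that $\bb{REP}(G)\cong\m{QCoh}(BG)$ obeys fppf descent, and then \emph{assert} that the flag and nondegeneracy conditions are ``by their definition local,'' so the fibered subcategories $\Mnflg\subset\Mnndg\subset\M_n(G)$ inherit the stack property. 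Your argument is a direct, hands-on verification: you show $\underline{\mathrm{Isom}}(r,r')$ is a locally closed subscheme of a representable $\fHom$-functor (hence a sheaf), descend the underlying vector sheaf and then the Lie action by fppf descent for quasi-coherent modules, and then do the real work — a descent-of-flagness lemma proved by recursion on the filtration index, using that the $0$-eigenspace is the kernel of $\cl{F}\to\cl{F}^{\oplus m}$ and that kernels commute with flat base change — plus a condition-by-condition descent of nondegeneracy (\ref{.3836}(i) by recursion on rank, (ii) by field-extension insensitivity of \ref{.12}, (iii) via $\fAut r\cong\bb{G}_{m,T}\times\V Q$ from \ref{.3831}, \ref{.3834}, \ref{.3835} and descent of flatness of $Q$). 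What each buys: the paper's route is short and conceptual but depends on Theorem \ref{.557} (proved two sections later, hence the deferral) and leaves ``locality'' of flagness and nondegeneracy unargued; your route is longer and more elementary, is self-contained within \S\S1--5, and supplies exactly the descent statements the paper only asserts. The one place worth stating explicitly in your write-up is that the inductive hypothesis of the flagness lemma should include both that $\m{Fil}^r_j\cl{E}$ is a vector subsheaf and that the natural map $f^*\m{Fil}^r_j\cl{E}\to\m{Fil}^{f^*r}_j\cl{E}_{T'}$ is an isomorphism of subsheaves of $f^*\cl{E}$; with that stated, the step to $j+1$ via kernels and quotients under the flat $f$ is exactly as you sketch.
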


The proof is in paragraph \ref{.5578} below.

\begin{prop} \label{.397}
Let $\bb{B}_n$ act on $X\nfl(\fk{g})$ and on $X^\m{nd}_n(\g)$ by conjugation. Then (in the notation of \ref{.043})
\begin{align*}
[X^\m{fl}_n/_{ZAR}\bb{B}_n]= \Mnflg && \mbox{and} && [X\nnd/_\m{ZAR} \bb{B}_n] = \Mnndg \,.
\end{align*}
\end{prop}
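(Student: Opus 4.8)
The plan is to identify $\Mnflg$ (resp. $\Mnndg$) with the stack quotient $[X^\m{fl}_n/_\m{ZAR}\bb{B}_n]$ (resp. $[X^\m{nd}_n/_\m{ZAR}\bb{B}_n]$) by producing, for each affine $k$-scheme $T$, an equivalence of groupoids that is functorial in $T$. Recall that by definition the objects of $[X^\m{fl}_n/_\m{ZAR}\bb{B}_n](T)$ are morphisms $T \to X^\m{fl}_n$, i.e.\ flag representations $r: \fk{g}_T \to \fk{n}_{n,T}$ on the trivial bundle $\ssf_T^n$ with associated filtration equal to the standard flag; a morphism between two such is an element of $\bb{B}_n(T)$ conjugating one to the other, after Zariski-local refinement on $T$. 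The objects of $\Mnflg(T)$ are abstract flag representations $r: \fk{g}_T \to \iEnd(\cl{E})$; morphisms are isomorphisms of representations.

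First I would define the functor in the evident direction: send a morphism $T \to X^\m{fl}_n$ to the corresponding flag representation on $\ssf_T^n$, viewed as an object of $\Mnflg(T)$; send a conjugation by $b \in \bb{B}_n(T)$ to the isomorphism of representations $\ssf_T^n \to \ssf_T^n$ given by $b$. One checks immediately that $\bb{B}_n(T)$ is precisely the group of automorphisms of $\ssf_T^n$ carrying the standard flag to itself, hence exactly the automorphisms available in $\Mnflg(T)$ between two representations both supported on $(\ssf_T^n,\m{Fil}^\m{st})$; so the functor is fully faithful at the level of presheaf quotients. For essential surjectivity after stackification, the key input is Remark \ref{.381}: every flag representation of $\fk{g}_T$ is, Zariski-locally on $T$, isomorphic to one on $\ssf_T^n$ with associated flag the standard flag. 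This is exactly the statement that the map of prestacks $[X^\m{fl}_n/\bb{B}_n] \to \Mnflg$ becomes an equivalence after Zariski stackification of the source, which is the content of the first claimed equality.

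Next I would record the general principle being used: if $\cl{P} \to \cl{M}$ is a morphism of fibered categories over $\mathbf{Aff}(k)$ which is fully faithful and Zariski-locally essentially surjective, and if $\cl{M}$ is already a Zariski stack, then $\cl{M}$ is the Zariski stackification of $\cl{P}$; equivalently $[\,\cdot/_\m{ZAR}\,]$ applied to $\cl{P}$ yields $\cl{M}$. Full faithfulness is the automorphism computation above (valid verbatim for both the flag and nondegenerate cases, since the conditions cut out by \ref{.3836} are stable under isomorphism and compatible with Zariski localization); Zariski-local essential surjectivity is \ref{.381}; and that $\Mnflg$, $\Mnndg$ are stacks — even for the finer fppf topology, a fortiori for the Zariski topology — is Proposition \ref{.38371}. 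The nondegenerate case follows the flag case line for line once one observes that $X^\m{nd}_n \subset X^\m{fl}_n$ is the $\bb{B}_n$-stable locus of representations satisfying \ref{.3836} and that a flag representation is nondegenerate nilpotent in the sense of \ref{.3836} if and only if each of its Zariski-local trivialized models is, since all the defining conditions (nondegeneracy of $r_{n-1}$ and $r^1$, fiberwise nondegeneracy, flatness of $\fAut r$) are local on $T$ and invariant under isomorphism of representations.

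The main obstacle is the essential-surjectivity step, i.e.\ ensuring that an arbitrary flag representation on a not-necessarily-trivial vector sheaf $\cl{E}$ can be trivialized \emph{compatibly with its associated filtration} Zariski-locally; this is precisely where one needs that $\m{Fil}^r$ is a full flag of vector subsheaves whose formation commutes with base change, so that the flag is Zariski-locally split and one may choose a filtered trivialization $\cl{E}|_U \cong \ssf_U^n$ carrying $\m{Fil}^r|_U$ to the standard flag. This is exactly Remark \ref{.381}, so the argument reduces to citing it; the remaining verifications (the automorphism computation, functoriality in $T$, stability of the nondegeneracy conditions under the two operations of localization and isomorphism) are routine and I would dispatch them briefly. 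A final bookkeeping point worth stating explicitly: the two composites $X^\m{nd}_i \to X^\m{nd}_{i-1}$ appearing in \ref{.392} correspond, under the identification just established, to the formation of $r_{n-1}$ and $r^1$, so the compatibility diagram there is automatic.
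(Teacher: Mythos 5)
Your proposal is essentially identical to the paper's proof: both build the evident map $[X^\m{fl}_n/\bb{B}_n]\to\Mnflg$, establish full faithfulness by observing that an isomorphism of representations on $(\ssf_T^n,\m{Fil}^\m{st})$ must preserve the standard flag and hence lie in $\bb{B}_n(T)$, invoke Remark \ref{.381} for Zariski-local essential surjectivity, and conclude by the universal property of Zariski stackification, with the nondegenerate case following verbatim. The only difference is that you state the abstract "fully faithful + locally essentially surjective + target is a stack" principle explicitly, whereas the paper leaves it implicit.
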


\begin{proof}
Recall our notational convention (\ref{.043}) by which $[X^\m{fl}_n/\bb{B}_n]$ denotes the fibered category whose objects over $T$ are the elements of $X^\m{fl}_n(T)$ and whose morphisms $x \to y$  over $\m{id}_T$ are those elements $b$ of $\bb{B}_n(T)$ such that $bx=y$. There is an obvious map
$$[X^\m{fl}_n/\bb{B}_n]\rightarrow \mathcal{M}\nfl(\fk{g}) \,.$$
Any isomorphism between flag representations of the form $\rgttonnt$ belongs to $\bb{B}_n(T)$. Indeed, any isomorphism of representations must respect the associated filtrations (\ref{.111}); the filtration associated to a flag representation of the form $\gttonnt$ is equal to the standard flag; an element of $\bb{GL}_n(T)$ which preserves the standard flag is by definition an element of $\bb{B}_n(T)$. This shows that the map is fully faithful. Moreover, a general nondegenerate nilpotent representation $r:\fk{g}_T \rightarrow \iEnd{\mathcal{E}}$ is of the form $\fk{g}_T \rightarrow \nnt$, hence comes from $X^\m{fl}_n(\fk{g})(T)$, after possibly replacing $T$ by a Zariski covering of $T$. That is, every object of the target is Zariski locally in the image. It follows that the map factors through an isomorphism of Zariski stacks as claimed.

The same argument applies to $\Mnnd$.
\end{proof}

\Corollary{.3973}{The stacks $\Mnfl, \Mnnd$ are algebraic.}

\begin{proof} This follows from \ref{.397} in view of \ref{.383721} and \ref{.39}, respectively. \end{proof}

\begin{thm}\label{.398}
The fppf sheaf $\pi_0^\m{fppf} \cl{M}\nd_n(\fk{g})$ (\ref{.0431}) associated to $\Mnndg$ is an algebraic space.
\end{thm}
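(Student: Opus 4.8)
The proof will be an application of \emph{rigidification}, as indicated in the introduction; all the hard work is already done. By Corollary \ref{.3973} the stack $\Mnndg$ is algebraic (and, by Proposition \ref{.38371}, a stack for the fppf topology), so it remains only to analyze its inertia. The plan is: (1) identify the inertia stack and verify that it is flat and locally of finite presentation over $\Mnndg$; (2) invoke the rigidification formalism to pass to an algebraic stack $\cl{M}^\m{rig}$ with trivial inertia, which is therefore an algebraic space; (3) identify $\cl{M}^\m{rig}$ with $\pi_0^\m{fppf}\Mnndg$.

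For step (1): over an affine $k$-scheme $T$ carrying a nondegenerate nilpotent representation $r$, the fiber of $\cl{I}_{\Mnndg}$ is the $T$-group $\fAut r$, which by Proposition \ref{.3831} decomposes as $\bb{G}_{m,T} \times U(r)$, while Claim \ref{.3834} together with paragraph \ref{.3835} identifies $U(r)$ with $n(r) = \V Q$, the contravariant total space of a module $Q$. Running this over the universal family on $X^\m{nd}_n$ and using the Fitting-ideal analysis of \ref{.394} (together with Remark \ref{.3962}), the module $Q$ is finitely presented, so $\cl{I}_{\Mnndg}$ is locally of finite presentation over $\Mnndg$; and by clause (iii) of the definition of nondegeneracy (\ref{.3836}), $\fAut r$ is flat, so $\cl{I}_{\Mnndg}$ is flat over $\Mnndg$. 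For step (2): since $\cl{I}_{\Mnndg}$ is a flat, locally finitely presented subgroup stack of the inertia --- in fact all of it, hence \emph{a fortiori} normal --- the rigidification machinery applies and produces an algebraic stack $\cl{M}^\m{rig}$ together with a morphism $\Mnndg \to \cl{M}^\m{rig}$ which is an fppf gerbe, universal among morphisms from $\Mnndg$ to stacks on which the automorphisms coming from $\cl{I}_{\Mnndg}$ act trivially. Because $\cl{I}_{\Mnndg}$ is the entire inertia, $\cl{M}^\m{rig}$ has trivial inertia; its diagonal is thus a monomorphism, so $\cl{M}^\m{rig}$ is an algebraic space. For step (3): any morphism from $\Mnndg$ to an fppf sheaf kills all automorphisms, hence factors uniquely through $\cl{M}^\m{rig}$ by the universal property (equivalently, an fppf gerbe exhibits its base as the fppf sheafification of the $\pi_0$-presheaf of its total space); so $\cl{M}^\m{rig} = \pi_0^\m{fppf}\Mnndg$, which is therefore an algebraic space.

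The only substantive point --- everything else being a citation of the rigidification formalism --- is the verification in step (1) that $\cl{I}_{\Mnndg}$ is flat and locally of finite presentation over $\Mnndg$. Flatness is not really \emph{proved} here; it is \emph{imposed}, being precisely the nondegeneracy condition \ref{.3836}(iii) --- this is the whole reason that clause was built into the definition. Local finite presentation is where the scheme-theoretic Lie algebra $n(r)$ of section \ref{Aut} pays off, reducing the matter to the finite presentation of the Fitting-ideal module of \ref{.394}. The remaining care to take is simply to cite a version of the rigidification theorem whose hypotheses match what has been checked (a flat, locally finitely presented, normal subgroup stack of the inertia); the full inertia satisfies all of these.
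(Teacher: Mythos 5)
Your proof is correct and follows exactly the paper's own argument: the paper's proof consists of one sentence observing that the inertia of $\Mnndg$ is flat by construction (nondegeneracy clause (iii)), and then citing rigidification as in Olsson's Lecture Notes. Your expansion of that sentence --- making explicit the identification of inertia with $\fAut r = \bb{G}_{m,T} \times U(r)$, the role of $n(r)$ in securing local finite presentation, and the universal property that identifies the rigidification with $\pi_0^\m{fppf}$ --- is accurate and faithful to what the paper's citation is silently invoking.
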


\begin{proof}
By construction, the inertia stack of $\Mnndg$ is flat. So this follows from \ref{.3973} by rigidification (see, for instance, \cite[\S1.5]{olsson}).
\end{proof}

\begin{dfn}  \label{.3981}
We let
$$M\nd_n(\fk{g}) := \pi_0^\m{fppf}\mathcal{M}_n(\fk{g})$$
and call it \textbf{the moduli space of nondegenerate nilpotent representations}.
\end{dfn}

\Remark{.3982}{ The map $\Mnnd \to \mnnd$ is fppf. Indeed, if $f:T \to \mnnd$ is a map from a $k$-scheme then there is an fppf map $T' \to T$ such that $f': T' \to T \to \mnnd$ admits a section $g: T' \to \Mnnd$. Then by \cite[1.5.6]{olsson} $f'^*\Mnnd$ is the classifying stack of a flat group scheme and is thus fppf over $T'$. This implies that $f^*\Mnnd$ is fppf over $T$ and subsequently, since $T$ and $f$ were arbitrary, that $\Mnnd$ is fppf over $\mnnd$ as claimed. }

We now discuss the functoriality of our moduli spaces (\ref{.3983} -- \ref{.39879}).

\Remark{.3983}{Let $s$ be a surjection of Lie algebras
$$\fk{f} \twoheadrightarrow \g$$
and let $\rgttoiende$ be a representation of $\g$ over a $k$-scheme $T$. Then the zero eigenspace of $rs_T$ is equal to the zero-eigenspace of $r$. Since the functor $r \mapsto rs_T$ is exact, it follows that $rs_T$ is flag if and only if $r$ is flag. In particular, $s$ gives rise to a fully faithful morphism of stacks
$$\Mnflg \hookrightarrow \Mnfl(\fk{f}) \, .$$ }

\Proposition{.3985}{ Let $s$ be a surjection of Lie algebras
$$\fk{f} \twoheadrightarrow \g \, .$$
Then the morphism
$$\Mnflg \hookrightarrow \Mnfl(\fk{f})$$
induced by $s$ as in \ref{.3983} is a closed immersion. }

The proof follows (\ref{.39853} -- \ref{.39875}).

\Lemma{.39853}{ Let $A$ be a ring and $\theta:E \to F$ a morphism of $A$-modules with $F$ locally free of finite rank. Then there exists an ideal $I(\theta)$ of  $A$ such that for all $A$-algebras $B$, $\theta_B := \m{id}_B \otimes \theta =0$ if and only if $I(\theta)B =0$. }

\begin{proof}
The assertion is local on $A$, so we may assume $F$ is free. After possibly precomposing $\theta$ with a surjection $E' \twoheadrightarrow E$, we may assume $E$ is free as well. To complete the proof, we fix bases and let $I(\theta)$ be the ideal generated by the corresponding matrix entries of $\theta$.
\end{proof}

\Unmarked{.39875}{\textit{Completion of proof of} \ref{.3985}. Let $r: \fk{f}_T \to \iEnd \cl{E}$ be a flag representation of $\fk{f}$ over an affine $k$-scheme $T =\spec A$. Let $i: \fk{a} \to \fk{f}$ be the kernel of $s:\fk{f} \twoheadrightarrow \fk{g}$ and let $Z$ be the closed subscheme of $T$ defined by $I(ri_T)$. Then by \ref{.39853}, $g: T' \to T$ factors thought $Z$ if and only if $g^*r$ factors through $\fk{g}$. Regarding $r$ as a map $T \to \Mnfl(\fk{f})$, we have constructed a cartesian square
\[\xymatrix{
Z \ar@{^{(}->}[r] \ar[d] 	&
T \ar[d]				\\
\Mnflg \ar@{^{(}->}[r]	&
\Mnfl(\fk{f})			}
\]
with $Z \hookrightarrow T$ a closed immersion, which concludes the proof.}

\Remark{.39876}{Consider again the situation of \ref{.3983} given by a surjection
$$s: \fk{f} \twoheadrightarrow \g$$
of Lie algebras and a representation $\rgttoiende$ of $\g$ over a $k$-scheme $T$. Then
$$\fAut rs_T = \fAut r \, .$$
Suppose $A(\g,i) = A(\fk{f}, i)$ for $i=1, \dots, n$. Then it follows from \ref{.3983} that $r$ is nondegenerate nilpotent if and only if $rs_T$ is nondegenerate nilpotent. In particular, $s$ gives rise to a fully faithful morphism of stacks
$$\Mnndg \hookrightarrow \Mnnd(\fk{f}) \, .$$  }

\Corollary{.39878}{Continuing with the situation of \ref{.39876}, the map
$$\Mnndg \hookrightarrow \Mnnd(\fk{f})$$
is a closed immersion.}

\begin{proof}
Indeed, the \textquotedblleft if and only if" part of \ref{.39876} implies moreover that the resulting square
\[\xymatrix{
\Mnflg \ar@{^{(}->}[r] 		&
\Mnfl(\fk{f})				\\
\Mnndg \ar@{^{(}->}[u] \ar@{^{(}->}[r]	&
\Mnnd(\fk{f}) \ar@{^{(}->}[u]			}
\]
is Cartesian, so this follows from \ref{.3983}.
\end{proof}

\Corollary{.39879}{Continuing with the situation of \ref{.39876}, $s$ gives rise to a closed immersion
$$\mnndg \hookrightarrow \mnnd(\fk{f}) \, .$$}

\begin{proof}
The square 
\[\xymatrix{
\Mnndg \ar[d] \ar@{^{(}->}[r]		&
\Mnnd(\fk{f}) \ar[d] 				\\
\mnndg \ar@{^{(}->}[r]			&
\mnnd(\fk{f})					}
\]
is automatically Cartesian. Since the vertical arrow on the right is fppf (\ref{.3982}) and the horizontal arrow at the top is a closed immersion (\ref{.39878}), the proposition follows.
\end{proof}

\begin{dfn}
Since an isomorphism of flag representations give rise to an equality of constellations (\ref{.495}), construction of constellations gives rise to a map
$$\Mnflg \to (\bb{P}\fk{g}^\m{ab})^J$$
which we denote by $\m{\mathit{const}}$.
\end{dfn}

\Remark{.496}{Although we have no use for this in the sequel, we remark that a surjection
$$\fk{f} \twoheadrightarrow \g$$
of Lie algebras gives rise to a commuting square 
\[\xymatrix{
\Mnflg \ar@{^{(}->}[r] \ar[d]		&
\Mnfl(\fk{f}) \ar[d]				\\
(\bb{P}\g^\m{ab})^J \ar@{^{(}->}[r]	&
(\bb{P}\fk{f}^\m{ab})^J			}
\] }

\section{Variant: framed representations}
\label{Framed}

We continue to work with a finitely generated Lie algebra $\g$ over a field $k$, and we preserve all notations introduced in the previous section.

Recall that $\bb{B}_n=\bb{T}_n\ltimes\bb{U}_n$ is the semidirect product of a torus and a unipotent group. In studying the quotient of $X\nnd$ by $\bb{B}_n$ it will be convenient to consider the actions of $\bb{U}_n$ on $X_n^\m{nd}$ and of $\bb{T}_n$ on the quotient of $X_n^\m{nd}$ by $\bb{U}_n$ separately. The following variant provides a natural interpretation of the quotient stack $[X_n^\m{fl}/_\m{ZAR}\bb{U}_n]$ in terms of flag representations.

\begin{dfn} \label{3988}
Let $T$ be a $k$-scheme. A \textbf{framed flag representation of} $\g$ \textbf{over} $T$ is a pair $(r,e)$ where $r$ is a flag representation $\fk{g}_T\rightarrow \iEnd{\mathcal{E}}$ of $\g$ over $T$ and $e = (e_1, \dots, e_n)$ is a basis for $\m{gr}^{\m{Fil}^r}\mathcal{E}$ (\ref{.111}) compatible with the grading. If $r,r'$ are framed flag representations, a \textbf{framed isomorphism} $r\rightarrow r'$ is an isomorphism $\phi$ of the underlying flag representations such that $\m{gr}(\phi):\m{gr}\,\mathcal{E}\rightarrow \m{gr}\,\mathcal{E'}$ is the isomorphism determined by the given bases. A \textbf{framed nondegenerate nilpotent representation} is a framed flag representation whose underlying flag representation is nondegenerate.

We let $\mathcal{M}^\m{ffl}_n(\fk{g})$ denote the fibered category of framed flag representations and we let $\Mnfndg$ denote the fibered subcategory consisting of framed nondegenerate nilpotent representations.
\end{dfn}

\begin{prop} \label{.39881}
The fibered categories $\mathcal{M}^\m{ffl}_n(\fk{g})$ and $\Mnfndg$ obey fppf descent.
\end{prop}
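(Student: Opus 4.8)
The plan is to deduce fppf descent for framed objects from the corresponding fact for their underlying flag (resp.\ nondegenerate) representations, which is Proposition~\ref{.38371}, together with fppf descent for sections of quasi-coherent sheaves. The starting observation is that, since $\gr^{\m{Fil}^r}\cl{E} = \bigoplus_{i=1}^{n} \cl{L}^r_i$ splits canonically as a direct sum of the line sheaves of Definition~\ref{.3813}, a grading-compatible basis of $\gr^{\m{Fil}^r}\cl{E}$ is the same datum as a choice of nowhere-vanishing global section of each $\cl{L}^r_i$; and since, for a flag representation, formation of $\m{Fil}^r$ (hence of $\gr^{\m{Fil}^r}\cl{E}$ and of each $\cl{L}^r_i$) is compatible with base change (\ref{.225}), this extra datum can be handled by descent for quasi-coherent sheaves.

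\emph{Morphisms.} Given two framed flag representations over a $k$-scheme $T$, a framed isomorphism between them is an isomorphism of the underlying flag representations satisfying the condition that its associated graded be the prescribed identification; this is a pointwise, hence fppf-local, condition. Thus the presheaf of framed isomorphisms is a subpresheaf of the presheaf of isomorphisms of the underlying flag representations cut out by a local condition; since the latter is an fppf sheaf by Proposition~\ref{.38371}, so is the former.

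\emph{Effectivity.} Let $\{f_\alpha \colon T_\alpha \to T\}$ be an fppf cover and let $(r_\alpha, e_\alpha)$ be framed flag representations over the $T_\alpha$ equipped with a framed descent datum. Forgetting the framings, the $r_\alpha$ carry a descent datum in $\Mnflg$, which is effective by Proposition~\ref{.38371}: we obtain a flag representation $r$ over $T$ together with compatible isomorphisms $f_\alpha^* r \cong r_\alpha$. Transporting the sections $e_\alpha$ through these isomorphisms and using the base-change compatibility of $\gr^{\m{Fil}}$ noted above, the framed cocycle condition says exactly that the resulting nowhere-vanishing sections of $f_\alpha^* \gr^{\m{Fil}^r}\cl{E}$ agree on the overlaps $T_\alpha \times_T T_\beta$; by fppf descent for sections of the quasi-coherent $\ssf_T$-module $\gr^{\m{Fil}^r}\cl{E}$ they glue to a global section $e$, which is again a grading-compatible basis since nonvanishing is checked fppf-locally. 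Then $(r,e)$ is a framed flag representation over $T$ inducing $(r_\alpha, e_\alpha)$ with the given descent datum, unique up to unique framed isomorphism by the previous paragraph. Hence $\mathcal{M}^\m{ffl}_n(\fk{g})$ obeys fppf descent. For $\Mnfndg$ one repeats the argument: when the $r_\alpha$ are moreover nondegenerate, the glued flag representation $r$ is nondegenerate because $\Mnndg$ is itself an fppf stack (Proposition~\ref{.38371}), so $\Mnfndg$ inherits fppf descent from $\mathcal{M}^\m{ffl}_n(\fk{g})$ and $\Mnndg$.

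The step requiring the most care is the effectivity argument, where one must match the framed cocycle condition for the $(r_\alpha,e_\alpha)$ against the gluing condition for sections of $\gr^{\m{Fil}^r}\cl{E}$ after transporting through the (non-canonical) comparison isomorphisms $f_\alpha^* r \cong r_\alpha$; once this translation is in place, the proof is a formal consequence of Proposition~\ref{.38371} and descent for quasi-coherent sheaves.
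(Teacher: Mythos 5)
Your proof is correct and follows essentially the same strategy as the paper: forget the framing and descend the underlying flag (resp. nondegenerate) representation via Proposition~\ref{.38371}, then transport and glue the framings using descent for sections of quasi-coherent sheaves, with the morphism condition handled as a local equality of sections. Your phrasing is somewhat more conceptual (viewing a framing as nowhere-vanishing sections of the line sheaves $\cl{L}^r_i$), whereas the paper carries out the transport step by an explicit diagram chase, but the underlying argument is the same.
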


\begin{proof}
We start with $\Mnffl$. This is a straightforward verification using the fact that $\Mnfl$ is an fppf stack (\ref{.5578}). Let $T$ be an affine $k$-scheme, $f: T' \rightarrow T$ an fppf covering, let $T'' = T' \times_T T'$ and denote by $p_1, p_2: T'' \rightrightarrows T'$ the two projections. Denote by $\cl{M}^\m{ffl}_n(f)$ the category of descent data relative to $f$. We are to show that the functor
$$f^*:\cl{M}^\m{ffl}_n(T) \rightarrow \cl{M}^\m{ffl}_n(f)$$
is an equivalence. To see that $f^*$ is fully faithful, fix two framed representations
$$(r: \fk{g}_T \rightarrow \iEnd(\cl{E}), e^r)$$
and
$$(s: \fk{g}_T \rightarrow \iEnd(\cl{F}), e^s )$$
over $T$. Let $\alpha$ denote the induced isomorphism
$$p_2^* f^* r \rightarrow p_1^* f^* r \, ,$$
$\beta$ the induced isomorphism
$$p_2^* f^* s \rightarrow p_1^* f^* s \, ,$$
and consider a morphism of descent data
$$ \phi': (f^*r,  f^*e^r , \alpha) \rightarrow (f^*s, f^*e^s , \beta ) \, ;$$
that is, a morphism
$$\phi': (f^*r,  f^* e^r ) \rightarrow (f^*s, f^* e^s )$$
such that the square
\[\xymatrix{
p_2^* f^* r \ar[r]^{p_2^* \phi'} \ar[d]_\alpha	&
p_2^* f^* s \ar[d]^\beta	\\
p_1^* f^* r \ar[r]_{p_1^* \phi'}	&
p_1^*f^*s	}
\]
commutes. Descent for $\cl{M}\fl_n$ implies that there is a (necessarily unique) morphism of representations
$$\phi: r \rightarrow s$$
such that
$$f^* \phi = \phi' \, ;$$
on the other hand
$$f^* \m{gr} \; \phi = \m{gr} \; f^* \phi = \m{gr} \; \phi'$$
sends $f^* e^r$ to $f^* e^s$ which implies that $\m{gr} \; \phi$ sends $e^r$ to $e^s$ since restriction maps along coverings are injective. This establishes the full faithfulness.

To check essential surjectivity, fix a framed flag representation
$$(r': \fk{g}_{T'} \rightarrow \iEnd(\cl{E}'), e')$$
and a framed isomorphism
$$\alpha: (p_2^*r', p_2^*e') \rightarrow (p_1^*r', p_1^* e')$$
obeying the cocycle condition. Descent for $\cl{M}\fl_n$ produces a representation $r: \fk{g}_T \rightarrow \iEnd(\cl{E})$ whose descent data $(f^*r, \alpha^\m{can.})$ relative to $f$ is isomorphic to $(r', \alpha)$. Fixing an isomorphism
$$\phi: (r', \alpha) \rightarrow (f^*r, \alpha^\m{can.})$$
we get a diagram
\[\xymatrix@!0 @C=10ex{
	&
p_2^* \m{gr} \; \cl{E}' \ar[rr]^-{p_2^*(\m{gr} \; \phi)} \ar[dddd]^(.3){\m{gr} \; \alpha}	&&
p_2^*f^* \m{gr} \; \cl{E} \ar[dddd]_(.3){\m{gr} \; \alpha^\m{can.}}	&
	\\\\
**[l] e'_i \in \m{gr} \; \cl{E}'	\ar[uur]^{p_2^*} \ar[ddr]_{p_1^*} \ar'[r]'[rrr]_{\m{gr} \; \phi}[rrrr]	&
	&&
	&
**[r] f^* \m{gr} \; \cl{E} \ni (\m{gr} \; \phi)(e'_i) \ar[uul]_{p_2^*} \ar[ddl]^{p_1^*}	\\\\
	&
p_1^* \m{gr} \; \cl{E}' \ar[rr]_-{p_1^*(\m{gr} \; \phi)}	&&
p_1^* f^* \m{gr} \; \cl{E}	
	&}
\]
in which the (small) square at the front and the two trapezoids at the back commute.  Since
$$( \m{gr} \; \alpha)(p_2^*e'_i) = p_1^* e'_i \, ,$$
it follows that
$$(\m{gr} \; \alpha^\m{can.})p_2^* (\m{gr} \; \phi) (e'_i) = p_1^* (\m{gr} \; \phi)(e'_i) \, ,$$
hence that $\{ (\m{gr} \; \phi) (e'_i) \}$ descends to a basis $\{ e_i \}$ of $\m{gr} \; \cl{E}$ making $(r, e)$ into a framed representation. This shows that $(r', e', \alpha)$ is in the essential image of $f^*$ and completes the verification.

Finally, the same argument using the fact that $\Mnnd$ is an fppf stack shows that $\Mnfnd$ is an fppf stack.
\end{proof}

\begin{prop} \label{.39882}
Let $\bb{U}_n$ act on $X\nfl(\fk{g})$ and on $X^\m{nd}_n(\g)$ by conjugation. Then 
\begin{align*}
[X^\m{fl}_n/_{ZAR}\bb{U}_n]= \Mnfflg && \mbox{and} && [X\nnd/_\m{ZAR} \bb{U}_n] = \Mnfndg \, .
\end{align*}
\end{prop}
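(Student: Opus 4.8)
\emph{Proof proposal.} The plan is to follow the proof of Proposition \ref{.397} almost verbatim, adding one observation: a flag representation of the standard form $\fk{g}_T \to \fk{n}_{n,T}$ carries a canonical framing, and any grading-compatible framing can be normalized to it, Zariski-locally on the base, by a conjugation that stays inside $X^\m{fl}_n$. Concretely, if $r:\fk{g}_T\to\fk{n}_{n,T}$ is a flag representation on $\ssf_T^n$ whose associated filtration is the standard flag, then $\gr^{\m{Fil}^r}\ssf_T^n=\bigoplus_{i=1}^n\ssf_T\,\bar e_i$ comes equipped with the basis $(\bar e_1,\dots,\bar e_n)$ of images of the standard basis vectors; call this the \emph{canonical framing}. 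Sending $r\mapsto(r,\text{canonical framing})$ defines a morphism of fibered categories $[X^\m{fl}_n/\bb{U}_n]\to\Mnffl$: a morphism $x\to y$ over $\m{id}_T$ in $[X^\m{fl}_n/\bb{U}_n]$ is an element $u\in\bb{U}_n(T)$ with $ux=y$, and since $\gr u=\m{id}$ such a $u$ is a framed isomorphism $(x,\text{can})\to(y,\text{can})$ in the sense of \ref{3988}. The same recipe restricts to $[X^\m{nd}_n/\bb{U}_n]\to\Mnfnd$ by \ref{.3836}.

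Next I would check full faithfulness. A framed isomorphism between two representations of the form $\fk{g}_T\to\fk{n}_{n,T}$ with standard flags and canonical framings is in particular an isomorphism of the underlying flag representations, hence lies in $\bb{B}_n(T)$ by the argument already given in the proof of \ref{.397} (an isomorphism of representations respects the associated filtration, which here is the standard flag); being a framed isomorphism it moreover induces the identity on $\gr$, so it lies in $\bb{U}_n(T)$. Conversely every $u\in\bb{U}_n(T)$ carrying $x$ to $y$ is such a framed isomorphism. Thus $\Hom_{[X^\m{fl}_n/\bb{U}_n]}(x,y)\to\Hom_{\Mnffl}\big((x,\text{can}),(y,\text{can})\big)$ is a bijection, and the comparison functor is fully faithful.

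The one genuinely new point is essential surjectivity after Zariski localization. Given a framed flag representation $(r,e)$ over $T$, by \ref{.381} there is a Zariski cover of $T$ over which $r$ becomes isomorphic to some $r':\fk{g}_T\to\fk{n}_{n,T}$ on $\ssf_T^n$ with associated filtration the standard flag; transporting $e$ along this isomorphism gives a grading-compatible basis $e'$ of $\bigoplus\ssf_T\,\bar e_i$, so $e'_i=a_i\bar e_i$ for a unique invertible section $a_i$ of $\ssf_T$. Conjugation by $d:=\mathrm{diag}(a_1,\dots,a_n)\in\bb{T}_n(T)$ preserves $\fk{n}_{n,T}$ and the standard flag, so $r'':=d^{-1}r'd$ is again of the standard form and $d$ is a framed isomorphism $(r'',\text{canonical framing})\to(r',e')$; composing, $(r,e)$ is Zariski-locally framed-isomorphic to $(r'',\text{canonical framing})$, i.e.\ lies in the essential image. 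If $(r,e)$ is moreover nondegenerate then so is $r''$, so the analogous statement holds for $[X^\m{nd}_n/\bb{U}_n]\to\Mnfnd$.

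Finally I would conclude. By \ref{.39881} the fibered categories $\Mnffl$ and $\Mnfnd$ are fppf stacks, hence in particular Zariski stacks, so a fully faithful morphism into them which is Zariski-locally essentially surjective induces an equivalence from the Zariski stackification of its source; this yields $[X^\m{fl}_n/_\m{ZAR}\bb{U}_n]\xrightarrow{\ \sim\ }\Mnfflg$ and $[X^\m{nd}_n/_\m{ZAR}\bb{U}_n]\xrightarrow{\ \sim\ }\Mnfndg$, as claimed. The only subtlety to watch is in the surjectivity step: one must verify that re-normalizing an arbitrary grading-compatible framing to the canonical one is realized by a conjugation landing back in $X^\m{fl}_n$ — which works precisely because the scaling matrix $d$ is diagonal and therefore normalizes both $\fk{n}_n$ and the standard flag.
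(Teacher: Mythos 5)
Your proof is correct and follows essentially the same strategy as the paper's: send a standard-form representation to itself equipped with the canonical framing, and observe that an isomorphism of underlying flag representations lies in $\bb{U}_n$ precisely when it induces the identity on the associated graded. The only thing you add beyond the paper's terse argument is spelling out essential surjectivity (the diagonal re-scaling step), which the paper leaves implicit; that step is sound, though one could shortcut it by noting that a grading-compatible basis of $\gr\cl{E}$ lifts Zariski-locally to a filtered basis of $\cl{E}$, in which $r$ is automatically of standard form with canonical framing.
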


\begin{proof}
Consider the map
$$[X^\m{fl}_n/\bb{U}_n]\rightarrow \mathcal{M}^\m{ffl}_n(T)$$
which sends $r:\fk{g}_T\rightarrow \fk{n}_{n,T}$ to
$$(\fk{g}_T\rightarrow \fk{n}_{n,T}\hookrightarrow \iEnd{\mathcal{O}^{(n)}_T}, \text{the standard basis of } \ssf_T^{(n)}) \, .$$
If $r,r'\in X^\m{fl}_n(T)$, then an isomorphism of representations $b:r\rightarrow r'$ is in $\bb{U}_n(T)$ if and only if $\m{gr}\,b=\m{id}_{\mathcal{O}^{(n)}_T}$. The same argument applies to $\Mnfnd$.
\end{proof}


\begin{cor} \label{.39883}
The fppf-sheaf $\pi_0^\m{fppf}\cl{M}_n^\m{fnd}(\fk{g})$ associated to $\cl{M}_n^\m{fnd}(\fk{g})$ is algebraic.
\end{cor}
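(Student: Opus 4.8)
The proof will run entirely parallel to that of Theorem~\ref{.398}, with the unipotent-quotient description of Proposition~\ref{.39882} playing the role that the Borel-quotient description of Proposition~\ref{.397} plays there.

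First I would check that $\Mnfndg$ is an algebraic stack. By Proposition~\ref{.39882} one has $\Mnfndg = [X\nnd/_\m{ZAR}\bb{U}_n]$; since $X\nnd$ is representable by a quasi-projective $k$-scheme (Proposition~\ref{.39}), since $\bb{U}_n$ is a smooth affine algebraic $k$-group acting on it by conjugation, and since $\Mnfndg$ obeys fppf descent (Proposition~\ref{.39881}), the argument that proves Corollary~\ref{.3973} applies verbatim to show that $\Mnfndg$ is algebraic. Moreover $\Mnfndg$ is locally of finite presentation over $k$, because $X\nnd$ is (Remark~\ref{.3962}).

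Next I would identify the inertia of $\Mnfndg$ and verify that it is flat and of finite presentation. Over a $k$-scheme $T$ the automorphisms of a framed nondegenerate nilpotent representation $(r,e)$ are, by Definition~\ref{3988}, exactly those automorphisms $\phi$ of the underlying representation $r$ for which $\gr(\phi)$ is the identity on $\gr^{\m{Fil}^r}\cl{E}$ — that is, the group $U(r)$ of unipotent automorphisms of $r$ (Definition~\ref{.38303}). Because $r$ is nondegenerate, $\fAut r$ is flat over $T$ (Definition~\ref{.3836}), and the decomposition $\fAut r = {\bb{G}_m}_T\times U(r)$ of Proposition~\ref{.3831} then yields flatness of $U(r)$ over $T$ by fppf descent of flatness along the projection $\fAut r \to U(r)$; alternatively, $U(r)$ is isomorphic as a $T$-scheme to $n(r)$ (Claim~\ref{.3834}), which by the flattening-stratification argument used in \ref{.394} is the total space of a locally free sheaf. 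Since the inertia of $\Mnfndg$ is in addition affine — hence quasi-compact and separated — over $\Mnfndg$ and locally of finite presentation, all the hypotheses needed for rigidification are met.

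Finally I would invoke rigidification as recalled in \cite[\S1.5]{olsson}: an algebraic stack whose inertia is flat and of finite presentation may be rigidified along its full inertia, producing an algebraic space over which the original stack is a gerbe. Applying this to $\Mnfndg$ gives that $\pi_0^\m{fppf}\Mnfndg$ is algebraic (indeed an algebraic space), which is the assertion; one then sets $\mnfndg := \pi_0^\m{fppf}\Mnfndg$. The only point that is not completely formal is the identification of the inertia with $U(r)$ together with its flatness, and this is settled by the nondegeneracy hypothesis and Propositions~\ref{.3831} and~\ref{.3834}, exactly as in the unframed case.
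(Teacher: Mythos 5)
Your proof is correct and takes the route the paper intends: the corollary is stated without proof, but the argument is the rigidification strategy of Theorem~\ref{.398} transported to the unipotent quotient of Proposition~\ref{.39882}, exactly as you lay out. The one step you need to add — flatness of the inertia $U(r)$, which does not literally appear in Definition~\ref{.3836}(iii) — you handle correctly by combining the flatness of $\fAut r$ with the splitting $\fAut r = {\bb{G}_m}_T\times U(r)$ of Proposition~\ref{.3831} and descent of flatness along the fppf projection.
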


\begin{dfn} \label{.39884}
We define $M^\m{fnd}_n(\fk{g})$, \textbf{the moduli space of framed nondegenerate nilpotent representations}, by
$$M^\m{fnd}_n(\fk{g}) = \pi_0^\m{fppf}\cl{M}_n^\m{fnd}(\fk{g}) \, .$$
\end{dfn}

\Remark{.3992}{The argument of \ref{.3982} applies to show that the map $\Mnfndg \to \mnfndg$ is fppf.} 

\Remark{.3993}{Remark \ref{.3983} applies without essential change to show that a surjection of Lie algebras
$$s:\fk{f} \twoheadrightarrow \g$$
gives rise to a fully faithful morphism
$$\Mnfflg \hookrightarrow \Mnffl(\fk{f})$$
of stacks, and the proof of \ref{.3985} applies without essential change to show that this map is in fact a closed immersion. 

Suppose, moreover, that for $i=1,\dots,n$, $A(\fk{f},i)=A(\g,i)$. Then paragraphs \ref{.39876} -- \ref{.39879} apply without essential change to show that $s$ gives rise to closed immersions
$$\Mnfndg \hookrightarrow \Mnfnd(\fk{f})$$
and
$$\mnfndg \hookrightarrow \mnfnd(\fk{f})\,.$$}

We conclude this section with a brief discussion of the framed analogs of canonical coordinates and constellations.

\Definition{.399303}{ Fix a $k$-scheme $T$ and a framed flag representation $(r,e)$ on a vector bundle $\cl{E}$ of rank $n$ over $T$. For each pair of integers $i,j$ with $1 \le i < j \le n$ we define the $(i,j)^\m{th}$ \textbf{canonical matrix entry of} $(r,e)$ to be the composite
\[\xymatrix{
(\fk{h}_{j-i})_T \ar[r]_-{\ka_{i,j}^r}  \ar@/^15pt/[rr]^{\ka_{i,j}^{(r,e)}} &
\cl{H}om(\cl{L}^r_j, \cl{L}^r_i) \ar[r]_-\cong &
\ssf_T
}\]
of the $(i,j)^\m{th}$ canonical matrix entry $\ka^r_{i,j}$ of the underlying flag representation (\ref{.3813}) with the isomorphism induced by the basis. In particular, each $\ka_{i,i+1}^{(r,e)}$ is surjective (\ref{.38}) and we define the \textbf{constellation of} $(r,e)$, denoted $\mathit{const}(r,e)$, to be the vector
$$(\ka_{1,2}^{(r,e)}, \dots, \ka_{n-1,n}^{(r,e)}) \in (\V_*\g^\m{ab})^J(T)$$
where
$$J= \{ (1,2), \dots, (n-1,n) \} \,. $$
}

\Remark{.399304}{ Formation of constellations in the framed and unframed cases gives rise to a commuting square
\[\xymatrix{
\mnfnd \ar[r] \ar[d] &  (\V_*\g^\m{ab})^J \ar[d] \\
\mnnd \ar[r] & (\bb{P}\g^\m{ab})^J
}\]
The horizontal arrow at the top is equivariant with respect to the actions of $\bb{T}_{n-1}$ given on the left by conjugation by the diagonal torus and on the right by
$$(a_1, \dots, a_{n-1})(\ka_{1,2},\dots,\ka_{n-1,n}) = (a_1\ka_{1,2}, \dots, a_{n-1}\ka_{n-1,n}) \,.$$
Since the action of $\bb{T}_{n-1}$ on the right is free, it follows that the action on the left is also free. Thus we have
\begin{align*}
\mnnd = [\mnfnd/_\m{ZAR}\bb{T}_{n-1}] &&\text{and} && (\bb{P}\g^\m{ab})^J = [(\V_*\g^\m{ab})^J/_\m{ZAR}\bb{T}_{n-1}] \,.
\end{align*}
It follows that the square is in fact Cartesian and that $\mnfnd$ is a $\bb{T}_{n-1}$-torsor over $\mnnd$.
 }

\section{Low dimensional examples}
\label{Low}

We continue to work with a finitely generated Lie algebra $\g$ over a field $k$.

\subsection*{Two dimensions}

\begin{prop} \label{.3902}
The moduli space $M\nd_2(\fk{g})$ of two dimensional nondegenerate nilpotent representations is canonically isomorphic to $\bb{P}\fk{g}^\m{ab}$.
\end{prop}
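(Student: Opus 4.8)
The plan is to identify both sides with the same functor on affine $k$-schemes by exhibiting mutually inverse maps, using the theory already built up. First I would recall that by Proposition~\ref{.15}, a two-dimensional nilpotent representation of $\fk{g}$ over a field is nondegenerate if and only if it is flag if and only if it is nonzero, and its automorphism group is always two-dimensional; moreover every flag representation of rank $2$ is (Zariski-locally) of the form $\fk{g}_T \to \fk{n}_{2,T} = \ssf_T$, i.e.\ is given by a single matrix entry $\la_{1,2}: \fk{g}_T \to \ssf_T$ which factors through $\fk{g}^\m{ab}$ and is surjective. By \ref{.397} (or directly, since $\bb{n}_2$ is abelian so $\bb{U}_2$ acts trivially and $\bb{B}_2 = \bb{T}_2 = \bb{G}_m$ acts on $\la_{1,2}$ by scaling) we get $\Mnd_2(\fk{g}) = [X_2^\m{nd}/_\m{ZAR}\bb{G}_m]$, where $X_2^\m{nd}$ is the scheme of surjections $\fk{g}^\m{ab} \twoheadrightarrow \ssf^1$, i.e.\ the complement of the origin in $\V_*\fk{g}^\m{ab}$, and $\bb{G}_m$ acts by scaling.

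Next I would observe that the map sending a framed flag representation to its constellation specializes in dimension $2$ to an isomorphism: the constellation $\mathit{const}(r,e)$ of \ref{.399303} is precisely the single surjection $\ka_{1,2}^{(r,e)} \in (\V_*\fk{g}^\m{ab})(T)$, so $\mnfnd_2(\fk{g}) = X_2^\m{nd} = \V_*\fk{g}^\m{ab} \setminus \{0\}$ and this identification is $\bb{G}_m$-equivariant. By \ref{.399304}, $\mnd_2(\fk{g}) = [\mnfnd_2(\fk{g})/_\m{ZAR}\bb{G}_m]$ is the sheaf quotient, and the square there is Cartesian over $(\bb{P}\fk{g}^\m{ab})^J$ with $J = \{(1,2)\}$ a singleton, so $\mnd_2(\fk{g}) = \bb{P}\fk{g}^\m{ab}$. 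Concretely, a nondegenerate rank-$2$ representation $r$ over $T$ determines the surjection $\ka^r_{1,2}: (\fk{g}^\m{ab})_T \to \cl{H}om(\cl{L}^r_2, \cl{L}^r_1)$ of \ref{.3813}, hence a $T$-point of $\bb{P}\fk{g}^\m{ab}$ which is manifestly isomorphism-invariant (\ref{.495}); conversely, a $T$-point of $\bb{P}\fk{g}^\m{ab}$ is a line bundle quotient $(\fk{g}^\m{ab})_T \twoheadrightarrow \cl{Q}$, which (composing with $\fk{g}_T \to (\fk{g}^\m{ab})_T$ and using that $\fk{n}_{2}(\cl{L})$ for the flag $0 \subset \cl{Q} \otimes \cl{L} \subset \cl{Q} \otimes \cl{L} \oplus \cl{L}$ with $\cl{L}$ any line bundle is canonically $\cl{Q}$) gives a rank-$2$ flag — hence nondegenerate — representation, well-defined up to isomorphism.

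I would then check these two constructions are mutually inverse natural transformations of fppf sheaves, which reduces (since both sides are sheaves and the statement is local) to the case $T = \spec k$ where it is the classical statement that nonzero linear functionals on $\fk{g}^\m{ab}$ up to scaling are the points of $\bb{P}\fk{g}^\m{ab}$; naturality in $T$ is immediate from the construction of the constellation. Finally, to upgrade this from an isomorphism of fppf sheaves to a canonical isomorphism of schemes one notes that $\bb{P}\fk{g}^\m{ab}$ is a scheme, so the identification $M\nd_2(\fk{g}) = \bb{P}\fk{g}^\m{ab}$ of sheaves is automatically an isomorphism of schemes.

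The main obstacle, such as it is, is bookkeeping rather than mathematics: one must be careful that the isomorphism is \emph{canonical}, i.e.\ does not depend on auxiliary choices of trivialization — this is exactly why it is cleaner to phrase the forward map via the intrinsic canonical matrix entry $\ka^r_{1,2}$ of \ref{.3813} (which lands in the honestly canonical line $\cl{H}om(\cl{L}^r_2,\cl{L}^r_1)$) rather than via a chosen filtered trivialization $\cl{E} \cong \ssf_T^2$, and to cite \ref{.399304} for the passage to the quotient rather than redoing the descent. Everything else — exactness of $r \mapsto \ka^r_{1,2}$, surjectivity from the flag condition via \ref{.38}, triviality of the $\bb{U}_2$-action — is already in hand.
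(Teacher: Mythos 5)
Your argument is correct in its main thrust and lands where the paper does, but it routes through more machinery than the paper itself deploys. The paper's own proof is three sentences: by \ref{.15} every flag representation is nondegenerate in dimension two, so $X_2^\m{nd} = X_2^\m{fl}$ is the complement of the origin in $\V\fk{g}^\m{ab}$; $\bb{U}_2 = \bb{G}_a$ acts trivially while $\bb{T}_2$ acts through $\bb{T}_2/\bb{G}_m = \bb{G}_m$ with weight one; the Zariski sheaf quotient is $\bb{P}\fk{g}^\m{ab}$ by definition. Your detour through $\mnfnd_2$, the constellation map, and \ref{.399304} carries out the same computation in the framed-moduli language. It is a perfectly valid way to organize it — and has the small virtue of foreshadowing the higher-dimensional arguments of \S9 — but it is not a genuinely different proof.

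Two of your subsidiary claims need tightening, though neither undermines the argument because the appeal to \ref{.399304} already finishes the job abstractly (the top arrow of that Cartesian square is an isomorphism because $\bb{U}_2$ acts trivially on $X_2^\m{nd}=\V_*\fk{g}^\m{ab}$, both vertical arrows are $\bb{T}_1$-torsors, and so the bottom arrow is an isomorphism by fppf descent). First, your backward map is not ``well-defined up to isomorphism'' at the presheaf level: the construction $\cl{Q} \mapsto \bigl(\fk{g}_T \to \cl{Q} = \fk{n}_\m{Fil}(\cl{Q}\otimes\cl{L}\oplus\cl{L})\bigr)$ produces genuinely non-isomorphic representations as the auxiliary line bundle $\cl{L}$ varies — on $T=\bb{P}^1$ with $\cl{Q}=\ssf(1)$, for instance, $\ssf(1)\oplus\ssf$ and $\ssf(2)\oplus\ssf(1)$ are non-isomorphic as vector sheaves, so the underlying representations cannot be isomorphic. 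What is true is that the various choices agree Zariski-locally and hence define the same $T$-point of the sheafification $M^\m{nd}_2$; but that is a weaker statement and is exactly the subtlety being swept under the rug. Second, one cannot in general verify that two natural transformations of sheaves are mutually inverse by checking on $\spec k$-points: two maps of schemes can agree on all field-valued points without agreeing (think of Frobenius-type or nilpotent phenomena). The cure for both is simply to delete the ``concretely'' paragraph and rest on the descent argument from \ref{.399304}, which you already have in hand.
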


\begin{proof}
As observed in \ref{.15}, every flag representation is nondegenerate. So 
$$X_2^\m{nd} = X_2^\m{fl} \hookrightarrow X_2 = \fHom{_\m{\bld{Lie}}(\fk{g}, \fk{n}_2)}=\V\fk{g}^\m{ab}$$
is just the complement of the origin. The action by $\bb{G}_a$ is trivial, and the action of $\bb{T}_2$ factors through $\bb{T}_2/\bb{G}_m = \bb{G}_m$. The resulting action by $\bb{G}_m$ is just the weight-one action defining projective space. 
\end{proof}

\subsection*{Three dimensions}

\begin{prop} \label{.3901}
Let $m$ denote the dimension of $\g$ and let $w$ denote the width of $\g$. If $w=1$ then the moduli space ${M}^\m{nd}_3(\fk{g})$ of three dimensional nondegenerate nilpotent representations is a closed subscheme of a vector bundle of rank $m-1$ over $\bb{P}\fk{g}^\m{ab}$. If $w \ge 2$ then ${M}^\m{nd}_3(\fk{g})$ is a closed subscheme of a vector bundle of rank $m-2$ over the complement of the diagonal of $\bb{P}\fk{g}^\m{ab} \times \bb{P}\fk{g}^\m{ab}$.
\end{prop}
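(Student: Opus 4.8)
The plan is to compute $M_3^{\m{nd}}(\fk{g})$ directly from its construction as an iterated quotient, using the explicit coordinates of \S4. Fix the standard flag on $k^3$. By \ref{.16} (equations \ref{.163}--\ref{.165}) a nilpotent representation of $\fk{g}$ on $k^3$ is a triple $(\la_1,\la_2,\la_3)$ with $\la_1,\la_2\in(\fk{g}^\m{ab})^\lor$, $\la_3\in\fk{g}^\lor$, and $\la_3[\cdot,\cdot]=\la_1\wedge\la_2$; it is flag exactly when $\la_1,\la_2$ vanish nowhere. Thus $X_3^\m{fl}$ is an open subscheme of the closed subscheme of $\V\fk{g}^\m{ab}\times\V\fk{g}^\m{ab}\times\V\fk{g}$ defined by $\la_3[\cdot,\cdot]=\la_1\wedge\la_2$. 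I would then identify $X_3^\m{nd}\subset X_3^\m{fl}$ in the two cases. If $w\ge 2$ then $3\le w+1$, so by the criterion of \S9 (\ref{.5}) a flag representation is nondegenerate if and only if it is wide; by \ref{.26934} this means $\la_1,\la_2$ are pointwise linearly independent, i.e. $X_3^\m{nd}$ is the open locus of $X_3^\m{fl}$ over which the constellation map avoids the diagonal. If $w=1$, there is no wide three-dimensional representation, so (by \ref{.26934} over every field extension) every flag representation on $k^3$ has $\la_1,\la_2$ proportional, whence $A(\fk{g},3)=3$ by \ref{.167}; combining this with the Fitting-ideal description of $X^\m{nd}$ from \S7 (and the observation that then $n(r)$ has locally constant rank) shows that every flag representation is nondegenerate, so $X_3^\m{nd}=X_3^\m{fl}$, and here $\la_1,\la_2$ proportional forces $\la_3$ to factor through $\fk{g}^\m{ab}$.

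Next I would pass to the framed moduli space by dividing by $\bb{U}_3$. By \ref{.39882} the stack $[X_3^\m{nd}/_\m{ZAR}\bb{U}_3]$ is $\cl{M}_3^\m{fnd}(\fk{g})$, so $M_3^\m{fnd}(\fk{g})=\pi_0^\m{fppf}[X_3^\m{nd}/\bb{U}_3]$ is the fppf quotient $X_3^\m{nd}/_\m{fppf}\bb{U}_3$. A short computation of the conjugation action shows that $\bb{U}_3$ fixes $\la_1,\la_2$ and translates $\la_3$ by the subspace $\mathrm{span}(\la_1,\la_2)\subset\fk{g}^\lor$ (the corner entry acting trivially, cf. \ref{.3}). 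Writing $U$ for the locus of admissible pairs $(\la_1,\la_2)$ — nowhere zero and, in addition, linearly independent if $w\ge 2$ or proportional if $w=1$ — and $\ell\subset\fk{g}^\lor\otimes\ssf_U$ for the tautological subbundle $\mathrm{span}(\la_1,\la_2)$, which has rank $2$ when $w\ge 2$ and rank $1$ when $w=1$, one checks (lifting $\la_3$ Zariski-locally along the split surjection $\fk{g}^\lor\to\fk{g}^\lor/\ell$) that $X_3^\m{nd}/_\m{fppf}\bb{U}_3$ is represented by the space of triples $(\la_1,\la_2,\bar\la_3)$ with $\bar\la_3$ a section of $\fk{g}^\lor/\ell$ satisfying the (well defined) equation $\la_3[\cdot,\cdot]=\la_1\wedge\la_2$. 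Hence $M_3^\m{fnd}(\fk{g})$ is a closed subscheme of the vector bundle $\fk{g}^\lor/\ell$ over $U$, of rank $m-2$ if $w\ge 2$ and $m-1$ if $w=1$.

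Finally I would descend along the free $\bb{T}_{n-1}=\bb{T}_2$-action. By \ref{.399304}, $M_3^\m{nd}(\fk{g})=[M_3^\m{fnd}(\fk{g})/_\m{ZAR}\bb{T}_2]$ with $\bb{T}_2\cong\bb{G}_m^2$ acting freely, and in our coordinates this action is $(a_1,a_2)\cdot(\la_1,\la_2,\bar\la_3)=(a_1\la_1,a_2\la_2,a_1a_2\bar\la_3)$. The action on the base $U$ is free and $U$ is a $\bb{G}_m^2$-torsor over its quotient $B$, where $B=\bb{P}\fk{g}^\m{ab}\times\bb{P}\fk{g}^\m{ab}\setminus\Delta$ when $w\ge 2$ (two points of $\bb{P}\fk{g}^\m{ab}$ agree iff the functionals are proportional) and $B=\bb{P}\fk{g}^\m{ab}$ when $w=1$ (writing $\la_2=c\la_1$ with $c\in\bb{G}_m$, the residual $\bb{G}_m$ is absorbed). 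Since $\fk{g}^\lor/\ell$ is $\bb{G}_m^2$-equivariant over $U$ and the defining equation is $\bb{G}_m^2$-equivariant (both sides scaling by $a_1a_2$), $\bb{G}_m^2$-equivariant descent along $U\to B$ carries $\fk{g}^\lor/\ell$ to a vector bundle over $B$ of the asserted rank and $M_3^\m{fnd}(\fk{g})$ to the closed subscheme which is $M_3^\m{nd}(\fk{g})$; this is the desired conclusion.

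The step I expect to be the main obstacle is the verification in the second paragraph that the fppf quotient $X_3^\m{nd}/_\m{fppf}\bb{U}_3$ is genuinely the naive geometric quotient $(\la_1,\la_2,\bar\la_3)$, despite $\bb{U}_3$ acting with positive-dimensional and (when $w=1$) point-dependent stabilizers; this is exactly where the Zariski-local lifting of $\la_3$ and the $\pi_0^\m{fppf}$-rigidification of \S7--\S8 are needed. A secondary, more routine point is pinning down the scheme structure of the nondegeneracy locus — transparent for $w\ge 2$, where it is an open condition, but requiring the Fitting-ideal bookkeeping and the constant-rank property of $n(r)$ when $w=1$.
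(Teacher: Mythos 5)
Your proposal is correct and follows essentially the same route as the paper's proof: identify $X_3^{\m{nd}}$ in the coordinates of \ref{.16} (via \ref{.5} for $w\ge2$, via $A(\fk{g},3)=3$ for $w=1$), then factor the $\bb{B}_3$-quotient as a $\bb{U}_3$-quotient (producing the vector bundle over the constellation base) followed by a $\bb{T}_2$-quotient (descent to $\bb{P}\fk{g}^{\m{ab}}$ or the complement of its diagonal). The only step you omit is the paper's opening replacement of $\fk{g}$ by $\fk{g}/\fk{g}^{(3)}$, which forces $\fk{g}$ to be finite-dimensional and $w\in\{1,2\}$ before the rank computation; beyond that, your route through $\cl{M}_3^{\m{fnd}}$ and \ref{.399304} is a repackaging of the paper's direct two-step quotient of $\V\fk{g}_{\tilde{S}_w}$.
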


The proof follows in paragraphs \ref{.393}--\ref{.3932}.

\Unmarked{.393}{
We begin by noting that
\begin{align*}
w(\g/\g^{(3)}) = \left\{ 
\begin{array}{ll}
1 & \mbox{if } w(\g)=1 \\
2 & \m{otherwise} .	\label{.167}
\end{array}
\right.
\end{align*}
Replacing $\fk{g}$ by $\fk{g}/\fk{g}^{(3)}$, we may suppose that $w$ is equal to either $1$ or $2$ and, moreover, that $\fk{g}$ is finite dimensional. Let $S_1 \subset \bb{P}\fk{g}^\m{ab}\times\bb{P}\fk{g}^\m{ab}$ denote the diagonal and $S_2$ its complement, and for $i=1,2$ let
$$\tilde{S}_i := S_i \underset{\bb{P}\fk{g}^\m{ab} \times\bb{P} \fk{g}^\m{ab}} \times (\V_*\fk{g}^\m{ab})^2$$
For $j=1,2,3$ let $\la^r_j$ denote the $j^\m{th}$ full matrix entry of $r$, indexed as in \ref{.16}.
Then we have the following diagram:
\[
\xymatrix{
X^\m{nd}_3 \ar@{_{(}->}[d]_{j'} \ar@{^{(}->}[r]			& 
X^\m{fl}_3 \ar@{_{(}->}[d] \ar@{^{(}->}[r] 				& 
X_3 \ar@{_{(}->}[d]^j	& r \ar@{|->}[d] 					 \\
\V\fk{g}_{\tilde{S}_w} \ar[d] \ar@{^{(}->}[r] 				& 
(\V_*\fk{g}^\m{ab})^2\times\V\fk{g} \ar[d] \ar@{^{(}->}[r]		&
(\V\fk{g}^\m{ab})^2 \times \V \fk{g} \ar[d]									& 
(\la^r_1,\la^r_2,\la^r_3) \ar@{|->}[d]						\\
\tilde{S}_w \ar[d] \ar@{^{(}->}[r]						& 
(\V_*\fk{g}^\m{ab})^2 \ar[d]^\pi \ar@{^{(}->}[r]				&
(\V\fk{g}^\m{ab})^2										& 
(\la^r_1,\la^r_2)									\\
S_w \ar@{^{(}->}[r]								&
\bb{P}\fk{g}^\m{ab} \times\bb{P} \fk{g}^\m{ab} 						}
\]
We claim that all squares appearing in the diagram are Cartesian. This is clear below the top row. For the square in the upper right, this follows from \ref{.38}. We discuss the square in the upper left. Note that if $w=1$ then $A(\g,3)=3$ and if $w \ge 2$ then $A(\g, 3) = 2$. Applying this to \ref{.16}, we have: if $w=2$ then nondegeneracy is equivalent to the condition that $\la_1,\la_2$ be linearly independent, which is precisely the condition imposed by cartesianness of the square in that case; if, on the other hand, $w=1$, then every flag representation is nondegenerate, and the upper left square, which now has an equality at the top, just expresses the fact that every (flag) representation has $\la_1, \la_2$ linearly dependent. 

To complete the set up, note that $\bb{B}_3$ acts on $(\V\fk{g}^\m{ab})^2 \times \V \fk{g}$, with $\V\fk{g}_{\tilde{S}_w}$ and $X^\m{nd}_3$ invariant.}

\Unmarked{.3931}{
We claim that $\V\fk{g}_{\tilde{S}_w}/_\m{ZAR}\bb{B}_3$ forms a vector bundle of rank $m-w$ over $S_w$.}

\begin{proof}
We begin by recording the action of $\bb{U}_3$:
$$(b_1,b_2,b_3)(\la_1,\la_2,\la_3)=(\la_1,\la_2,\la_3+b_1\la_2-b_2\la_1)\,.$$
Define the vector sheaf $\tilde{\mathcal{E}}$ on $\tilde{S}_w$ by
\[
\ssf_{\tilde{S}_w}^{(2)} \to  \fk{g}_{\tilde{S}_w}^\lor \xrightarrow{\phi}  \tilde{\mathcal{E}} \to  0 
\]
where the first arrow is given by the canonical pair of sections. Since this system has constant rank $w$, $\tilde{\mathcal{E}}$ has constant rank equal to $m-w$. In particular, $\tilde{\mathcal{E}}$ is locally free. Then
$$\V \phi^\lor : \V \fk{g}_{\tilde{S}_w} \rightarrow \V\tilde{\mathcal{E}}^\lor$$
is the presheaf quotient of $\V \fk{g}_{\tilde{S}_w}$ by $\bb{U}_3$. Indeed, an orbit is determined precisely by $\la_1, \la_2$ and the image of $\la_3$ modulo $\la_1, \la_2$. 
Now $\bb{T}_2=\bb{T}_3/\bb{G}_a$  acts on $\V\tilde{\mathcal{E}}^\lor$. The action is given by
$$(a_1,a_2)(\la_1,\la_2,v)=(a_1\la_1,a_2\la_2, a_1a_2 v)$$
This is the same as descent data for the vector sheaf $\tilde{\mathcal{E}}$ along the faithfully flat morphism $\tilde{S}_w \rightarrow S_w$, hence gives rise to a vector sheaf $\mathcal{E}$ on $S_w$ forming a Cartesian square
\[\xymatrix{
									& \V\fk{g}_{\tilde{S}_w} \ar[dl]_{\V\phi^\lor} \ar[d] 	\\
\V\tilde{\mathcal{E}}^\lor \ar[r] \ar[d]_{\pi''}	& \tilde{S}_w \ar[d]_{\pi'}						\\
\V \mathcal{E}^\lor \ar[r]					& S_w
}\]
with $\pi''$ the Zariski sheaf quotient of $\V\tilde{\mathcal{E}}^\lor$ by $\bb{T}_2$. This establishes the claim, and shows, moreover, that the projection to the quotient is fppf.
\end{proof}

\Unmarked{.3932}{
It is a general fact about sheaf quotients that we get a Cartesian square
\[\xymatrix{
X^\m{nd}_3 \ar@{^{(}->}[r]^{j'} \ar[d] 			& \V\fk{g}_{\tilde{S}_w} \ar[d]^{\psi=\pi'' \circ \V \phi^\lor} 	\\
X^\m{nd}_3/_\m{ZAR}\bb{B}_3 \ar@{^{(}->}[r]		& \V\mathcal{E}^\lor
}\] 
Since $j'$ is a closed immersion and $\psi$ is fppf, it follows by descent that $X^\m{nd}_3/_\m{ZAR}\bb{B}_3$ is representable by a closed subscheme of $\V\mathcal{E}^\lor$.}

\section{Moduli of wide nilpotent representations}

\label{ModW}

In this section we give an explicit construction of $M\nd_n(\fk{g})$ for $n \le w+1$, proving, in particular, that $M\nd_n(\fk{g})$ is quasi-projective. This generalizes Proposition \ref{.3901} in the case $w(\g)\ge 2$, and the construction is similar. We continue to work with a finitely generated Lie algebra $\g$ over a field $k$.

\begin{prop} \label{.5}
Suppose $\fk{g}$ has width $w$ and let $r: \fk{g}_T \rightarrow \iEnd(\cl{E})$ be a flag representation of $\fk{g}$ on a vector sheaf $\cl{E}$ of rank $n \le w+1$ over an affine $k$-scheme $T = \spec B$. Then $r$ is nondegenerate if and only if the points of $\m{\textit{const}} \; r(t)$ are distinct for all $t \in T$.
\end{prop}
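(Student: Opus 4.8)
The plan is to prove the asserted equivalence first for $T = \spec k'$ a field---where Definitions \ref{.12} and \ref{.3836} amount to the same thing---by induction on $n$, and then to bootstrap to a general affine base. The cases $n \le 2$ are immediate (the $1$-dimensional flag representation is the trivial one, and by \ref{.15} every $2$-dimensional flag representation is nondegenerate, while in both cases the constellation is vacuously nonrepetitive). For the inductive step the crucial input is the computation that \emph{every wide representation $r$ of dimension $n$ over a field satisfies $n(r) = \Hom(L_n, L_1)$, hence $\dim \fAut{r} = 2$.} To see this, choose a splitting of $\m{Fil}^r$, decompose $\phi \in n(r)$ (\ref{.3832}) into homogeneous pieces $\phi = \phi_1 + \dots + \phi_{n-1}$ with $\phi_s$ lowering the flag index by $s$, and show by ascending induction on $s$ that $\phi_s = 0$ for $s \le n-2$: granting $\phi_1 = \dots = \phi_{s-1} = 0$, the vanishing of the degree-$(s+1)$ part of $[\phi, rv]$ says that $\phi_s$ commutes with the degree-one part of $r$, i.e.\ $\phi_s^{(i)} \circ \ka_{i+s,i+s+1}(\bar v) = \ka_{i,i+1}(\bar v) \circ \phi_s^{(i+1)}$ for all $\bar v \in \fk{g}^\m{ab}$, and linear independence of $\ka_{i,i+1}$ and $\ka_{i+s,i+s+1}$ (\ref{.26934}, using wideness) forces $\phi_s^{(i)} = \phi_s^{(i+1)} = 0$; the piece $\phi_{n-1}$ is unconstrained because $[\phi_{n-1}, rv]$ lowers the flag index by more than $n-1$. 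With the lower bound of \ref{.3} this gives $\dim \fAut{r} = 2$; and since a wide representation of dimension $w+1$ exists, truncation (\ref{.433}) produces a wide representation of dimension $n$ for each $2 \le n \le w+1$, whose codimension-one subquotients are wide (\ref{.4434}) hence nondegenerate by induction, so $A(\fk{g}, n) = 2$ for these $n$.

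Given this, the inductive step over a field is short. If $r$ is wide, its codimension-one subquotients $r_{n-1}, r^1$ are wide (\ref{.433}) hence nondegenerate by induction, so \ref{.12}(i) holds, while $\dim \fAut{r} = 2$ is the minimum permitted by \ref{.3}, so \ref{.12}(ii) holds and $r$ is nondegenerate. Conversely, if $r$ is nondegenerate then $r_{n-1}, r^1$ are nondegenerate hence wide, so $\ka_{1,2}, \dots, \ka_{n-2,n-1}$ are pairwise distinct and $\ka_{2,3}, \dots, \ka_{n-1,n}$ are pairwise distinct; the only pair of points of $\m{const}(r)$ not yet separated is $(\ka_{1,2}, \ka_{n-1,n})$. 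If these were equal, the last step $\phi_{n-2}^{(1)} \circ \ka_{n-1,n} = \ka_{1,2} \circ \phi_{n-2}^{(2)}$ of the induction above would have a one-parameter solution space, giving $\dim n(r) \ge 2$ and $\dim \fAut{r} \ge 3 > A(\fk{g},n)$, contradicting \ref{.12}(ii); so $r$ is wide.

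Finally I pass to $T = \spec B$ arbitrary affine, by a further induction on $n$. The ``only if'' direction is immediate: if $r$ is nondegenerate then its fibers are nondegenerate (\ref{.3836}(ii)), hence wide by the field case, since each has dimension $n \le w+1$. For the ``if'' direction, assuming the points of $\m{const}\, r(t)$ are distinct for all $t$, I verify the three clauses of \ref{.3836}: clause (ii) is the field case applied to fibers; for clause (i), $r_{n-1}$ is flag of dimension $n-1 \le w+1$ and, by \ref{.4434}, the fibers of its constellation are truncations of those of $\m{const}(r)$, hence still nonrepetitive, so $r_{n-1}$ is nondegenerate by the inductive hypothesis, and likewise $r^1$; for clause (iii), flatness of $\fAut{r} = \bb{G}_{m,T} \times U(r)$ with $U(r) \cong n(r)$ (\ref{.3831}, \ref{.3834}), I rerun the shift-degree induction above over $B$ in place of over a field: each $\ka^r_{i,i+1}$ is a surjection onto a line bundle ($r$ being flag), and the hypothesis on fibers says exactly that for $i \ne j$ the map $(\fk{g}^\m{ab})_T \to \cl{H}om(\cl{L}_{i+1},\cl{L}_i) \oplus \cl{H}om(\cl{L}_{j+1},\cl{L}_j)$ assembled from $\ka_{i,i+1}$ and $\ka_{j,j+1}$ is injective on every fiber, hence locally split injective, so the argument goes through and shows $n(r) = \cl{H}om(\cl{L}_n,\cl{L}_1)$, a line bundle; then $\fAut{r}$ is smooth, in particular flat, over $T$, and $r$ is nondegenerate.

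I expect the main obstacle to be the computation that a wide representation of dimension $n$ has $n(r)$ of rank one---equivalently the identity $A(\fk{g},n) = 2$ for $n \le w+1$. The shift-degree decomposition of $n(r)$ and the bookkeeping that singles out $(\ka_{1,2},\ka_{n-1,n})$ as the decisive pair of constellation points must be carried through carefully, as must the transposition of that analysis to the base $T$, where linear independence over a field has to be upgraded to fiberwise linear independence, i.e.\ local split injectivity of a map of vector sheaves.
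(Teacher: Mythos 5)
Your proof is correct and follows essentially the same route as the paper: both hinge on the identity $n(r)=\fHom(\cl{L}_n,\cl{L}_1)$ for wide $r$, obtained from the same equivariance relation $a_{i,j}\la_{j,j+1}=a_{i+1,j+1}\la_{i,i+1}$ by ascending induction on the shift degree together with pairwise linear independence of the first-superdiagonal entries, the only real difference being organizational (the paper proves the identity directly over a Noetherian base $T$ and then passes to non-Noetherian $B$ by limits, while you do the field case by induction on $n$ and then bootstrap). One small slip in your bootstrap: the map $(\g^\m{ab})_T\to\cl{H}om(\cl{L}_{i+1},\cl{L}_i)\oplus\cl{H}om(\cl{L}_{j+1},\cl{L}_j)$ is fiberwise \emph{surjective} (not injective, unless $\dim\g^\m{ab}=2$); what the argument actually needs is that $\ssf_T^{\oplus 2}\to(\g^\m{ab})^\lor_T$, $(b,c)\mapsto b\la_{j,j+1}-c\la_{i,i+1}$, is fiberwise, hence split, injective --- this is precisely what the paper extracts from the Fitting-ideal criterion --- and once the map is restated this way your argument goes through.
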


\begin{proof}
Suppose first that $T$ is Noetherian and suppose the points of $\m{\textit{const}} \; r(t)$ are distinct for all $t \in T$. Recall our notation $\cl{L}_i = \gr_i\cl{E}$ (\ref{.3813}). We claim that
$$n(r) = \fHom(\cl{L}_n, \cl{L}_1)\,.$$
Indeed, there is in general a functorial injection
$$\fHom(\cl{L}_n, \cl{L}_1) \hookrightarrow n(r)$$
given by
$$\phi \mapsto ( x \mapsto \phi(\bar{x}))$$
on $T'$-valued points ($T'$ an arbitrary $T$-scheme), where $\bar{x}$ denotes the image of $x$ in $\Gamma(T', \cl{L}_n)$. To see that our assumption makes this into an isomorphism it is enough to look Zariski locally on $T$ so that $\cl{E}$, together with its flag becomes trivial, and to evaluate on an affine Noetherian $T$-scheme $\spec A$ (since both functors are locally of finite presentation). Fixing a basis compatible with the flag, we thus have $n(r)(A) \subset {\fk{n}_n}_A$. 

Now fix $(a_{i,j}) \in n(r)(A)$, assume for an induction on $s$ that $a_{i,j} = 0$ for $j-i <s$ and fix $i,j$ with $j-i = s$. Then equivariance of $(a_{i,j})$ with $r$ implies
$$a_{i+1,j+1}\la_{i,i+1} = a_{i,j}\la_{j,j+1}\,.$$
Then the fiberwise linear independence of $\la_{i,i+1},\la_{j,j+1}$ implies
$$a_{i+1,j+1} = a_{i,j} = 0\,.$$
Indeed, consider the map of $A$-modules
$$\psi: A^2 \rightarrow A \otimes \fk{g}^\lor$$
defined by
$$(b,c) \mapsto b \la_{i,i+1} + c \la_{j,j+1}\,.$$
By \cite[20.8]{eisenbud} the cokernel splits as a free submodule of co-rank $2$, hence the image is a free submodule of rank $2$, hence $\psi$ is injective.

This shows that $a_{i,j} = 0$ for $1 \le j-i \le n-2$ and establishes the claimed isomorphism $n(r) = \fHom(\cl{L}_n, \cl{L}_1)$. Thus the automorphism group is flat, its fiberwise dimension is minimal and, assuming for an induction that the subquotients of $r$ are nondegenerate, it follows that $r$ is nondegenerate.

For the converse with $T$ still assumed Noetherian, suppose there is a $t \in T$ such that the points of $\m{\textit{const}} (r)$ are not all distinct. Our goal being to show that $r$ is not nondegenerate, after possibly replacing $T$ by a nonempty Zariski open subset, we may suppose $(\cl{E}, \m{Fil}^r)$ to be trivial. After possibly replacing $r$ by a subrepresentation, we may assume that the $\la_{i,i+1}$ are pairwise linearly independent with the exception of $\la_{1,2}, \la_{n-1,n}$. Then the equation 
\[ a_{2,n} \la_{1,2}(t) = a_{1,n-1} \la_{n-1, n}(t) \]
has a one dimensional space of solutions, so $\dim \fAut r(t) = 3$ is not minimal.

If $B$ is not Noetherian, write $E$ for the global sections of $\cl{E}$ and $\m{Fil}$ for the flag associated to $r$ and let $B_0$ be a finite type subalgebra over which $E, \m{Fil}$ are defined: $E= B\otimes_{B_0} E_0$ with $\m{Fil}$ induced from a filtration $\m{Fil}_0$ on $E_0$. Let $\s{B}$ denote the system of finite type $B_0$-subalgebras of $B$. The functor $X^\m{wide}_{(B_0, E_0, \m{Fil}_0)}$ whose points are wide representations on $E_0$ with filtration $\m{Fil}_0$ is an open subfunctor of $X^\m{fl}_{(B_0, E_0, \m{Fil}_0)}$, so is, in particular, locally of finite presentation. Thus by \ref{.38373} we have an equality of subsets of $X^\m{fl}_{(B_0, E_0, \m{Fil}_0)}(B)$:
 \begin{eqnarray}
 X^\m{nd}_{(B_0, E_0, \m{Fil}_0)} (B) &=& 
 \lim_{\substack{\longrightarrow \\ B' \in \s{B} }} X^\m{nd}_{(B_0, E_0, \m{Fil}_0)} (B')  \nonumber \\
 &=& 
 \lim_{\substack{\longrightarrow \\ B' \in \s{B} }} X^\m{wide}_{(B_0, E_0, \m{Fil}_0)} (B') \nonumber \\
 &=& 
 X^\m{wide}_{(B_0, E_0, \m{Fil}_0)} (B)
\end{eqnarray}
\end{proof}

\begin{thm} \label{.5011}
Let $w$ denote the width of $\fk{g}$. If $n \le w+1$ then $\pi_0 ^\m{ZAR} \Mnfndg$ (\ref{.39881}) is quasi-projective. Thus
$$\mnfndg = \pi_0 ^\m{ZAR} \Mnfndg$$
(\ref{.39884}) and in particular, $\mnfndg$ is quasi-projective.
\end{thm}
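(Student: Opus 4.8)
The plan is to construct $\pi_0^\m{ZAR}\Mnfndg$ explicitly as a locally closed subscheme of a vector bundle over an open subscheme of $(\V_*\g^\m{ab})^J$, in the spirit of the case $w\ge 2$ of Proposition \ref{.3901} but carried out uniformly in all dimensions $n\le w+1$. The key tool is the criterion of Proposition \ref{.5}: for $n\le w+1$ a flag representation is nondegenerate if and only if the points of its constellation are fiberwise distinct. Combined with the functoriality results of \S\ref{Modnd} (specifically \ref{.39879} and its framed analog \ref{.3993}), this lets us reduce to the case that $\g$ is free pronilpotent on a finite-dimensional vector space $F$: indeed $A(\g,i)$ depends only on whether constellation points can be made distinct, which for $i\le w+1$ is governed purely by linear independence of the $\la_{i,i+1}$, and one checks that $A(\fk{n}(F),i)=A(\g,i)$ for a suitable surjection $\fk{n}(F)\twoheadrightarrow \g$; the induced map on framed moduli spaces is then a closed immersion, so it suffices to prove quasi-projectivity for $\g$ free.

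First I would record the framed constellation map $\mathit{const}:\mnfndg \to (\V_*\g^\m{ab})^J$ from \ref{.399303} and identify its image with the open subscheme $U\subset(\V_*\g^\m{ab})^J$ cut out by the condition that the $n-1$ points $\ka_{1,2},\dots,\ka_{n-1,n}$ of $\bb{P}\g^\m{ab}$ are pairwise distinct (equivalently, pairwise linearly independent as functionals, an open condition). Over this base, the remaining data of a framed flag representation is the collection of higher canonical matrix entries $\ka_{i,j}$ for $j-i\ge 2$, which take values in the line bundles $\cl{H}om(\cl{L}_j,\cl{L}_i)$ trivialized by the framing, hence in $\ssf$; by Theorem \ref{.434} each such $\ka_{i,j}$ is automatically surjective, and the constraints coming from the bracket identity of \ref{.44} are closed (and, by the wideness hypothesis, actually determine the higher entries from the first superdiagonal, as in the computation of \ref{.2153} and the $n(r)$-computation in the proof of \ref{.5}). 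So $\mnfndg$ sits inside the total space of a vector bundle (of rank $\dim\g^{(2)} \cdot$ something, but the precise rank is immaterial) over $U$ as the closed subscheme defined by these bracket equations. Quasi-projectivity follows: $(\V_*\g^\m{ab})^J$ is quasi-projective over $k$, $U$ is open in it, the vector bundle is quasi-projective over $U$, and a closed subscheme of a quasi-projective scheme is quasi-projective.

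To see that this locally closed subscheme of the vector bundle really represents $\pi_0^\m{ZAR}\Mnfndg$ rather than merely receiving a map from it, I would argue as in \ref{.3931}--\ref{.3932}: by Proposition \ref{.39882}, $\Mnfndg = [X\nnd/_\m{ZAR}\bb{U}_n]$, so $\pi_0^\m{ZAR}\Mnfndg = X\nnd/_\m{ZAR}\bb{U}_n$; one computes the $\bb{U}_n$-action on $X\nnd$ explicitly in coordinates and checks that, modulo the bracket relations, the higher superdiagonal entries are exactly the $\bb{U}_n$-coordinates while the first superdiagonal entries $(\la_{1,2},\dots,\la_{n-1,n})\in U$ are $\bb{U}_n$-invariant — so the presheaf quotient is already a Zariski sheaf and coincides with the subscheme of the vector bundle constructed above. (This is where the inductive structure of nondegeneracy and the surjectivity statement of Theorem \ref{.434} do the real work: they guarantee that the quotient map $X\nnd \to X\nnd/\bb{U}_n$ admits Zariski-local sections, so $\pi_0^\m{ZAR}$ agrees with $\pi_0^\m{fppf}=\mnfndg$ from \ref{.39884}.) Finally, the equality $\mnfndg = \pi_0^\m{ZAR}\Mnfndg$ is then immediate, and quasi-projectivity of $\mnfndg$ follows.

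\textbf{The main obstacle} I expect is the reduction to the free case and the bookkeeping of the bracket relations: one must verify carefully that the higher canonical matrix entries, for a \emph{wide} representation, are genuinely free coordinates along the $\bb{U}_n$-orbit direction after imposing the relations of \ref{.44} — i.e. that the relations express each $\ka_{i,j}$ with $j-i\ge 2$ in terms of lower ones plus a genuine new $\bb{U}_n$-parameter, with no hidden further constraint — so that the Zariski sheaf quotient is a scheme of the expected form and not something larger or with unexpected identifications. The computation in the proof of Proposition \ref{.5} (showing $n(r)=\fHom(\cl{L}_n,\cl{L}_1)$) is the infinitesimal shadow of exactly this fact and should be upgraded to the statement about orbits; once that is in hand, the quasi-projectivity is formal.
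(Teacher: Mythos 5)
Your global architecture matches the paper's: reduce to the free case $\g=\fk{n}(F)$ via the closed immersion of framed moduli from \ref{.3993}, identify $X\nnd(\fk{n}(F))$ as the cone $\widetilde W\times\V F^I$ over the complement of the big diagonal, and then exhibit the $\bb{U}_n$-quotient as a bundle over $\widetilde W$. But there is a substantive gap in the middle, and a misdirected appeal along the way.

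The gap: you assert that ``the presheaf quotient is already a Zariski sheaf and coincides with the subscheme of the vector bundle constructed above,'' and that the $\bb{U}_n$-action ``admits Zariski-local sections'' as if this follows from the infinitesimal computation $n(r)=\fHom(\cl L_n,\cl L_1)$. That is precisely what has to be proved, and it does not follow formally from the infinitesimal statement. The paper's actual work (paragraphs \ref{.50143}--\ref{.50147}) is the construction of an explicit family of local slices: for each generic choice $\gamma=(E_2^\lor,\dots,E_{n-1}^\lor)$ of codimension-$2$ subspaces of $F^\lor$ one gets an open $\widetilde W_\gamma\subset\widetilde W$ and a subbundle $\V V_\gamma\subset\V F^I_{\widetilde W_\gamma}$, and one shows (\ref{.50144}--\ref{.50146}, a genuine induction with explicit polynomial bookkeeping of the $\bb{U}_n$-action in coordinates) that $U\times\V V_\gamma\to\V F^I_{\widetilde W_\gamma}$ is an isomorphism; only then do the local presheaf quotients glue to a Zariski sheaf (\ref{.50147}). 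Nothing in your write-up produces these slices or establishes that they cover. One also has to notice that the $\bb{U}_n$-action is not free — it factors through $U=\bb{U}_n/\ga$, and it is the $U$-action that is free — a point you don't flag.

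The misdirection: you invoke Theorem \ref{.434} and the bracket relations of \ref{.44}, claiming that the higher canonical entries $\ka_{i,j}$ are ``determined by the first superdiagonal'' and that $\mnfndg$ is a closed subscheme of a vector bundle ``defined by these bracket equations.'' This is both internally inconsistent (if the higher entries were genuinely determined, there would be nothing left to cut out) and does not match the free case, where the whole point is that there are \emph{no} constraints: $X_n(\fk{n}(F))=\fHom_{\m{\bld{Lie}}}(\fk{n}(F),\fk n_n)=\V F^K$ on the nose, and the paper deliberately parametrizes by the \emph{full} matrix entries $\la_{i,j}$, not the canonical ones, exactly so that the higher superdiagonals are free coordinates. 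Theorem \ref{.434} and \ref{.44} do not appear in the paper's proof of this theorem; they enter earlier, in Proposition \ref{.5}, which you cite correctly but then do not need again. Dropping this detour and instead carrying out the slice construction is what closes the argument.
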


The proof follows (\ref{.5012} -- \ref{.50148}).

\begin{unmarked} \label{.5012}
We have
\begin{align*}
\pi_0^\m{ZAR}\Mnfndg &= \pi_0^\m{ZAR} [X \nnd /_\m{ZAR} \bb{U}_n ]\\ 
&= X_n^\m{nd}(\g)/_\m{ZAR}\bb{U}_n\,,
\end{align*}
so our goal is to show that the Zariski sheaf quotient $X_n^\m{nd}(\g)/_\m{ZAR}\bb{U}_n$ is quasi-projective. We first consider the case
$$\fk{g} = \fk{n}(F)$$
free pronilpotent on a finite dimensional vector space $F$ of dimension not less than $2$. Let $K$ be the set of pairs $(i,j)$ of integers $1\le i<j\le n$ and let
$$I = \{(i,j) \in K \; | \; j-i \ge 2 \}\,,$$
$$J= \{ (i,j) \in K \; | \; j-i = 1 \}$$
be its partition into first superdiagonal and higher superdiagonals. Let
$$W \hookrightarrow (\bb{P}F)^J$$
denote the complement of the big diagonal, that is, the open subscheme of $J$-tuples of distinct points of $\bb{P}F$. Write $\V_*F$ for $\V F \setminus \{0\}$ and let
$$\widetilde{W} \hookrightarrow (\V_*F)^J$$
denote the cone above $W$. By \ref{.5},
$$X\nnd(\fk{n}(F)) = \widetilde{W} \times \V F^I = \V F^I_{\widetilde{W}}\,.$$ 
\end{unmarked}

\begin{unmarked} \label{.5013}
Let $B$ be a $k$-algebra and consider an arbitrary element $u \in \bb{U}_n(B)$. Then
$$(u^{-1})_{i,j} = -u_{i,j} + P_{i,j}$$
where $P_{i,j}$ is a polynomial in $u_{i',j'}$ with $i' \ge i, j' \le j$ and $(i',j') \neq (i,j)$, hence, in particular, with $j'-i' < j-i$. Indeed, since formation of the inverse of an automorphism commutes with taking subquotients along a filtration respected by the automorphism, it is sufficient to check this statement for $(i,j)=(1,n)$. Let $u_{\widehat{i,j}}$ denote the matrix $u$ with the $i^\m{th}$ row and $j^\m{th}$ column removed and let $u_{ \widehat{ i,j }, \widehat{ i',j' } }$ denote $u$ with rows $i,i'$ and columns $j,j'$ removed. Then in this case, we have
\begin{align}
(u^{-1})_{1,n} 	& = (-1)^{n+1} \det u_{\widehat {n,1}} \nonumber \\
			& = (-1)^{n+1} \sum_{j=2}^n (-1)^{j} u_{1,j} \det  u_{ \widehat{ n,1 }, \widehat{ 1,j } }   \nonumber \\
			& = \left( (-1)^{n+1} \sum_{j=2}^{n-1} (-1)^{j} u_{1,j} \det  u_{ \widehat{ n,1 }, \widehat{ 1,j } } \right) - u_{1,n} \nonumber
\end{align}
and we need only remark that the parenthetical term is a polynomial in $u_{i',j'}$ with $(i',j') \neq (1,n)$.
\end{unmarked}

\begin{unmarked} \label{.5014}
Continuing with the situation and the notation of \ref{.5013}, let $\la$ be a $B$-valued point of $\V F_{\widetilde{W}}^I$. Then $\la$ corresponds to a family of $B$-linear maps
$$(\la_{i,j}: F_B := B \otimes F \to B)_{(i,j)\in K}$$
indexed by $K$, with $\{\la_{i,j}(t)\}_{(i,j)\in J}$ pairwise linearly independent for every $t\in T:= \spec B$, and corresponds further to a wide nilpotent representation $r:\fk{n}(F)_T \to {\nn}_T$ such that for each $(i,j)\in K$, $\la_{i,j}$ is the restriction of $\la^r_{i,j}$ to $F$.

We denote the action of $u$ on $\la$ by $u\la u^{-1}$. In terms of the linear functionals $\la_{i,j}$, the calculation of \ref{.5013} gives rise to a formula for the component linear functionals of $u\la u^{-1}$. For notational convenience, we allow scalars to act on the right.
\[\setlength{\extrarowheight}{2ex}
\begin{array}{rll}
(u \la u^{-1})_{i,j} & = & \la_{i,j} -\la_{i,i+1}u_{i+1,j} + u_{i,j-1} \la_{j-1,j} \\
&& - \underset{i+1<m<j}{\sum}  \la_{i,m} u_{m,j} + \underset{i<l<j-1}{\sum} u_{i,l}\la_{l,j} -\underset{i<l<m<j}{\sum} u_{i,l} \la_{l,m} u_{m,j}  \\
&& + \la_{i,i+1} P_{i+1,j} + \underset{i+1 <m<j}{\sum} \la_{i,m} P_{m,j} + \underset{i<l<m<j}{\sum} u_{i,j} \la_{l,m} P_{m,j} \\
& = & \la_{i,j} -\la_{i,i+1}u_{i+1,j} + u_{i,j-1} \la_{j-1,j} + \underset{ (l,m) \neq (i,j) }{\underset{i\le l < m \le j}{\sum}} Q_{l,m}\la_{l,m}
\end{array}\]
where each $Q_{l,m}$ is a polynomial in $u_{l',m'}$ with $l',m'$ subject to the conditions $i \le l' < m' \le j$ and $m'-l'\le j-i-2$. We note in particular that the action factors through the quotient $U:= \bb{U}_n/\ga$ of $\bb{U}_n$ by the subgroup defined by $u_{i,j}=0$ for $(i,j)\neq (1,n)$.
\end{unmarked}

\begin{unmarked} \label{.50143}
Let $G$ denote the set of $(n-2)$-ples $(E^\lor_2,\dots,E^\lor_{n-1})$ of subspaces of $F^\lor$ of codimension $2$ and fix arbitrarily an element $\gamma \in G$. Define an open subset $\widetilde{W}_\gamma \subset \widetilde{W}$ as follows. Given a point $\la \in \widetilde{W}$ let $k(\la)$ denote its residue field, let $(\la_{1,2},\dots,\la_{n-1,n})$ denote the corresponding $J$-tuple of lineal functionals $F_{k(\la)} \to k(\la)$ and require that for each $i= 2, \dots, n-1$, the subspace of $F^\lor_{k(\la)}$ generated by the pair $\la_{1,2}, \la_{i,i+1}$ intersect $(E^\lor_i)_{k(\la)}$ trivially:
$$\widetilde{W}_\gamma= \left\{ \la=(\la_{1,2},\dots,\la_{n-1,n}) \in \widetilde{W} \;\big|\; \langle\la_{1,2},\la_{i,i+1}\rangle \cap (E^\lor_{i})_{k(\la)} = 0 \;\m{in}\; \forall i \right\}\,.$$
Since the projection $\V F^I_{\wt{W}} \to \wt{W}$ is $U$-equivariant for the trivial action of $U$ on the target, $U$ acts on the restriction $\V F^I_{\wtg}$ of $\V F^I_{\wt{W}}$ to $\wtg$.

For each $i=1,\dots, n-1$, $F^\lor_{\widetilde{W}}$ has a canonical section $\la^c_{i,i+1}$; denote its restriction to $\widetilde{W}_\gamma$ by $\la^\gamma_{i,i+1}$. Since for $i \neq 1$, the sections $\la^\gamma_{1,2}, \la^\gamma_{i,i+1}$ are at each point of $\widetilde{W}_\gamma$ linearly independent of each other and of $(E^\lor_i)_{\widetilde{W}_\gamma}$, the trivial vector sheaf $F^\lor_{\widetilde{W}_\gamma}$ decomposes as
$$F^\lor_{\wtg} = \ssf_{\wtg}\la^\gam_{1,2} \oplus \ssf_{\wtg} \la^\gam_{i,i+1} \oplus (E^\lor_i)_{\wtg} \, .$$
Define for each $(i,j)\in I$ a vector subsheaf $F_\gam(i,j)^\lor$ of $F^\lor_{\wtg}$ by
\[ F_\gam(i,j)^\lor := \left\{
\begin{array}{ll}
(E^\lor_{j-1})_{\wtg} 	& \mbox{if } i=1 \\
\ssf_{\wtg}\la^\gam_{1,2} \oplus (E^\lor_i)_{\wtg} 	& \mbox{otherwise}
\end{array}
\right.\]
and define $V^\lor_\gam$ to be the vector subsheaf
\[ V_\gam^\lor := \bigoplus_{(i,j)\in I} F_\gam (i,j)^\lor \]
of $F^{\lor I}_{\wtg}$.
\end{unmarked}

\begin{unmarked} \label{.50144}
The construction of \ref{.50143} gives rise to a closed immersion $\V V_\gam \hookrightarrow \V F^I_{\wtg}$ and hence, by linearization with respect to the action of $U$, to a $U$-equivariant map $U\times \V V_\gam \to \V F^I_{\wtg}$ which we denote by $d_\gam$. We claim that $d_\gam$ is an isomorphism. 
\end{unmarked}

The next two paragraphs (\ref{.50145} -- \ref{.50146}) are devoted to a proof of \ref{.50144}.

\begin{unmarked} \label{.50145}
We fix an affine $k$-scheme $T = \spec B$ and a point $\la \in \V F^I_{\wtg}(T)$, and we set out to prove that there exists a unique point $(u,\la') \in U(T) \times \V V_\gam(T)$ mapping to $\la$ under $d_\gam$. Note first that the restriction $\la|_J$ of $\la$ to $J$ corresponds to a map $T\to \wtg$ and that for any $u \in U(T)$, $u \la u^{-1}$ corresponds to an element of the pullback $(\la|_J)^*F^{\lor I}_{\wtg}$ of $F^{\lor I}_{\wtg}$ along this map; in concrete terms, our goal is to show that there exists a unique element $u \in U(T)$ such that $u \la u^{-1}$ is actually contained in the submodule $(\la|_J)^*V^\lor_\gam$ of $(\la|_J)^*F^{\lor I}_{\wtg}$. We impose a total ordering on $I$: $(i,j) < (i',j')$ if $j-i < j'-i'$ or if both of the following hold  $j-i = j'-i'$ and $i < i'$. 
\end{unmarked}

The following lemma is a strengthening of \ref{.50144} suited for an induction on $I$.

\begin{lm} \label{.50146}
For each $(i,j)$ there exist elements $b_{i',j'} \in B$, $(i',j') \le (i+1,j)$ such that
$$(u\la u^{-1})_{i'',j''} \in (\la|_J)^*F_\gam(i'',j'')^\lor \text{ for all } (i'',j'') \le (i,j)$$
if and only if
$$u_{i',j'} = b_{i',j'} \text{ for all } (i',j') \le (i+1,j) \,.$$
\end{lm}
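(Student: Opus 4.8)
The statement is proved by induction on $(i,j)$ with respect to the total order imposed on $I$ in \ref{.50145}. The base case and inductive step are essentially the same computation, so I would organize it as a single induction. Fix $(i,j) \in I$ and assume inductively that the lemma has been established for all $(l,m) < (i,j)$; in particular, the entries $u_{i',j'}$ for $(i',j') < (i+1,j)$ are already pinned down to prescribed values $b_{i',j'} \in B$ (note $(i',j') \le (i+1,j)$ but $(i'+1,j') $ with $j'-i' < j-i$, so these all precede $(i,j)$ in the order once we peel off the diagonal-distance bookkeeping). What remains is to show that the single equation coming from the $(i,j)^{\mathrm{th}}$ component, namely the requirement
$$(u\la u^{-1})_{i,j} \in (\la|_J)^*F_\gam(i,j)^\lor,$$
determines $u_{i+1,j}$ uniquely and imposes no further constraint.

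**Key steps.** First I would invoke the explicit formula for $(u\la u^{-1})_{i,j}$ from \ref{.5014}: it equals
$$\la_{i,j} - \la_{i,i+1}u_{i+1,j} + u_{i,j-1}\la_{j-1,j} + \sum_{\substack{i\le l<m\le j\\ (l,m)\neq(i,j)}} Q_{l,m}\la_{l,m},$$
where each $Q_{l,m}$ is a polynomial in the entries $u_{l',m'}$ with $m'-l' \le j-i-2$. By the inductive hypothesis all these entries have already been fixed, so the $Q_{l,m}$ are now \emph{known} elements of $B$, and likewise $u_{i,j-1}$ is known. Hence the whole expression is an affine-linear function of the single unknown $u_{i+1,j}$: it has the form $\mu - \la_{i,i+1}u_{i+1,j}$ where $\mu \in F^\lor_B$ is a known combination of the sections $\la_{l,m}$. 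Second, I would unwind the definition of $F_\gam(i,j)^\lor$ from \ref{.50143}. When $i=1$ it is $(E^\lor_{j-1})_{\wtg}$, and the relevant decomposition is $F^\lor = \ssf\la^\gam_{1,2}\oplus\ssf\la^\gam_{j-1,j}\oplus E^\lor_{j-1}$; when $i\neq 1$ it is $\ssf\la^\gam_{1,2}\oplus(E^\lor_i)_{\wtg}$, and the decomposition is $F^\lor = \ssf\la^\gam_{1,2}\oplus\ssf\la^\gam_{i,i+1}\oplus E^\lor_i$. In either case the complementary line is spanned by $\la_{i,i+1}$ (this is where the \emph{transversality} cutting out $\wtg$ is used: the splitting is a genuine direct-sum decomposition of the trivial sheaf over $\wtg$). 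So the condition ``$(u\la u^{-1})_{i,j} \in F_\gam(i,j)^\lor$'' is exactly the condition that the $\la_{i,i+1}$-component of $\mu - \la_{i,i+1}u_{i+1,j}$ vanish, i.e.\ $u_{i+1,j}$ equals the $\la_{i,i+1}$-coordinate of $\mu$ in this decomposition. Call that element $b_{i+1,j} \in B$. Third, I would observe that this is an ``if and only if'': if $u_{i+1,j} = b_{i+1,j}$ then the $(i,j)$ component lands in $F_\gam(i,j)^\lor$ and, combined with the inductive hypothesis for the smaller pairs, all components $(i'',j'')\le(i,j)$ land where required; conversely any $u$ achieving this must have had the smaller entries equal to the $b$'s (inductive hypothesis) and hence, by the displayed affine equation, must have $u_{i+1,j}=b_{i+1,j}$.

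**Main obstacle.** The only genuinely delicate point is making sure that the decomposition of $F^\lor_{\wtg}$ used to extract the $\la_{i,i+1}$-coordinate is valid as a decomposition of \emph{locally free sheaves over the base} $\wtg$ (so that the coordinate $b_{i+1,j}$ is a well-defined global element of $B$ and the construction is stable under base change), rather than merely fiberwise. This is precisely what the definition of $\widetilde{W}_\gamma$ in \ref{.50143} secures: the sections $\la^\gam_{1,2},\la^\gam_{i,i+1}$ are pointwise linearly independent of each other and of $(E^\lor_i)_{\wtg}$, which over the (reduced, or more simply over any) base forces the sum $\ssf\la^\gam_{1,2}\oplus\ssf\la^\gam_{i,i+1}\oplus(E^\lor_i)$ to be all of $F^\lor_{\wtg}$ as a direct summand decomposition. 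I would state this as a short preliminary observation before running the induction. Everything else — the bookkeeping that $Q_{l,m}$ involves only already-determined entries, and that the total order makes the induction well-founded — is routine once the formula of \ref{.5014} is in hand. Finally, taking $(i,j)$ to be the largest element of $I$ recovers the full collection of $b_{i',j'}$ and, since the map $d_\gam$ of \ref{.50144} has image exactly $\V V_\gam$ cut out by these conditions, yields the isomorphism claim of \ref{.50144} as well.
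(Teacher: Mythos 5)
Your high-level plan --- induction on $I$ using the explicit formula of \ref{.5014}, with the definition of $\widetilde{W}_\gamma$ securing the direct-sum decomposition of $F^\lor_{\wtg}$ over the base --- is the same as the paper's. But there is a genuine gap in the case $i=1$, which the paper handles separately as ``the case $j_0=n$.''

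Your framing asserts that the inductive hypothesis already pins down all $u_{i',j'}$ with $(i',j')<(i+1,j)$, so that only $u_{i+1,j}$ remains, determined by the single constraint that the $\la_{i,i+1}$-component of $(u\la u^{-1})_{i,j}$ vanish. This is correct for $i\ne 1$: the predecessor of $(i,j)$ in $I$ is $(i-1,j-1)$, and its instance of the lemma pins down every $(i',j')\le(i,j-1)$, i.e.\ every $(i',j')<(i+1,j)$. When $i=1$ (and $j\ge4$), however, the predecessor of $(1,j)$ in $I$ is $(n-j+2,n)$, and its instance pins down only entries of diagonal distance at most $j-3$. The entry $u_{1,j-1}$ has diagonal distance $j-2$ and satisfies $(1,j-1)<(2,j)$, yet it is \emph{not} pinned down; it is a second new unknown in addition to $u_{2,j}$. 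Correspondingly, your assertion that ``in either case the complementary line is spanned by $\la_{i,i+1}$'' is false for $i=1$: there $F_\gam(1,j)^\lor=(E^\lor_{j-1})_{\wtg}$ has a rank-two complement $\ssf_{\wtg}\la^\gam_{1,2}\oplus\ssf_{\wtg}\la^\gam_{j-1,j}$, not a line. The requirement that $(u\la u^{-1})_{1,j}$ land in $(E^\lor_{j-1})_{\wtg}$ therefore imposes \emph{two} constraints, which, via the formula
$$(u\la u^{-1})_{1,j}=\la_{1,j}-\la_{1,2}u_{2,j}+u_{1,j-1}\la_{j-1,j}+\sum Q_{l,m}\la_{l,m}$$
(all $Q_{l,m}$ depending only on entries of diagonal distance $\le j-3$, hence already determined), solve uniquely for both $u_{2,j}$ and $u_{1,j-1}$. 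The base case $(1,3)$ is of this type as well, with the two unknowns $u_{1,2},u_{2,3}$. Add this second branch of the induction and your argument aligns with the paper's.
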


\begin{proof}
The base case is in fact a special case of the inductive step with vacuous inductive hypothesis. For the inductive step, fix arbitrarily $(i_0,j_0) \in I$, suppose the lemma holds for $i=i_0,j=j_0$ and let $(i_1,j_1)$ be the immediate successor of $(i_0,j_0)$. Consider first the case given by $j_0 \neq n$. In this case, $(i_0,j_0) = (i_1 -1, j_1 -1)$. Note the decomposition
\[ (\la |_J)^*F^\lor_{\wtg} = B\la_{1,2} \oplus B\la_{i_1,i_1 +1} \oplus (E^\lor_{i_1})_B \,. \]
For $(i',j') \le (i_1, j_1-1)$ set $u_{i',j'}$ equal to the element $b_{i',j'} \in B$ determined by the inductive hypothesis. Then
\[ (u\la u^{-1})_{i_1,j_1} = \la_{i_1,j_1} + b_{i_1,j_1-1}\la_{j_1 -1, j_1} - u_{i_1 +1, j_1}\la_{i_1, i_1 +1} + \sum Q_{l,m}\la_{l,m} \]
with each $Q_{l,m}$ appearing in the sum a polynomial in the elements $b_{i',j'}$ of $B$ determined by the inductive hypothesis, and, in particular, independent of $u_{i_1 +1, j_1}$. Thus, there exists an element $b_{i_1 +1, j_1} \in B$ such that
$$(u \la u^{-1})_{i_1,j_1} \in B \la_{1,2} \oplus (E^\lor_{i_1})_B = (\la |_J)^* F_\gam(i_1, j_1)$$
if and only if $u_{i_1 +1, j_1} = b_{i_1 +1, j_1}$. The case $j_0 = n$ is similar.
\end{proof}

\begin{unmarked} \label{.50147}
Consequently, the composite
$$\V F^I_{\wtg} \rightarrow U\times \V V_\gam \rightarrow \V V_\gam$$
of the isomorphism $d_\gam^{-1}$ with the second projection of $U \times \V V_\gam$ is the presheaf quotient of $\V F^I_{\wtg}$ by the action of $U$. Since $\bigcup_{\gam \in G} \wtg = \wt{W}$ and hence $\bigcup_{\gam \in G} \V F^I_{\wtg} = \V F^I_{\wt{W}}$, we have a family of local presheaf quotients defined over a covering. These glue automatically to produce a global Zariski sheaf quotient, as I now explain.

Given elements $\gam, \beta \in G$, we denote the intersection $\wtg \cap \wt{W}_{\beta}$ by $\wt{W}_{\beta \gam}$. Then the square 
\[
\xymatrix{\V F^I_{\wt{W}_{\beta \gam}} \ar[d] \ar@{^{(}->}[r] & \V F^I_{\wtg} \ar[d] \\ \V F^I_{\wt{W}_{\beta \gam}}/U \ar@{^{(}->}[r] & \V V_\gam}
\]
is automatically Cartesian (a general fact about (pre)sheaf quotients). Since the top arrow is an open immersion and the vertical arrow on the right is fppf, it follows that the horizontal arrow on the bottom is an open immersion. The cocycle condition follows from the fact that the intersection $\V F^I_{\wt{W}_{\alpha\beta}}/U \cap \V F^I_{\wt{W}_{\beta\gam}}/U$ in $\V V_\beta$ is canonically identified with $\V F^I_{\wt{W}_{\alpha \beta \gam}}/U$.
\end{unmarked}

\begin{unmarked} \label{.50148}
This completes the proof of the first statement in the special case $\g = \fk{n}(F)$. It follows that $\pi_0^\m{ZAR}\Mnfndg$ is representable by a scheme, hence is an fppf sheaf. Thus,
\begin{align*}
\pi_0^\m{ZAR}\Mnfndg &=  \pi_0^\m{fppf}\Mnfndg \\ 
&= \mnfndg
\end{align*}
which completes the proof of the theorem in the special case. For the general case, fix a surjection
$$s:\fk{f} \twoheadrightarrow \g$$
from a free Lie algebra on two or more generators. Then $n \le w(\fk{f}) +1$ so by \ref{.3993}, $s$ induces a fully faithful morphism
$$\Mnfndg \hookrightarrow \Mnfnd(\fk{f}) \,. $$
This gives rise to a square
\[\xymatrix{
\Mnfndg \ar@{^{(}->}[r] \ar[d]		&
\Mnfnd(\fk{f}) \ar[d]				\\
\pi_0^\m{ZAR}\Mnfndg \ar@{^{(}->}[r]	&
\pi_0^\m{ZAR}\Mnfnd(\fk{f})			}
\]
concerning which we have the following facts: the square is Cartesian; the bottom right corner is algebraic; the vertical map on the right is fppf (\ref{.3992}); the horizontal map at the top is a closed immersion (\ref{.3993}). It follows that $\pi_0^\m{ZAR}\Mnfndg$ is algebraic, from which the theorem follows as in the free case.
\end{unmarked}

\begin{thm} \label{.50149}
Let $w$ denote the width of $\fk{g}$ and suppose that $n \le w+1$. Then $\pi_0 ^\m{ZAR} \Mnndg$ is quasi-projective. Thus
$$\mnndg = \pi_0 ^\m{ZAR} \Mnndg \,,$$
and in particular, $\mnndg$ is quasi-projective.
\end{thm}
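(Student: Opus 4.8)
The strategy is to deduce Theorem \ref{.50149} from its framed counterpart, Theorem \ref{.5011}, by dividing out the residual torus. One first records the identity
\[\pi_0^\m{ZAR}\Mnndg = X_n^\m{nd}(\g)/_\m{ZAR}\bb{B}_n = \bigl(X_n^\m{nd}(\g)/_\m{ZAR}\bb{U}_n\bigr)/_\m{ZAR}\bb{T}_{n-1} = \mnfndg/_\m{ZAR}\bb{T}_{n-1}\,,\]
using \ref{.397}, the computation of \ref{.5012} (with $\bb{B}_n$ in place of $\bb{U}_n$), the fact that a Zariski sheaf quotient by $\bb{B}_n=\bb{T}_n\ltimes\bb{U}_n$ is the iterated quotient, and the triviality of the conjugation action of the scalars, so that the residual $\bb{T}_n$-action factors through $\bb{T}_{n-1}=\bb{T}_n/\bb{G}_m$; the last equality is Theorem \ref{.5011}, representing $X_n^\m{nd}(\g)/_\m{ZAR}\bb{U}_n$ by the quasi-projective scheme $\mnfndg$. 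As $\mnndg=\pi_0^\m{fppf}\Mnndg$ is already an algebraic space (\ref{.398}), it suffices to show that $\mnfndg/_\m{ZAR}\bb{T}_{n-1}$ is a quasi-projective scheme equal to $\mnndg$.

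I would run the construction of \ref{.5011} equivariantly for $\bb{T}_{n-1}$. Recall that for $\g$ free the scheme $\mnfndg$ is glued from the vector bundles $\V V_\gamma\to\widetilde W_\gamma$, with $\widetilde W_\gamma\subset(\V_*\g^\m{ab})^J$ an open set saturated for the scaling action. The torus $\bb{T}_{n-1}$ acts tautologically on the bases --- so that $\widetilde W_\gamma\to W_\gamma:=\widetilde W_\gamma/\bb{T}_{n-1}\subset(\bb{P}\g^\m{ab})^J$ is the restriction of the tautological, Zariski-locally-trivial $\bb{T}_{n-1}$-torsor on $(\bb{P}\g^\m{ab})^J$ --- and linearly, through characters, on the $\V V_\gamma$; the subsheaf $V_\gamma^\lor\subset F^{\lor I}_{\widetilde W_\gamma}$ is stable since each of its summands is either a constant subspace of $F^\lor$ or the span of such a subspace with the canonical section $\la^\gamma_{1,2}$, both preserved by the characters. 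Dividing by $\bb{T}_{n-1}$ therefore turns each $\V V_\gamma$, by the descent argument of \ref{.3931}, into a vector bundle over $W_\gamma$, and these glue over $W=\bigcup_\gamma W_\gamma$ to a vector bundle whose total space is affine over $W\subset(\bb{P}\g^\m{ab})^J$, hence quasi-projective. In the free case this total space is $\mnnd$ itself; in general $\mnnd\hookrightarrow\mnnd(\fk{f})$ is a closed immersion into the free case (\ref{.39879}), so $\mnnd$ is quasi-projective as well. Since this realizes $\mnfndg/_\m{ZAR}\bb{T}_{n-1}$ as a scheme, it coincides with $\pi_0^\m{fppf}\Mnndg=\mnndg$ and, by the first paragraph, with $\pi_0^\m{ZAR}\Mnndg$.

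The point to watch is the descent of quasi-projectivity: since $\mnfnd\to\mnnd$ is only Zariski-locally --- not globally --- a trivial $\bb{T}_{n-1}$-torsor (this is the content of the Cartesian square of \ref{.399304}), quasi-projectivity of $\mnnd$ is not a formal consequence of that of $\mnfnd$ and must be produced explicitly, as above. The concrete route handles it cleanly, since a vector bundle over a quasi-projective base is quasi-projective (its structure map is affine, hence carries back an ample sheaf) and a closed subscheme of a quasi-projective scheme is quasi-projective; the only real labour is checking that the construction of \ref{.5011} is genuinely $\bb{T}_{n-1}$-equivariant with a linear action on the fibres, so that \ref{.3931} applies verbatim. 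A purely formal alternative would try to descend along the torsor directly --- realizing $\mnnd$ locally over a trivializing open $U\subset(\bb{P}\g^\m{ab})^J$ as the fibre over the identity of $(\mnfnd)_U\cong(\mnnd)_U\times\bb{T}_{n-1}\to\bb{T}_{n-1}$, a closed subscheme of the quasi-projective $(\mnfnd)_U$ --- but then one still has to patch these local quasi-projective pieces, which is less transparent.
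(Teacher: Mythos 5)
Your proof is correct, and your opening reduction
\[\pi_0^\m{ZAR}\Mnndg = X\nnd/_\m{ZAR}\bb{B}_n = (X\nnd/_\m{ZAR}\bb{U}_n)/_\m{ZAR}\bb{T}_{n-1} = \mnfndg/_\m{ZAR}\bb{T}_{n-1}\]
coincides with the paper's. But the paper executes the torus quotient more economically than you propose. Rather than re-running the local construction of \ref{.5011} equivariantly and verifying that each linearization $V_\gamma^\lor$ is $\bb{T}_{n-1}$-stable, it simply notes that the already-constructed constellation projection $\mnfndg \to \widetilde W$ is $\bb{T}_{n-1}$-equivariant and that both the action on the source and the scaling action on the target are \emph{functorially free}; this yields a Cartesian square with $\mnfndg/_\m{ZAR}\bb{T}_{n-1} \to W$ along the bottom, and fppf descent (along $\widetilde W \to W$) then shows $\mnfndg/_\m{ZAR}\bb{T}_{n-1}$ is affine over $W$, hence quasi-projective. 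The reduction to the free case and the identification with $\mnndg$ are then dispatched exactly as in \ref{.50148}, as in your proposal. Two small corrections to your write-up: what one obtains after dividing by $\bb{T}_{n-1}$ and gluing over $W$ is an \emph{affine} morphism to $W$, not a vector bundle — the gluing maps on overlaps $W_{\beta\gamma}$ are the descents of the open immersions of \ref{.50147} and are not linear; and the $\bb{T}_{n-1}$-equivariance of the $V_\gamma^\lor$, which you assert but do not check in detail (the action is by the characters $a_i\cdots a_{j-1}$ on the $(i,j)$-component and one must see the line $\ssf_{\wtg}\la^\gamma_{1,2}$ and the $(E_i^\lor)_{\wtg}$ are each preserved), is precisely the bookkeeping the paper's single Cartesian-square argument avoids. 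Your approach buys explicitness; the paper's buys brevity by quoting what \ref{.5011} has already established about $\mnfndg \to \widetilde W$.
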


\begin{proof}
We have
\begin{align*}
\pi_0 ^\m{ZAR} \Mnndg &= X\nnd/_\m{ZAR}\bb{B}_n \\
&= (X\nnd/_\m{ZAR}\bb{U}_n)/_\m{ZAR}\bb{T}_n \\
&= \mnfndg/_\m{ZAR}\bb{T}_n \,.
\end{align*}
Continuing with the notation of \ref{.5012}, consider the action of $\bb{T}_{n-1}$ on $\widetilde{W}$ by 
$$(a_1,\dots,a_{n-1})(\la_{1,2},\dots,\la_{n-1,n})=(a_1\la_{1,2},\dots,a_{n-1}\la_{n-1,n})$$
so that $W = \widetilde{W}/_\m{ZAR}\bb{T}_{n-1}$. Recall that the action of $\bb{T}_n$ on $M^\m{fnd}_n$ factors through $\bb{T}_{n-1} = \bb{T}_n/\bb{G}_m$. The projection $M_n^\m{fnd} \rightarrow \widetilde{W}$ is then $\bb{T}_{n-1}$-equivatiant. Since both actions are functorially free, we get a Cartesian square
\[\xymatrix{
M^\m{fnd}_n \ar[r] \ar[d]	&
\widetilde{W} \ar[d] 	\\
[M^\m{fnd}_n/_\m{ZAR} \bb{T}_{n-1}] \ar@{=}[d] \ar[r]	&
[ \widetilde{W} /_\m{ZAR} \bb{T}_{n-1} ] \ar@{=}[d]	\\
M^\m{fnd}_n /_\m{ZAR} \bb{T}_{n-1} \ar[r]	&
W	}
	\]
Thus, $M^\m{fnd}_n /_\m{ZAR} \bb{T}_{n-1}$ is fppf locally representable by an affine scheme, hence is itself representable by an affine scheme.

The second statement follows from the first as in \ref{.50148}.
\end{proof}

We conclude with a remark concerning the prospects of extending the results of this subsection beyond the bound $w+1$.

\begin{rmk} \label{.5025}
Let
$$J(l) = \{ (i, j) \in [1,n] \; | \; 0 < j-i \le l \} \, .$$
Suppose $n \le 2w+2$. Then the action of $\bb{U}_n/\bb{U}_n^{(w+1)}$ on $X^\m{nd}_n /_\m{fppf} \bb{U}_n^{(w+1)}$ is free. On the other hand, the map
$$f: X^\m{nd}_n \rightarrow \V \fk{g}^{J(l)}$$
is equivariant relative to the action of $\bb{U}_n^{(w+1)}$ ($\bb{U}_n^{(w+1)}$ acts trivially on $\V \fk{g}^{J(l)}$) and the stabilizers are constant along the fibers. So this action is free modulo a family of normal subgroups parametrized by $\V \fk{g}^{J(w)}$. Thus
$$X^\m{nd}_n /_\m{fppf} \bb{U}_n^{(w+1)} = X^\m{nd}_n /_\m{ZAR} \bb{U}_n^{(w+1)}$$
and the situation might be amenable to methods similar to (but much more complicated than) the ones above.
\end{rmk}

\section{Representations of a unipotent group}\label{.52}
\label{Unip}

In this section we put ourselves in the situation indicated by the title of the paper given by a field $k$ of characteristic zero and a unipotent group $G$ over $k$. The problem of moduli of representations of $G$ is equivalent to the problem studied in the previous sections applied to the case $\g := \mathbf{Lie}\, G$. This is largely a matter of reviewing the classical theory. Statements available in the literature, however, focus on representations defined over a field; we explain in detail how to work in families.

\begin{unmarked} \label{.521}
For proofs of the following facts (as well as a discussion of the definition of a unipotent group), we refer the reader to \cite[IV \S2]{dg}. Let $\m{\bld{UG}}$ denote the category of unipotent groups over $k$ and let $\m{\bld{NL}}$ denote the category of nilpotent Lie algebras over $k$. The Lie algebra of a unipotent group is nilpotent. Thus $\m{\bld{Lie}}$ is a functor $\m{\bld{UG}} \rightarrow \m{\bld{NL}}$. On the other hand, if $\fk{g}$ is a nilpotent Lie algebra, its covariant total space may be endowed with a product structure $\star : \V \fk{g}^\lor \times \V \fk{g}^\lor \rightarrow \V \fk{g}^\lor$ given by the Baker-Cambell-Hausdorff formula. This makes $(\V \fk{g} ^\lor, \star)$ into a unipotent group, and defines a functor $\m{\bld{H}}: \m{\bld{NL}} \rightarrow \m{\bld{UG}}$. $\m{\bld{Lie}}$ and $\m{\bld{H}}$ are quasi-inverse (\cite[IV \S2 4.5]{dg}). In particular, there is a natural isomorphism $\exp: \m{\bld{H}} \circ \m{\bld{Lie}} \rightarrow \m{id}_\m{\bld{UG}}$, which is called the \it{exponential map}. 
\end{unmarked}

\begin{unmarked} \label{.5213}
For the remainder of this section, we fix a unipotent group $G$ over $k$ and we let $\fk{g}$ denote its Lie algebra. Recall that formation of the Lie algebra is compatible with flat base-change, so for any $k$-scheme $T$, $\fk{g}_T$ fits into a split short exact sequence of (abstract) groups
\[\xymatrix@1{
1 \ar[r]	&
\Gamma(T, \fk{g}_T) \ar[r]	 &
G(T[\epsilon]) \ar[r] \POS c+(7,1.5) ="t"	&
G(T)	\ar[r] \POS c+(-5,1.5)="s" &
1 \, .
\ar @/_3pt/ "s";"t"	}
\]
Here $T[\epsilon]$ denotes $\m{spec}_T \, \ssf_T [\epsilon]/(\epsilon^2)$. 
\end{unmarked}

We note a few generalities about quasi-coherent representations of a Lie algebra over a general (affine) base (\ref{.5223} -- \ref{.5224}). 

\begin{unmarked} \label{.5223}
Suppose $T = \spec B$ is an affine scheme, $F$ is a $B$-module and $r: B \otimes \fk{g} \rightarrow \m{End}(F)$ is a representation. Then for any $B$-algebra $B'$, $r$ defines a representation $r(B'): B' \otimes \fk{g} \rightarrow \m{End}_{B'}(B' \otimes F)$ determined by the commuting square  
\[\xymatrix{
B \otimes \fk{g} \ar[r]^(.45){r = r(B)} \ar[d] 	&
\m{End}_B(F) \ar[d]	\\
B' \otimes \fk{g} \ar@{.>}[r]_(.35){r(B')} 	&
\m{End}_{B'}(B' \otimes F)	}
\]
and the requirement that $r(B')$ be $B'$-linear. Thus if $\fEnd(F)$ denotes the functor $B' \mapsto \m{End}_{B'}(B' \otimes F)$, then $r$ extends uniquely to a morphism of Lie $o_T$-algebras $\V \fk{g}^\lor_T \rightarrow \fEnd(F)$, which we denote again by $r$.
\end{unmarked}

\begin{unmarked} \label{.5224}
Continuing with the situation of \ref{.5223}, we remark that any vector in the $0$-eigenspace of $r$ is automatically universally in the $0$-eigenspace. That is, if $x \in F$ is such that $r(v)(x) = 0 $ for all $v \in B \otimes \fk{g}$ then for any $B$ algebra $B'$, and any $v' \in B' \otimes \fk{g}$,
$$r(B')(v')(1 \otimes x) = 0\,.$$
Indeed, since $\fk{g}$ is nilpotent and finitely generated, it is finite dimensional. Let $v_1, \dots, v_m$ be a basis and write
$$v' = \sum_i b'_i \otimes v_i$$
with $b'_i \in B'$. Then (identifying $B' \otimes_k \fk{g}$ with $B' \otimes_B B \otimes_k \fk{g}$) we have
\begin{align*}
r(B')(v')(1 \otimes_B x) &= r(B')(\sum b'_i \otimes_k v_i)(1 \otimes_B x) \\
&= \sum b'_i r(B')(1_{B'} \otimes_B 1_B \otimes_k  v_i)(1_{B'} \otimes_B x) \\
&= \sum b'_i \otimes_B (r(B)(1_B \otimes_k v_i)(x)) \\
&= \sum b'_i \otimes_B 0 \\
&= 0 \,.
\end{align*}
\end{unmarked}

\begin{dfn} \label{.523}
Let $B$ be a $k$-algebra and let $r:\fk{g}_B \to \m{End}(F)$ be a representation on a $B$-module $F$. Then $r$ is \textbf{locally nilpotent} if $\m{Fil}^r$ (\ref{.111}) is exhaustive (\ref{.112}).
\end{dfn}

\begin{dfn} \label{.524}
Let $B$ be a $k$-algebra and let $F$ be a $B$-module. We denote by $\fAut F$ the group-valued functor
$$B' \mapsto \m{Aut}_{B'}(B' \otimes F) \,.$$
A representation $\rho$ of $G$ on $F$ (over T) is a morphism of group-valued functors
$$G_T \rightarrow \fAut F \,.$$
The submodule $F^{G_T} \subset F$ of \textit{invariants} is then defined to be the set of universally fixed elements of $F$, that is, those $x \in F$ such that for any $B$-algebra $B'$ and any $u \in G(B')$,
$$\rho(B')(u)(1_{B'} \otimes_B x) = 1_{B'} \otimes_B x \,.$$

We associate to $\rho$ a filtration $\m{Fil}^\rho$ by submodules $F_0 \subset F_1 \subset F_2 \subset \cdots$ of $F$ as for a representation of $\fk{g}$ by setting $F_0 = 0$ and defining $F_{i+1}$ to be the preimage in $F$ of $(F/F_i)^{G_T}$.
\end{dfn}

\Remark{.5243}{Let $\rho:G_T \to \fAut{\cl{F}}$ be a representation of $G$ on a quasi-coherent sheaf $\cl{F}$ over an affine $k$-scheme $T=\spec B$ with structure morphism $f:\spec B \to \spec k$. We denote the $B$-module associated to $\cl{F}$ by $F$ as usual. Then we can define an associated representation
$$f_*\rho:G \to \fAut f_* \cl{F}$$
of $G$ on $f_* \cl{F}$ by forgetting the $B$-linearity of the coaction
$$F \to (B \otimes_k A) \otimes_B F = B \otimes_k F \, .$$
This defines a functor
$$f_*:\mathbf{Rep}\,G_T \to \mathbf{Rep}\,G$$
from the category of quasi-coherent representation of $G_T$ to the category of quasi-coherent representations of $G$ which is exact and satisfies
$$f_*(\cl{F}^{G_T}) = (f_*\cl{F})^G \,.$$
Indeed, both are equal to the kernel of
$$\alpha - \pi: F \to B\otimes_k F$$
where $\alpha$ is the coaction and $\pi$ is the projection $x \mapsto 1_B \otimes_k x$. Consequently,
$$\m{Fil}^\rho = \m{Fil}^{f_*\rho} \,.$$  }

\begin{prop} \label{.525}
Let $T = \spec B$ be an affine $k$-scheme, $F$ a $B$-module, $\rho: G_T \rightarrow \fAut F$ a representation and $\m{Fil}^\rho = (F_0 \subset F_1 \subset \cdots)$ the filtration associated to $\rho$ as in \ref{.524}. Then the filtration $\m{Fil}^\rho$ is exhaustive.
\end{prop}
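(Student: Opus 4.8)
The plan is to reduce to the case of a finite-dimensional representation over the ground field $k$, where the statement is the classical fact that representations of unipotent groups are unipotent. First I would pass to $T=\spec k$: if $f:\spec B\to\spec k$ is the structure morphism, then by Remark \ref{.5243} the filtration $\m{Fil}^\rho$ coincides, as an increasing chain of subgroups of $F$, with $\m{Fil}^{f_*\rho}$, where $f_*\rho$ is the representation of $G$ on $F$ regarded merely as a $k$-vector space. Hence it suffices to prove that $\m{Fil}^{f_*\rho}$ is exhaustive, and so from now on I may assume $B=k$, i.e. $F$ is a $k$-vector space and $\rho:G\to\fAut F$.

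Next I would invoke local finiteness. A representation of the affine group scheme $G$ on a $k$-vector space $F$ is the same datum as a $k[G]$-comodule structure on $F$, and every element of a comodule lies in a finite-dimensional subcomodule (see \cite[II \S2]{dg}); therefore, given $x\in F$, I may choose a finite-dimensional $G$-stable subspace $V\subseteq F$ with $x\in V$ and set $n:=\dim V$. The finite-dimensional representation $\rho|_V:G\to\fAut V$ is then unipotent by the structure theory of unipotent groups: $V$ admits a full flag $0=V_0\subset V_1\subset\cdots\subset V_n=V$ of $G$-stable subspaces on whose successive quotients $G$ acts trivially. Over a field of characteristic zero this follows by induction on $\dim G$ from the fact that $G$ is an iterated extension of copies of $\bb{G}_a$, together with the classification of $\bb{G}_a$-representations as exponentials of locally nilpotent endomorphisms (\cite[IV \S2]{dg}); equivalently, one may differentiate $\rho|_V$ (using \ref{.5213}) and apply Proposition \ref{.113}. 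In any case $\m{Fil}^{\rho|_V}$ is exhaustive, so $\m{Fil}^{\rho|_V}_n=V$.

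It then remains to compare $\m{Fil}^{\rho|_V}$ with $\m{Fil}^\rho$. Since $V$ is a subrepresentation, a straightforward induction on $j$ shows $\m{Fil}^{\rho|_V}_j=V\cap\m{Fil}^\rho_j F$: for $j=1$ both sides are the $G$-invariants of $V$ (\ref{.524}); and if the equality holds for $j$, then $V/\m{Fil}^{\rho|_V}_j\hookrightarrow F/\m{Fil}^\rho_j F$ is an injective $G$-equivariant map, so an element of $V$ has $G$-invariant image in $V/\m{Fil}^{\rho|_V}_j$ if and only if it has $G$-invariant image in $F/\m{Fil}^\rho_j F$; taking preimages in $V$ yields $\m{Fil}^{\rho|_V}_{j+1}=V\cap\m{Fil}^\rho_{j+1}F$. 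Consequently $x\in V=\m{Fil}^{\rho|_V}_n\subseteq\m{Fil}^\rho_n F$, and since $x\in F$ was arbitrary, $\m{Fil}^\rho$ is exhaustive.

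The only genuinely non-formal input is the one used at the start of the second step: the translation of the functorial notion of representation into comodule language, the resulting local finiteness, and the classical fact that finite-dimensional representations of a unipotent group over a field are unipotent. I expect this to be the main point to pin down carefully; everything downstream of it is bookkeeping.
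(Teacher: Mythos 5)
Your proposal is correct and follows essentially the same route as the paper's proof: reduce via $f_*$ (Remark \ref{.5243}) to a $k$-linear representation of $G$, use local finiteness of comodules (\cite[II 2.2.2.2]{sr}) to reduce to a finite-dimensional subrepresentation, invoke the classical exhaustiveness fact over a field (\cite[IV 2.5]{dg}), and finish by compatibility of $\m{Fil}$ with passage to subrepresentations. The only difference is that you spell out the compatibility $\m{Fil}^{\rho|_V}_j = V \cap \m{Fil}^\rho_j F$ by induction, whereas the paper states it in one sentence without proof; the mathematical content is the same.
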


\begin{proof}
Formation of the associated filtration is compatible with taking subrepresentations. By \cite[II 2.2.2.2]{sr}, every element of $f_*\cl{F}$ is contained in a finite dimensional subrepresentation; by \cite[IV 2.5]{dg}, the filtration associated to a finite dimensional representation over a field is strictly increasing, hence exhaustive.
\end{proof}

We recall the definition and a first property of the derivative of a representation:

\begin{unmarked} \label{.5253}
Let $T = \spec B$ be an affine $k$-scheme and let $\rho: G_T \rightarrow \fAut F$ be a representation of a unipotent group $G$ on a $B$-module $F$. Then $\m{\bld{Lie}}(\rho)$ is the representation $B \otimes \fk{g} \rightarrow \m{End}(F)$ of $\fk{g}$ induced by $\rho(B)$ and $\rho(B[\epsilon])$, forming a morphism of split short exact sequences of abstract groups as in the following diagram:
\[\xymatrix{
1 \ar [r]	&
B \otimes \fk{g} \ar[r] \ar@{.>}[d]_{\m{\bld{Lie}}(\rho)}	 &
G(B[\epsilon]/(\epsilon^2) \ar[r] \ar[d]_{\rho(B[\epsilon]/(\epsilon^2))} \POS c+(11, 1.5)="t1" &
G(B) \ar[r] \ar[d]_{\rho(B)} \POS c+(-6,1.6)="s1" 	 &
1	\\
1 \ar[r]	&
\m{End}(F) \ar[r]	&
\m{Aut}(B[\epsilon]/(\epsilon^2) \otimes F) \ar[r]	\POS c+(18,-1.5)="t2" &
\m{Aut}(F) \ar[r] \POS c+(-8,-1.6)="s2"	&
1	
\ar @/_3.5pt/ "s1";"t1"
\ar @/^3pt/ "s2";"t2"	}
\]
Formation of $\m{\bld{Lie}}(\rho)$ is compatible with arbitrary base-change; that is, given any $B$-algebra $B'$, $\m{\bld{Lie}}(\rho)(B')$ fits into a morphism of split short exact sequences of abstract groups:
\[\xymatrix @ C - 1ex{
1 \ar [r]	&
B' \otimes \fk{g} \ar[r] \ar[d]_{\m{\bld{Lie}}(\rho)(B')}	 &
G(B'[\epsilon]/(\epsilon^2) \ar[r] \ar[d]_{\rho(B'[\epsilon]/(\epsilon^2))} \POS c+(11.5, 1.5)="t1" &
G(B') \ar[r] \ar[d]_{\rho(B')}  \POS c+(-6.5,1.6)="s1" 	 &
1	\\
1 \ar[r]	&
\m{End}(B' \otimes F) \ar[r]	&
\m{Aut}(B' [\epsilon]/(\epsilon^2) \otimes F) \ar[r] \POS c+(18.5,-1.5)="t2"	&
\m{Aut}(B' \otimes F) \ar[r] \POS c+(-12.5,-1.6)="s2"	&
1
\ar @/_4pt/ "s1";"t1"
\ar @/^3pt/ "s2";"t2"	}
\]
\end{unmarked}

\begin{prop} \label{.54}
Let $T$ be an affine $k$-scheme, $\rho: G_T \rightarrow \fAut F$ a quasi-coherent representation of a unipotent group $G$, $r = \m{\bld{Lie}}(\rho)$ the associated representation of the Lie algebra $\fk{g}$ of $G$. As explained in \ref{.5223}, $r$ extends uniquely to a morphism of Lie-algebra-valued functors
$$\V \fk{g}_T^\lor \rightarrow \fEnd(F) \,,$$
hence corresponds to a point $\phi$ of
$$\fEnd(F)(S^\bullet \fk{g}_T^\lor) = \m{End}_{S^\bullet \fk{g}_T^\lor} (S^\bullet \fk{g}_T^\lor \otimes F) \,.$$
On the other hand,
$$\rho \circ \exp: \V \fk{g}_T^\lor \rightarrow G_T \rightarrow \fAut F$$
corresponds to a point 
$$\psi \in \m{Aut}_{S^\bullet \fk{g}_T^\lor} (S^\bullet \fk{g}_T^\lor \otimes F) \,.$$
Then $\phi$ is locally nilpotent and 
\[ \psi = 1 + \phi + \frac{\phi^2}{2} + \frac{\phi^3}{3!} + \cdots \]
\end{prop}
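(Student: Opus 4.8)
The plan is to reduce the statement to the well-known relationship $\exp(d\rho) = \rho \circ \exp$ for representations of a unipotent group on a \emph{finite-dimensional} vector space over a field, and then to propagate this to the quasi-coherent, relative setting using exhaustiveness of the canonical filtration (Proposition \ref{.525}) and the compatibility of $\m{\bld{Lie}}$ with base change (\ref{.5253}). First I would set up coordinates: fix a basis $v_1, \dots, v_m$ of $\fk{g}$, so that $S^\bullet \fk{g}_T^\lor$ is a polynomial algebra over $B$, and observe that the universal point of $\V \fk{g}_T^\lor$ over this algebra is the ``tautological'' element $\sum_i v_i^\lor \otimes v_i$. Then $\phi = r(\sum_i v_i^\lor \otimes v_i) \in \m{End}_{S^\bullet \fk{g}_T^\lor}(S^\bullet \fk{g}_T^\lor \otimes F)$ by \ref{.5223}, and $\psi = (\rho\circ\exp)(\sum_i v_i^\lor \otimes v_i)$; the claim is the identity $\psi = \sum_{k\ge 0} \phi^k/k!$ in the endomorphism ring of $S^\bullet \fk{g}_T^\lor \otimes F$, which first requires making sense of the right-hand side, i.e. local nilpotence of $\phi$.

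For local nilpotence: by Proposition \ref{.525} the filtration $\m{Fil}^\rho$ is exhaustive, and since $r = \m{\bld{Lie}}(\rho)$ one checks (using \ref{.524} and the infinitesimal description in \ref{.5253}) that $\m{Fil}^\rho = \m{Fil}^r$, so $r$ is locally nilpotent in the sense of \ref{.523}. Lemma \ref{.1114} (applied degreewise, or rather its evident generalization to modules filtered by an exhaustive filtration) shows that $r(v)$ lowers $\m{Fil}^r$ by one step for every $v$, hence every element of $S^\bullet \fk{g}_T^\lor \otimes F$ lies in some $S^\bullet \fk{g}_T^\lor \otimes F_N$ on which $\phi^{N}$ vanishes. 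Thus $\exp(\phi) := \sum_{k\ge 0}\phi^k/k!$ is a well-defined automorphism, locally unipotent with respect to the induced filtration.

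Now I would prove the identity $\psi = \exp(\phi)$. Both sides are morphisms of group-valued functors $\V\fk{g}_T^\lor \to \fAut F$ when restricted appropriately; more precisely, both $\rho\circ\exp$ and $v \mapsto \exp(r(v))$ are morphisms from the unipotent group $(\V\fk{g}_T^\lor, \star)$ (BCH product, \ref{.521}) to $\fAut F$ — the first because $\exp: \m{\bld{H}}\circ\m{\bld{Lie}} \to \m{id}$ is an isomorphism of group functors and $\rho$ is a homomorphism, the second because $\exp$ carries BCH multiplication of equivariant nilpotent endomorphisms to composition, exactly as in Claim \ref{.3834}. Since a morphism of affine group schemes is determined by its value on any faithfully flat cover, and since $(\V\fk{g}_T^\lor,\star)$ is, over $k$ of characteristic zero, generated as a group functor by its ``infinitesimal'' part, it suffices to check that the two homomorphisms have the same derivative at the identity; but the derivative of $\rho\circ\exp$ is $\m{\bld{Lie}}(\rho)\circ\m{\bld{Lie}}(\exp) = r \circ \m{id} = r$ (since $\exp$ induces the identity on Lie algebras), and the derivative of $v\mapsto\exp(r(v))$ is also $r$ by inspection of the linear term. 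Equivalently — and this is probably the cleanest route to write down — one reduces to the universal case $B = k$, $F$ finite-dimensional over $k$: over a field, pass to a filtered basis realizing $\m{Fil}^\rho$ as the standard flag, so $\rho$ factors through $\bb{U}(F)$ and $r$ through $\fk{n}_{\m{Fil}}(F)$, and then $\psi = \exp(\phi)$ is the classical fact (\cite[IV \S2]{dg}) that on a unipotent group the exponential intertwines the adjoint-type actions, applied entry-by-entry to the matrix coefficients, which are polynomials. The relative statement then follows because both $\psi$ and $\exp(\phi)$ are compatible with base change (\ref{.5253} for the Lie algebra side, and the functoriality of $\exp$ and $\rho$ for the group side), and two elements of $\m{Aut}_{S^\bullet\fk{g}_T^\lor}(S^\bullet\fk{g}_T^\lor\otimes F)$ that agree after every base change to a field — in particular after specializing the polynomial variables — agree, since $S^\bullet\fk{g}_T^\lor\otimes F$ injects into the product of its localizations at maximal ideals when $F$ is, say, reduced to the finite-dimensional case first and then extended by the same exhaustion argument.

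The main obstacle I anticipate is purely bookkeeping rather than conceptual: making the passage ``from a field to an arbitrary base $B$'' rigorous when $F$ need not be finitely generated and $\fAut F$ need not be representable. The safe way around this is to never leave the world of explicit power series — work inside $\m{End}_{S^\bullet\fk{g}_T^\lor}(S^\bullet\fk{g}_T^\lor\otimes F)$ directly, use the exhaustive filtration to truncate, and verify the identity $\psi=\exp(\phi)$ on each finite-dimensional subrepresentation guaranteed by \cite[II 2.2.2.2]{sr} (as in the proof of \ref{.525}), where the classical result applies verbatim; then take the colimit. This sidesteps any need to represent the automorphism functor and keeps the characteristic-zero hypothesis doing exactly the work it must (dividing by $k!$ and inverting the exponential).
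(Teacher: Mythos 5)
Your approach is genuinely different from the paper's, and mostly sound, but there is a circularity you should repair and you should commit to one of your two suggested routes rather than leaving both in play.

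The paper does not pass through finite-dimensional representations over $k$ at all. It proves a one-variable Lemma (its \ref{.5403}) directly for an arbitrary $B$-module $F$, expanding the formal exponential $e^{Tv}\in G(B[[T]])$ via the functional equation $e^{(T+T')v}=e^{Tv}e^{T'v}$, specializing along $T\mapsto\epsilon_1+\cdots+\epsilon_n$ to get $(1+\epsilon_1\phi)\cdots(1+\epsilon_n\phi)$, and then comparing the two sides by the injectivity statement \ref{.5402}, whose proof is the observation that $B[T]/T^{n+1}\hookrightarrow B[\epsilon_1,\dots,\epsilon_n]/(\epsilon_i^2)$ is split as a $B$-module map by the Reynolds operator for the permutation action. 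Local nilpotence of $\phi$ then drops out of the fact that $\exp:\fk{g}_{B'}\to G(B'[[T]])$ actually lands in $G(B'[T])$ (\cite[IV \S 2 4.1]{dg}), so the power series $\sum T^i\phi^i/i!$ is forced to be a polynomial. The upshot is an argument entirely in terms of power series, never leaving the universal ring and never invoking Tannakian input. Your route instead pushes forward to $\spec k$ via $f_*$, exhausts $f_*F$ by finite-dimensional $G$-subrepresentations \`a la \cite[II 2.2.2.2]{sr}, applies the classical identity $\rho\circ\exp=\exp\circ\,d\rho$ there, and then takes a colimit. This is a valid alternative (your observation that the identity in $\m{End}_{S^\bullet\fk{g}_T^\lor}(S^\bullet\fk{g}_T^\lor\otimes F)$ is literally the same set-theoretic identity as the one obtained after $f_*$ is the right way to justify the reduction to $B=k$), but it leans on a heavier external result; the paper's proof is both more elementary and more self-contained.

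Two concrete repairs are needed. First, the claim $\m{Fil}^\rho=\m{Fil}^r$ is circular: the inclusion $F^0\subset F^{G_T}$ is exactly Corollary \ref{.541}, whose proof in the paper \emph{uses} Proposition \ref{.54}. Fortunately you only need the other inclusion $F^{G_T}\subset F^0$ (hence, by induction, $\m{Fil}^\rho\subset\m{Fil}^r$), which combined with exhaustiveness of $\m{Fil}^\rho$ (\ref{.525}) already gives exhaustiveness of $\m{Fil}^r$ and hence local nilpotence. That inclusion is direct and does not use \ref{.54}: if $x\in F^{G_T}$ and $v\in\fk{g}_B$, the point $e^{\epsilon v}\in G(B[\epsilon]/(\epsilon^2))$ fixes $1\otimes x$, while $\rho(B[\epsilon])(e^{\epsilon v})=1+\epsilon\, r(v)$ by the very definition of $r=\m{\bld{Lie}}(\rho)$ (\ref{.5253}), so $r(v)x=0$. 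Replace the equality by this inclusion and the circularity disappears. Second, your "Variant A" — both sides are group morphisms $\V\fk{g}_T^\lor\to\fAut F$ with the same derivative, hence equal — is not justified as stated, precisely because (as you yourself flag) $\fAut F$ need not be representable, so "a morphism of group schemes is determined by its derivative" is not available in the generality needed. Drop Variant A and carry out Variant B (pushforward, exhaustion, colimit) as the actual proof; with the corrected local-nilpotence step it closes.
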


This is standard when $F$ is a vector sheaf. The present situation requires a more careful argument since the functors $\fAut F$ and $\bb{U}_\m{Fil}F$ may not be representable. The proof follows (\ref{.5402} -- \ref{.5404}). We avoid any mention of $\bb{U}_\m{Fil}F$.

\begin{lm} \label{.5402}
Let $B$ be a ring, $F$ a module, $n \in \bb{N}$. Let $B_n = B[T]/T^{n+1}$ and $C_n = B[\epsilon_1, \dots, \epsilon_n]/(\epsilon_1^2, \dots, \epsilon_n^2)$. Then the map 
\[\alpha: \m{Aut}_{B_n}(B_n \otimes F) \rightarrow \m{Aut}_{C_n}(C_n \otimes F)\]
induced by
\[ T \mapsto \epsilon_1 + \cdots + \epsilon_n \]
is injective.
\end{lm}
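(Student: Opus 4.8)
The plan is to reduce the injectivity of $\alpha$ to a statement about the ring map $B_n \to C_n$ induced by $T \mapsto \epsilon_1 + \cdots + \epsilon_n$, and then to an elementary computation with the element $\sigma := \epsilon_1 + \cdots + \epsilon_n$. First I would observe that $\sigma$ is nilpotent in $C_n$, indeed $\sigma^{n+1} = 0$ since any product of $n+1$ of the $\epsilon_i$ must repeat an index, so the substitution $T \mapsto \sigma$ genuinely defines a $B$-algebra map $B_n = B[T]/T^{n+1} \to C_n$. The key elementary fact is that this map is \emph{injective}: the monomials $1, \sigma, \sigma^2, \dots, \sigma^n$ are $B$-linearly independent in $C_n$. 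This is because $\sigma^k = k! \sum_{i_1 < \cdots < i_k} \epsilon_{i_1}\cdots\epsilon_{i_k}$ (using that $\epsilon_i^2 = 0$), and since we are in characteristic zero $k!$ is invertible; the squarefree monomials $\epsilon_{i_1}\cdots\epsilon_{i_k}$ for distinct multidegrees are part of a $B$-basis of $C_n$, so the images of $1, \sigma, \dots, \sigma^n$ land in complementary graded pieces (by total $\epsilon$-degree) of $C_n$ and hence are independent. Thus $B_n \to C_n$ is a split injection of $B$-modules (the retraction being "extract the component of each homogeneous degree supported on an initial segment of squarefree monomials and divide by the factorial", though I only need that it is a pure/split monomorphism).

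Next I would transport this to automorphism groups. Since $F$ is a $B$-module and $B_n \hookrightarrow C_n$ is a split injection of $B$-modules, tensoring with $F$ over $B$ gives a split injection $B_n \otimes_B F \hookrightarrow C_n \otimes_B F$ of $B$-modules; rewriting, $B_n \otimes_{B_n}(B_n\otimes_B F) = B_n \otimes_B F$ injects into $C_n \otimes_B F = C_n\otimes_{B_n}(B_n\otimes_B F)$, i.e. the base change map along $B_n \to C_n$ applied to the $B_n$-module $F_n := B_n \otimes_B F$ is injective. Now suppose $g \in \m{Aut}_{B_n}(F_n)$ maps to the identity in $\m{Aut}_{C_n}(C_n \otimes_{B_n} F_n)$. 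Then $g - \m{id}_{F_n}$, viewed as a $B_n$-linear endomorphism of $F_n$, becomes zero after base change to $C_n$; since $F_n \to C_n\otimes_{B_n}F_n$ is injective and $B_n$-linear, for every $x \in F_n$ we have $(g-\m{id})(x) \otimes 1 = 0$ in $C_n \otimes_{B_n} F_n$, and because $F_n$ is a direct summand of $C_n \otimes_{B_n} F_n$ as a $B_n$-module this forces $(g - \m{id})(x) = 0$. Hence $g = \m{id}_{F_n}$, which is exactly the injectivity of $\alpha$.

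I expect the only genuinely delicate point to be the linear independence of $1, \sigma, \dots, \sigma^n$ over $B$ inside $C_n$, together with the claim that $B_n \to C_n$ is not merely injective but \emph{split} (i.e. pure) as a map of $B$-modules — this is what lets me conclude that an endomorphism of $F_n$ killing a pure submodule after base change is itself zero, and it is exactly here that characteristic zero (invertibility of the $k!$'s) is used, matching the paper's running hypothesis. Everything else is formal: nilpotence of $\sigma$, compatibility of tensor products, and the reduction from automorphisms to their difference with the identity. A slight variant, if one prefers to avoid the word "pure", is to give the retraction $C_n \to B_n$ of $B$-modules explicitly — send a squarefree monomial $\epsilon_{i_1}\cdots\epsilon_{i_k}$ with $\{i_1,\dots,i_k\}$ an initial segment $\{1,\dots,k\}$ to $T^k/k!$ and all other basis monomials to $0$ — and check directly that this composes with $B_n \to C_n$ to the identity; then the argument of the previous paragraph goes through verbatim with "direct summand" in place of "pure submodule."
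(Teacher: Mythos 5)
Your proof is correct and takes essentially the same approach as the paper: both establish that $B_n \to C_n$ is a split injection of $B$-modules (the paper identifies the image as the $S_n$-invariant subring and invokes the Reynolds operator; you compute $\sigma^k = k!\,e_k(\epsilon)$ directly and write down the retraction), and then deduce injectivity on automorphism groups by tensoring with $F$. Both hinge on characteristic zero through the invertibility of the factorials.
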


\begin{proof}
The map $B_n \rightarrow C_n$ is injective with image the subring of invariants for the action of $S_n$ which permutes the variables. The Reynolds operator for this action provides a splitting of the injection regarded as a map of $B$-modules. It is thus universally injective. Now given an automorphism $\phi$ of $B_n \otimes F$, $\phi, \alpha(\phi)$ form a commuting square
\[\xymatrix{
B_n \otimes F \ar[d]_\phi \ar@{^{(}->}[r] &
C_n \otimes F \ar[d]^{\alpha(\phi)} \\
B_n \otimes F \ar@{^{(}->}[r] &
C_n \otimes F }
\]
in which the horizontal maps are injective, from which it follows that $\phi$ is uniquely determined by $\alpha(\phi)$.
\end{proof}

\begin{lm} \label{.5403}
Let $B$ be a ring containing $\bb{Q}$, $G = \spec A$ an algebraic group over $B$, $\fk{g} = \m{\bld{Lie}}(G)$, $\rho: G \rightarrow \fAut F$ a quasi-coherent representation over $T = \spec B$, $r = \m{\bld{Lie}}(\rho): \fk{g} \rightarrow \m{End}(F)$. Denote by $exp$ the formal exponential map
$$\fk{g} \rightarrow G(B[[ T ]])$$
as defined in \cite[II \S6 no. 3]{dg}. Following \cite{dg}, we denote $exp(v)$ by $e^{Tv}$ and given a map $B[[T]] \rightarrow B'$ sending $T \mapsto t \in B'$, we denote the image of $e^{Tv}$ in $G(B')$ by $e^{tv}$. Fix a vector $v \in \fk{g}$ and write
$$\phi := r(v) \,,$$
$$\psi := \rho(B[[T]])(e^{Tv}) \,.$$
Then
$$\psi = \sum \frac {T^i \phi^i}{i!} \,.$$
\end{lm}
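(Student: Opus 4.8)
The plan is to prove the identity modulo $T^{n+1}$ for every $n \in \bb{N}$ and then pass to the limit. Write $B_n = B[T]/T^{n+1}$ and $C_n = B[\epsilon_1, \dots, \epsilon_n]/(\epsilon_1^2, \dots, \epsilon_n^2)$ as in \ref{.5402}, let $t$ denote the image of $T$ in $B_n$, and let $\alpha: \m{Aut}_{B_n}(B_n \otimes F) \to \m{Aut}_{C_n}(C_n \otimes F)$ be the map induced by $T \mapsto \epsilon_1 + \cdots + \epsilon_n$, which is injective by \ref{.5402}. By functoriality of $\rho$, reduction along $B[[T]] \to B_n$ carries $\psi = \rho(B[[T]])(e^{Tv})$ to $\rho(B_n)(e^{tv})$; so, since $\alpha$ is injective, it suffices to check that $\alpha$ sends both $\rho(B_n)(e^{tv})$ and the reduction of $\sum_i \frac{T^i\phi^i}{i!}$ to the same element of $\m{Aut}_{C_n}(C_n \otimes F)$.

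For the computation in $C_n$ I would use two standard properties of the formal exponential of \cite[II \S6 no. 3]{dg}: that it is a one-parameter subgroup, so that $e^{(\epsilon_1 + \cdots + \epsilon_n)v} = e^{\epsilon_1 v} \cdots e^{\epsilon_n v}$ in $G(C_n)$, and that its first-order part $e^{\epsilon v} \in G(B[\epsilon]/(\epsilon^2))$ is the element of $\fk{g} = \ker(G(B[\epsilon]/(\epsilon^2)) \to G(B))$ corresponding to $v$. The element $e^{\epsilon_i v}$ is then the image of the latter under $B[\epsilon]/(\epsilon^2) \to C_n$, $\epsilon \mapsto \epsilon_i$, so by the very definition of $r = \m{\bld{Lie}}(\rho)$ (\ref{.5253}) together with functoriality of $\rho$ we get $\rho(C_n)(e^{\epsilon_i v}) = 1 + \epsilon_i\phi$. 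Since $\rho$ is a homomorphism of group functors,
$$\rho(C_n)(e^{(\epsilon_1 + \cdots + \epsilon_n)v}) \;=\; \prod_{i=1}^n (1 + \epsilon_i\phi) \;=\; \sum_{m=0}^n \Big(\sum_{i_1 < \cdots < i_m} \epsilon_{i_1}\cdots\epsilon_{i_m}\Big)\phi^m \;=\; \sum_{m=0}^n \frac{(\epsilon_1 + \cdots + \epsilon_n)^m}{m!}\,\phi^m \,,$$
the last equality because $\epsilon_i^2 = 0$ forces $(\epsilon_1 + \cdots + \epsilon_n)^m = m!\sum_{i_1 < \cdots < i_m}\epsilon_{i_1}\cdots\epsilon_{i_m}$. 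As $(\epsilon_1 + \cdots + \epsilon_n)^m = 0$ for $m > n$, the right-hand side is exactly $\alpha$ applied to the reduction mod $T^{n+1}$ of $\sum_i \frac{T^i\phi^i}{i!}$. Injectivity of $\alpha$ then gives $\rho(B_n)(e^{tv}) = \sum_{i=0}^n \frac{t^i\phi^i}{i!}$ for every $n$, and since this holds for all $n$ we conclude $\psi = \sum_i \frac{T^i\phi^i}{i!}$.

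The step that will need the most care is the identification $\rho(C_n)(e^{\epsilon_i v}) = 1 + \epsilon_i\phi$: it relies on making precise the first-order behaviour of the formal exponential of \cite{dg} (that $e^{\epsilon v} \equiv 1 + \epsilon v$ to first order in $G(B[\epsilon]/(\epsilon^2))$) and on reading off from \ref{.5253} that $\rho(B[\epsilon]/(\epsilon^2))$ restricted to $\fk{g}$ is $\m{\bld{Lie}}(\rho)$, then pushing this forward along $\epsilon \mapsto \epsilon_i$ by naturality of $\rho$. The remaining ingredients — the one-parameter subgroup property, the combinatorial expansion of $\prod (1 + \epsilon_i\phi)$, and the injectivity fed in from \ref{.5402} — are routine.
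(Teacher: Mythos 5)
Your proof is correct and follows essentially the same route as the paper's: reduce mod $T^{n+1}$, push into $C_n$ via $T \mapsto \epsilon_1 + \cdots + \epsilon_n$ and use the injectivity from Lemma \ref{.5402}, compute $\rho(C_n)(e^{\epsilon_i v}) = 1 + \epsilon_i\phi$ from the first-order characterization of the formal exponential and \ref{.5253}, and expand the product using the one-parameter-subgroup property. The organization differs slightly (you truncate at the outset, while the paper works with a single commuting square involving $B[[T]]$, $B_n$, and $C_n$), but the ingredients and the logic are the same.
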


\begin{proof}
According to its characterization in \cite{dg}, $exp$ satisfies the following two properties:
\begin{itemize}
\item[(1)] the element $e^{\epsilon v}$ of $G(B[\epsilon]/(\epsilon^2))$ determined by the map $T \mapsto \epsilon$ is also the image of $v$ under
$$\fk{g} \rightarrow G(B[\epsilon]/(\epsilon^2)) \,;$$ and
\item[(2)] $e^{(T+T')v} = e^{Tv}e^{T'v}$ in $G(B[[T, T']])$.
\end{itemize}
Meanwhile, the map
$$\m{End}(F) \rightarrow \m{Aut}(B[\epsilon]/(\epsilon^2) \otimes F)$$
defined by
$$\sigma \mapsto 1 + \epsilon \sigma$$
is injective with cokernel equal to $\m{Aut}(F)$. Since $r$ is defined by the map of short exact sequences
\[\xymatrix{
0 \ar[r] 	&
\fk{g} \ar[r] \ar[d]^r \POS p+(-4,5)*+{v}="v"  	&
G(B[\epsilon]/(\epsilon^2)) \ar[r] \ar[d]^{\rho(B[\epsilon]/(\epsilon^2))} \POS p+(0, 7)*+{e^{\epsilon v}}="e"	&
G(B) \ar[r] \ar[d]^{\rho(B)}	&
0	\\
0 \ar[r]	&
\m{End}( F) \ar[r] 	\POS p+(-10,-6)*+{\phi}="phi"&
\m{Aut}(B[\epsilon]/(\epsilon^2) \otimes  F) \ar[r] \POS p+(0,-7)*+{1+ \epsilon \phi}="1"	&
\m{Aut}( F) \ar[r]	&
0		
\ar@{|->} @/_10pt/  "v";"phi"
\ar@{|->} @/_3pt/  "phi";"1"
\ar@{|->} @/^3pt/ "v";"e"
}
\]
induced by $\rho$, it follows that
$$\rho(B[\epsilon]/(\epsilon^2))(e^{\epsilon v}) = 1 + \epsilon \phi \,.$$
Notation as in \ref{.5402}, the map
$$B[\epsilon]/(\epsilon^2) \rightarrow C_n \text{ defined by } \epsilon \mapsto \epsilon_i$$
gives rise to a commuting square
\[\xymatrix{
G(B[\epsilon]/(\epsilon^2)) \ar[d] \ar[r] \POS p+(-12,7)*+{e^{\epsilon v}}="e"	&
\m{Aut}(B[\epsilon]/(\epsilon^2) \otimes  F) \ar[d] \POS p+(17,7)*+{1+\epsilon \phi}="1"	\\
G(C_n) \ar[r] \POS p+(-12,0)*+{e^{\epsilon_i v}}="i"	&
\m{Aut}(C_n \otimes  F) \POS p+(21,0)*+{1+\epsilon_i \phi}="1i"	
\ar@{|->}@/_5pt/ "e";"i"
\ar@{|->}@/^5pt/ "e";"1"
\ar@{|->}@/^5pt/ "1"+(5,-3);"1i" }
\]
from which it now follows that $\rho(C_n)(e^{\epsilon_i v}) = 1+\epsilon_i \phi$.

Property (2) of $exp$ implies that given nilpotent elements $t,t'$ in a $B$-algebra $B'$, 
$$e^{(t+t')v} = e^{tv}e^{t'v} \,.$$
So the map
$$T \mapsto \epsilon_1 + \cdots + \epsilon_n$$
gives rise to a commuting square 
\[\xymatrix{
G(B[[T]]) \ar[r] \ar[d]	\POS p+(-10,7)*+{e^{Tv}}="e" &
\m{Aut}(B[[T]] \otimes  F) \ar[d] \POS p+(0,7)*+{\psi} ="psi"	\\
G(C_n) \ar[r] \POS p+(-7,-7)*+{e^{\epsilon_1 v} \cdots e^{\epsilon_n v}} = "es"	&
\m{Aut}(C_n \otimes  F) \POS p+(9,-7)*+{(1 + \epsilon_1 \phi)\cdots(1+ \epsilon_n \phi)} = "1s"
\ar@{|->}@/^5pt/"e";"psi"
\ar@{|->}@/_5pt/"e";"es"+(-5,3)	
\ar@{|->}@/_3pt/"es";"1s"}
\]
in which $e^{Tv}$ maps to $\psi$ on the upper right and to $(1 + \epsilon_1 \phi)\cdots(1+ \epsilon_n \phi)$ on the lower right as shown. On the other hand, in the notation of \ref{.5402}, the vertical map on the right factors through
$$\m{Aut}(B_n \otimes  F) \rightarrow \m{Aut}(C_n \otimes  F) \,.$$
By \ref{.5402}, this map is injective. Since in $\m{Aut}(C_n \otimes  F)$, 
\begin{align}
(1+\epsilon_1 \phi) \cdots  & (1+ \epsilon_n \phi) \nonumber \\
&= \sum_i (\m{sum \; of \;} i \m{-fold \; products \; of \; distinct \;} \epsilon_j \m{'s}) \phi^i \nonumber \\
 &= \sum_i \frac{(\epsilon_1 \cdots \epsilon_n)^i}{i!} \phi^i
\end{align}
this map sends
$$\sum \frac {T^i \phi^i}{i!} \text{ to } (1 + \epsilon_1 \phi)\cdots(1+ \epsilon_n \phi) \,.$$
It follows that $\psi$ maps to $\sum \frac {T^i \phi^i}{i!}$ in $\m{Aut}(B_n \otimes  F)$ which concludes the proof of the lemma.
\end{proof}

\begin{unmarked} \label{.5404}
Returning to the situation of the proposition, let
$$B' := B \otimes S^\bullet \fk{g}^\lor$$
and let $v \in \fk{g}_{B'}$ be the universal section. Then $r(B')(v) = \phi$ as defined in the proposition. By \ref{.5403},
$$\rho(B'[[T]])(e^{Tv}) = \sum \frac {T^i\phi^i}{i!} \,.$$
By \cite[IV \S2 4.1]{dg},
$$exp : \fk{g}_{B'} \rightarrow G(B'[[T]])$$
factors through $G(B'[T])$. The situation is summarized in the following diagram.
\[\xymatrix{
\fk{g}_{B'} \ar[r] \ar[d]	 \POS p+(-5,5)*+{v}="v" &
\m{End} (B' \otimes  F) \POS p+(3,6)*+{\phi}="phi"	\\
G(B'[T]) \ar[r] \ar[d]	&
\m{Aut}(B'[T] \otimes  F) \ar[d]	\\
G(B'[[t]]) \ar[r] \POS p+(-9,-6)*+{e^{Tv}}="T"	&
\m{Aut}(B'[[T]]\otimes  F) \POS p+(5,-7)*+{\sum \frac {T^i \phi^i} {i!}}="sum"
\ar@{|->}@/^5pt/"v";"phi"
\ar@{|->}@/_10pt/"v";"T"
\ar@{|->}@/_5pt/"T";"sum"		}
\]
This implies that $\sum \frac {T^i \phi^i}{i!}$ is in $\m{Aut}(B'[T] \otimes  F)$ and in particular that $\phi$ is locally nilpotent. Let $\exp$ denote the global exponential map
$$\V \fk{g}^\lor_B \rightarrow G_B \,.$$
Then by definition, $\exp(B')$ is the composite 
\[\xymatrix @R=0pt{
\fk{g}_{B'}  \ar[r] 	&
G(B'[T]) \ar[r]	&
G(B') 	\\
	&
T \ar@{|->}[r]	&
1 	}
\] 
Finally, $\psi$, as defined in the proposition, equals $\rho(B')(\exp(B')(v))$. So we consider the commuting square 
\[\xymatrix{
G(B'[T]) \ar[r] \ar[d] \POS p+(-8,6)*+{e^{Tv}}="et"	&
\m{Aut}(B'[T] \otimes  F) \ar[d] \POS p+(7, 7)*+{\sum \frac {T^i \phi^i}{i!}}="sum" \POS p+(19,0)*+{T}="T"		\\
G(B') \ar[r] \POS p+(-7,-5)*+{e^v}="ev"	&
\m{Aut}(B' \otimes  F) \POS p+(3,-6)*+{\psi}="psi"	\POS p+(16,0)*+{1}="1"
\ar@{|->}@/^5pt/"et";"sum"
\ar@{|->}@/_10pt/"et";"ev"
\ar@{|->}@/_5pt/"ev";"psi"
\ar@{|->}@/^5pt/"T";"1"	}
\]
from which it follows that
$$\psi = \sum \frac{\phi^i}{i!}$$
as claimed. 
\end{unmarked}

\begin{cor} \label{.541}
Let $T = \spec B$ be an affine $k$-scheme, $\rho : G_T \rightarrow \fAut F$ a quasi-coherent representation, $r = \m{\bld{Lie}}(\rho)$. Let $F^{G_T}$ denote the module of invariants of $\rho$ and let $F^0$ denote the $0$-eigenspace of $r$. Then
$$F^{G_T} = F^0 \,.$$
\end{cor}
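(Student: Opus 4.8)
The plan is to deduce Corollary \ref{.541} directly from Proposition \ref{.54}, together with the observation (\ref{.5224}) that membership in the $0$-eigenspace of $r$ is automatically universal. I would prove the two inclusions separately.

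First I would show $F^0 \subseteq F^{G_T}$. Fix $x \in F^0$, an arbitrary $B$-algebra $B'$, and $u \in G(B')$. Since $G$ is unipotent, $\exp$ identifies $\V\g^\lor$ with $G$ as schemes (\ref{.521}), so $u = \exp(v')$ for a unique $v' \in \g_{B'} = \V\g^\lor(B')$. Proposition \ref{.54} says that $\rho \circ \exp$ corresponds over $B \otimes S^\bullet\g^\lor$ to the automorphism $\psi = 1 + \phi + \tfrac{\phi^2}{2} + \cdots$, where $\phi$ is the (locally nilpotent) extension of $r$; specializing along the ring map $B \otimes S^\bullet\g^\lor \to B'$ classifying $v'$, under which $\phi \mapsto r(B')(v')$, and using compatibility with base change, one gets $\rho(B')(u)(1 \otimes x) = \sum_i \tfrac{1}{i!} r(B')(v')^i(1 \otimes x)$, a finite sum. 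Since $x \in F^0$ we have $r(v)(x) = 0$ for all $v \in B \otimes \g$, hence by \ref{.5224} $r(B')(v')(1 \otimes x) = 0$; so every term with $i \ge 1$ vanishes and $\rho(B')(u)(1 \otimes x) = 1 \otimes x$. As $B'$ and $u$ were arbitrary, $x \in F^{G_T}$.

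Next I would show $F^{G_T} \subseteq F^0$. Fix $x \in F^{G_T}$ and $v \in B \otimes \g = \Gamma(T, \g_T)$. Put $B' = B[\epsilon]/(\epsilon^2)$ and let $u_v \in G(B')$ be the image of $v$ under the inclusion $\Gamma(T,\g_T) \hookrightarrow G(T[\epsilon])$ of \ref{.5213}. By the very definition of $r = \m{\bld{Lie}}(\rho)$ (\ref{.5253}), together with the standard identification of $\ker\big(\m{Aut}(B'\otimes F) \to \m{Aut}(F)\big)$ with $\m{End}(F)$ via $\sigma \mapsto 1 + \epsilon\sigma$, one has $\rho(B')(u_v) = 1 + \epsilon\, r(v)$ as an automorphism of $B' \otimes F = F \oplus \epsilon F$. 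Since $x$ is universally fixed, $(1 + \epsilon\, r(v))(1 \otimes x) = 1 \otimes x$, i.e. $\epsilon \cdot r(v)(x) = 0$ in $F \oplus \epsilon F$, forcing $r(v)(x) = 0$. As $v$ was arbitrary, $x \in F^0$, which completes the proof.

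I do not expect any real obstacle here: the entire weight of the argument sits in Proposition \ref{.54}, whose proof was carried out with care precisely because $\fAut F$ need not be representable. Once that is in hand, the corollary is a routine unwinding of definitions; the only points needing a word of care are the identification $G(B') \cong \g_{B'}$ via $\exp$ and the universality of the $0$-eigenspace condition, both of which are already available (\ref{.521} and \ref{.5224}).
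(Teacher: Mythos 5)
Your proof is correct. The inclusion $F^0 \subseteq F^{G_T}$ is handled exactly as in the paper: write $u = \exp(v')$, invoke Proposition \ref{.54} to expand $\rho(B')(u)$ as the exponential series in $r(B')(v')$, and use \ref{.5224} to kill all terms with $i \ge 1$.

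For the reverse inclusion $F^{G_T} \subseteq F^0$ you take a genuinely different and somewhat more economical route. The paper again applies Proposition \ref{.54}: for $v \in B \otimes \g$ it sets $u = \exp(B)(v) \in G(B)$, obtains $\rho(B)(u) = \sum r(v)^i / i!$, and then needs its Lemma \ref{.542} (that $\psi x = x$ implies $\phi x = 0$ for $\psi = \sum \phi^i / i!$, proved by the logarithmic series) to deduce $r(v)(x) = 0$. You instead test the invariance of $x$ against the infinitesimal element $u_v = e^{\epsilon v} \in G(B[\epsilon]/(\epsilon^2))$ and read off $r(v)(x) = 0$ directly from the relation $\rho(B[\epsilon])(u_v) = 1 + \epsilon\, r(v)$, which is essentially the definition of $\mathbf{Lie}(\rho)$ in \ref{.5253}. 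This bypasses both the hard direction of Proposition \ref{.54} and all of Lemma \ref{.542}, at the cost of a less uniform treatment of the two inclusions. Both arguments are valid; yours makes it clearer that only the infinitesimal content of $\rho$ is needed to see that an invariant vector is annihilated by $\g$.
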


The proof follows (\ref{.542} -- \ref{.543}).

\begin{lm} \label{.542}
Let $B$ be a ring, $F$ a $B$-module, $\phi \in \m{End}(F)$ and suppose $\phi$ is locally nilpotent. Let $\psi = \sum \frac {\phi^i}{i!}$ and let $ x\in F$. Then $\phi x = 0 $ if and only if $\psi x = x$.
\end{lm}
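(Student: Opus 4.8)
The statement is a completely elementary assertion about a single locally nilpotent endomorphism $\phi$ of a module $F$ over a ring $B$ (no flatness, no finiteness, no characteristic hypothesis is even needed beyond $\bb{Q} \subset B$, which is implicit since we are writing $\phi^i/i!$), so the plan is to give a direct calculation. The one direction is immediate: if $\phi x = 0$ then $\phi^i x = 0$ for all $i \ge 1$, so $\psi x = x + \phi x + \phi^2 x/2 + \cdots = x$. The substance is the converse, and the key point is that $\psi$ is invertible with inverse given by the logarithm series applied to $\psi - 1$, or equivalently $\psi^{-1} = \sum \frac{(-1)^i \phi^i}{i!}$; since $\phi$ is locally nilpotent this series is finite on each element, so it makes sense, and one checks $\psi \cdot (\sum (-1)^i \phi^i/i!) = 1$ by the binomial identity $\sum_{j} \binom{i}{j}(-1)^{i-j} = 0$ for $i \ge 1$.

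So, first I would record that $\psi$ is an automorphism of $F$ and that $\phi = \log \psi := \sum_{i \ge 1} \frac{(-1)^{i+1}(\psi - 1)^i}{i}$ — again a finite sum on each element, because $(\psi-1)x = (\phi + \phi^2/2 + \cdots)x$ is itself killed by a power of $\phi$, hence $(\psi - 1)$ is locally nilpotent, and the formal identity $\log(\exp t) = t$ of power series (with zero constant term) specializes. Then if $\psi x = x$ we have $(\psi - 1)x = 0$, so every term $(\psi - 1)^i x$ with $i \ge 1$ vanishes, whence $\phi x = \log(\psi)\, x = 0$.

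The only care required is the bookkeeping that all the relevant power series, when evaluated on a fixed $x \in F$, are actually finite sums: this is exactly what "locally nilpotent" buys us, and it is what lets us manipulate $\exp$ and $\log$ as honest inverse maps on the submodule $B[\phi]\cdot x$ (or even just on $\bigcup_n \ker \phi^n$, which is all of $F$ by hypothesis) rather than as formal objects. I do not anticipate a genuine obstacle here; the proof is a one-paragraph computation. The slight subtlety worth a sentence in the writeup is that we are not assuming $\phi$ is \emph{globally} nilpotent, only locally, so the identities should be verified element-by-element; but since every identity used ($\psi\psi^{-1}=1$, $\log\exp=\mathrm{id}$) is a formal power series identity pulled back along $t \mapsto \phi$ acting on an element killed by some $\phi^N$, this is automatic.

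\begin{proof}
Fix $x \in F$ and choose $N$ with $\phi^N x = 0$; then $\phi^i x = 0$ for all $i \ge N$. If $\phi x = 0$ then already $\psi x = x + \sum_{i \ge 1} \frac{\phi^i x}{i!} = x$. Conversely, note that on the finitely generated submodule $M := \sum_{i=0}^{N-1} B\phi^i x$, both $\phi$ and $\psi - 1 = \sum_{i \ge 1} \phi^i / i!$ restrict to nilpotent endomorphisms ($\phi^N|_M = 0$, and $(\psi-1)^N|_M = 0$ since $\psi - 1 \in \phi\cdot B[\phi]$). Hence the finite sums
$$\psi^{-1} := \sum_{i=0}^{N-1} \frac{(-1)^i \phi^i}{i!}, \qquad \log\psi := \sum_{i=1}^{N-1} \frac{(-1)^{i+1}(\psi-1)^i}{i}$$
are well-defined endomorphisms of $M$, and the formal power series identities $\exp(t)\exp(-t) = 1$ and $\log(\exp t) = t$ (which involve only polynomials in $t$ once truncated modulo $t^N$) specialize under $t \mapsto \phi|_M$ to give $\psi \psi^{-1} = \mathrm{id}_M$ and $(\log \psi)|_M = \phi|_M$. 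Now suppose $\psi x = x$, i.e.\ $(\psi - 1)x = 0$. Then $(\psi - 1)^i x = 0$ for every $i \ge 1$, so
$$\phi x = (\log\psi)(x) = \sum_{i=1}^{N-1} \frac{(-1)^{i+1}}{i}(\psi - 1)^i x = 0,$$
as claimed.
\end{proof}
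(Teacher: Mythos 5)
Your proof is correct and follows essentially the same route as the paper: the forward direction is the trivial term-by-term vanishing, and the converse uses $\phi = \log\psi$ together with the observation that $\psi x = x$ kills every term $(\psi-1)^i x$ of the logarithm series. Your writeup is a bit more scrupulous about the local-nilpotence bookkeeping (passing to the finitely generated submodule $M$, truncating modulo $t^N$) than the paper's one-line computation, but the underlying idea is identical.
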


\begin{proof}
If $\phi x =0$ then
\begin{align*}
\psi x &= x + \phi(x) + \frac {\phi^2(x)}{2} + \cdots \\
&= x \,.
\end{align*}
If $\psi(x) = x$ then
\begin{align*}
\phi x &= (\log \psi)x \\
&= (\log(1 +(\psi -1)))x \\
&= ((\psi-1) - \frac {(\psi-1)^2}{2} + \frac {(\psi-1)^3}{3} - \cdots)x \\
&= 0 \,.
\end{align*}
\end{proof}

\begin{unmarked} \label{.543}
Returning to the proof of the corollary, suppose $x \in F^{G_T}$, let $v \in B \otimes \fk{g}$ and let $u = \exp (B)(v) \in G(B)$. Then
$$\rho (B)(u)(x) = x$$
and
$$\rho(B)(u) = \sum \frac {r(v)^i}{i!}$$
so
$$r(v)(x) = 0$$
by the lemma.

Conversely, suppose $x \in F^0$, let $B'$ be a $B$-algebra, let $u \in G(B')$ and let $v = \log (B')(u) \in B' \otimes \fk{g}$. As explained in \ref{.525}, $r(B')(v)(x) = 0$. Thus 
\begin{align*}
\rho(B')(u)(x) &= \sum \frac {r(B')(v)^i}{i!}(x) \\
&= x \,.
\end{align*}
\end{unmarked}

\begin{cor} \label{.55}
Let $T$ be an affine $k$-scheme, $\rho : G_T \rightarrow \fAut F$ a quasi-coherent representation, $r = \m{\bld{Lie}}(\rho)$. Then $r$ is locally nilpotent.
\end{cor}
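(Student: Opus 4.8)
The plan is to deduce Corollary \ref{.55} from the two preceding results almost immediately, reducing the locally nilpotent condition on $r$ to the exhaustiveness of an associated filtration, which is precisely the content of Proposition \ref{.525}. First I would recall that by Definition \ref{.523}, $r$ is locally nilpotent exactly when $\m{Fil}^r$ is exhaustive (Definition \ref{.112}), so the goal is to show $\bigcup_i \m{Fil}^r_i F = F$.

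Next I would invoke Corollary \ref{.541}, which identifies the module of invariants $F^{G_T}$ of $\rho$ with the $0$-eigenspace $F^0$ of $r = \m{\bld{Lie}}(\rho)$. The key point is that this identification is compatible with passage to subquotients along the two filtrations: the filtration $\m{Fil}^\rho$ is built from $F_0 = 0$ and $F_{i+1} = $ the preimage of $(F/F_i)^{G_T}$ (Definition \ref{.524}), while $\m{Fil}^r$ is built the analogous way from $0$-eigenspaces (Definition \ref{.111}). Since $\m{\bld{Lie}}$ is compatible with base change and with quotient representations (cf. \ref{.5253}), applying Corollary \ref{.541} to each successive quotient representation $r/r_i$ versus $\rho/\rho_i$ on $F/F_i$ shows, by induction on $i$, that $\m{Fil}^r_i F = \m{Fil}^\rho_i F$ for all $i$; that is, the two filtrations coincide.

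Finally, Proposition \ref{.525} asserts that $\m{Fil}^\rho$ is exhaustive. Combining this with the equality $\m{Fil}^r = \m{Fil}^\rho$ just established, $\m{Fil}^r$ is exhaustive, hence $r$ is locally nilpotent. I would write this up as a short three-line argument.

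I do not expect a serious obstacle here; the only point requiring a little care is verifying that the quotient representation $\m{\bld{Lie}}(\rho)/r_i$ on $F/F_i$ really is $\m{\bld{Lie}}(\rho/\rho_i)$, so that Corollary \ref{.541} can be applied at each stage of the induction. This is a formal compatibility of $\m{\bld{Lie}}$ with forming quotients, which follows from its description via derivatives (the diagram in \ref{.5253}) together with the fact — used already in the proof of Proposition \ref{.525} — that formation of the associated filtration commutes with taking subrepresentations; I would state it as an observation and cite \ref{.5253} rather than spell out the diagram chase.

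\begin{proof}
By Definition \ref{.523}, $r$ is locally nilpotent if and only if $\m{Fil}^r$ is exhaustive. Now $\m{Fil}^\rho_0 F = 0 = \m{Fil}^r_0 F$, and assuming for an induction on $i$ that $\m{Fil}^\rho_i F = \m{Fil}^r_i F$, the quotient representation $\rho/\rho_i$ of $G_T$ on $F/F_i$ has $\m{\bld{Lie}}(\rho/\rho_i) = r/r_i$ (compatibility of $\m{\bld{Lie}}$ with quotients, cf. \ref{.5253}), so by \ref{.541} applied to $\rho/\rho_i$,
$$(F/F_i)^{G_T} = (F/F_i)^0 \,.$$
Taking preimages in $F$ gives $\m{Fil}^\rho_{i+1}F = \m{Fil}^r_{i+1} F$, completing the induction; hence $\m{Fil}^r = \m{Fil}^\rho$. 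By \ref{.525}, $\m{Fil}^\rho$ is exhaustive, so $\m{Fil}^r$ is exhaustive and $r$ is locally nilpotent.
\end{proof}
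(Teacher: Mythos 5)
Your argument is correct and matches the paper's (very terse) proof in substance: the paper cites exactness of $\m{\bld{Lie}}$, \ref{.525}, and \ref{.541} to conclude that $\m{Fil}^\rho$ ``witnesses'' the local nilpotence of $r$, and your induction on $i$ showing $\m{Fil}^\rho_i F = \m{Fil}^r_i F$ is exactly the expansion of that claim. The only cosmetic difference is that you phrase the exactness of $\m{\bld{Lie}}$ as ``compatibility with quotients.''
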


\begin{proof}
It follows from the fact that $\m{\bld{Lie}}$ is exact, from \ref{.525} and from \ref{.541} that the canonical filtration associated to $\rho$ witnesses the local nilpotence of $r$.
\end{proof}

\begin{dfn} \label{.556}
Let $\bb{REP}(G)$ denote the full stack of quasi-coherent representations over the category of affine $k$-schemes. Thus an object is a triple $(T,F,\rho)$, $T = \spec B$ an affine $k$-scheme, $F$ a $B$-module, $\rho: G_T \rightarrow \fAut F$ a representation, and a morphism
$$(T', F',\rho') \rightarrow (T,F,\rho)$$
is a pair $(f,\phi)$, $f$ a morphism $T' \rightarrow T$ and $\phi$ a morphism of representations $f^*\rho \rightarrow \rho'$.

Let $\bb{REP}^\m{nil}(\fk{g})$ denote the fibered category of locally nilpotent representations of $\fk{g}$: an object is a triple $(T,F,r)$ with $T = \spec B$ an affine $k$-scheme, $F$ a $B$-module, $r : B \otimes \fk{g} \rightarrow \m{End}(F)$ a locally nilpotent representation, and a morphism
$$(T',F',r') \rightarrow (T,F,r)$$
is a pair $(f,\phi)$, $f: T' \rightarrow T$ a morphism of $k$-schemes and $\phi: f^*r \rightarrow r'$ a morphism of representations.
\end{dfn}

\begin{thm} \label{.557}
The functor 
$$\m{\bld{Lie}}:\bb{REP}(G) \rightarrow \bb{REP}^\m{nil}(\fk{g})$$
sending a representation to its derivative at the identity is an isomorphism of fibered categories.
\end{thm}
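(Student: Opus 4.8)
The plan is to construct an explicit inverse to $\m{\bld{Lie}}$ out of the exponential map and then to check that the two functors are mutually inverse on objects and on morphisms of every fibre, compatibly with pullback. First note that $\m{\bld{Lie}}$ is already a morphism of fibered categories: it lands in $\bb{REP}^\m{nil}(\fk{g})$ by Corollary \ref{.55}, it carries a morphism of representations to the $B$-linear map underlying it (since $\m{\bld{Lie}}$ is computed by restriction along $B \to B[\epsilon]/(\epsilon^2)$), and by \ref{.5253} its formation commutes with arbitrary base change.

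To build the inverse $\m{\bld{H}}$ (the representation-level counterpart of the functor of \ref{.521}), start from a locally nilpotent representation $r:\fk{g}_B \to \m{End}(F)$ over $T = \spec B$. For any $B$-algebra $B'$ and any $v \in B' \otimes \fk{g}$ the endomorphism $r(B')(v) \in \m{End}_{B'}(B' \otimes F)$ (notation of \ref{.5223}) is locally nilpotent: each $r(w)$ drops the $\m{Fil}^r$-degree by one by construction of $\m{Fil}^r$ (compare \ref{.1114}), so a product of $N$ such operators annihilates $\m{Fil}^r_N F$; expanding $r(B')(v)^N$ in a basis of $\fk{g}$ and using that $\m{Fil}^r$ is exhaustive, any given element of $B' \otimes F$ is killed by $r(B')(v)^N$ for $N \gg 0$. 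Hence $\exp\!\big(r(B')(v)\big) := \sum_{i\ge 0} r(B')(v)^i/i!$ is a well-defined $B'$-linear automorphism of $B' \otimes F$ with inverse $\exp\!\big({-}r(B')(v)\big)$. Using the isomorphism $\exp$ of \ref{.521}, hence its inverse $\log: G_B \to \V\fk{g}^\lor_B$, define
$$\m{\bld{H}}(r)(B')(u) := \exp\!\big(r(B')(\log u)\big) \qquad (u \in G(B'))\,.$$
That this is a homomorphism of group-valued functors $G_T \to \fAut F$ is the assertion that $\log$ carries the group law of $G$ to the Baker--Campbell--Hausdorff operation on $\V\fk{g}^\lor$, that the Lie algebra map $r(B')$ carries it to the Baker--Campbell--Hausdorff operation on the locally nilpotent part of $\m{End}_{B'}(B' \otimes F)$ --- here all iterated brackets of $r(B')(x), r(B')(y)$ equal $r(B')$ of an iterated bracket in $\fk{g}$ and hence vanish after bounded depth, so the series is actually finite --- and that $\exp$ carries the latter to composition. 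Functoriality in $B'$ and compatibility with base change in $B$ are immediate, and $\m{\bld{H}}$ carries a morphism $\phi$ of representations of $\fk{g}$ to the same underlying $B$-linear map, which intertwines $r(B')(v)^i$ with $r'(B')(v)^i$ for all $i$ and so intertwines the exponentials. Thus $\m{\bld{H}}$ is a morphism of fibered categories $\bb{REP}^\m{nil}(\fk{g}) \to \bb{REP}(G)$.

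It remains to see $\m{\bld{Lie}} \circ \m{\bld{H}} = \m{id}$ and $\m{\bld{H}} \circ \m{\bld{Lie}} = \m{id}$. For the first, the characterization of $\exp$ recalled in the proof of \ref{.5403} identifies the element of $G(B[\epsilon]/(\epsilon^2))$ corresponding to $v \in \fk{g}_B$ with $\exp(\epsilon v)$, whose $\log$ is $\epsilon v$; so $\m{\bld{H}}(r)$ sends it to $\exp(\epsilon\, r(v)) = 1 + \epsilon\, r(v)$, whence $\m{\bld{Lie}}(\m{\bld{H}}(r)) = r$, and on morphisms both functors act by the underlying $B$-linear map, so the composite is literally the identity. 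For the second, given $\rho: G_T \to \fAut F$ with $r = \m{\bld{Lie}}(\rho)$, Proposition \ref{.54} asserts exactly that $\rho \circ \exp = \exp \circ r$ as maps $\V\fk{g}^\lor_T \to \fAut F$, i.e. $\rho(B')(\exp v) = \exp(r(B')(v))$; since $\exp$ is an isomorphism every $u \in G(B')$ is uniquely $\exp v$, so $\rho(B')(u) = \exp(r(B')(\log u)) = \m{\bld{H}}(r)(B')(u)$, and again $\m{\bld{H}}$ and $\m{\bld{Lie}}$ agree on morphisms. In particular, over any fixed $T = \spec B$ the set of morphisms between two objects is in both categories the set of $B$-linear maps intertwining the respective actions, and these two intertwining conditions are equivalent (forward by functoriality of $\m{\bld{Lie}}$; backward because $\rho$ is recovered from $r = \m{\bld{Lie}}(\rho)$ by applying $\exp$ to the powers of $r(B')(\log u)$), so $\m{\bld{Lie}}$ is bijective on objects and on morphisms of each fibre; being also compatible with pullback, it is an isomorphism of fibered categories.

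The main obstacle is the middle paragraph: one must make sense of the exponential of a merely locally nilpotent --- not nilpotent --- endomorphism over every base change $B'$, and then push the Baker--Campbell--Hausdorff identities through the possibly non-representable functors $\fAut F$. This is the difficulty flagged just before Proposition \ref{.54}, and I would handle it exactly as there --- reducing each identity to a term-by-term comparison of power series after restriction to the rings $B'[\epsilon_1,\dots,\epsilon_n]/(\epsilon_1^2,\dots,\epsilon_n^2)$ of Lemma \ref{.5402}, over which the exponential collapses to the finite product $(1+\epsilon_1\phi)\cdots(1+\epsilon_n\phi)$ and everything is manifestly well defined.
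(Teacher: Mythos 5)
Your construction of the inverse functor is exactly the paper's (the paper calls it $\Psi$; its formula $\Psi(r)(B')(u) = \sum_i \frac{(r(B')\log(B')(u))^i}{i!}$ agrees with your $\m{\bld{H}}$), the verification that $\m{\bld{Lie}}\circ\Psi = \m{id}$ on objects is the same small diagram check, and the verification $\Psi\circ\m{\bld{Lie}} = \m{id}$ on objects via Proposition~\ref{.54} is identical. The one genuine difference is how you handle morphisms. The paper does not argue directly that the two intertwining conditions coincide; instead (in \ref{.5577}) it observes that $\m{\bld{Lie}}$ interpolates the invariants and nullspace functors on the nose (Corollary~\ref{.541}) and respects duals and tensor products, and then computes $\m{Hom}(\rho,\rho') = (\rho^\lor\otimes\rho')^{G_T} = ((\m{\bld{Lie}}\,\rho)^\lor\otimes\m{\bld{Lie}}\,\rho')^0 = \m{Hom}(\m{\bld{Lie}}\,\rho,\m{\bld{Lie}}\,\rho')$. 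You instead argue directly that both categories' hom-sets sit inside $\m{Hom}_B(F,F')$ as the equivariant maps, and that $G$-equivariance (a universal condition over all $B'$) and $\fk{g}$-equivariance (an $o_T(T)$-level condition, upgraded to a universal one by $B'$-linearity on a basis of $\fk{g}$) cut out the same subset, with the backward implication coming from $\rho = \exp\circ r\circ\log$. This is more elementary and sidesteps the tensor-hom step, which is delicate for arbitrary quasi-coherent $F$, $F'$ since $F^\lor\otimes F'$ need not recover all of $\cl{H}om(F,F')$ in infinite rank; your route buys you a cleaner hom-set comparison at the modest cost of spelling out the base-change compatibility of $\fk{g}$-equivariance, which you do note. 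One small clarification worth making explicit if you write this up: in the last paragraph you propose deriving the BCH identity for $\exp$ on $\fk{n}_{\m{Fil}}(F)$ by the dual-numbers trick of Lemma~\ref{.5402}, whereas the paper simply cites Bourbaki for $\exp(v_1\star v_2)=\exp(v_1)\exp(v_2)$ in that setting and reserves the \ref{.5402} trick for the proof of Proposition~\ref{.54}; either works, but they are separate points.
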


The proof is in paragraphs \ref{.5573}-\ref{.5577}.

\begin{unmarked} \label{.5573}
Compatibility with Cartesian morphisms is clear; so it is enough to fix an affine $k$-scheme $T = \spec B$ and show that
$$\m{\bld{Lie}}(T): \bb{REP}(G)(T) \rightarrow \bb{REP}^\m{nil}(\fk{g})(T)$$
is an isomorphism. We begin by constructing an inverse
$$\Psi: \m{Ob}(\bb{REP}^\m{nil}(\fk{g})(T)) \rightarrow \m{Ob}(\bb{REP}(G)(T))$$
to $\m{Ob}(\m{\bld{Lie}}(T))$.

Let $\m{Fil}$ be an exhaustive increasing filtration indexed by $\bb{N}$ on a $B$-module $F$ and let $\fk{n}_\m{Fil}(F)$ denote the submodule of $\m{End}(F)$ consisting of those endomorphisms which preserve the filtration and induce $0$ on the associated graded. (Note, however, that if F is not finitely presented, formation of $\fk{n}_\m{Fil}(F)$ may not be compatible with flat base-change; so it is better not to think of it as a quasi-coherent sheaf.) Given $v_1, v_2 \in \fk{n}_\m{Fil}(F)$ and $s \in \bb{N}$, all but finitely many terms of $v_1 \star v_2$ are in $(\fk{n}_\m{Fil}(F))^{(s)}$; hence $v_1 \star v_2$ is a locally finite sum. Moreover, 
\begin{equation}
\sum \frac {(v_1 \star v_2)^i}{i!} = ( \sum \frac {v_1^i}{i!} ) (\sum \frac {v_2^i}{i!})
\end{equation}\label{.5575}(\cite[\S6.4]{bourbaki}). 
\end{unmarked}

\begin{unmarked} \label{.5576}
Let $r: \fk{g}_T \rightarrow \m{End}(F)$ be a locally nilpotent representation on a $B$-module $F$ and let $T' = \spec B'$ be an arbitrary affine $T$-scheme. Then by $\ref{.5223}$, $r(T')$ is locally nilpotent. 
Let $\m{Fil}(B')$ denote the associated filtration on $B' \otimes F$. There is thus a factorization of $r(T')$ as
$$B' \otimes \fk{g} \rightarrow \fk{n}_{\m{Fil}(B')}(B' \otimes F) \subset \m{End}_{B'}(B' \otimes F) \,.$$
Now given $u \in G(B')$, define
$$\Psi(r): G_T \rightarrow \fAut F$$
by 
\[ \Psi(r)(B')(u) = \sum_i \frac {(r(B') \log(B') (u))^i} {i!} \]
By \ref{.5575}, $\Psi(r)$ is a morphism of group-valued functors. To check that
$$\m{\bld{Lie}}(T) \circ \Psi = \m{id}_{\m{Ob}(\bb{REP}^\m{nil}(\fk{g})(T))}$$
fix a locally nilpotent representation $r: \fk{g}_T \rightarrow \m{End}(F)$ and consider the following diagram.
\[\xymatrix @R=6ex {
0 \ar[r] \ar[r] 	&
B \otimes \fk{g} \ar[r] \ar[d]^r \POS p+(-6,7)*+{v}="v"	&
G(B[\epsilon]/(\epsilon^2)) \ar[r] \ar[d]^{\Psi(r)(B[\epsilon])}	 \POS p+(12,7)*+{e^{\epsilon v}} = "e" \POS p+(12,1.5)="t1" &
G(B) \ar[r] \ar[d]^{\Psi(r)(B)}  \POS p+(-6,1.6)="s1"	&
0	\\
0 \ar[r]		&
\m{End}(F) \ar[r] \POS p+(-5, -7)*+{r(v)}="r" 	&
\m{Aut}(B[\epsilon]/(\epsilon^2) \otimes F)	\ar[r] \POS p+(-9,-7)*+{1+\epsilon r(v)} = "1"  \POS p+(5,-13)*+{\sum \frac {(r(B[\epsilon])(\log(B[\epsilon])(e^{\epsilon v}))^i} {i!} } = "sum"  \POS p+(18,-1.5)="t2"	&
\m{Aut}(F) \ar[r]	  \POS p+(-7.5,-1.6)="s2"	 &
0
\ar@{|->} @/^5pt/  "v";"e"
\ar@{|->} @/^13pt/   "e" +(2.5,-1.5);"sum"+(14,5)
\ar@{|->} @/_10pt/  "v";"r"+(-4,1.5)
\ar@{|->} @/_2pt/  "r";"1" +(-9.5,-.5)
\ar@/_3pt/ "s1";"t1"
\ar@/^3pt/ "s2";"t2"	}
\]
Our goal being to check that the square on the left commutes, we compute:
\[ \sum \frac {(r(B[\epsilon])(\log(B[\epsilon])(e^{\epsilon v}))^i} {i!} = \sum \frac {(\epsilon r(B)(v))^i}{i!} = 1+ \epsilon r(v) \]

To check that
$$\Psi \circ \m{\bld{Lie}}(T) = \m{id}_{\m{Ob}(\bb{REP}(G)(T))} \,,$$
fix a representation $\rho: G_T \rightarrow \fAut F$, an affine $T$-scheme $T' = \spec B'$ and a point $u \in G(B')$. We then have 
\[ \Psi (\m{\bld{Lie}}(T)(\rho))(T')(u) = \sum \frac {((\m{\bld{Lie}}\; \rho)(B')\log(B')(u))^i} {i!} = \rho(T')(u)\] by \ref{.54}.
\end{unmarked}

\begin{unmarked} \label{.5577}
We've shown that $\m{\bld{Lie}}(T)$ interpolates the invariants and nulspace functors on the nose (\ref{.541}) and that $\m{Ob}(\m{\bld{Lie}}(T))$ is bijective (\ref{.5576}). Finally, note the if one fixes a particular construction of duals and tensor products inside the category of quasi-coherent sheaves on $T$, it makes sense to say that $\m{Ob}(\m{\bld{Lie}})$ respects duals and tensor products.

This gives us, for each $\rho, \rho'$, an equality of sets
\begin{align*}
\m{Hom}(\rho, \rho') &= (\rho^\lor \otimes \rho')^{G_T} \\
&= \m{\bld{Lie}}(\rho^\lor \otimes \rho')^0 \\
&= ((\m{\bld{Lie}} \; \rho)^\lor \otimes \m{\bld{Lie}} \; \rho')^0 \\
&= \m{Hom}(\m{\bld{Lie}} \; \rho, \m{\bld{Lie}} \; \rho')
\end{align*}
compatible with identity elements and composition, hence an isomorphism of categories as claimed.
\end{unmarked}

\Definition{.55773}{We define \textbf{the moduli stack of} $n$\textbf{-dimensional representations of} $G$, denoted $\M_n(G)$, to be the substack of $\bb{REP}(G)$ determined by requiring morphisms of representations to be isomorphisms. }

\Unmarked{.5578}{\textit{Proof of \ref{.38371}.} We have $\Mnflg = \Mnfl(\g/\g^{(n)})$. By \ref{.521}, $\g/\g^{(n)}$ is the Lie algebra of a unipotent group $G$. The full fibered category $\bb{REP}(G)$ of quasi-coherent representations is canonically isomorphic to the fibered category of quasi-coherent sheaves on the classifying stack $BG$ of $G$. The latter is known to obey fppf descent. This implies that $\M_n(G)$ obeys fppf descent. By theorem \ref{.557}, $\Mnfl(\g/\g^{(n)})$ embeds as a fibered subcategory of $\M_n(G)$. Since the flag condition is by its definition local, it follows that $\Mnfl(\g)$ obeys fppf decent. 

Similarly, the nondegeneracy condition is by its definition local, so the same argument shows that $\Mnndg$ obeys fppf descent.\qed}

\section{Compatibility of flag representations}
\label{Com}

The $(n+1)^\m{st}$ moduli space $M\nd_{n+1}$ is naturally fibered over a certain closed subscheme $M_n^\m{cnd}$ of $M\nd_n \times_{p_2, M_{n-1}, p_1} M\nd_n$. In this section we give a construction as well as a modular interpretation of $M_n^\m{cnd}$. This generalizes, for instance, the role played by the diagonal of $\bb{P}\g^\m{ab} \times \bb{P}\g^\m{ab}$ in the construction of $M\nd_3(\g)$ in the case $w(\g) =1$ (\ref{.3901}).

We work over a field $k$ of characteristic zero and work interchangeably with representations of a fixed unipotent group $G$ and with nilpotent representations of its Lie algebra $\fk{g}$. 

\begin{dfn} \label{.56}
There are two natural maps
$$p_1, p_2: \cl{M}_n^\m{fl} \rightrightarrows \cl{M}_{n-1}^\m{fl}$$
given by
$$p_1(r) = r_{n-1}\,, \; p_2(r) = r^1\,.$$
Recall that
$$\cl{M}^\m{fl}_n \underset{p_2, \cl{M}^\m{fl}_{n-1}, p_1} \times \cl{M}^\m{fl}_n$$
may be described as the stack whose objects are $4$-ples $(T, r, r', \phi)$, $T$ an affine $k$-scheme, $r,r'$ flag representations $\fk{g}_T \rightarrow \iEnd \cl{E}$, $\phi$ an isomorphism $r^1 \rightarrow r'_{n-1}$. A morphism
$$(U,s,s',\chi) \rightarrow (T,r,r',\phi)$$
consists of a morphism $f:U\rightarrow T$ and isomorphisms $f^*r \rightarrow s, f^*r' \rightarrow s'$ such that the square
\[\xymatrix{
f^*(r^1) \ar[d]_{f^*\phi} \ar@{=}[r]	&
(f^*r)^1 \ar[r]	&
s^1 \ar[d]^\chi	\\
f^*(r'_{n-1}) \ar@{=}[r]	&
(f^*r')_{n-1} \ar[r]	&
**[r]s'_{n-1}	}
\]
commutes. Then there is a natural map
$$p = (p_1, p_2): \cl{M}^\m{fl}_{n+1} \rightarrow \cl{M}^\m{fl}_n \underset{\cl{M}^\m{fl}_{n-1}} \times \cl{M}^\m{fl}_n$$
sending $r$ over $T$ to $(T, r_n, r^1, \phi)$ where $\phi$ is the canonical isomorphism $(r_n)^1 \rightarrow (r^1)_{n-1}$.
\end{dfn}

\begin{defprop} \label{.57}
The image of $p$ is the same in the indiscrete and fppf topologies, and is a closed substack of $\cl{M}^\m{fl}_n \times_{\cl{M}^\m{fl}_{n-1}} \cl{M}^\m{fl}_n$. Denote it by $\cl{M}_n^\m{cfl}$. We call an object in the essential image of $p$ \textit{a compatible pair of flag representations}.
\end{defprop}

The proof is in paragraphs \ref{.58} -- \ref{.61}.

\begin{unmarked} \label{.58}
Consider first the indiscrete topology in which the image is just the essential image of the functor. An object $(T, r, r', \phi)$ of $\cl{M}^\m{fl}_n \times_{\cl{M}^\m{fl}_{n-1}} \cl{M}^\m{fl}_n$ gives rise to a two step extension
\[ 0 \rightarrow r_1 \rightarrow r \rightarrow r' \rightarrow r'^{n-1} \rightarrow 0\]
hence to a class $o(T,r,r',\phi) \in \m{Ext}^2_{\m{Rep} \; G_T} (r'^{n-1}, r_1)$. Then $(T,r,r',\phi)$ is in the essential image of $p$ if and only if $o(T,r,r', \phi) = 0$. Indeed, $(T,r,r',\phi)$ is in the essential image of $p$ if and only if there exists a quasi-coherent (necessarily $n+1$-dimensional, flag) representation $r''$ over $T$ and isomorphisms $\alpha:r\rightarrow r''_n$, $\beta:r''^1\rightarrow r'$ forming a commuting pentagon:
\begin{displaymath}
\xymatrix{
		& 0 \ar[d]						&			& 0 \ar[d]				&						&			\\
		& r_1 \ar@{=}[rr] \ar[d]			&			& r_1 \ar[d]			&						&			\\
0 \ar[r]	& r \ar[rr]^\alpha \ar[d]			&			& r'' \ar[r] \ar[dd]_\beta	& r'^{n-1} \ar@{=}[dd] \ar[r] 	& 0			\\
		& r^1 \ar[dd] \ar[dr]_\phi^\sim 	&			&					&						&			\\
0 \ar[rr]	&							& r'_{n-1} \ar[r]	& r' \ar[r] \ar[d]			& r'^{n-1} \ar[r]			& 0			\\
		& 0							&			& 0	 				&						&			}
\end{displaymath}
(The rest of the diagram is automatic.) The vertical extension on the left gives rise to a long exact sequence 
\[ \cdots \to
\m{Ext}^1(r'^{n-1}, r) \xrightarrow{\pi_*}	
\m{Ext}^1(r'^{n-1}, r^1) \xrightarrow{\delta}	
\m{Ext}^2(r'^{n-1}, r_1) \to	
\cdots	\]
Under $\pi_*$ followed by $\delta$ the class of
\[0 \rightarrow r \rightarrow r'' \rightarrow r'^{n-1} \rightarrow 0\]
goes to 
\[0 \rightarrow r^1 \rightarrow r' \rightarrow r'^{n-1} \rightarrow 0\]
goes to
\[ 0 \rightarrow r_1 \rightarrow r \rightarrow r' \rightarrow r'^{n-1} \rightarrow 0 \,.\]
Thus $(r'', \alpha, \beta)$ as above exists if and only if the class the two-step extension is zero.
\end{unmarked}

\begin{unmarked} \label{.59}
Suppose now that $(r,r',\phi) \in (\cl{M}^\m{fl}_n \times_{\cl{M}^\m{fl}_{n-1}} \cl{M}^\m{fl}_n)(T)$ is fppf-locally in the essential image of $p$. Then there is an fppf map $f:U\rightarrow T$ such that $f^*(r,r',\phi)$ is compatible. Hence the class of 
\[ 0 \rightarrow (f^*r)_1 \rightarrow f^*r \rightarrow f^*r' \rightarrow (f^*r')^{n-1} \rightarrow 0 \]
in $\m{Ext}^2((f^*r')^{n-1}, (f^*r)_1)$ is zero. This corresponds to the class of 
\[ 0 \rightarrow f^*(r_1) \rightarrow f^*r \rightarrow f^*r' \rightarrow f^*(r'^{n-1}) \rightarrow 0 \]
in $\m{Ext}^2(f^*(r'^{n-1}), f^*(r_1))$ which is the image of $o(r,r', \phi)$ under 
\[ 
\m{Ext}^2(f^*(r'^{n-1}), f^*(r_1)) \xleftarrow{\cong}
f^*\m{Ext}^2(r'^{n-1}, r_1) \leftarrow
\m{Ext}^2(r'^{n-1}, r_1)
\]
It follows that $o(r,r',\phi) = 0$, hence that $(T,r,r',\phi)$ is in the essential image of $p$ by \ref{.58}. This shows that the image is the same in the indiscrete and fppf topologies.
\end{unmarked}

\begin{unmarked} \label{.6}
Suppose $(r,r',\phi) \in (\cl{M}_n^\m{fl} \times_{\cl{M}_{n-1}^\m{fl}} \cl{M}_n^\m{fl})(T)$ and denote by $L'_n, L_1$ the line sheaves corresponding to $r'^{n-1}$, $r_1$ respectively. We claim that
$$\m{Ext}^2(r'^{n-1},r_1) = \m{H}^2(G, k) \otimes L'^\lor_n \otimes L_1.$$
\end{unmarked}

\begin{proof}
We denote by $\m{inv}_*$ the functor which takes a quasi-coherent representation to its sheaf of invariants, and by $\m{inv}^*$ the functor which endows a quasi-coherent sheaf with the trivial group action. We have
\begin{align}
\m{Ext}^2(r'^{n-1}, r_1) & = \m{H}^2(G_T, (r'^{n-1})^\lor \otimes r_1) \nonumber \\
	& = R^2 \m{inv}_* \m{inv}^*(L'^\lor_n \otimes L_1) \nonumber
\end{align} 
since both representations are trivial
\begin{align}
 = R^2\m{inv}_*(1_T) \otimes L'^\lor_n \otimes L_1 \nonumber
\end{align}
by the projection formula (here $1_T$ denotes the trivial representation on $\ssf_T$)
\begin{align}
= \m{H}^2(G,k) \otimes L'^\lor_n \otimes L_1 \nonumber
\end{align}
by compatibility with flat base change. 
\end{proof}

\begin{unmarked} \label{.61}
By \ref{.39853}, there is a closed subscheme $Z \subset T$ representing the vanishing locus of $o(T, r,r',\phi)$. We claim that there is a Cartesian square
\[\entrymodifiers={!! <0pt, .8ex>+}
\xymatrix{
Z \ar[d] \ar[r]	&
\cl{M}_n^\m{cfl} \ar[d]	\\
T \ar[r]	&
\cl{M}^\m{fl}_n \underset{\cl{M}^\m{fl}_{n-1}}{\times} \cl{M}^\m{fl}_n	}\]
Indeed, given $f: T' \rightarrow T$, $f^*(r,r',\phi)$ forms a compatible pair if and only if
$$0 = o(f^*(r,r',\phi)) = f^*(o(r,r',\phi))$$
if and only if $f$ factors through $Z$. This completes the proof of \ref{.57}.
\end{unmarked}

\begin{dfn} \label{.62}
We define $\cl{M}^\m{cnd}_n$, \textbf{the stack of compatible pairs of nondegenerate nilpotent representations of dimension} $n$, by the Cartesian square
\[\entrymodifiers={!! <0pt, .8ex>+}
\xymatrix{
\cl{M}^\m{cnd}_n \ar@{_{(}->}[d] \ar@{_{(}->}[r]	&
\cl{M}^\m{cfl}_n \ar@{_{(}->}[d]	\\
\cl{M}\nd_n \underset{\cl{M}\nd_{n-1}}{\times} \cl{M}\nd_n \ar@{^{(}->}[r] 	&
\cl{M}^\m{fl}_n \underset{\cl{M}^\m{fl}_{n-1}}{\times} \cl{M}^\m{fl}_n	}
\]
We define $M_n^\m{cnd}$, \textbf{the moduli space of compatible pairs of nondegenerate nilpotent representations of dimension} $n$, to be the subfunctor of $M\nd_n \times_{M\nd_{n-1}} M\nd_n$ whose $T$-valued points are those pairs $(r,r')$ such that given any square
\[\entrymodifiers={!! <0pt, .8ex>+}
\xymatrix{
T' \ar[r]^(.3){(s,s',\phi)} \ar[d] 	&
\cl{M}\nd_n \underset{\cl{M}\nd_{n-1}}{\times} \cl{M}\nd_n \ar[d]	\\
T \ar[r]_(.3){(r,r')}	&
M\nd_n \underset{M\nd_{n-1}}{\times} M\nd_n	}
\]
$(s,s',\phi)$ is in (the essential image of) $\cl{M}_n^\m{cnd}$.
\end{dfn}

\begin{prop} \label{.63}
The functor $M_n^\m{cnd}$ is a closed algebraic subspace of $M\nd_n \times_{M\nd_{n-1}} M\nd_n$. The natural map $M\nd_{n+1} \rightarrow M\nd_n \times_{M\nd_{n-1}} M\nd_n$ factors through $M_n^\m{cnd}$:
\[\entrymodifiers={!! <0pt, .8ex>+}
\xymatrix{
M\nd_{n+1} \ar[d]	&
	\\
M^\m{cnd}_n \ar@{^{(}->}[r]	&
M\nd_n \underset{M\nd_{n-1}}{\times} M\nd_n	}
\]
\end{prop}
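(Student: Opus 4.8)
The plan is to obtain $M_n^\m{cnd}$ by descending the closed substack $\cl{M}_n^\m{cnd}$ of \ref{.62} along the rigidification maps. Write $\cl{Q} := \cl{M}\nd_n \times_{\cl{M}\nd_{n-1}} \cl{M}\nd_n$ and $P := M\nd_n \times_{M\nd_{n-1}} M\nd_n$, so that $P$ is an algebraic space (a fibre product of the algebraic spaces of \ref{.398}) and there is a natural morphism $q \colon \cl{Q} \to P$. Two facts are immediate. First, $\cl{M}_n^\m{cnd} \hookrightarrow \cl{Q}$ is a closed immersion: by \ref{.62} it is obtained by base-changing the closed immersion $\cl{M}_n^\m{cfl} \hookrightarrow \cl{M}_n^\m{fl} \times_{\cl{M}_{n-1}^\m{fl}} \cl{M}_n^\m{fl}$ of \ref{.57}. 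Second, $q$ is fppf: each $\cl{M}\nd_i \to M\nd_i$ is, fppf-locally on the target, the classifying stack of a flat group scheme (\ref{.3982}), hence an fppf gerbe, and its relative diagonal $\cl{M}\nd_i \to \cl{M}\nd_i \times_{M\nd_i} \cl{M}\nd_i$ is a torsor under that flat inertia group and so is also fppf; $q$ is a composite of base changes of maps of these two kinds.

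The crux is to show that $\cl{M}_n^\m{cnd}$ is \emph{saturated} over $P$, i.e. $\cl{M}_n^\m{cnd} = q^{-1}\big(q(\cl{M}_n^\m{cnd})\big)$. Concretely: whether a family $(s,s',\phi)$, with $\phi \colon s^1 \xrightarrow{\sim} s'_{n-1}$, lies in $\cl{M}_n^\m{cnd}$ should depend only on the pair $([s],[s'])$ and not on the gluing isomorphism $\phi$. By \ref{.6} and \ref{.62}, this membership is the vanishing of the obstruction class $o(T,s,s',\phi) \in \m{H}^2(G,k) \otimes L'^\lor_n \otimes L_1$ of the two-step extension
\[ 0 \to s_1 \to s \to s' \to s'^{n-1} \to 0, \]
whose middle map is $s \twoheadrightarrow s^1 \xrightarrow{\phi} s'_{n-1} \hookrightarrow s'$. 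Replacing $\phi$ by $\psi \circ \phi$ with $\psi \in \Aut(s'_{n-1})$ replaces this two-step extension by one with the same outer terms; when $\psi$ extends to an automorphism $\tilde\psi$ of $s'$, the datum $(\m{id}_{s_1},\m{id}_s,\tilde\psi,\m{id}_{s'^{n-1}})$ is a morphism of two-step extensions, so the class is literally unchanged, and symmetrically for automorphisms of $s$. For a general $\psi$ I would use \ref{.3831}: $\Aut(s'_{n-1}) = \bb{G}_m \times U(s'_{n-1})$, the $\bb{G}_m$-factor merely rescales $o$ and so does not affect its vanishing, and the unipotent automorphisms that can occur at a point where $o$ vanishes are precisely those induced by an automorphism of the glued $(n+1)$-dimensional flag representation, hence by one of $s'$. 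Pinning down this last reduction — extracting from \ref{.6} exactly why the vanishing \emph{locus} of $o$ is insensitive to $\phi$ — is the step I expect to be the main obstacle.

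Granting saturation, the rest is formal. Set $M_n^\m{cnd} := q(\cl{M}_n^\m{cnd})$ as an fppf subsheaf of $P$; by saturation $q^{-1}(M_n^\m{cnd}) = \cl{M}_n^\m{cnd}$, so $M_n^\m{cnd} \hookrightarrow P$ is a closed immersion by fppf descent of closed immersions along the cover $q$, whence $M_n^\m{cnd}$ is a closed algebraic subspace of $P$. Its functor of points agrees with the one in \ref{.62}, since under saturation "every lift of $(r,r')$ lies in $\cl{M}_n^\m{cnd}$" is equivalent to "some lift does". For the factorization, let $r$ be an $(n+1)$-dimensional nondegenerate nilpotent representation over $T$; then $r_n$ and $r^1$ are nondegenerate by \ref{.12}, and $p(r)$ lies in $\cl{M}_n^\m{cfl}$ because it is by construction in the essential image of $p$, hence $p(r) \in \cl{M}_n^\m{cnd}$. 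Thus the composite $\cl{M}\nd_{n+1} \xrightarrow{p} \cl{M}_n^\m{cnd} \hookrightarrow \cl{Q} \xrightarrow{q} P$ factors through $M_n^\m{cnd}$; since any morphism from $\cl{M}\nd_{n+1}$ to an algebraic space factors uniquely through $M\nd_{n+1} = \pi_0^\m{fppf}\cl{M}\nd_{n+1}$, this composite descends to the natural map $M\nd_{n+1} \to P$, which therefore factors through the closed subspace $M_n^\m{cnd}$, as claimed.
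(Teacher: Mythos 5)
Your strategy coincides with the paper's proof (\ref{.64}--\ref{.67}): establish that $q\colon \cl{Q}\to P$ is fppf, reduce the closed--immersion claim to a saturation statement for $\cl{M}_n^\m{cnd}$ over $P$ (this is Claim \ref{.66}), and then descend the closed immersion and derive the factorization formally (\ref{.67}); your final paragraph reproduces this last part correctly. Where you treat $q$ as a single fppf morphism, the paper factors it explicitly through $\cl{M}\nd_n\times_{M\nd_{n-1}}\cl{M}\nd_n$: the first arrow is an $\fAut r^1$-torsor (\ref{.64}) and the second is locally a classifying stack of a smooth group scheme (\ref{.65}), which is essentially the reasoning you sketch.

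The gap you flag is real, and it sits at precisely the point where the paper is itself most compressed. The paper's Claim \ref{.66} proceeds by a chain of three commuting Ext-squares. The first two replace, one at a time, the objects $s'$ and $s$ by their counterparts $r'$, $r$ via the \emph{chosen} isomorphisms, and conclude that $(s,s',\chi')$ is compatible for the \emph{transported} gluing $\chi'\colon s^1 \to r^1 \xrightarrow{\phi} r'_{n-1} \to s'_{n-1}$. The third step, passing from $\chi'$ to $\chi$, is dispatched with the sentence that they "differ by an automorphism of $s^1$, from which it follows \dots again by a similar argument"; this is exactly the step you did not manage to pin down. Your observation that the $\bb{G}_m$-factor of $\fAut(s^1) = \bb{G}_m\times U(s^1)$ only rescales $o$ is correct and handles the torus part. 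For the unipotent part, however, the justification you propose (that the relevant unipotent automorphisms are precisely those induced from $\Aut(s')$) is not right as stated: the restriction $U(s')\to U(s'_{n-1})$ need not be surjective, and the honest version of the question is whether $\ker\delta = \m{im}\bigl(\pi_*\colon \m{Ext}^1(s'^{n-1},s)\to\m{Ext}^1(s'^{n-1},s^1)\bigr)$ is stable under pushforward by $\beta\in U(s^1)$, which amounts to the same lifting problem for $\beta$ against $\Aut(s)$. You should work out what the paper's "similar argument" must be before regarding the saturation step as complete.
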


The proof follows (\ref{.64} -- \ref{.67})

\begin{unmarked} \label{.64}
The morphism
$$\cl{M}\nd_n \times_{\cl{M}\nd_{n-1}} \cl{M}\nd_n \rightarrow \cl{M}\nd_n \times_{M\nd_{n-1}} \cl{M}\nd_n$$
is represented by $\bb{I}\m{som}(r^1,r'_{n-1})$. That is, suppose $T \rightarrow \cl{M}\nd_n \times_{M\nd_{n-1}} \cl{M}\nd_n$  corresponds to the pair of representations $(r,r')$ with $r^1, r'_{n-1}$ fppf-locally isomorphic and consider the fibered product
\[\entrymodifiers={!! <0pt, .8ex>+}
\xymatrix{
Y \ar[r] \ar[d]	&
\cl{M}\nd_n \underset{\cl{M}\nd_{n-1}}{\times} \cl{M}\nd_n \ar[d]	\\
T \ar[r]	&
\cl{M}\nd_n \underset{M\nd_{n-1}}{\times} \cl{M}\nd_n	}
\]
Objects of $Y$ are $6$-tuples $(T', f, s, s', \phi, \psi)$, $T'$ a $k$-scheme, $f$ a map $T' \rightarrow T$, $(s,s',\phi) \in (\cl{M}\nd_n \times_{\cl{M}\nd_{n-1}} \cl{M}\nd_n)(T')$ and $\psi$ an isomorphism $f^*(r,r') \rightarrow (s,s')$. Given two objects
$$a_1 = (T', f_1, s_1, s'_1, \phi_1, \psi_1)\,,\; a_2 = (T', f_2, s_2, s'_2, \phi_2, \psi_2)$$
over $T'$, there is exactly one isomorphism $a_2 \rightarrow a_1$ over $\m{id}_{T'}$ if $f_1 = f_2$ and the induced isomorphism
$$(s_2, s'_2) \rightarrow (s_1,s'_1)$$
respects $\phi_1, \phi_2$ and no isomorphisms otherwise. Then, on the one hand, $Y$ is equivalent to the full subcategory consisting of those objects such that
$$(s,s') = f^*(r,r') \text{ and } \phi = \m{id}_{f^*(r,r')}\,,$$
which, on the other hand, is clearly equivalent to $\bb{I}\m{som}_{T'}(r^1, r'_{n-1})$.

Since $r^1, r'_{n-1}$ are fppf-locally isomorphic, $\bb{I}\m{som}_{T'}(r^1, r'_{n-1})$ is an fppf-torsor under $\fAut r^1$. Hence, in particular, the morphism
$$\cl{M}\nd_n \times_{\cl{M}\nd_{n-1}} \cl{M}\nd_n \rightarrow \cl{M}\nd_n \times_{M\nd_{n-1}} \cl{M}\nd_n$$
is flat and locally of finite presentation.
\end{unmarked}

\begin{unmarked} \label{.65}
The morphism
$$h:\cl{M}\nd_n \times_{M\nd_{n-1}} \cl{M}\nd_n \rightarrow M\nd_n \times_{M\nd_{n-1}} M\nd_n$$
is represented locally by the classifying stack of a smooth group scheme. Indeed, if $(r,r') \in (\cl{M}\nd_n \times_{M\nd_{n-1}} \cl{M}\nd_n)(T)$ then
$$\fAut(r,r') = \fAut r \times_T \fAut r' $$
is smooth. Moreover, 
$$M\nd_n \times_{M\nd_{n-1}} M\nd_n = \pi^\m{fppf}_0(\cl{M}\nd_n \underset{M\nd_{n-1}} \times \cl{M}\nd_n) \,.$$
It follows that $h$ is a rigidification map, hence locally a classifying stack by \cite[Remark 1.5.6]{olsson} as claimed. Hence $h$ is flat and locally of finite presentation.
\end{unmarked}

\begin{claim} \label{.66}
There exists a map $\cl{M}_n^\m{cnd} \to M_n^\m{cnd}$ which forms a Cartesian square
\[\entrymodifiers={!! <0pt, .8ex>+}
\xymatrix{
\cl{M}_n^\m{cnd} \ar@{^{(}->}[d] \ar[r]	&
M_n^\m{cnd} \ar@{^{(}->}[d]	\\
\cl{M}\nd \underset{\cl{M}\nd_{n-1}}{\times} \cl{M}\nd_n \ar[r]	&
M\nd_n \underset {M\nd_{n-1}} {\times}  M\nd_n	}
\]
\end{claim}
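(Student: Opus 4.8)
The plan is to obtain the top horizontal and left vertical maps of the square by restricting the faithfully flat morphism
$$g:\cl{M}\nd_n\underset{\cl{M}\nd_{n-1}}{\times}\cl{M}\nd_n\longrightarrow\cl{M}\nd_n\underset{M\nd_{n-1}}{\times}\cl{M}\nd_n\longrightarrow M\nd_n\underset{M\nd_{n-1}}{\times}M\nd_n,$$
whose two factors are flat and locally of finite presentation by \ref{.64} (an fppf torsor under $\fAut{r^1}$) and \ref{.65} (a rigidification map), so that $g$ is faithfully flat. The crux is to prove that $\cl{M}_n^\m{cnd}$ is exactly the preimage $g^{-1}(M_n^\m{cnd})$ — equivalently, that membership of a triple $(s,s',\phi)$ over $T$ in $\cl{M}_n^\m{cnd}$ depends only on its image $(r,r')=g(s,s',\phi)$ in $(M\nd_n\times_{M\nd_{n-1}}M\nd_n)(T)$. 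Once this is known, pulling the inclusion $M_n^\m{cnd}\hookrightarrow M\nd_n\times_{M\nd_{n-1}}M\nd_n$ back along $g$ gives the desired map $\cl{M}_n^\m{cnd}\to M_n^\m{cnd}$ and the Cartesian square; moreover, in the proof of \ref{.63} this same equality, together with fppf descent of the closed immersion $\cl{M}_n^\m{cnd}\hookrightarrow\cl{M}\nd_n\times_{\cl{M}\nd_{n-1}}\cl{M}\nd_n$ (\ref{.57}, \ref{.62}) along the faithfully flat $g$, will yield closedness of $M_n^\m{cnd}$.

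One inclusion is immediate from the definition of $M_n^\m{cnd}$: if $(r,r')\in M_n^\m{cnd}(T)$ then, applying that definition to $\m{id}_T$ and to the lift $(s,s',\phi)$ itself, $(s,s',\phi)\in\cl{M}_n^\m{cnd}(T)$; hence $g^{-1}(M_n^\m{cnd})\subseteq\cl{M}_n^\m{cnd}$. For the reverse inclusion, fix $(s,s',\phi)\in\cl{M}_n^\m{cnd}(T)$ with image $(r,r')$; I must show that every lift $(\tilde s,\tilde s',\tilde\phi)$ of $(r,r')|_{T'}$ over every $T'\to T$ lies in $\cl{M}_n^\m{cnd}(T')$. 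Since $\cl{M}_n^\m{cnd}$ is an fppf stack — being assembled by fibered products from $\cl{M}\nd_n$, $\cl{M}^\m{fl}_n$ and $\cl{M}_n^\m{cfl}$, which are fppf stacks by \ref{.38371} and \ref{.57} — this may be checked fppf-locally on $T'$; and since $M\nd=\pi_0^\m{fppf}\cl{M}\nd$, after an fppf cover we may assume $\tilde s\cong s|_{T'}$ and $\tilde s'\cong s'|_{T'}$. Transporting $(s,s',\phi)|_{T'}$ across these isomorphisms reduces us to the case $\tilde s=s|_{T'}$, $\tilde s'=s'|_{T'}$, where $\phi|_{T'}$ and $\tilde\phi$ are two gluing isomorphisms $s^1|_{T'}\to s'_{n-1}|_{T'}$. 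So everything comes down to the invariance of the compatibility condition under change of the gluing isomorphism.

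For this I would use the structure of automorphism groups established in \ref{.3}: the two gluing isomorphisms differ by $\theta:=\tilde\phi\circ(\phi|_{T'})^{-1}\in\fAut{s'_{n-1}}(T')$, and since the equations cutting out $U(r)$ never constrain the top-row entries $b_{1,j}$, the restriction homomorphism $\fAut{s'}=\bb{G}_m\times U(s')\to\bb{G}_m\times U(s'_{n-1})=\fAut{s'_{n-1}}$ is surjective with affine-space fibers. So after a further (Zariski, hence fppf) refinement of $T'$ we may lift $\theta$ to $\alpha'\in\fAut{s'}(T')$; then $(\m{id}_s,\alpha')$ is an isomorphism $(s,s',\phi|_{T'})\to(s,s',\tilde\phi)$ in $\cl{M}\nd_n\times_{\cl{M}\nd_{n-1}}\cl{M}\nd_n$ — by construction $\alpha'|_{s'_{n-1}}\circ(\phi|_{T'})=\theta\circ(\phi|_{T'})=\tilde\phi$ — whence $(s,s',\tilde\phi)\in\cl{M}_n^\m{cnd}(T')$ as well. (Equivalently, one may phrase this as saying that the obstruction of \ref{.58} is insensitive to the gluing datum $\phi$.) I expect this invariance — essentially the surjectivity of $\fAut{r'}\to\fAut{r'_{n-1}}$ — to be the only point that goes beyond formal manipulation of the definitions and of the descent facts \ref{.64}, \ref{.65}, \ref{.57}, \ref{.62}; granting it, $\cl{M}_n^\m{cnd}=g^{-1}(M_n^\m{cnd})$ and the Cartesian square of the claim follows.
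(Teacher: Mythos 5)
Your overall strategy — reducing the claim to the assertion that compatibility of a lift $(s,s',\phi)$ depends only on its image $(r,r')$ in $M\nd_n \times_{M\nd_{n-1}} M\nd_n$, hence is insensitive to the choice of gluing isomorphism — agrees in outline with the paper's proof, which shows the fibered product $\cl{Y}$ equals $\cl{M}_n^\m{cnd}$. But the step you single out as the crux rests on a surjectivity claim that is false: the restriction map $\fAut{s'} \to \fAut{s'_{n-1}}$ is not surjective, and it fails precisely in the nondegenerate situations at issue. Take $\g = \fk{n}_3$ and $s'$ the natural representation, which is nondegenerate (\ref{.18}). Then $\fAut{s'}$ consists of matrices $aI + cE_{13}$, so $\fAut{s'} \cong \bb{G}_m \times \bb{G}_a$ with the $\bb{G}_a$ concentrated in the $(1,3)$ entry; restriction to the span of $e_1, e_2$ kills $E_{13}$, so the image in $\fAut{s'_2} \cong \bb{G}_m \times \bb{G}_a$ is only $\bb{G}_m$. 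More generally, for any wide $s'$ of dimension $n \ge 3$, $U(s')$ is the single parameter $b_{1,n}$, which restricts to zero on $s'_{n-1}$, whereas $U(s'_{n-1})$ is the parameter $b_{1,n-1}$; thus $U(s') \to U(s'_{n-1})$ is the zero map. Your reading of Proposition \ref{.3} is the source of the error: that argument solves the entries $b_{i,j}$ with $i \ge 2$ in terms of the top-row entries $b_{1,j}$, giving the upper bound $\dim\fAut r \le n$, but the $b_{1,j}$ are themselves further constrained, and in the nondegenerate case collapse to the single parameter $b_{1,n}$.

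Without the lifting lemma, the objects $(s,s',\phi|_{T'})$ and $(s,s',\tilde\phi)$ genuinely need not be isomorphic in $\cl{M}\nd_n \times_{\cl{M}\nd_{n-1}} \cl{M}\nd_n$, so your proposed isomorphism $(\m{id}_s,\alpha')$ does not exist and the inclusion $\cl{M}_n^\m{cnd} \subseteq g^{-1}(M_n^\m{cnd})$ remains unproved. This is not a mere detail: invariance of compatibility under a change of gluing isomorphism cannot be established by transporting one object to another inside the fibered-product stack, because they lie in distinct isomorphism classes there. The paper instead compares obstruction classes directly, chasing the $\m{Ext}$ long exact sequences through the commuting squares induced by fixed isomorphisms $s \to r$ and $s' \to r'$, and treats the change of gluing isomorphism as the final "similar argument" at the level of $\m{Ext}^2$. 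To repair your proof you would need to show directly that the class $o(T',s,s',\tilde\phi)$ of \ref{.58} vanishes if and only if $o(T',s,s',\phi|_{T'})$ does, without assuming that $\theta$ lifts.
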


\begin{proof}
Let $\cl{Y}$ denote the fibered product
\[\entrymodifiers={!! <0pt, .8ex>+}
\xymatrix{
\cl{Y} \ar@{^{(}->}[d] \ar[r]	&
M_n^\m{cnd} \ar@{^{(}->}[d]	\\
\cl{M}\nd \underset  {\cl{M}\nd_{n-1}} {\times}   \cl{M}\nd_n \ar[r]	&
M\nd_n \underset  {M\nd_{n-1}} {\times}   M\nd_n	}
\]
$\cl{Y}$ is the full subcategory of $\cl{M}\nd_n \times_{\cl{M}\nd_{n-1}} \cl{M}\nd_n$ consisting of those objects whose image in $M\nd_n \times_{M\nd_{n-1}} M\nd_n$ lies in $M_n^\m{cnd}$. Thus $\cl{M}_n^\m{cnd}, \cl{Y}$ are both full subcategories, and we claim that they are in fact equal, the inclusion $\cl{Y} \subset \cl{M}_n^\m{cnd}$ being clear.

For the reverse inclusion we are to consider a square 
\[\entrymodifiers={!! <0pt, .8ex>+}
\xymatrix{
T' \ar[rrr]^(.4){(s,s',\chi)} \ar[d] \ar[dr]^{(r,r',\phi)}	&
	&
	&
\cl{M}\nd_n  \underset {\cl{M}\nd_{n-1}}  {\times}  \cl{M}\nd_n \ar[d]	\\
T \ar[r]	&
\cl{M}_n^\m{cnd} \ar[r]	&
\cl{M}\nd_n  \underset {\cl{M}\nd_{n-1}}  {\times}   \cl{M}\nd_n \ar[r]	&
M\nd_n   \underset {M\nd_{n-1}}  \times  M\nd_n 	}
\]
and to show that $(s,s',\chi) \in \cl{M}_n^\m{cnd}(T')$. After possibly replacing $T'$ by an fppf cover, we may assume $s \cong r, s' \cong r'$. Fixing isomorphisms as in the diagram below
\[\xymatrix{
	&
0 \ar[d]	&
0 \ar[d]	&
	&
	&
	&
	\\
	&
s_1 \ar[r]^\cong \ar[d]	&
r_1 \ar[d]	&
	&
	&
	&
	\\
	&
s \ar[r]^\cong \ar[d]	&
r \ar[d]	&
	&
	&
	&
	\\
	&
s^1 \ar[r]^\cong \ar[ddd]	&
r^1 \ar[ddd] \ar[dr]^\phi	&
	&
	&
	&
	\\
0 \ar[rrr]	&
	&
	&
r'_{n-1} \ar[d]^\cong \ar[r]	&
r'\ar[d]^\cong \ar[r]	&
r'^{n-1} \ar[d]^\cong \ar[r]	&
0	\\
0 \ar[rrr]	&
	&
	&
s'_{n-1} \ar[r]	&
s' \ar[r]	&
s'^{n-1} \ar[r]	&
0	\\
	&
0	&
0	&
	&
	&
	&
	} 
\]
We claim that $(s,s', s^1 \rightarrow r^1 \rightarrow r'_{n-1} \rightarrow s'_{n-1})$ is a compatible pair. Indeed, the square
\[\xymatrix{
\m{Ext}^1(s'^{n-1}, r^1) \ar[r]^\cong \ar[d]_\delta	&
\m{Ext}^1(r'^{n-1}, r^1) \ar[d]^\delta	\\
\m{Ext}^2(s'^{n-1}, r_1) \ar[r]_\cong	&
\m{Ext}^2(r'^{n-1}, r_1)	}
\]
commutes and the arrow at the top sends the class of
\[0 \rightarrow r^1 \rightarrow s' \rightarrow s'^{n-1} \rightarrow 0 \]
to the class of 
\[0 \rightarrow r^1 \rightarrow r' \rightarrow r'_{n-1} \rightarrow 0\]
from which it follows that $(r, s', r^1 \rightarrow r'_{n-1} \rightarrow s'_{n-1})$ is a compatible pair; the next step follows similarly from commutativity of the square 
\[\xymatrix{
\m{Ext}^1(s'^{n-1}, s^1) \ar[r]^\cong \ar[d]	&
\m{Ext}^1(s'^{n-1}, r^1) \ar[d]	\\
\m{Ext}^2(s'^{n-1}, s_1) \ar[r]_\cong	&
\m{Ext}^2(s'^{n-1}, r_1)	}
\]
Finally, $s^1 \rightarrow r^1 \rightarrow r'_{n-1} \rightarrow s'_{s-1}$ differs from $\chi$ by an automorphism of $s^1$ from which it follows that $(s,s',\chi)$ is compatible, again by a similar argument.
\end{proof}

\begin{unmarked} \label{.67}
We've shown that the inclusion
$$M_n^\m{cnd} \hookrightarrow M\nd_n \times_{M\nd_{n-1}} M\nd_n$$
is a closed immersion by checking fppf locally on the target. Finally, the factorization follows from the universal mapping property of the fppf sheaf associated to a stack.
\end{unmarked}

\appendix
\section{A direct proof of separatedness}
\label{Sep}

Let $\g$ be a finitely generated Lie algebra over a field $k$ and let $w$ denote its width (\ref{.26933}). It follows from Theorem \ref{.50149} that $M\nnd$, for $n \le w+1$, is separated. The assumption $n \le w+1$ also enables us to give a direct proof of separatedness. Of interest here is the reappearance of the width. The proof is in four paragraphs (\ref{.502} -- \ref{.5024}) staring with concrete calculations over a discrete valuation ring followed by generalities concerning the reduction of separatedness of a rigidification to a certain condition involving only isomorphism classes of objects of the stack.

\begin{lm} \label{.502}
Let $V$ be a valuation ring over $k$ with fraction field $K$, and let $r,r':\fk{g}\rightarrow \m{End}(E), \m{End}(E')$ be two nondegenerate nilpotent representations. If $r_K\cong r'_K$ then $r\cong r'$. 
\end{lm}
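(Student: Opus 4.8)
The plan is to trivialize $r$ and $r'$ over $\spec V$, record the generic isomorphism as conjugation by an element of $\bb{B}_n(K)$, and then show that element may be taken over $V$, treating its torus and unipotent parts separately. Concretely, I view $r,r'$ as nondegenerate nilpotent representations over $\spec V$ (so $E,E'$ are finite free $V$-modules). Since they are flag (\ref{.225}), their associated filtrations are full flags of free submodules with free quotients; as $V$ is local these split, so after choosing filtered bases we may take $E=E'=V^n$ with the standard flag and $r,r':\g_V\to\fk{n}_{n,V}$, both nilpotent with respect to the standard flag (the ranks agree because $r_K\cong r'_K$). Any isomorphism $r_K\to r'_K$ respects the associated filtrations, which are the standard flag on $K^n$; exactly as in the proof of \ref{.397} it therefore lies in $\bb{B}_n(K)$, giving $g\in\bb{B}_n(K)$ with $r'_K=gr_Kg^{-1}$. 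Write $g=tu$ with $t=\m{diag}(t_1,\dots,t_n)\in\bb{T}_n(K)$ and $u\in\bb{U}_n(K)$.

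For the torus part I note that conjugation by $u$ fixes the first-superdiagonal matrix entries, while conjugation by $t$ scales the $(i,i+1)$ entry by $t_i/t_{i+1}$; hence $\la^{r'}_{i,i+1}=(t_i/t_{i+1})\la^r_{i,i+1}$ in $\g_K^\lor$ for $i=1,\dots,n-1$. By the flag criterion (\ref{.38}) both $\la^r_{i,i+1}$ and $\la^{r'}_{i,i+1}$ are surjective onto $V$, so evaluating on an element of $\g_V$ carried to $1$ by $\la^r_{i,i+1}$ (resp. $\la^{r'}_{i,i+1}$) shows $t_i/t_{i+1}\in V$ (resp. $t_{i+1}/t_i\in V$), whence $t_i/t_{i+1}\in V^\times$. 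Since the central $\bb{G}_m\subset\bb{B}_n$ acts trivially by conjugation, we may replace $t$ by $s:=\m{diag}(t_1/t_n,\dots,t_{n-1}/t_n,1)\in\bb{T}_n(V)$ (note $ts^{-1}=t_nI$ is central); conjugating $r$ by $s$ (which yields a nondegenerate nilpotent representation over $V$, isomorphic to $r$) and replacing $u$ by $sus^{-1}\in\bb{U}_n(K)$, we reduce to the situation $r'_K=ur_Ku^{-1}$ with $u\in\bb{U}_n(K)$ and, moreover, $\la^{r'}_{i,i+1}=\la^r_{i,i+1}$ in $\g_V^\lor$ for all $i$.

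The unipotent part is the crux. First I would record a saturation lemma: if $\mu_1,\mu_2:\g_V\to V$ are linearly independent over every residue field of $V$ and $a,b\in K$ satisfy $a\mu_1+b\mu_2\in\g_V^\lor$, then $a,b\in V$ — indeed fibrewise independence makes $(\mu_1,\mu_2):\g_V\to V^2$ surjective at the closed point, hence (Nakayama, $\g$ being finite over $k$) surjective; it then splits, and dualizing exhibits $V^2$ as a direct summand of $\g_V^\lor$, which is therefore saturated in $\g_K^\lor$. By Proposition \ref{.5}, since $r,r'$ are nondegenerate of rank $n\le w+1$ they are fibrewise wide, so $\la^r_{i,i+1}$ and $\la^r_{j,j+1}$ are fibrewise linearly independent whenever $i\ne j$. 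I would then prove $u\in\bb{U}_n(V)$ by induction on the superdiagonal $c=1,\dots,n-2$: by the triangular conjugation formula of \ref{.5014} (a matrix identity, valid for any $\g$), the $(i,\,i+c+1)$ entry of $r'=uru^{-1}$ reads
\[ \la^{r'}_{i,\,i+c+1}-\la^r_{i,\,i+c+1}-\!\!\sum_{\substack{i\le l<m\le i+c+1\\(l,m)\ne(i,\,i+c+1)}}\!\!Q_{l,m}\,\la^r_{l,m}\;=\;-\,\la^r_{i,i+1}\,u_{i+1,\,i+c+1}+u_{i,\,i+c}\,\la^r_{i+c,\,i+c+1}, \]
where each $Q_{l,m}$ is a polynomial in the entries $u_{l',m'}$ with $m'-l'\le c-1$, hence in $V$ by the inductive hypothesis; thus the left-hand side lies in $\g_V^\lor$, and the saturation lemma (applied to $\mu_1=\la^r_{i,i+1}$, $\mu_2=\la^r_{i+c,\,i+c+1}$) forces $u_{i+1,\,i+c+1},u_{i,\,i+c}\in V$. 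Letting $i$ vary gives every superdiagonal-$c$ entry of $u$. Finally the entry $u_{1,n}$ does not enter the conjugation at all (the elementary matrix $E_{1n}$ is central in $\fk{n}_n$), so replacing it by $0$ produces $u_0\in\bb{U}_n(V)$ with $u_0ru_0^{-1}=r'_K$ over $K$, hence $=r'$ over $V$ since $\fk{n}_{n,V}\hookrightarrow\fk{n}_{n,K}$; so $r\cong r'$ over $V$.

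The main obstacle is exactly the unipotent step: it is where the hypothesis $n\le w+1$ is genuinely used (through \ref{.5}), and it requires the careful bookkeeping of the triangular conjugation formula together with the saturation lemma. An alternative, after the torus reduction, would be to pass to a surjection $\fk{f}\twoheadrightarrow\g$ from a free pronilpotent Lie algebra (legitimate because $A(\fk{f},i)=A(\g,i)$ for $i\le n$ by \ref{.5}, so that \ref{.39876} applies) and then invoke the isomorphism $d_\gamma$ of \ref{.50144}: as $\spec V$ is local, the constellation of $r$ lands in a single chart $\widetilde{W}_\gamma$, and uniqueness of $U$-orbit representatives under $d_\gamma$ would force $r\cong r'$ over $V$. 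I would present the hands-on version, which is more in keeping with a ``direct'' proof.
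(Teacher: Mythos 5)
Your proof is correct and follows essentially the same approach as the paper's: factor the generic isomorphism into diagonal and unipotent parts, show the ratios $t_i/t_{i+1}$ are units using surjectivity of the first-superdiagonal matrix entries, then induct up the superdiagonals of $u$ using fibrewise pairwise linear independence of the $\la_{i,i+1}$ (which is where $n\le w+1$ enters via Proposition \ref{.5}). The remaining differences --- isolating the saturation observation as a named lemma, citing the explicit conjugation formula of \ref{.5014} rather than re-expanding the equivariance relation directly, and zeroing out $u_{1,n}$ at the end instead of folding that into the paper's upfront allowance to modify $(t,u)$ by an automorphism of $r_K$ --- are cosmetic bookkeeping.
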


\begin{proof}
Fix bases compatible with the flags. Then there are elements $t\in\bb{T}_n(K)$ and $u\in\bb{U}_n(K)$ such that $ut$ is an isomorphism $r_K\rightarrow r'_K$. Our goal is to show that after possibly modifying $t$ and $u$ by precomposing with any automorphism of $r_K$, we have $t \in \bb{T}_n(V)$ and $u \in \bb{U}_n(V)$.

Denote by $\la_{i,j},\la'_{i,j}$ the full matrix entries $\g_V \to V$ of $r,r'$, respectively. Then we have, for $i=1,\dots,n-1$,
$$t_{i,i}\la_{i,i+1} = t_{i+1,i+1}\la'_{i,i+1} \,.$$
Since $\la_{i,i+1}$ is surjective, there is a $v \in \fk{g}_V$ such that $\la_{i,i+1}v=1$. Plugging in and taking valuations, we get
$$\m{val}(t_{i,i}) = \m{val}(t_{i+1,i+1}) + \m{val}(\la'_{i,i+1}(v))$$
from which, $\m{val}(t_{i,i}) \ge \m{val}(t_{i+1,i+1})$; by symmetry, $\m{val}(t_{i,i}) = \m{val}(t_{i+1,i+1})$. Hence, after possibly multiplying by a suitable constant, we get $t \in \bb{T}_n(V)$.

This completes the first half of our task, and provides us, moreover, with an intermediate representation $r''$ over $V$, the conjugate of $r$ by $t$, such that $t$ is an isomorphism $r \rightarrow r''$ and $u$ is an isomorphism $r''_K \rightarrow r'_K$. 

We claim that $u \in \bb{U}_n(V)$, without requiring further modification. Indeed, writing $\la''_{i,j}$ for the component linear functionals of $r''$, equivariance of $u$ reads 
\begin{align*}
\la_{i,j} + u_{i,i+1}\la_{i+1,j} + \cdots &+ u_{i,j-1}\la_{j-1,j} \\
&= u_{i+1,j}\la''_{i,i+1} + \cdots + u_{j-1,j}\la''_{i,j-1} + \la''_{i,j} \,.
\end{align*}
Note, in particular, that for each $i$, 
$$\la_{i,i+1} = \la''_{i,i+1} \,.$$
Assume for an induction on $s$ that for each $j-i < s-1$, $u_{i,j} \in V$. Applying the above equation with $j-i=s$ to any $v \in \fk{g}_V$, we see that 
\begin{equation} \label{.50201}
u_{i,j-1}\la_{j-1,j}(v) - u_{i+1,j}\la_{i,i+1}(v) \in V \,.
\end{equation}
But since $r$ is wide,
$$\la_{j-1,j} \times \la_{i,i+1}: \fk{g}_V \rightarrow V \times V$$
is surjective. We apply \eqref{.50201} to elements $v_1, v_2 \in \fk{g}_V$ mapping to $(1,0), (0,1)$, respectively, to conclude the proof.
\end{proof}

\begin{unmarked} \label{.5021}
Note that $M\nd_n(\fk{g})$ is locally of finite type over $\spec k$. Indeed, by \cite[I.4.11]{knutson} this may be checked fppf locally on the source. The map $X_n^\m{nd} \rightarrow \cl{M}\nd_n$ is a $\bb{B}_n$-torsor, hence fppf, and the map $\cl{M}\nd_n \rightarrow M\nd_n$ was noted to be fppf in \ref{.3982}. So in checking that $\mnndg$ is locally of finite type, we may first replace $\mnndg$ by $X\nd_n$ and we then note that the latter is quasi-projective over $k$, hence, in particular, locally of finite type over $k$.
\end{unmarked}

\begin{unmarked} \label{.5022}
Abstracting a bit, we have an algebraic stack $\cl{X}$ and a map $f: \cl{X} \rightarrow X$ to an algebraic space which is surjective in the fppf topology. Moreover, $X$ is locally of finite type over a locally Noetherian base $S$. We've shown that if $V$ is a valuation ring over $S$ and $K$ is its fraction field, then given $r,r' : \spec V \rightarrow \cl{X}$ over $S$, $r_K \cong r'_K$ implies $r \cong r'$. By \cite[A3]{lm}, the valuative criterion for separatedness for $X$ may be checked against complete discrete valuation rings. So suppose given $r, r': T \rightarrow X$, $T = \spec V$ a complete discrete valuation ring with generic point $\tau = \spec K$ and closed point $t = \spec k$. Then there exists an fppf $T$-scheme $Y$ and a lifting of $r, r'$ to $s, s': Y \rightarrow \cl{X}$.
\end{unmarked}

In order to be able to apply \ref{.502} to $s,s'$, we will need to replace $Y$ by an appropriate valuation ring.

\begin{claim} \label{.5023}
If $Y$ is a flat, finite type $T$-scheme, $T = \spec V$, $V$ a complete discrete valuation ring, then there exists a complete discrete valuation ring $V'$, flat and finite over $V$ and a map
$$T' = \spec V' \rightarrow Y$$
over $V$. 
\end{claim}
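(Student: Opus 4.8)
The plan is to realize $V'$ as the completed local ring of a curve through a closed point of the special fiber of $Y$. Since $Y\to T$ is faithfully flat (it is an fppf $T$-scheme, as in \ref{.5022}), its special fiber $Y_t$ is nonempty; I would choose a closed point $y_0\in Y_t$, whose residue field $\ka'$ is finite over $k$, hence separable since $\m{char}\,k=0$. Replacing $Y$ by an affine open around $y_0$ and then by the reduced structure on an irreducible component through $y_0$ — still flat over $V$, because every component of a $V$-flat scheme dominates $\spec V$ and its local rings are domains into which $V$ injects, hence torsion-free over the discrete valuation ring $V$ — and passing to the complete local ring $A_0:=\widehat{\ssf_{Y,y_0}}$ (flat over $V$, so a uniformizer $\pi$ of $V$ is a nonzerodivisor) modulo a minimal prime $\fk q$ with $\pi\notin\fk q$, it suffices to construct a $V$-morphism $\spec V'\to\spec A$ hitting the closed point, where $A:=A_0/\fk q$ is a complete Noetherian local \emph{domain}, flat over $V$, of dimension $d\ge1$, with residue field $\ka'$.

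Next I would arrange that the residue field of $A$ agrees with that of $V$. Writing $\ka'=k(\bar\theta)$ with minimal polynomial $\bar g$ monic and (by separability) satisfying $\bar g'(\bar\theta)\ne0$, lift $\bar g$ to a monic $g\in V[x]$; then $V_1:=V[x]/(g)$ is a complete discrete valuation ring, finite flat over $V$, with uniformizer $\pi$ and residue field $\ka'$, and Hensel's lemma in the complete local ring $A$ lifts a root of $g$ into $A$, yielding an injection $V_1\hookrightarrow A$ over $V$ that identifies residue fields and with respect to which $A$ is again flat. Replacing $V$ by $V_1$, the ring $V$ becomes a coefficient ring of $A$; choosing a system of parameters $\pi,t_1,\dots,t_{d-1}$ of $A$ with $t_1,\dots,t_{d-1}$ a system of parameters of $A/\pi A$, the Cohen structure theorem (complete Noether normalization) provides a module-finite injection $V[[t_1,\dots,t_{d-1}]]\hookrightarrow A$.

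Finally I would cut down. Base-changing along $V[[t_1,\dots,t_{d-1}]]\twoheadrightarrow V$, the ring $\bar A:=A/(t_1,\dots,t_{d-1})A$ is a finite $V$-module; by Krull's height theorem $\dim\bar A\ge d-(d-1)=1$, while finiteness over $V$ gives $\dim\bar A\le1$, so $\dim\bar A=1$. I would then pick a minimal prime $\fk p$ of $\bar A$ with $\dim\bar A/\fk p=1$: the quotient $\bar A/\fk p$ is a one-dimensional local domain, finite over $V$, into which $V$ injects, so its integral closure $V'$ in its fraction field is a discrete valuation ring, module-finite over $\bar A/\fk p$ because $\bar A/\fk p$ is finite over the complete (hence excellent) discrete valuation ring $V$; thus $V'$ is a complete discrete valuation ring, finite and flat over $V$. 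The composite $\spec V'\to\spec(\bar A/\fk p)\hookrightarrow\spec\bar A\hookrightarrow\spec A\to Y$ is a $V$-morphism sending the closed point of $\spec V'$ to $y_0$, which is the desired $T'\to Y$.

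The main obstacle is precisely the cutting-down step. Quotienting a non-Cohen--Macaulay $A$ by a regular system of parameters of the special fiber can introduce $\pi$-torsion, so one may not simply intersect $Y$ with ``vertical hyperplanes'' through $y_0$ and expect the result to remain flat over $V$. The device that makes it work is to cut by the coordinates $t_i$ coming from Cohen normalization and then pass to a \emph{minimal} prime $\fk p$ of the resulting one-dimensional, $V$-finite ring $\bar A$: such $\fk p$ cannot contain $\pi$ — otherwise $\bar A/\fk p$ would be finite over $k$, hence zero-dimensional — so the associated curve automatically dominates $\spec V$. The remaining technical point, matching residue fields by an unramified extension lifted via Hensel's lemma, is where characteristic zero enters.
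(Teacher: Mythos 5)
Your proof is correct, but it takes a route genuinely different from the paper's. The paper reasons globally on $Y$: it selects a closed point $y$ of the special fiber, invokes a specialization lemma of Osserman to produce a closed point $\xi$ of the generic fiber $Y_\tau$ specializing to $y$, and passes to the reduced closure $Z$ of $\xi$, so that $\ssf_{Z,y}$ is a Noetherian local domain whose fraction field $K'=k(\xi)$ is finite over $K$. It then dominates $\ssf_{Z,y}$ by a discrete valuation ring of $K'$; since $V$ is complete, its extension to $K'$ is unique, forcing that ring to equal the integral closure of $V$ in $K'$, hence to be finite, flat and complete. Your argument instead localizes and completes at the outset, producing a complete local flat domain $A$, arranging by Hensel's lemma and the primitive element theorem that $V$ become a coefficient ring, realizing $A$ as module-finite over $V[[t_1,\dots,t_{d-1}]]$ via Cohen's complete Noether normalization, and then cutting by the $t_i$ and passing to a minimal prime and its normalization. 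The toolkits are essentially disjoint (Osserman's dimension-theoretic specialization lemma and valuation theory on one side; Cohen structure theory, Krull's height theorem and excellence on the other), and your approach has the merit of being self-contained within standard commutative algebra, whereas the paper depends on an external preprint. One small asymmetry: your use of the primitive element theorem and of $\bar g'(\bar\theta)\neq 0$ invokes characteristic zero explicitly, whereas the paper's proof is characteristic-free; this is harmless since the paper works in characteristic zero throughout. Finally, since $A$ is a complete local domain it is catenary and equidimensional, so in fact \emph{every} minimal prime of $\bar A$ has coheight one; the particular choice of $\fk p$ plays no role, though your argument of course does not need this.
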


A proof based on global geometric techniques was outlined by Hassett in oral communication. Below we follow an alternative approach.

\begin{proof}
Since the special fiber $Y_t$ is of finite type over $t$, it contains a point $y$, finite over $t$. The stalk $\ssf_{Y,y}$ of $\ssf_Y$ at $y$ is flat over $V$, hence injects into $K \otimes_V \ssf_{Y,y}$ ($K$ the fraction field of $V$). Hence in particular $K \otimes_V \ssf_{Y,y}$ is nonzero. An arbitrary point $\zeta$ of its spectrum gives rise to a point of $Y_\tau$ which specializes to $y$. By \cite[Corollary 2.4]{osserman} the specialization
$$\zeta \rightsquigarrow y$$
admits a factorization
$$\zeta \rightsquigarrow \xi \rightsquigarrow y$$
through a closed point $\xi$ of $Y_\tau$. Since $Y_\tau$ is of finite type over $\tau$, $\xi$ is finite over $\tau$. Let $Z$ be the closed subscheme of $Y$ defined by endowing $\bar { \{ \xi \} }$ with the reduced induce structure. Then $\ssf_{Z,y}$ is a Noetherian local domain with generic point $\xi$. Hence by \cite[II Exercise 4.11]{hartshorne}, $K' := k(\xi)$ has a discrete valuation ring $V'$ dominating $\ssf_{Z,y}$. Its spectrum $T' = \spec V'$ surjects onto $T$, hence is torsion free hence flat. On the other hand, since $K'$ is finite over $K$, it has a unique discrete valuation ring dominating $V$ which may be characterized as the integral closure of $V$ in the residue field of $\xi$; moreover this discrete valuation ring is automatically complete (\cite[XII 2.5]{lang}). Thus $T'$ is complete and fppf over $T$ and has a map $T' \rightarrow Y$ as claimed.
\end{proof}

\begin{unmarked} \label{.5024}
We thus have a complete discrete valuation ring $V'$, and sections
$$u, u' : T' := \spec V' \rightarrow \cl{X}$$
lifting $r, r'$. Denoting the function field of $V'$ by $K'$, these have the property that $u_{K'}, u'_{K'}$ agree in $X$ hence are fppf locally isomorphic in $\cl{X}$. Thus there is a finite extension $K''$ of $K'$ such that $u_{K''} \cong u'_{K''}$. There exists a discrete valuation ring $V''$ of $K''$ (complete and) fppf over $V'$. Let $v = u_{V''}, v' = u'_{V''}$ (that is, $v, v'$ are the composites
$$T'' = \spec V'' \rightarrow T' \rightrightarrows \cl{X}\,).$$
Then $v,v'$ are valuation valued points of $\cl{X}$ which are generically isomorphic, so, as we assumed in \ref{.5022}, it follows that they are isomorphic. Hence they agree in $X$. Hence our original $r, r'$ agree fppf locally, hence agree globally and we're done.
\end{unmarked}

\bigskip

\scriptsize\textsc{Leibniz Universit\"at Hannover\\
\indent Institut f\"ur Algebra, Zahlentheorie und Diskrete Mathematik\\
\indent Fakult\"at f\"ur Mathematik und Physik\\
\indent Welfengarten 1\\
\indent 30167 Hannover\\
\indent Germany}

\smallskip

\textit{E-mail address:} \texttt{dan-cohen@math.uni-hannover.de}

\end{document}